\newcommand{\argmin}{\mathop{\rm arg\min}}
\newcommand{\argmax}{\mathop{\rm arg\max}}
\newcommand{\R}{\mathbb{R}}
\numberwithin{equation}{section}
\newtheorem{thm}{Theorem}
\newtheorem{lem}{Lemma}
\newtheorem{rem}{Remark}
\newtheorem{prop}{Proposition}
\newcommand{\Var}{\operatorname{Var}}
\newcommand{\Std}{\operatorname{Std}}
\newcommand{\Bin}{\operatorname{Bin}}
\newcommand{\Ber}{\operatorname{Ber}}
\newcommand{\Poi}{\operatorname{Poi}}
\newcommand{\E}{\mathbb{E}}
\renewcommand{\P}{\mathbb{P}}
\newcommand{\TV}{\operatorname{TV}}
\newcommand{\KL}{\operatorname{KL}}
\newcommand{\mH}{\mathcal{H}}
\newcommand{\mC}{\mathcal{C}}
\newcommand{\mA}{\mathcal{A}}
\newcommand{\mF}{\mathcal{F}}
\newcommand{\mE}{\mathcal{E}}
\newcommand{\mG}{\mathcal{G}}
\newcommand{\mD}{\mathcal{D}}
\newcommand{\mP}{\mathcal{P}}
\begin{document}

\title{The Le Cam distance between density estimation, Poisson processes and Gaussian white noise}

\author{Kolyan Ray\footnote{The research leading to these results has received funding from the European Research Council under ERC Grant Agreement 320637.\newline Email: \href{mailto:k.m.ray@math.leidenuniv.nl}{k.m.ray@math.leidenuniv.nl}, \href{mailto:schmidthieberaj@math.leidenuniv.nl}{schmidthieberaj@math.leidenuniv.nl}}\; and Johannes Schmidt-Hieber
 \vspace{0.1cm} \\
 {\em Leiden University} }



\date{}
\maketitle

\begin{abstract}
\noindent It is well-known that density estimation on the unit interval is asymptotically equivalent to a Gaussian white noise experiment, provided the densities have H\"older smoothness larger than $1/2$ and are uniformly bounded away from zero. We derive matching lower and constructive upper bounds for the Le Cam deficiencies between these experiments, with explicit dependence on both the sample size and the size of the densities in the parameter space. As a consequence, we derive sharp conditions on how small the densities can be for asymptotic equivalence to hold. The related case of Poisson intensity estimation is also treated.
\end{abstract}

\paragraph{AMS 2010 Subject Classification:}
Primary 62B15; secondary 62G05, 62G07, 62G20.
%

\paragraph{Keywords:} Asymptotic equivalence; Le Cam distance; density estimation; Poisson intensity estimation; Gaussian shift experiments.

\section{Introduction}
\label{sec.intro}

In nonparametric density estimation on the unit interval, we observe $n$ i.i.d. random variables from an unknown probability density $f$ supported on $[0,1]$. This model is closely related to Poisson intensity estimation, where we observe a Poisson process on $[0,1]$ with unknown intensity function $nf$. The notion of ``closeness" between these problems can be made precise via the Le Cam deficiency $\delta$ and Le Cam (pseudo-)distance $\Delta,$ which we recall in Appendix \ref{sec.LeCam}. If the parameter space $\Theta$ consists of densities $f$ on $[0,1]$ that are uniformly bounded away from zero and have H\"older smoothness larger than $1/2$, then a seminal result of Nussbaum \cite{nussbaum1996} establishes that these models are asymptotically equivalent in the Le Cam sense to the Gaussian white noise model where we observe the Gaussian process $(Y_t)_{t\in [0,1]}$ such that 
\begin{align}
	dY_t = 2\sqrt{f(t)} dt + n^{-1/2} dW_t, \quad t\in [0,1], \ \ f\in \Theta,
	\label{eq.GWN_in_intro}
\end{align}
with $(W_t)_{t\in[0,1]}$ a Brownian motion. Brown and Zhang \cite{brown1998} constructed a parameter space with H\"older smoothness exactly $1/2$ such that asymptotic equivalence fails to hold, thereby establishing the sharpness of the smoothness constraint.

The main goal of this article is to sharply quantify the rate of the Le Cam distance between these three models with explicit dependence on both the smoothness of the underlying function class and the size of the functions contained therein. To this end, we derive matching upper and lower bounds for the rates of the various Le Cam deficiencies under general conditions. As a by-product, we characterize exactly how small densities can be for asymptotic equivalence to hold between these models. This is of particular interest in Poisson intensity estimation, where low count data is characteristic of many applied problems. Furthermore, since our upper bound is constructive and provably sharp, it provides a blueprint to transform Poisson data into Gaussian data in an optimal way with respect to the Le Cam distance.

We henceforth take the parameter space $\Theta=\Theta_n$ to be a sample size dependent subspace of $\beta$-smooth H\"older densities. Such a notion is widely used in high-dimensional statistics and turns out to be natural in our setting as well. Density estimation is a qualitatively different problem for densities taking values near zero, both in terms of estimation rates \cite{patschkowski2016,ray2016IP} and asymptotic equivalence, as we show below. Indeed, an $n$-dependent threshold turns out to be the correct notion to characterize ``small densities", much as in the case of high-dimensional statistics. We show that under general conditions, the squared Le Cam deficiencies between either the density estimation experiment or Poisson intensity experiment and the corresponding Gaussian white noise model are of the order 
\begin{align}
	1\wedge n^{\frac{1-2\beta}{2\beta+1}} \sup_{f\in \Theta}\int_0^1 f(x) ^{-\frac{2\beta+3}{2\beta+1}} dx,
	\label{eq.rate_in_intro}
\end{align}
where $\wedge$ denotes the minimum. Our main restriction is that for the upper bound we require smoothness $\beta\leq 1.$ Recall that two experiments are said to be asymptotically equivalent if both deficiencies tend to zero. In particular, if $f$ is uniformly bounded away from zero, we recover the rate $1\wedge n^{(1-2\beta)/(2\beta+1)}$ and so asymptotic equivalence holds if and only if $\beta>1/2.$ 

The Le Cam distance between two experiments controls the maximal difference in statistical risk of decision problems with loss function bounded by one, see Strasser [35] and Le Cam and Yang [18]. In particular, if one solves any such decision problem by transforming Poisson data into Gaussian data, which is a common approach as discussed below, then the rate (1.2) provides a bound on the contribution to the risk from the data transformation when using an optimal transformation, for instance the one considered in this article. The Le Cam distance thus provides a sharp description of the statistical cost associated to reducing one problem to another and allows one to characterize the optimal such reduction.

Whilst explicit formulas for the Le Cam deficiency are known for some parametric models (cf. Torgersen \cite{Torgersen1991}, Section 8.5-8.6), the existing theory for the Le Cam distance between nonparametric models focuses on necessary and sufficient conditions for asymptotic equivalence. Explicit upper bounds for the Le Cam distance are, however, sometimes available. For the models we consider, Carter \cite{carter2002} obtained suboptimal upper bounds using a multinomial approximation. In view of the lower bound we prove here, the approach of Brown et al. \cite{brown2004} yields the correct rate in terms of $n$, but not $\Theta$, even though their result is not stated in this form. Our upper bound construction is related to the quantile coupling employed in \cite{brown2004}, though obtaining the correct dependence on the density $f$ in \eqref{eq.rate_in_intro} imposes significant additional technical challenges. Explicit upper bounds have also been obtained for various regression models \cite{grama1998,rohde2004,reiss2008,schmidt-hieber2014}. Existing lower bound results have focused on proving asymptotic nonequivalence of models rather than lower bounding the rate of the Le Cam deficiency, see \cite{efromovich1996,brown1998,wang2002,ray2018lower}.

To understand the advantage of having rates for the Le Cam deficiency beyond simply asymptotic equivalence, one can make the analogy with consistency versus convergence rates for an estimator. Consistency specifies that an estimator will eventually be close to the true parameter, but this may occur only for extremely large sample sizes. In contrast, rates of convergence allow for a much finer understanding of the performance of estimators and provide a framework to compare different procedures. Asymptotic equivalence is a qualitative statement that the experiments will be close in the limit, while the rates at which the deficiencies tend to zero provide a quantitative insight into the speed of this convergence.

A major motivating application for this work is nonparametric Poisson intensity estimation, where there is a long list of techniques on transforming Poisson data into approximately Gaussian data. These methods typically use local binning together with variations of the parametric square root transform, see for instance \cite{fryzlewicz2004,brown2009,Makitalo2011} or the recent survey article \cite{hohage2016}. Given that there are multiple proposed transformations, one would like a theoretical concept to compare the quality of the different transformations, in particular against some information-theoretic optimal benchmark. With regards to a large class of decision procedures, such a benchmark is provided by the Le Cam distance.

More abstractly, given two sequences of statistical experiments $\mE_n(\Theta) = (\Omega_n, \mA_n, (P_\theta^n: \theta \in \Theta))$ and $\mF_n(\Theta) = (\Omega_n', \mA_n', (Q_\theta^n: \theta \in \Theta)),$ a (measurable) map $M$ that sends probability measures $P_\theta^n$ to probability measures on the measurable space $(\Omega_n', \mA_n')$ represents a method to transform data arising in $\mE_n(\Theta)$ into data comparable to that generated in $\mF_n(\Theta).$ In particular, one seeks a method to convert data arising from $P_\theta^n$ into a ``synthetic" observation that is a good approximation to true data generated from the corresponding $Q_\theta^n$, uniformly over $\theta \in \Theta$. The quality of such an approximation can be measured by the total variation distance $\sup_{\theta \in \Theta} \|MP_\theta^n - Q_\theta^n\|_{\TV}.$ If this converges to zero, then no statistical test can asymptotically tell whether given data are transformed data originating from $\mE_n(\Theta)$ or true data from $\mF_n(\Theta).$ The Le Cam deficiency therefore provides a benchmark for optimality in this regard and a rate-optimal approximation can be defined as one such that the corresponding map $M^*$ attains this lower bound (up to constants):
\begin{align*}
	\sup_{\theta\in \Theta} \|M^*P_\theta^n -Q_\theta^n\|_{\TV} 
	\asymp 
	\inf_{M} \sup_{\theta \in \Theta} \|MP_\theta^n -Q_\theta^n\|_{\TV}
	=:
	\delta(\mE_n(\Theta), \mF_n(\Theta)) .
\end{align*}
In particular, since our upper bound on the Le Cam deficiency is constructive, one can deduce from it an approximation of the Poisson model by the Gaussian white noise model \eqref{eq.GWN_in_intro} that is rate-optimal in this sense.

While the sharpness of the smoothness condition in Nussbaum's result has been established, the extent to which one can relax the assumption that the densities must be uniformly bounded away from zero has received little study. A notable exception is Mariucci \cite{mariucci2016b}, who studies densities of the form $f \cdot g$, where $g$ is known and possibly small and $f$ is unknown and uniformly bounded away from zero. From an applied perspective, a uniform lower bound on the density is artificial and one would like to weaken this condition. Low Poisson counts occur in applications, such as image denoising, and existing results can rely on Gaussian approximations \cite{Makitalo2011}. This regime is not well-understood and it would therefore be useful to understand how such a Gaussian approximation behaves for small densities.

The rate \eqref{eq.rate_in_intro} allows us to characterize exactly how small a density can be for asymptotic equivalence to hold between density estimation or Poisson intensity estimation and the Gaussian model \eqref{eq.GWN_in_intro}. For example, if $\inf_{f\in \Theta} \inf_x f(x) \gg n^{(1-2\beta)/(2\beta+3)},$ then asymptotic equivalence still holds. Since ``small" is defined in \eqref{eq.rate_in_intro} in an integrated sense, even weaker assumptions are required if the densities are small on a shrinking set: for example asymptotic equivalence still holds if $\Theta$ contains densities of the form $f(x)  \propto x^\beta + n^{-\beta/(\beta+1)}s_n$, where $s_n \rightarrow \infty.$ Densities can therefore come arbitrarily close to the threshold $n^{-\beta/(\beta+1)},$ which turns out to be the absolute lower limit since, under very weak assumptions, asymptotic equivalence fails if $\inf_{f\in \Theta} \inf_x f(x) \lesssim n^{-\beta/(\beta+1)}$, see Theorem 1 of \cite{ray2018lower}.

One might naturally wonder why the rate of the Le Cam deficiency becomes slower if the parameter space contains small densities. A possible explanation is that the information about $f$ contained in the data is not the same in the different models. If $f$ is small in some interval, then in density estimation we observe very few observations in this region, whereas in the Gaussian white noise model \eqref{eq.GWN_in_intro} the whole path $(Y_t)_{t\in [0,1]}$ is observed and the difficulty lies rather in separating small signal from noise. Due to the different structures of these estimation problems, it seems reasonable that they are further apart in the Le Cam distance when the densities are small.

By the localization principle, it suffices to consider a local parameter space for upper bounds on the Le Cam distance. Sharp estimation rates are therefore crucial, since they determine the size of the local parameter space. In both density estimation and the Gaussian white noise model \eqref{eq.GWN_in_intro}, small densities can be estimated with a faster pointwise rate of convergence recently derived in \cite{patschkowski2016} and \cite{ray2016IP}. If $f$ is $\beta$-smooth in an appropriate sense, then the pointwise estimation rate at any $x\in (0,1)$ is, up to $\log n$ factors,
\begin{align}
	n^{-\frac{\beta}{\beta+1}} + \Big(\frac {f(x)}n \Big)^{\frac{\beta}{2\beta+1}}.
	\label{eq.ptw_conv_rate}
\end{align}
If $f(x)$ is larger than $n^{-\beta/(\beta+1)}$ then the rate is of order $(f(x)/n)^{\frac{\beta}{2\beta+1}},$ while if $f(x)$ is very small, in the sense that $f(x)\leq n^{-\beta/(\beta+1)},$ then the convergence rate is $n^{-\frac{\beta}{\beta+1}}.$ Small densities can therefore be estimated with faster rates of convergence. Note that if $f$ is bounded from below, we recover the standard $n^{-\beta/(2\beta+1)}$-rate of convergence. We shall refer to $f(x)\geq n^{-\beta/(\beta+1)}$ as the {\it regular regime} and to $f(x)\leq n^{-\beta/(\beta+1)}$ as the {\it irregular regime.} While the faster convergence rate for small densities means we can localize better, this does not translate into better rates for the Le Cam distance, since for small densities the local approximations are much worse.

Assuming known smoothness $\beta$, one can use a density or Poisson intensity estimator $\widehat{f}_n$ to find a local parameter space $\Theta(\widehat f_n)$ containing the true density with high probability and whose size is determined by the estimation rate. It then suffices to restrict to this local parameter space and the rate of the Le Cam distance is of the possibly much faster order 
\begin{align*}
	1\wedge n^{\frac{1-2\beta}{2\beta+1}} \sup_{f\in \Theta(\widehat f_n)}\int_0^1 f(x) ^{-\frac{2\beta+3}{2\beta+1}} dx
	\asymp 1\wedge n^{\frac{1-2\beta}{2\beta+1}} \int_0^1 \widehat f_n(x) ^{-\frac{2\beta+3}{2\beta+1}} dx,
\end{align*}
provided $\widehat{f}_n$ achieves the pointwise estimation rate  \eqref{eq.ptw_conv_rate}. We may thus obtain faster rates for the \emph{local} asymptotic equivalence of these models compared with their \emph{global} asymptotic equivalence. Local asymptotic equivalence has been studied for example in \cite{grama2006,butucea2018}. Given this, $\widehat f_n$ can also be used to check whether the density lies in the regime where local asymptotic equivalence holds. Plugging $\widehat{f}_n$ into the rate \eqref{eq.rate_in_intro} yields the estimate $I_n(\widehat{f}_n) := 1 \wedge n^{(1-2\beta)/(2\beta+1)} \int_0^1 \widehat{f}_n(x) ^{-(2\beta+3)/(2\beta+1)} dx$, which with high probability gives the order of the Le Cam distance over the local parameter space $\Theta (\widehat{f}_n)$. In particular, if $f_0$ has points in the irregular regime, then $I_n (\widehat{f}_n)$ will typically be close to one. This provides a practical pre-test to verify, for example, if a Gaussian approximation is suitable for low-count Poisson data.

Although for small densities, density estimation and the Gaussian white noise model \eqref{eq.GWN_in_intro} are no longer asymptotically equivalent, many aspects of their statistical theory, such as consistent testing, remain the same, see \cite{ray2018lower} for further discussion. Indeed, the fact that many statistical decision problems have nearly the same asymptotic properties in these three models irrespective of the underlying density size makes it difficult to prove lower bounds for the Le Cam deficiencies. In the regular regime, that is if $\inf_{f\in \Theta} \inf_x f(x) \gg n^{-\beta/(\beta+1)},$ we bound the Le Cam deficiency from below by the difference of the  Bayes risks for a decision problem on a discrete parameter space equipped with a non-uniform prior. Considering non-uniform priors seems necessary here in order to achieve the correct rate. The construction of the lower bounds provides many insights regarding the sense in which these models differ. 

Mathematically, many of our techniques build on earlier works on asymptotic equivalence, in particular Nussbaum \cite{nussbaum1996}, Brown and Zhang \cite{brown1998}, Brown et al. \cite{brown2004} and Low and Zhou \cite{low2007}. While the upper bounds expand many existing techniques, the lower bounds require several new concepts. Other works on asymptotic equivalence include J\"ahnisch and Nussbaum \cite{jahnisch2003} for density estimation and Genon-Catalot et al. \cite{genon-catalot2002} and Meister and Rei\ss \ \cite{meister2013} for Poisson intensity estimation.

{\it Notation:} For two positive sequence $(a_n)_n$ and $(b_n)_n$, we write $a_n \lesssim b_n$ if there is exists a constant $C$ independent of $n$, such that $a_n \leq C b_n$ for all $n\geq n_0$ and some $n_0 \geq 1$. If $a_n \lesssim b_n$ and $b_n \lesssim a_n,$ we write $a_n \asymp b_n.$ Similarly, $a_n \ll b_n$ means $\lim_{n\rightarrow \infty} a_n/b_n = 0$. In some proofs, we additionally require that the constant does not depend on certain parameters and we always indicate this at the beginning of the proof. For two functions $f,g$ defined on the same domain, we write $f\leq g$ if $f(x) \leq g(x)$ for all $x.$ Let $\|\cdot\|_p$ denotes the usual $L^p$-norm. Given two probability measures $P,Q$ defined on the same measurable space, the total variation distance, Hellinger distance and Kullback-Leibler divergence are denoted by $\|P-Q\|_{\TV},$ $H(P,Q)$ and $\KL(P,Q)$ respectively.

\section{Main results}
\label{sec.main}

We now formally define the three statistical experiments considered in this article.

{\it Density estimation $\mE_n^D(\Theta)$:} We observe $n$ i.i.d. copies $X_1,\ldots,X_n$ of a random variable on $[0,1]$ with unknown Lebesgue density $f.$ The corresponding statistical experiment is $\mE_n^D(\Theta)=([0,1]^n ,\sigma([0,1]^n), (P_f^n : f\in \Theta))$ with $P_f^n$ the product probability measure of $X_1, \ldots,X_n.$

{\it Poisson intensity estimation $\mE_n^P(\Theta)$:} We observe a Poisson process on $[0,1]$ with intensity function $nf$ and unknown density $f\in \Theta$. We thus observe the point process $\sum_{i=1}^N \delta_{X_i}$, where $X_1,X_2,\dots$ are i.i.d. random variables with density $f$, $N$ is an independent Poisson($n$) random variable and $\delta_x$ is the Dirac measure at $x$. This is equivalent to observing $X_1,\dots,X_N$. Denoting the distribution of this point process by $\overline P_f^n$, we can write the corresponding statistical experiment as $\mE_n^P(\Theta)=(\mathbb{M},\mathcal{M},(\overline P_f^n : f\in \Theta)),$ where $\mathbb{M}$ is the space of point measures equipped with the appropriate $\sigma$-algebra $\mathcal{M}$, see Section 4 of \cite{nussbaum1996} for further details.

{\it Gaussian white noise experiment $\mE_n^G(\Theta)$:} We observe the Gaussian process $(Y_t)_{t\in [0,1]}$ given by $dY_t = 2\sqrt{f(t)} dt + n^{-1/2} dW_t,$ $t\in [0,1],$ where $f\in \Theta$ is unknown and $W$ is a Brownian motion. The Gaussian white noise experiment is $\mE_n^G(\Theta)=(\mC([0,1]) ,\sigma(\mC([0,1])), (Q_f^n : f\in \Theta))$ with $Q_f^n$ the distribution of $(Y_t)_{t\in [0,1]},$ $\mC([0,1])$ the space of continuous functions on $[0,1]$ and $\sigma(\mC([0,1]))$ the $\sigma$-algebra generated by the open sets with respect to the uniform norm.

{\it Function spaces:} Denote by $\lfloor \beta \rfloor$ the largest integer strictly smaller than $\beta.$ The H\"older semi-norm is given by $|f|_{\mC^\beta} := \sup_{x\neq y, x,y\in [0,1]} |f^{(\lfloor \beta \rfloor)}(x) - f^{(\lfloor \beta \rfloor)}(y)| /|x-y|^{\beta - \lfloor \beta \rfloor}$ and the H\"older norm is $\| f \|_{\mC^\beta} := \| f\|_\infty + \| f^{(\lfloor\beta\rfloor)} \|_\infty + |f|_{\mC^\beta}.$ Consider the space of $\beta$-smooth H\"older densities with H\"older norm bounded by $R,$
\begin{equation*}
\mC^\beta(R) := \big\{ f: [0,1] \rightarrow \R \  :  \  f\geq 0, \ \int_0^1 f(u) du =1, \  f^{(\lfloor \beta \rfloor)} \text{ exists}, \  \| f \|_{\mC^\beta} \leq R \big\}. 
\end{equation*}
If $f$ is allowed to depend on $n$ and $0<\beta \leq 2,$ the pointwise rate of estimation at any $x \in (0,1)$ over the parameter space $\mC^\beta(R)$ is given by \eqref{eq.ptw_conv_rate}, up to $\log n$-factors (see Theorems 3.1 and 3.3 of \cite{patschkowski2016}  and Theorems 1 and 2 of \cite{ray2016IP}). This rate of convergence does not extend beyond $\beta=2$ using the usual definition of H\"older smoothness (Theorem 3 of \cite{ray2016IP}). To take advantage of higher order smoothness, we must therefore modify our function class.

A natural way to extend such rates to smoothness $\beta>2$ is to impose a shape constraint. On $\mC^\beta$ define the flatness seminorm $| f |_{\mathcal{H}^\beta} = \max_{1 \leq j <\beta } \| |f^{(j)}|^\beta/|f|^{\beta-j} \|_\infty^{1/j},$ with $0/0$ defined as $0$ and $|f|_{\mH^\beta}=0$ for $\beta \leq 1.$ The quantity $| f |_{\mathcal{H}^\beta}$ measures the flatness of a function near zero in the sense that if $f(x)$ is small, then the derivatives of $f$ must also be small in a neighbourhood of $x$. Define $\| f\|_{\mH^\beta} := \|f\|_{\mC^\beta} + |f|_{\mH^\beta}$ and consider the space of densities
\begin{align*}
	\mH^\beta(R) := \{ f\in \mC^\beta(R) \ : \  \|f\|_{\mH^\beta}\leq R\}.
\end{align*}
Notice that $\mH^\beta(R)=\mC^\beta(R)$ for $\beta\leq 1.$ Properties of the function space $\mH^\beta(R)$ are studied in \cite{RaySchmidt-Hieber2015c}.

We are now ready to state the main results, beginning with the upper bound for Poissonization. The proof of the following theorem is given in Section \ref{sec.DE_PIE}.

\begin{thm}[Upper bound between density and Poisson intensity estimation]
\label{thm.ub_DE_PIE}
If $\Theta \subset \mH^\beta(R)$ for $\beta>0,$ then
\begin{align*}
	\Delta(\mE_n^D(\Theta), \mE_n^P(\Theta))^2 \lesssim n^{-\frac{2\beta}{2\beta+1}}\log^2 n \  \sup_{f\in \Theta}  \int_0^1 \Big( \frac 1{f(x)} \wedge n^{\frac{\beta}{\beta+1}}\Big)^{\frac{1}{2\beta+1}}  dx.
\end{align*}
\end{thm}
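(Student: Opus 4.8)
The goal is to bound the Le Cam distance $\Delta(\mE_n^D(\Theta),\mE_n^P(\Theta))$, i.e.\ to construct a randomization in each direction turning density data into Poisson data and vice versa. Since the Poisson experiment is equivalent to observing $X_1,\dots,X_N$ with $N\sim\Poi(n)$ and the density experiment to observing $X_1,\dots,X_n$, the natural couplings are: (i) from $\mE_n^P$ to $\mE_n^D$, if $N\geq n$ discard the extra points, if $N<n$ sample $n-N$ fresh i.i.d.\ draws from $f$ — but $f$ is unknown, so this is not a valid randomization and one must instead use a smoothed/binned estimate of $f$, which forces a rate penalty; (ii) from $\mE_n^D$ to $\mE_n^P$, symmetrically. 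Because $f$ is unknown in both directions, the standard trick is to \emph{not} try to match $f$ exactly but to split the sample, use part of it to build an estimator $\widehat f$, and use $\widehat f$ to generate the missing observations. The error incurred is then governed (a) by the fluctuation $|N-n|\asymp\sqrt n$, which contributes a fixed number of ``wrong'' points, and (b) by how well $\widehat f$ approximates $f$ in total variation on each bin.

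\textbf{Steps.} First I would invoke the localization principle: it suffices to bound $\Delta$ over a local parameter space $\Theta(\widehat f_n)$ of densities within the pointwise estimation rate \eqref{eq.ptw_conv_rate} (up to logs) of a preliminary estimator, shrinking to a band of radius governed by $n^{-\beta/(\beta+1)}+(f/n)^{\beta/(2\beta+1)}$ around $\widehat f_n$. Second, I would set up the coupling on a dyadic partition of $[0,1]$ into $m\asymp n^{1/(2\beta+1)}$ bins (optimized below), replacing $f$ on each bin by a polynomial/local-average proxy $\bar f$; the bin width is chosen so the bias term matches the Poissonization error. Third, within each bin I would couple the two experiments by (a) a \emph{Poissonization step}: given the $n$ i.i.d.\ points, thin/augment to obtain a Poisson process, using the fact that the multinomial counts on bins are close in TV to independent Poissons with matched means (a quantitative Poisson approximation, e.g.\ via Stein/Le Cam's inequality, contributing $\sum_{\text{bins}}(\text{bin prob})^2\asymp \|f\|_\infty/m$), and (b) a \emph{density-matching step}: replace the true intra-bin conditional distribution by the one generated from $\bar f$, at cost $\sum_{\text{bins}} (\text{expected count in bin})\cdot H^2(\text{true intra-bin law},\bar f\text{ intra-bin law})$. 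Fourth, I would bound the Hellinger terms: on a bin $B$ where $f\geq n^{-\beta/(\beta+1)}$ (regular regime), $H^2(\text{conditional laws})\lesssim |B|^{2\beta}/\big(\inf_B f\big)^{?}$-type bounds coming from $\beta$-Hölder smoothness of $f$ relative to its size (this is where $\mH^\beta(R)$ and the flatness seminorm enter, controlling $|f^{(j)}|/|f|$), and on a bin in the irregular regime ($f\lesssim n^{-\beta/(\beta+1)}$) the expected count is $\lesssim n|B|\cdot n^{-\beta/(\beta+1)}$, which is small, so that bin contributes only a negligible amount. Summing over bins and balancing the number of bins $m$ against the two error sources yields, after the $f$-dependence is tracked carefully, the integral $\int_0^1\big(1/f(x)\wedge n^{\beta/(\beta+1)}\big)^{1/(2\beta+1)}dx$ times $n^{-2\beta/(2\beta+1)}$, with the $\log^2 n$ arising from the localization radius and from a union bound over the (at most polynomially many) bins. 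Fifth, I would run the reverse direction symmetrically; since both experiments have the same local structure, the same bounds apply.

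\textbf{Main obstacle.} The crux is step four: getting the \emph{correct dependence on $f$} in the Hellinger distance between the intra-bin conditional laws of the two experiments. The naive Hölder bound gives a factor like $|B|^{2\beta}\|f''\|_\infty^2/\inf_B f$ or similar, but this is not integrable in the right way and blows up in the irregular regime; the key is to use the flatness seminorm $|f|_{\mH^\beta}$ to bound $\|f^{(j)}\|_\infty$ on a bin by powers of $\inf_B f$, so that the ratio stabilizes and the per-bin contribution becomes $\asymp n|B|^{2\beta+1}(\inf_B f)^{-1/(2\beta+1)}$-type, which Riemann-sums to the stated integral with the truncation $\wedge\, n^{\beta/(\beta+1)}$ coming precisely from the crossover to the irregular regime where one instead bounds by the (small) expected count. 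Handling bins straddling the regime boundary, and making the binning adaptive to $\widehat f_n$ so that bin widths vary with the local size of $f$ (finer bins where $f$ is small), is the technically delicate part — this is exactly the ``significant additional technical challenge'' beyond the quantile-coupling approach of \cite{brown2004} alluded to in the introduction. I would also need a careful accounting that the preliminary estimator $\widehat f_n$ used for localization is itself constructed from an asymptotically negligible fraction of the data (sample splitting), so that it does not degrade the rate, and that the ``bad event'' where $f\notin\Theta(\widehat f_n)$ has probability $o(n^{-2\beta/(2\beta+1)})$, which the pointwise rate \eqref{eq.ptw_conv_rate} with high-probability bounds (again costing a $\log n$) supplies.
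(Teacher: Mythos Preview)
Your localization framework is correct and matches the paper: one reduces to a local parameter space $\Theta_1^\beta(f_0)$ around a preliminary estimator achieving the rate \eqref{eq.ptw_conv_rate}, and then builds a Markov kernel on that local space. However, the specific coupling you propose on the local space is both more complicated than what the paper does and contains a genuine rate problem.

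\textbf{The rate gap.} Your step (a) invokes the Le Cam multinomial-to-Poisson bound $\sum_i p_i^2\asymp \|f\|_\infty/m$. With $m\asymp n^{1/(2\beta+1)}$ bins this gives a squared-TV contribution of order $n^{-1/(2\beta+1)}$, which is \emph{larger} than the target $n^{-2\beta/(2\beta+1)}$ for all $\beta>1/2$ and does not even vanish correctly for $\beta$ large. The multinomial-to-Poisson mechanism is simply the wrong tool here; it is too coarse to capture the actual closeness of $\mE_n^D$ and $\mE_n^P$, which differ only in the \emph{number} of observations ($n$ versus $\Poi(n)$), not in their spatial distribution. Your step (b) on intra-bin Hellinger matching, and the whole adaptive-binning apparatus with the flatness seminorm, is machinery for the Gaussian approximation (Theorem~\ref{thm.ub}), where one genuinely must control local distributional discrepancies; it is not needed for Poissonization. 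The remark in the introduction about ``significant additional technical challenges beyond \cite{brown2004}'' that you cite refers to Theorem~\ref{thm.ub}, not to this result.

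\textbf{What the paper actually does.} On the local space $\Theta_1^\beta(f_0)$ the center $f_0$ is \emph{known}, and the paper exploits this directly, following Low and Zhou \cite{low2007}. Set $\kappa_n=\sqrt{2n\log n}$. To go from $\mE_n^D$ to $\mE_n^P$: (i) draw $N\sim\Poi(n-\kappa_n)$ independently and keep only $X_1,\dots,X_{N\wedge n}$; since $\P(N>n)\lesssim 1/n$ this costs essentially nothing, and one now has a Poisson process with intensity $(n-\kappa_n)f$; (ii) add an independent Poisson process with intensity $\kappa_n\widetilde f_0$, where $\widetilde f_0=f_0\mathbf{1}(f_0\geq c(\log n/n)^{\beta/(\beta+1)})$ is the truncated known center. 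The result is a Poisson process with intensity $(n-\kappa_n)f+\kappa_n\widetilde f_0$, and its Hellinger distance to the target $\Poi(nf)$ process is
\[
\int_0^1\big(\sqrt{(n-\kappa_n)f+\kappa_n\widetilde f_0}-\sqrt{nf}\big)^2\,dx
\;\leq\; \frac{\kappa_n^2}{n}\int_0^1\Big(f\,\mathbf{1}(f_0<cL_n^{\beta/(\beta+1)})+\frac{(f_0-f)^2}{f}\mathbf{1}(f_0\geq cL_n^{\beta/(\beta+1)})\Big)dx.
\]
On the set where $f_0\geq cL_n^{\beta/(\beta+1)}$, the local constraint $|f-f_0|\lesssim (L_nf_0)^{\beta/(2\beta+1)}$ and $f\asymp f_0$ give $(f_0-f)^2/f\lesssim L_n^{2\beta/(2\beta+1)}f_0^{-1/(2\beta+1)}$; on the complement one simply uses $f\lesssim L_n^{\beta/(\beta+1)}$. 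Both pieces combine into $n^{-2\beta/(2\beta+1)}\log^2 n\int_0^1(f_0^{-1}\wedge n^{\beta/(\beta+1)})^{1/(2\beta+1)}dx$, with the $\log^2 n$ coming from $\kappa_n^2/n$ and the log in $L_n$. No binning, no multinomial approximation, no intra-bin conditional laws. The reverse direction is symmetric.

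In short: keep your localization step, but replace your binned multinomial-to-Poisson coupling by the intensity-shift-and-augment construction above. That is where the correct $n^{-2\beta/(2\beta+1)}$ rate and the integral in the statement actually come from.
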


We deduce that the squared Le Cam distance is of order at most $n^{-\frac{\beta}{\beta+1}}\log^2 n$ and so asymptotic equivalence holds for any $\beta>0$ irrespective of the size of the densities in $\Theta$. If the densities are uniformly bounded away from zero then this rate improves to $n^{-\frac{2\beta}{2\beta+1}}\log^2 n.$ The $\log^2 n$ factor is an artifact of the proof.

Poisson intensity estimation is equivalent to observing $N \sim \Poi(n)$ i.i.d. observations from the density $f$. Since $N=n + O_P (\sqrt{n})$, one can compare this to the statistical information contained in $\sqrt{n}$ additional observations. Mammen \cite{mammen1986} showed that for smooth parametric i.i.d. models, adding $r_n$ observations changes the squared Le Cam distance by $O(r_n^2/n^2)$. Heuristically, the corresponding bound for a $d$-dimensional parameter with explicit dependence on $d$ is $O(dr_n^2 /n^2)$. The rate in Theorem \ref{thm.ub_DE_PIE} can be viewed as a nonparametric analogue. Indeed, we show in Section \ref{sec.heuristics} that there is an effective parameter dimension $m_n\rightarrow \infty$ such that the rate equals
\begin{align}
	\frac{m_n r_n^2}{n^2}
	\label{eq.rate_Poissonization_general}
\end{align}
with $r_n = \sqrt{n}.$ In the parametric case ``$\beta = \infty$", we recover the rate $O(r_n^2/n^2) = O(1/n).$

\begin{thm}[Upper bound between Poisson intensity estimation and Gaussian white noise]
\label{thm.ub}
Let $\tfrac 12 <\beta \leq 1.$ If $\Theta \subset \mH^\beta(R)$ and $ \inf_{f\in \Theta} \inf_x f(x) \gg n^{-\frac{\beta}{\beta+1}}\log^8 n,$ then
\begin{align*}
	\Delta(\mE_n^D(\Theta), \mE_n^G(\Theta))^2
	+\Delta(\mE_n^P(\Theta), \mE_n^G(\Theta))^2 \lesssim 1\wedge n^{\frac{1-2\beta}{2\beta+1}} \sup_{f\in \Theta}  \int_0^1 f(x)^{-\frac{2\beta+3}{2\beta+1}} dx.
\end{align*}
\end{thm}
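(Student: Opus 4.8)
The plan is to combine Theorem \ref{thm.ub_DE_PIE} (which already reduces the density/Poisson comparison to a term of the correct order, since in the regular regime $1/f(x) \wedge n^{\beta/(\beta+1)} = 1/f(x)$ and $n^{-2\beta/(2\beta+1)}\int f^{-1/(2\beta+1)} \leq n^{(1-2\beta)/(2\beta+1)}\int f^{-(2\beta+3)/(2\beta+1)}$ whenever $f \gtrsim n^{-\beta/(\beta+1)}$, modulo the log factors absorbed by the strict domination hypothesis) with a constructive approximation of the Poisson experiment by the Gaussian white noise experiment. By the triangle inequality for $\Delta$, it suffices to bound $\Delta(\mE_n^P(\Theta),\mE_n^G(\Theta))^2$ by the stated rate. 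The first move is the localization principle: using the pointwise estimation rate \eqref{eq.ptw_conv_rate}, one constructs from a preliminary estimator $\widehat f_n$ (built on a vanishing fraction of the sample, or on an independent thinning of the Poisson process) a random neighbourhood $\Theta(\widehat f_n) \subset \mH^\beta(R')$ containing $f$ with probability $1-o(\text{rate})$, on which $f$ is pinned down to within the local rate. This reduces the problem to a local parameter space where $f$ varies only on small balls, at the cost of an additive error of the claimed order.

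The core step is then the coupling between the (localized) Poisson experiment and the Gaussian experiment. I would partition $[0,1]$ into $m_n$ bins of equal length $h_n \asymp n^{-1/(2\beta+1)}$ (the optimal bandwidth for $\beta$-smooth functions). On each bin the Poisson counts are independent $\Poi(n\int_{\text{bin}} f)$ random variables, and the target Gaussian experiment restricted to the bin is (after the variance-stabilizing square-root parametrization $dY_t = 2\sqrt{f(t)}\,dt + n^{-1/2}dW_t$) asymptotically a Gaussian shift. The heart of the argument is a quantile/KMT-type coupling — in the spirit of Brown et al. \cite{brown2004} and Low and Zhou \cite{low2007} — that couples each binned Poisson count with the corresponding Gaussian increment so that, conditionally on being in the regular regime on that bin, the total variation error on the bin is controlled by the normal approximation error to $\Poi(\mu)$ with $\mu \asymp n h_n f$, which decays polynomially in $\mu$; crucially one must track the dependence on $f$, not just on $n$, since $\mu$ can be as small as $n h_n \cdot n^{-\beta/(\beta+1)}$. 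Summing the squared Hellinger (or total-variation-squared) contributions over the $m_n \asymp h_n^{-1}$ bins, one bin contributes of order $(n h_n f(x_j))^{-c}$-type terms that, when assembled and optimized in $h_n$, produce exactly $n^{(1-2\beta)/(2\beta+1)} \int_0^1 f(x)^{-(2\beta+3)/(2\beta+1)}\,dx$; the exponent $(2\beta+3)/(2\beta+1)$ emerges from balancing the squared bias $h_n^{2\beta}$ of the within-bin approximation of $\sqrt f$ against the stochastic coupling error $\propto 1/(n h_n f)$. One must also handle the mean part separately: replacing $2\sqrt{\bar f_{\text{bin}}}$ by $2\sqrt{f(t)}$ on each bin incurs an $L^2$-error of order $h_n^{2\beta} \sup |f^{(\beta)}|^2 / f$ (here the flatness seminorm $|f|_{\mH^\beta}$ in the definition of $\mH^\beta(R)$ is what keeps this term of the right order uniformly, even near small $f$), and this is again of the claimed order in the regular regime.

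The main obstacle I anticipate is the second step: obtaining the \emph{sharp $f$-dependence} in the coupling error for a single bin. The classical quantile couplings give the right $n$-dependence for $f$ bounded away from zero, but pushing the Poisson-to-Gaussian total variation bound to scale correctly as the mean $\mu = n h_n f(x_j)$ gets small (while staying in the regular regime $f \gtrsim n^{-\beta/(\beta+1)}\log^8 n$, so $\mu \gtrsim h_n n^{1/(\beta+1)}\log^8 n \to \infty$) requires a refined Edgeworth-type expansion of the Poisson distribution and careful control of the remainder uniformly in $\mu$, together with a matching control of how the within-bin variation of $f$ (of relative size $h_n^\beta |f|_{\mH^\beta}$) interacts with this expansion. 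A secondary technical nuisance is combining the localization randomness with the coupling: the neighbourhood $\Theta(\widehat f_n)$ is data-dependent, so one must either work on a high-probability event and absorb the complement into the rate, or use a sample-splitting device so that the coupling construction is conditionally independent of $\widehat f_n$; I would use the latter. Once the per-bin bound has the correct dependence on $\mu$, assembling it into \eqref{eq.rate_Poissonization_general}-type sum and optimizing the bandwidth is routine, and the $1\wedge$ truncation in the statement is automatic because total variation (hence $\Delta$) is always at most $1$.
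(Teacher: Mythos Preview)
Your reduction via Theorem~\ref{thm.ub_DE_PIE} and the localization step are both on target and match the paper. The genuine gap is the binning scheme. You propose \emph{equal-length} bins of width $h_n\asymp n^{-1/(2\beta+1)}$ and then claim that ``balancing the squared bias $h_n^{2\beta}$ against the stochastic coupling error $\propto 1/(nh_nf)$'' produces the exponent $-(2\beta+3)/(2\beta+1)$ on $f$. That balance is a \emph{local} one: solving $h^{2\beta}\asymp 1/(nhf)$ gives $h=h(x)\asymp (f(x)/n)^{1/(2\beta+1)}$, which depends on $f(x)$. With a single global $h_n$, summing the per-bin coupling errors $1/\mu_j\asymp 1/(nh_nf(x_j))$ over $h_n^{-1}$ bins yields
\[
\sum_j \frac{1}{nh_nf(x_j)}\asymp \frac{1}{nh_n^2}\int_0^1 f(x)^{-1}\,dx
= n^{\frac{1-2\beta}{2\beta+1}}\int_0^1 f(x)^{-1}\,dx,
\]
and the bias-type terms (e.g.\ the analogue of $n\sum_j h_n^{4\beta+1}/f(x_j)^3$ in the paper's Proposition~\ref{prop.step_fct_in_variance}) contribute $n^{(1-2\beta)/(2\beta+1)}\int f^{-3}$. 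Neither integrand is $f^{-(2\beta+3)/(2\beta+1)}$; for small $f$ the second is strictly larger, so you would only prove a weaker bound than the one stated. The paper instead uses the $f_0$-adaptive partition $\Delta_i=(f_0(x_{i-1})/n)^{1/(2\beta+1)}$ of Section~\ref{sec.Haar_wavelets}, for which $1/(n\Delta_i F_i)\asymp n^{(1-2\beta)/(2\beta+1)}\Delta_i f_0(x_{i-1})^{-(2\beta+3)/(2\beta+1)}$ so that summing over $i$ (that is, integrating in $x$) gives exactly $n^{(1-2\beta)/(2\beta+1)}\int f_0^{-(2\beta+3)/(2\beta+1)}$; see \eqref{eq.heuristic_rate_computation} and the discussion in Section~\ref{sec.heuristics}.

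Two smaller points. First, coupling only the bin totals is not enough: you still lose the within-bin information, and the resulting approximation error diverges. The paper handles this by building a Haar wavelet basis \emph{on each adaptive interval} $[x_{i-1},x_i]$, coupling the scaling coefficients via Lemma~\ref{lem.Hell_bd_scaling_coeffs} and the wavelet coefficients via the binomial--Gaussian quantile transform \eqref{eq.Hell_bd_quantil_cpl}, working conditionally level by level (Proposition~\ref{prop.seq_space_Le_Cam_bd}). Your proposal gestures at a KMT-type coupling but does not say how to retain the high-frequency information. Second, for $\tfrac12<\beta\le 1$ one has $\mH^\beta(R)=\mC^\beta(R)$ and $|f|_{\mH^\beta}=0$, so the flatness seminorm plays no role here; what keeps the bias terms uniform is precisely the adaptive bin width, not the shape constraint.
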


The statement assumes smoothness $\beta>1/2$ since for $\beta \leq 1/2,$ asymptotic equivalence fails even if all densities are uniformly bounded away from zero \cite{brown1998}. The main restriction of this result is the assumption that $\beta \leq 1.$ As in \cite{brown2004}, our proof relies on a Haar wavelet decomposition and heavily exploits the fact that the Haar basis functions are locally constant and have disjoint support at a fixed resolution level, see Section \ref{sec.bds_for_couplings}. For tight upper bounds in the case $\beta>1,$  expansions with respect to more regular wavelets are required, but without the specific structure of the Haar wavelet the coupling of the empirical wavelet coefficients in our construction becomes infeasible. Since in dimension $d>1$ asymptotic equivalence is expected to hold for $\beta > d/2,$ the multivariate extension of our result requires different techniques. A heuristic discussion of the rate in Theorem \ref{thm.ub} is deferred to Section \ref{sec.heuristics}, since it relies on technical devices introduced in Section \ref{sec.PIE_GWN}.

The Le Cam distance $\Delta$ is a pseudo-metric on the class of statistical experiments with the same parameter space, see Appendix \ref{sec.LeCam}. To prove Theorem \ref{thm.ub}, it is therefore enough to establish the rate for $\Delta(\mE_n^P(\Theta), \mE_n^G(\Theta))^2$ since by Theorem \ref{thm.ub_DE_PIE},
\begin{align*}
	\Delta(\mE_n^D(\Theta), \mE_n^G(\Theta))^2
	&\leq 2\Delta(\mE_n^D(\Theta), \mE_n^P(\Theta))^2+ 2\Delta(\mE_n^P(\Theta), \mE_n^G(\Theta))^2  \\
	& = 2\Delta(\mE_n^P(\Theta), \mE_n^G(\Theta))^2 + o\Big( 1\wedge n^{\frac{1-2\beta}{2\beta+1}} \sup_{f\in \Theta}  \int_0^1 f(x)^{-\frac{2\beta+3}{2\beta+1}} dx \Big).
\end{align*}

For the lower bounds on the Le Cam deficiencies, we must take the supremum over densities which are not isolated in the parameter space and thus need to introduce a suitable notion of interior parameter space.  As a neighbourhood of a density $f^*$, consider the band$$\mathcal{U}(f^*):= \{ f\in \mH^\beta(R): \tfrac 12 f^* \leq f \leq 2f^*\}.$$ Given a parameter space $\Theta\subset \mH^\beta(R),$ let $R'<R$ be fixed. Define the interior parameter space $\Theta_0$ as the space of all $f\in \Theta \cap \mH^\beta(R')$ such that $\mathcal{U}(f) \subset \Theta.$ The dependence of $\Theta_0$ on $R'$ is omitted. For example, for an arbitrary sequence $(\delta_n)$ consider the parameter space $\Theta = \{ f\in \mH^\beta(R): f\geq \delta_n\}.$ The corresponding interior parameter space is then $\Theta_0 = \{ f\in \mH^\beta(R'): f\geq 2\delta_n\}.$

For the lower bounds, we distinguish between the regular and irregular regimes, that is whether $\inf_{f_0 \in \Theta_0} \inf_{x_0} f(x_0)$ is larger or smaller than $n^{-\beta/(\beta+1)}.$ In the irregular case, asymptotic equivalence always fails under very weak assumptions on the parameter space, see Theorem 1 of \cite{ray2018lower}. The level $n^{-\beta/(\beta+1)}$ is a fundamental threshold separating the ``small" and ``large" density regimes from a statistical perspective, as can be seen by the qualitatively different minimax estimation rates in \eqref{eq.ptw_conv_rate}. One way to view this is through the bias-variance tradeoff for estimation in the Gaussian model \eqref{eq.GWN_in_intro}. In the regular regime, one obtains the classical nonparametric bias-variance tradeoff, while in the irregular regime, the variance of an optimal estimator is strictly larger than its bias. Another perspective is the information geometry of the problem, measured through the Hellinger distance, which behaves differently in these two regimes. In the ``large regime", it behaves like the $L^2$-distance, thereby leading to the usual classical nonparametric behaviour, including the rate. As a density approaches zero however, the Hellinger distance behaves more like the $L^1$-distance, leading to the same rates occurring in irregular models, such as in nonparametric regression with one-sided errors \cite{jirak2014}. For further discussion see \cite{patschkowski2016,ray2016IP}.

\begin{thm}[Lower bound between Poisson intensity estimation and Gaussian white noise]
\label{thm.lb}
If $\Theta \subset \mH^\beta(R)$ for $\beta>0$ and $\inf_{f_0 \in \Theta_0} \inf_{x_0} f_0(x_0) \gg n^{-\beta/(\beta+1)},$ then there exists an integer $n_0$ such that for all $n\geq n_0,$
\begin{align*}
	\delta(\mE_n^P(\Theta), \mE_n^G(\Theta)) ^2 \wedge \delta( \mE_n^G(\Theta), \mE_n^P(\Theta)) ^2
	&\gtrsim
	 1 \wedge   n^{\frac{1-2\beta}{2\beta+1}} \sup_{f\in \Theta_0} \int_0^1 f(x)^{-\frac{2\beta+3}{2\beta+1}} dx.
\end{align*}
\end{thm}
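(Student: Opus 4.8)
The plan is to lower bound the Le Cam deficiency $\delta(\mE_n^P(\Theta), \mE_n^G(\Theta))$ (and its reverse) by exhibiting a concrete decision problem on a finite subfamily of $\Theta_0$ on which the two experiments behave measurably differently. By the localization principle it suffices to work inside a band $\mathcal{U}(f^*)$ for an arbitrary $f^* \in \Theta_0$, and to choose the hardest such $f^*$, so that the final bound carries the supremum $\sup_{f\in\Theta_0}\int_0^1 f(x)^{-(2\beta+3)/(2\beta+1)}\,dx$. Write $L_n := 1\wedge n^{(1-2\beta)/(2\beta+1)}\sup_{f\in\Theta_0}\int_0^1 f^{-(2\beta+3)/(2\beta+1)}$; if $L_n$ stays bounded away from $0$ there is nothing to prove, so we may assume $L_n\to 0$, i.e. $n^{(1-2\beta)/(2\beta+1)}\int f_*^{-(2\beta+3)/(2\beta+1)}\to 0$ for the near-optimal $f_*$.

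The core construction is a two-point–per–bin perturbation at the estimation-optimal resolution. Partition $[0,1]$ into $m_n$ intervals and on each place a smooth bump $\psi_{j}$ of height calibrated to the local size of $f^*$; the admissible perturbation amplitude is governed by the pointwise rate \eqref{eq.ptw_conv_rate} and the Hölder constraint, which in the regular regime means perturbations of order $(f^*(x)/n)^{\beta/(2\beta+1)}$ around $2\sqrt{f^*}$ in the Gaussian scale. Since we are told non-uniform priors are needed, I would equip the resulting $2^{m_n}$-point parameter set (signs $\varepsilon\in\{\pm1\}^{m_n}$) with a product prior whose per-coordinate weights depend on $f^*$ on that bin — heavier mass where $f^*$ is small, since those bins are the ones that distinguish the experiments. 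The decision problem is to estimate a bounded linear functional of $\varepsilon$, or equivalently a bounded loss version of recovering the sign pattern, weighted so that the total loss is $\le 1$ and the Bayes-risk gap concentrates on the small-density bins. The key computation is then to compare the Bayes risk in the Gaussian experiment — where each coordinate is a clean Gaussian-shift subproblem with known variance $n^{-1}$ and the Bayes risk is explicitly a mixture-of-Gaussians posterior computation — with the Bayes risk in the Poisson experiment, where on a bin with intensity $nf^*$ roughly of order $nf^*(x)\cdot m_n^{-1}$ the observation is a Poisson count whose square-root transform differs from the Gaussian-shift model by an amount controlled by the third cumulant, i.e. of relative order $1/\sqrt{\text{(expected count)}} \asymp (nf^*(x)/m_n)^{-1/2}$. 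Summing the squared per-bin discrepancies over the $m_n$ bins, weighted by the prior, reproduces exactly $n^{(1-2\beta)/(2\beta+1)}\int f^{*-(2\beta+3)/(2\beta+1)}$ up to constants once $m_n$ is chosen as the optimal resolution $m_n \asymp (n/\cdot)^{1/(2\beta+1)}$ adapted to $f^*$ (with the flatness seminorm $|f|_{\mH^\beta}$ ensuring the perturbations stay inside $\mH^\beta(R)$, using $R'<R$). To get a genuine deficiency lower bound rather than merely nonequivalence, I would invoke the standard reduction: $\delta(\mE,\mF)\ge \sup$ over decision problems of (Bayes risk in $\mF$) $-$ (Bayes risk in $\mE$), so it suffices that the Poisson Bayes risk exceeds the Gaussian one by $\gtrsim L_n^{1/2}$ on this problem (and symmetrically for the reverse deficiency, swapping roles and using that the square-root of a Poisson is "less informative than Gaussian" in a way that also costs a per-bin $\asymp$ same order — this symmetry is why both deficiencies, not just one, are bounded below).

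The main obstacle is the Bayes-risk comparison itself: one needs a two-sided, non-asymptotic control of the total variation (or Hellinger) distance between the product-of-Poissons law and the product-of-Gaussian-shifts law on the chosen $2^{m_n}$ parameters, sharp enough that the leading third-cumulant term survives and is not swamped by the error. Concretely I expect to need, per bin, an Edgeworth-type expansion of the Poisson (or its root transform) with an explicit remainder, then a tensorization argument turning $m_n$ per-bin near-equalities into a global statement — and crucially the prior weights must be tuned so that the $\chi^2$-type cross terms in the tensorization remain summable while the discrepancy remains of order $\sqrt{L_n}$. This is delicate precisely in the bins where $f^*$ is smallest (largest prior weight, smallest expected count), which is exactly the regime near the threshold $n^{-\beta/(\beta+1)}$; the hypothesis $\inf f_0 \gg n^{-\beta/(\beta+1)}$ is what keeps the expected per-bin count $\to\infty$ so the Edgeworth remainder is lower order. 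A secondary technical point is verifying the perturbed densities genuinely lie in $\mC^\beta(R)$ and in the flatness class $\mH^\beta(R)$; this is where the gap $R'<R$ in the definition of $\Theta_0$ and the band $\mathcal{U}(f^*)$ are used, and it is routine once the bump profile is fixed with amplitude matching \eqref{eq.ptw_conv_rate}.
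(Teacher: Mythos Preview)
Your proposal identifies the right architecture --- a $2^{m_n}$-point subfamily built from per-bin sign perturbations at the estimation-optimal scale, a product prior, and the third-cumulant/Edgeworth discrepancy as the source of the risk gap --- and this is indeed the paper's approach. Two specifics, however, differ in ways that matter.

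First, the prior. You write that the per-coordinate weights should be ``heavier mass where $f^*$ is small''. The paper does \emph{not} do this: the tilt is the same constant $\alpha$ at every bin, $\pi_\pm(\theta_j=1)=e^{\pm 2\alpha}/(1+e^{\pm 2\alpha})$, and the $\pm$ sign is what handles the two deficiency directions (the third-cumulant term enters with opposite sign under $\pi_+$ and $\pi_-$, so one prior makes the Poisson Bayes risk exceed the Gaussian one and the other prior reverses this). The small-density bins are instead given more weight through the \emph{loss}, not the prior: the paper uses an indicator loss $\ell_A(\theta,\theta')=\mathbf{1}(\rho(\theta,\theta')\geq A)$ with $\rho(\theta,\theta')=\sum_j \rho_j\mathbf{1}(\theta_j\neq\theta_j')$ and $\rho_j=(nF_j)^{-1/2}$. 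Your suggestion of a ``bounded linear functional of $\varepsilon$'' is in the right spirit but the indicator loss is what the paper actually makes work.

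Second, and more substantively, your global step (``tensorization argument turning $m_n$ per-bin near-equalities into a global statement'', ``two-sided, non-asymptotic control of the total variation\ldots between the product-of-Poissons law and the product-of-Gaussian-shifts law'') is not how the paper proceeds, and a direct TV comparison between the two product laws is harder than what is actually needed. The paper exploits that the likelihood ratio in \emph{both} experiments factorizes over bins, so that the Bayes estimator is componentwise and the Bayes risk reduces to $\P(\sum_j \rho_j Z_j(p_j)>A)$ for independent Bernoullis with success probabilities $p_j$ (Poisson) or $q_j$ (Gaussian). The Edgeworth expansion (Proposition~\ref{prop.pjqj_expansion}) gives $p_j-q_j\gtrsim \alpha\rho_j$, and then a short change-of-measure lemma (Lemma~\ref{lem.Bern_diff}) plus Berry--Esseen converts this into a Bayes-risk gap of order $(\sum_j\rho_j^2)^{1/2}$. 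This bypasses any product-TV computation entirely, and is the main technical idea your sketch does not yet contain.
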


For sufficiently large $n,$ the lower bound matches the rate obtained in Theorem \ref{thm.ub}, provided that the supremum over $f\in \Theta$ is of the same order as the supremum over $f\in \Theta_0.$ As in \cite{brown1998}, the proof is based on the construction of a decision problem and comparison of the Bayes risk in the two experiments, which yields a lower bound on the Le Cam deficiency. Since we are interested in the rates of the Le Cam deficiencies, the exact Bayes risks must be approximated up to second order. In fact, we explicitly construct a separate decision problem for every parameter $f\in \Theta_0,$ which quantifies how well we can separate $f$ from elements in the local neighbourhood $\mathcal{U}(f)$.

\begin{thm}[The Le Cam deficiencies between density estimation and Gaussian white noise]
\label{thm.lb2}
Let $\tfrac 12 <\beta \leq 1.$ If $\Theta \subset \mH^\beta(R)$, $ \inf_{f\in \Theta} \inf_x f(x) \gg n^{-\frac{\beta}{\beta+1}}\log^8 n$ and
\begin{align}
	1\wedge n^{\frac{1-2\beta}{2\beta+1}} \sup_{f\in \Theta_0 } \int_0^1 f(x)^{-\frac{2\beta+3}{2\beta+1}} dx
	\asymp 
	1\wedge n^{\frac{1-2\beta}{2\beta+1}}\sup_{f\in \Theta}\int_0^1 f(x)^{-\frac{2\beta+3}{2\beta+1}} dx,
	\label{eq.1st_assump_thmlb2}
\end{align}
then there exists an integer $n_0$ such that for all $n\geq n_0,$
\begin{align*}
	\delta(\mE_n^D(\Theta), \mE_n^G(\Theta)) ^2 \asymp \delta( \mE_n^G(\Theta), \mE_n^D(\Theta)) ^2
	\asymp 1\wedge n^{\frac{1-2\beta}{2\beta+1}} \sup_{f\in \Theta} \int_0^1 f(x)^{-\frac{2\beta+3}{2\beta+1}} dx.
\end{align*}
\end{thm}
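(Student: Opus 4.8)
The plan is to obtain this statement as a corollary of Theorems \ref{thm.ub}, \ref{thm.lb} and \ref{thm.ub_DE_PIE}, using only elementary properties of the Le Cam deficiency (Appendix \ref{sec.LeCam}). Write $r_n := 1\wedge n^{(1-2\beta)/(2\beta+1)}\sup_{f\in\Theta}\int_0^1 f(x)^{-(2\beta+3)/(2\beta+1)}\,dx$ for the target rate. The upper bound I would read off directly: since $\Delta$ dominates both one-sided deficiencies and the hypothesis $\inf_{f\in\Theta}\inf_x f(x)\gg n^{-\beta/(\beta+1)}\log^8 n$ is precisely what Theorem \ref{thm.ub} requires, we get $\delta(\mE_n^D(\Theta),\mE_n^G(\Theta))^2 \vee \delta(\mE_n^G(\Theta),\mE_n^D(\Theta))^2 \le \Delta(\mE_n^D(\Theta),\mE_n^G(\Theta))^2 \lesssim r_n$.

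For the lower bound, the first thing I would establish is that the Poissonization deficiency is negligible at the scale $\sqrt{r_n}$. Under the regular-regime hypothesis one has $1/f(x)\ll n^{\beta/(\beta+1)}$ uniformly in $x$ and in $f\in\Theta$, so the minimum appearing in Theorem \ref{thm.ub_DE_PIE} equals $1/f(x)$ and $\Delta(\mE_n^D(\Theta),\mE_n^P(\Theta))^2 \lesssim n^{-2\beta/(2\beta+1)}\log^2 n\,\sup_{f\in\Theta}\int_0^1 f(x)^{-1/(2\beta+1)}\,dx$. Since $f\le R$ for every $f\in\mH^\beta(R)$, pointwise $f^{-1/(2\beta+1)}\le R^{(2\beta+2)/(2\beta+1)}f^{-(2\beta+3)/(2\beta+1)}$, so the previous bound is $\lesssim n^{-2\beta/(2\beta+1)}\log^2 n\,\sup_{f\in\Theta}\int_0^1 f(x)^{-(2\beta+3)/(2\beta+1)}\,dx$. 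Using $n^{-2\beta/(2\beta+1)} = n^{(1-2\beta)/(2\beta+1)}\,n^{-1/(2\beta+1)}$ and the definition of $r_n$, this is $o(r_n)$, hence $\Delta(\mE_n^D(\Theta),\mE_n^P(\Theta)) = o(\sqrt{r_n})$.

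Then I would combine these via the triangle inequality for the Le Cam deficiency. From $\delta(\mE_n^P(\Theta),\mE_n^G(\Theta)) \le \delta(\mE_n^P(\Theta),\mE_n^D(\Theta)) + \delta(\mE_n^D(\Theta),\mE_n^G(\Theta)) \le \Delta(\mE_n^D(\Theta),\mE_n^P(\Theta)) + \delta(\mE_n^D(\Theta),\mE_n^G(\Theta))$ I get $\delta(\mE_n^D(\Theta),\mE_n^G(\Theta)) \ge \delta(\mE_n^P(\Theta),\mE_n^G(\Theta)) - \Delta(\mE_n^D(\Theta),\mE_n^P(\Theta))$. Theorem \ref{thm.lb} applies because $\Theta_0\subset\Theta$ inherits the regular regime, and assumption \eqref{eq.1st_assump_thmlb2} lets me replace $\Theta_0$ by $\Theta$, so $\delta(\mE_n^P(\Theta),\mE_n^G(\Theta))^2 \gtrsim r_n$; together with the previous paragraph this yields $\delta(\mE_n^D(\Theta),\mE_n^G(\Theta)) \ge \sqrt{c\,r_n} - o(\sqrt{r_n}) \gtrsim \sqrt{r_n}$ for $n$ large, i.e.\ $\delta(\mE_n^D(\Theta),\mE_n^G(\Theta))^2\gtrsim r_n$. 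The bound for $\delta(\mE_n^G(\Theta),\mE_n^D(\Theta))$ is entirely symmetric, using the other half of the minimum in Theorem \ref{thm.lb} and $\delta(\mE_n^G(\Theta),\mE_n^P(\Theta)) \le \Delta(\mE_n^D(\Theta),\mE_n^P(\Theta)) + \delta(\mE_n^G(\Theta),\mE_n^D(\Theta))$. I do not expect a genuine obstacle: the substantive content sits entirely in the three cited theorems, and the only delicate point is the negligibility of the Poissonization deficiency, which rests on both the regular-regime hypothesis (to discard the truncation in Theorem \ref{thm.ub_DE_PIE}) and the uniform bound $f\le R$ (to compare $\int f^{-1/(2\beta+1)}$ with $\int f^{-(2\beta+3)/(2\beta+1)}$).
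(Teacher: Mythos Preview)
Your proposal is correct and follows essentially the same route as the paper: the upper bound is read off from Theorem~\ref{thm.ub}, the Poissonization distance is shown to be negligible via Theorem~\ref{thm.ub_DE_PIE} using $f\le R$ and the regular-regime assumption, and the lower bound is then transferred from Theorem~\ref{thm.lb} to the density experiment by the triangle inequality for $\delta$ together with assumption~\eqref{eq.1st_assump_thmlb2}. One small technical point (the paper glosses over it too): your deduction ``this is $o(r_n)$'' from the bound $n^{-1/(2\beta+1)}\log^2 n \cdot n^{(1-2\beta)/(2\beta+1)}\sup_f\int f^{-(2\beta+3)/(2\beta+1)}$ is only immediate when the latter supremum term is $\le 1$; when $r_n=1$ you should instead invoke the unconditional bound $\Delta(\mE_n^D,\mE_n^P)^2 \lesssim n^{-\beta/(\beta+1)}\log^2 n = o(1)$ from~\eqref{eq.Poissonization_to_show}.
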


The remaining sections are structured as follows. In Sections \ref{sec.DE_PIE} and \ref{sec.PIE_GWN}, we derive upper bounds for the Le Cam distance and prove Theorems \ref{thm.ub_DE_PIE} and \ref{thm.ub}. Some heuristics behind the rates for Poissonization and Gaussian approximation are presented in Section \ref{sec.heuristics}. Lower bounds can be found in Section \ref{sec.lbs_regular}, where we provide the proofs of Theorems \ref{thm.lb} and \ref{thm.lb2}. Technical results are deferred to the appendix, which also contains a brief summary of the Le Cam deficiency in Appendix \ref{sec.LeCam}.

\section{Asymptotic equivalence between density estimation and Poisson intensity estimation}
\label{sec.DE_PIE}

We now prove Theorem \ref{thm.ub_DE_PIE}, which states that if $\Theta \subset \mH^\beta(R)$ for some $\beta>0,$ then
\begin{align}
	\Delta(\mE_n^D(\Theta), \mE_n^P(\Theta))^2 
	&\lesssim n^{-\frac{2\beta}{2\beta+1}}\log^2 n \  \sup_{f\in \Theta}  \int_0^1 \Big( \frac 1{f(x)} \wedge n^{\frac{\beta}{\beta+1}}\Big)^{\frac{1}{2\beta+1}}  dx \notag \\
	&\leq n^{-\frac{\beta}{\beta+1}}\log^2 n \rightarrow 0.
	\label{eq.Poissonization_to_show}
\end{align}
The two experiments differ in the number of i.i.d. copies of $X\sim f$ which are observed. In the density estimation model, we observe $n$ copies and in the Poisson intensity model $N$ copies, where $N$ is drawn from a Poisson distribution with intensity $n.$ One strategy to bound the Le Cam distance is to `synchronize' the models in the sense that (pseudo)-observations are generated in the model with fewer observations. Proposition 4.1 in \cite{nussbaum1996} and \cite{LeCamY2000}, p.73 establish bounds based on this idea (see also the related earlier work of Le Cam \cite{LeCam1974} and Mammen \cite{mammen1986}). Asymptotic equivalence of the density and Poisson experiments then holds for H\"older balls whenever the H\"older index is larger than $1/2.$ A slightly different approach was employed by Low and Zhou \cite{low2007}, which gives asymptotic equivalence for all H\"older balls with positive smoothness index. Below, we show that combining this technique with the faster convergence rates for estimation of small signals yields the rate \eqref{eq.Poissonization_to_show}.

A key ingredient in the proof of Theorem \ref{thm.ub_DE_PIE} is the localization principle that we recall in Appendix \ref{sec.LeCam}. More precisely, we apply Lemma \ref{lem.localization_bd} to the local parameter space
\begin{align*}
	\Theta_1^\beta(f_0) := \Big\{ f\in \Theta  :  \big| f(x)-f_0(x) \big| \leq C \Big(\frac{\log n}{n}\Big)^{\frac{\beta}{\beta+1}}+ C \Big(\frac{\log n}{n}f_0(x)\Big)^{\frac{\beta}{2\beta+1}}, \ \forall x\in[0,1]\Big\}
\end{align*}
with $C$ some sufficiently large constant. The constants $R$ and $C$ are of no importance and therefore omitted in the notation. The right-hand side is the upper bound on the pointwise convergence rate given in \eqref{eq.ptw_conv_rate}, up to logarithmic factors. The next result establishes the rate of convergence for the Le Cam distance on the local parameter space $\Theta_1^\beta(f_0).$ The proof is given in Appendix \ref{eq.proof_sec_DE_PIE}.

\begin{thm}
\label{thm.Poissonization}
For any $\beta>0,$ 
\begin{align*}
	\Delta \big(\mE_n^D(\Theta_1^\beta(f_0)), \mE_n^P (\Theta_1^\beta(f_0)) \big)^2 \lesssim n^{-\frac{2\beta}{2\beta+1}}\log^2 n \ \int_0^1 \Big( \frac 1{f_0(x)} \wedge n^{\frac{\beta}{\beta+1}}\Big)^{\frac{1}{2\beta+1}}  dx.
\end{align*}
\end{thm}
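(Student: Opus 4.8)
The plan is to follow the synchronization strategy attributed to Le Cam, Nussbaum and Low--Zhou: since the Poisson experiment observes $N \sim \Poi(n)$ i.i.d.\ draws from $f$ while the density experiment observes exactly $n$ such draws, we build randomizations in both directions by simulating the missing observations. The cleanest route is to condition on $N$: if $N \le n$ we already have the $N$ Poisson observations available in the density experiment and must discard/augment; if $N > n$ we must generate extra pseudo-observations. In either case, the key quantity controlling the deficiency is $\E|N-n|$, which is of order $\sqrt n$, together with the cost of fabricating one pseudo-observation from a \emph{known} approximation $\widehat f$ to $f$ rather than from $f$ itself. Concretely, I would use Lemma~4.1-type bounds (as in Nussbaum \cite{nussbaum1996}, Le Cam--Yang \cite{LeCamY2000}, and Low--Zhou \cite{low2007}) to reduce the squared Le Cam distance to a sum of two contributions: a ``Poissonization'' term coming from the random sample size, bounded by something like $\E[(N\wedge n)^{-1}(N-n)_+^2]/n$ or more simply by controlling the Hellinger affinity between $\mathrm{Bin}$/$\Poi$ mixtures, and an ``approximation'' term measuring how well a preliminary estimator $\widehat f$ constructed from part of the sample approximates $f$ in a suitable (Hellinger-type) sense.

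The heart of the argument, and where the local parameter space $\Theta_1^\beta(f_0)$ enters, is the second term. The standard approach splits the sample into two halves: the first half builds an estimator $\widehat f_n$, and the second half (plus fabricated observations drawn from $\widehat f_n$) is used to reconstruct the target experiment. The cost of one fabricated observation is governed by a distance like $H^2(f,\widehat f_n)$ or $\|f - \widehat f_n\|_1$, and one needs the number of fabricated observations (order $\sqrt n$ in expectation, but one should be careful on the event $\{N > n\}$ where it can be larger) times this per-observation cost to be small. This is precisely where the pointwise rate \eqref{eq.ptw_conv_rate} pays off: on $\Theta_1^\beta(f_0)$, by definition every $f$ satisfies $|f(x)-f_0(x)| \lesssim (\log n/n)^{\beta/(\beta+1)} + ((\log n/n) f_0(x))^{\beta/(2\beta+1)}$, so taking $\widehat f_n$ to be (a suitably truncated version of) $f_0$ — or any estimator achieving this rate — gives a deterministic bound on $|f(x) - \widehat f_n(x)|$ uniformly over the local space. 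Translating this into a per-observation Hellinger cost and integrating, $H^2(f,\widehat f_n) \lesssim \int_0^1 \frac{(f(x)-\widehat f_n(x))^2}{f(x) \vee \widehat f_n(x)}\,dx$, one obtains a bound of the form $\int_0^1 \big(\tfrac{1}{f_0(x)} \wedge n^{\beta/(\beta+1)}\big)^{1/(2\beta+1)} dx$ after inserting the local-space estimate and simplifying (the $\wedge\, n^{\beta/(\beta+1)}$ truncation corresponds to the irregular regime where $f_0(x) \lesssim n^{-\beta/(\beta+1)}$ and the estimator is simply floored). Multiplying by $(\sqrt n \cdot n^{-1})^2 \cdot n$-type factors coming from the sample-size fluctuation and the per-observation normalization, together with the $\log n$ from the rate, yields the claimed $n^{-2\beta/(2\beta+1)}\log^2 n$ prefactor.

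In detail, the steps I would carry out are: (i) invoke the localization lemma reduction so that only $\Theta_1^\beta(f_0)$ need be treated; (ii) state the one-sided synchronization bound expressing $\delta(\mE_n^D,\mE_n^P)$ (and the reverse) in terms of $\E|N-n|$ and a per-observation fabrication error, citing the relevant proposition of \cite{nussbaum1996} / \cite{LeCamY2000} or adapting \cite{low2007}; (iii) choose $\widehat f_n$ adapted to the local space (essentially $f_0$ with a floor at $n^{-\beta/(\beta+1)}$, renormalized to a density, which stays in a Hölder ball and keeps the per-observation cost controlled), and bound $H^2(f,\widehat f_n)$ uniformly over $f \in \Theta_1^\beta(f_0)$ by $\lesssim \log^{?} n \cdot n^{-\cdots}\int_0^1(\tfrac1{f_0(x)}\wedge n^{\beta/(\beta+1)})^{1/(2\beta+1)}dx$ using the pointwise rate and splitting $[0,1]$ into the regular set $\{f_0 \gtrsim n^{-\beta/(\beta+1)}\}$ and the irregular set; (iv) control the random-sample-size term, handling the event $\{N > n\}$ (where extra observations are needed) via standard Poisson tail bounds so its contribution is lower order; (v) combine to get the squared-distance bound. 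The main obstacle I anticipate is \textbf{step (iii)} combined with the $\{N>n\}$ analysis in step (iv): one must fabricate a random (Poisson-fluctuating) number of observations, and on the tail event $\{N \gg n\}$ the number of fabricated draws is large, so the per-observation Hellinger cost must be multiplied by a random count and one needs either a careful truncation of $N$ or a direct Hellinger-affinity computation between the full mixture experiments (rather than a crude union bound over observations) to avoid losing powers of $n$. Getting the density-dependent integral $\int_0^1(\tfrac1{f_0}\wedge n^{\beta/(\beta+1)})^{1/(2\beta+1)}$ with the right exponent — as opposed to a cruder $\int f_0^{-1}$ or $\sup f_0^{-1}$ — requires carefully exploiting that the pointwise bias/variance tradeoff in \eqref{eq.ptw_conv_rate} is itself $f_0(x)$-dependent, which is the technical novelty over the classical uniformly-bounded-below argument.
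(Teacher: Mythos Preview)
Your high-level strategy matches the paper's, and you correctly identify the target integral and the split into regular/irregular regimes. But the execution you sketch has a real gap at exactly the point you flag as ``the main obstacle.'' With $N\sim\Poi(n)$ one has $\P(N>n)\approx 1/2$, so ``standard Poisson tail bounds'' do not make the augmentation event lower order; and if you fabricate $(N-n)_+$ i.i.d.\ draws from a density $\widehat f_n$ and bound the error per observation, the squared deficiency picks up a factor $\E(N-n)_+\asymp\sqrt n$ in front of $H^2(f,\widehat f_n)$, which is too large by $\sqrt n$ compared to the claimed rate. Also, step~(i) is redundant (the theorem is already stated on the local space, so $f_0$ is known and no preliminary estimator or further localization is needed).

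The paper resolves both issues cleanly. First it shifts the Poisson mean: take $N\sim\Poi(n-\kappa_n)$ with $\kappa_n=\sqrt{2n\log n}$, so $\P(N>n)=O(1/n)$ is genuinely negligible and the passage $\mE_n^D\to\mE_{n-\kappa_n}^P$ costs $O(1/n)$. Second, to go from $\mE_{n-\kappa_n}^P$ to $\mE_n^P$, it does \emph{not} fabricate i.i.d.\ observations from a density but instead superposes an independent Poisson process with intensity $\kappa_n\widetilde f_0$, where $\widetilde f_0=f_0\mathbf 1(f_0\ge cL_n^{\beta/(\beta+1)})$ is the hard-thresholded (not floored, not renormalized) $f_0$. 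The squared Hellinger between the resulting Poisson process with intensity $(n-\kappa_n)f+\kappa_n\widetilde f_0$ and the target $nf$ is
\[
\int\bigl(\sqrt{(n-\kappa_n)f+\kappa_n\widetilde f_0}-\sqrt{nf}\,\bigr)^2
\;\le\;\frac{\kappa_n^2}{n}\int\frac{(\widetilde f_0-f)^2}{f},
\]
because the denominator $(\sqrt{\lambda_1}+\sqrt{\lambda_2})^2\gtrsim nf$ absorbs the large common part. The prefactor is therefore $\kappa_n^2/n=2\log n$, not $\sqrt n$, and this is precisely what produces $n^{-2\beta/(2\beta+1)}\log^2 n$. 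Splitting the remaining integral over $\{f_0\gtrless cL_n^{\beta/(\beta+1)}\}$ and using the pointwise bound from the definition of $\Theta_1^\beta(f_0)$ then yields $\int\bigl(f_0(x)^{-1}\wedge n^{\beta/(\beta+1)}\bigr)^{1/(2\beta+1)}dx$ directly; the $\wedge$ arises because on the irregular set $\widetilde f_0=0$ and $(\widetilde f_0-f)^2/f=f\lesssim L_n^{\beta/(\beta+1)}$.
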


\begin{thm}
\label{thm.globalization_Poissonization}
Let $\beta>0$ and $\Theta \subset \mH^\beta(R).$ In the nonparametric density estimation experiment $\mE_n^D(\Theta),$ there exists an estimator $\widehat f_n$ taking values in a finite subset of $\Theta$ which satisfies
\begin{align*}
	\inf_{f_0 \in \Theta} P_{f_0}^n\Big( f_0 \in \Theta_1^\beta \big(\widehat f_n \big)\Big) =1 -O(n^{-1}),
\end{align*}
provided the constant $C$ in the definition of $\Theta_1^\beta(f_0)$ is chosen large enough. Moreover, there exists an estimator in $\mE_n^P(\Theta)$ with the same properties.
\end{thm}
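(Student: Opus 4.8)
The plan is a two-stage construction. First I would build an \emph{unconstrained} pilot estimator $\widehat g_n$ attaining the pointwise rate \eqref{eq.ptw_conv_rate} \emph{uniformly in} $x\in[0,1]$ on an event of probability $1-O(n^{-1})$, and then project $\widehat g_n$ onto a suitable finite subset $\mathcal N\subset\Theta$ in a rate-weighted supremum distance. Throughout abbreviate
\begin{align*}
  \rho(t):=\Big(\tfrac{\log n}{n}\Big)^{\frac{\beta}{\beta+1}}+\Big(\tfrac{\log n}{n}\,t\Big)^{\frac{\beta}{2\beta+1}},\qquad t\ge 0,
\end{align*}
so that $\Theta_1^\beta(f_0)=\{f\in\Theta:\ |f(x)-f_0(x)|\le C\rho(f_0(x))\ \text{for all }x\}$, with $\rho$ increasing and $\rho\ge(\log n/n)^{\beta/(\beta+1)}$. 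For the pilot I would take $\widehat g_n$ to be a kernel (or local polynomial) estimator with a Lepski-type locally adaptive bandwidth, as in \cite{patschkowski2016,ray2016IP}, truncated at zero. Combining the bias bound with Bernstein's inequality for the centred stochastic term (whose variance at $x$ is of order $f_0(x)/(nh(x))$), a union bound over a polynomially fine grid of points, and the H\"older continuity of $f_0$ and of $\widehat g_n$ to interpolate between grid points, one obtains
\begin{align*}
  \inf_{f_0\in\Theta}P_{f_0}^n\Big(\sup_{x\in[0,1]}\tfrac{|\widehat g_n(x)-f_0(x)|}{\rho(f_0(x))}\le c_1\Big)\ \ge\ 1-O(n^{-1}),
\end{align*}
with $c_1$ depending only on $\beta,R$; the $\log n$ factors built into $\rho$ are precisely what lets the union bound deliver error probability $O(n^{-1})$.

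Next I would construct the finite net. Since $\Theta\subset\mC^\beta(R)$ and $\mC^\beta(R)$ is totally bounded in $\|\cdot\|_\infty$ by the Arzel\`a--Ascoli theorem, there is a finite set $\mathcal N\subset\Theta$ such that every $f_0\in\Theta$ admits an $f^\ast\in\mathcal N$ with $\|f^\ast-f_0\|_\infty\le(\log n/n)^{\beta/(\beta+1)}$: take a sup-norm net of $\mC^\beta(R)$ of mesh $\tfrac13(\log n/n)^{\beta/(\beta+1)}$, and for each net point lying within $\tfrac23(\log n/n)^{\beta/(\beta+1)}$ of $\Theta$ replace it by a nearby element of $\Theta$. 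Because $\rho\ge(\log n/n)^{\beta/(\beta+1)}$, such an $f^\ast$ automatically satisfies $\sup_x|f^\ast(x)-f_0(x)|/\rho(f_0(x))\le 1$. I would then take $\widehat f_n\in\mathcal N\subset\Theta$ to be a minimiser over $f\in\mathcal N$ of $\sup_{x\in[0,1]}|f(x)-\widehat g_n(x)|/\rho(\widehat g_n(x))$; this is a measurable, finitely-valued estimator.

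The heart of the argument is the elementary fact that $\rho(s)\asymp\rho(t)$, with constants depending only on $\beta,R,c$, whenever $s\ge 0$ and $|s-t|\le c\,\rho(t)$: if $t\ge A(\log n/n)^{\beta/(\beta+1)}$ for $A=A(c,\beta)$ large enough then $c\,\rho(t)\le\tfrac12 t$, so $s\asymp t$ and hence $\rho(s)\asymp\rho(t)$ since $\rho$ is essentially a power function; if instead $t\le A(\log n/n)^{\beta/(\beta+1)}$ then $|s-t|\lesssim(\log n/n)^{\beta/(\beta+1)}$, so $s\lesssim(\log n/n)^{\beta/(\beta+1)}$ and both $\rho(s),\rho(t)$ are $\asymp(\log n/n)^{\beta/(\beta+1)}$ by monotonicity of $\rho$. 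On the event in the second display, applying this to $\widehat g_n$ gives $\rho(\widehat g_n(x))\asymp\rho(f_0(x))$ for all $x$; a triangle-inequality estimate against $f^\ast$ (using that $\widehat f_n$ minimises the weighted distance to $\widehat g_n$, and that this distance for $f^\ast$ is $\lesssim 1+c_1$) then yields $\sup_x|\widehat f_n(x)-\widehat g_n(x)|/\rho(\widehat g_n(x))\le c_4$ and hence $\sup_x|\widehat f_n(x)-f_0(x)|/\rho(f_0(x))\le c_5$, with $c_4,c_5$ depending only on $\beta,R$. Feeding $c_5$ back into the comparison fact gives $\rho(\widehat f_n(x))\asymp\rho(f_0(x))$, whence $|f_0(x)-\widehat f_n(x)|\le c_5\rho(f_0(x))\le C\rho(\widehat f_n(x))$ for all $x$, provided $C$ is taken large enough (depending only on $\beta,R$); that is, $f_0\in\Theta_1^\beta(\widehat f_n)$. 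As this holds on the good event, $\inf_{f_0\in\Theta}P_{f_0}^n\big(f_0\in\Theta_1^\beta(\widehat f_n)\big)=1-O(n^{-1})$.

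For the Poisson experiment $\mE_n^P(\Theta)$ the same argument applies with a pilot built from the Poisson data: conditionally on the sample size $N\sim\Poi(n)$ the experiment is $N$ i.i.d. draws from $f$, and $N\asymp n$ with probability $1-O(n^{-1})$, so the uniform pilot bound transfers; the net and selection steps are unchanged. I expect the main obstacle to be the first step: the cited estimation results are pointwise, and upgrading them to a genuinely uniform-in-$x$ deviation inequality with error probability $O(n^{-1})$ --- while retaining the variance-adaptive (``regular versus irregular'') shape encoded in $\rho$ --- requires the union-bound/chaining argument above; given that, the remaining steps are routine manipulations of the function $\rho$.
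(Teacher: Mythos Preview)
Your proposal is correct and follows essentially the same architecture as the paper: build a pilot estimator attaining the variance-adaptive pointwise rate $\rho(f_0(x))$ uniformly in $x$ with probability $1-O(n^{-1})$, project onto a finite sup-norm net of $\Theta$ obtained via Arzel\`a--Ascoli, and use the switching relation $\rho(s)\asymp\rho(t)$ whenever $|s-t|\le c\rho(t)$ (your ``elementary fact'' is exactly the paper's Lemma~\ref{lem.switch_relation}) to pass from $f_0\in\Theta_1^\beta(\text{pilot})$-type statements to $f_0\in\Theta_1^\beta(\widehat f_n)$.

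The one substantive difference is the pilot. You invoke a Lepski-type locally adaptive estimator and sketch the uniform-in-$x$ upgrade by Bernstein plus a union bound over a polynomial grid. The paper instead splits the sample into two halves and uses a two-stage kernel construction: a first-stage kernel estimator $\widehat f_{1n_*}$ with \emph{fixed} bandwidth $h_{1n}=L_{n_*}^{1/(\beta+1)}$, and a second-stage kernel estimator $\widehat f_{2n_*}$ whose bandwidth $\widehat h_n(x)=L_{n_*}^{1/(\beta+1)}\vee(L_{n_*}\widehat f_{1n_*}(x))^{1/(2\beta+1)}$ is deterministic conditionally on the first half. This buys a cleaner Bernstein argument (Theorem~\ref{thm.dens_estimation}) since the bandwidth is not random with respect to the relevant data, so no union bound over candidate bandwidths is needed; the paper works only on the grid $\{i/n\}$ and interpolates at the very end using that both $f_0$ and the \emph{final} estimator $\widehat f_n$ lie in $\mH^\beta(R)$, rather than relying on regularity of the pilot itself. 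Your Lepski route is standard and works, but the union bound must additionally run over the bandwidth grid, which you do not mention explicitly; the paper's sample-splitting device sidesteps that bookkeeping entirely. The projection steps are equivalent: you minimise a $\rho$-weighted sup distance over the net, the paper picks any net center lying in an explicitly described $\rho$-weighted ball around $\widehat f_{2n_*}$. The reduction of the Poisson case to density estimation via $P(N\ge n/2)=1-O(e^{-cn})$ is the same.
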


The proof can be found in Appendix \ref{sec.global}. The rate \eqref{eq.Poissonization_to_show} is now a direct consequence of Lemmas \ref{lem.localization_bd} and \ref{lem.sample_splitting}, which allow one to piece together a global Markov kernel using the estimator from Theorem \ref{thm.globalization_Poissonization} and local Markov kernels from Theorem \ref{thm.Poissonization}. This completes the proof of Theorem \ref{thm.ub_DE_PIE}.

\section{Asymptotic equivalence between Poisson intensity estimation and Gaussian white noise}
\label{sec.PIE_GWN}

To establish the rate of the Le Cam distance between the Poisson intensity estimation experiment and the Gaussian white noise experiment, Section \ref{sec.Haar_wavelets} introduces a suitable local parameter space together with an orthonormal basis of $L^2[0,1]$ which depends on this space. The Poisson process is expanded with respect to this basis in Section \ref{sec.Poisson_count_model}. The same is done for the Gaussian white noise model in Section \ref{sec.Gaussian_seq_model}. It then remains to couple the empirical basis coefficients in the Gaussian and Poisson models. In Section \ref{sec.bds_for_couplings} we discuss general bounds on the Hellinger distance, which are then applied to the specific problem in Section \ref{sec.LeCam_for_seq_space_models}. The proof is completed in Section \ref{sec.compl_proof_ub}.

\subsection{Localization and basis expansion}
\label{sec.Haar_wavelets}

As in the proof of Theorem \ref{thm.ub_DE_PIE}, we apply the localization principle (see Section \ref{sec.LeCam}) and consider for any $f_0\in \Theta\subset \mH^\beta(R)$ the local parameter space
\begin{align*}
	&\Theta^\beta(f_0) =\Theta_{C,R}^\beta(f_0) \\
	&:= \Big\{ f\in \Theta: \ \frac{1}{32}f_0 \leq f \leq 32 f_0\ \text{and} \ n \int_0^1 \frac{(f(x) -f_0(x))^4}{ f_0(x)^3} dx
	\leq C n^{\frac{1-2\beta}{2\beta+1}} \int_0^1 f_0(x)^{-\frac{2\beta+3}{2\beta+1}}dx \Big\},
\end{align*}
for a sufficiently large constant $C$, depending only on $R$ and $\beta$. By \eqref{eq.ptw_conv_rate}, the convergence rate for estimation of $f(x)$ in the regular regime is $(f_0(x)/n)^{\beta/(2\beta+1)}$ up to $\log n$ factors. Replacing $f(x) -f_0(x)$ by $C^{1/4}(f_0(x)/n)^{\beta/(2\beta+1)}$ in the definition of $\Theta^\beta(f_0)$ then yields equality. The localization constraint is written via integrals rather than pointwise to prevent unnecessary $\log n$ factors in the rate of the Le Cam distance. Localization using integral constraints was also used in Section 2.2 of Dalalyan and Rei\ss \ \cite{dalalyan2006}.

From now on let us work on $\Theta^\beta(f_0).$ A common approach in asymptotic equivalence is to further split the localized experiment into so-called \textit{doubly local} experiments (cf. Grama and Nussbaum \cite{grama1998}), such that on each of these single subexperiments, the unknown parameter can be estimated at the localization rate in the definition of $\Theta^\beta(f_0).$ Since $f_0$ is known in the local experiment, we may use it to define a partition of $[0,1],$ which provides the appropriate shrinking intervals generating the doubly local experiments. Define $z_0:=0$ and $z_{i+1} := z_i + (f_0(z_i)/n)^{1/(2\beta+1)}.$ Let $m$ be the index of the largest $z_i$ smaller than $1.$ Define the boundary corrected version $(x_i)_{i=0,\ldots,m}$ as
\begin{align}
	x_i:=z_i \quad \text{for} \ i<m  \quad \text{and} \ x_m:=1.
	\label{eq.xi_def}
\end{align}
Further write 
\begin{align}
	\Delta_i := x_i-x_{i-1} = \Big(\frac{f_0(x_{i-1})}n\Big)^{1/(2\beta+1)} + (1-z_m) \mathbf{1}(i=m).
	\label{eq.Deltai_def}
\end{align}	
By assumption $\inf _{f_0 \in \Theta}\inf_{x} f_0(x) \gg n^{-\frac{\beta}{\beta+1}}$ and so, for any positive constant $c$ and sufficiently large $n,$ $(f_0(x)/n)^{1/(2\beta+1)} \leq c (f_0(x)/R)^{1/\beta}$ for all $x.$ Applying Lemma \ref{lem.fx_local_bd} gives $\tfrac{1}{2} f_0(z_{j-1}) \leq f_0(x) \leq 2f_0(z_{j-1})$ for all $x\in [z_{j-1},z_j]$ and all $j=1,\ldots,m.$ Since $1-z_m \leq (f_0(z_m)/n)^{1/(2\beta+1)},$ we obtain for the remainder term
\begin{align}
	(f_0(x_{m-1})/n)^{1/(2\beta+1)} \leq \Delta_m \leq 3(f_0(x_{m-1})/n)^{1/(2\beta+1)}.
	\label{eq.Deltam_ineqs}
\end{align}
This also shows that for any fixed positive constant $c,$ $\Delta_m \leq c (f_0(x_{m-1})/R)^{1/\beta}$ provided $n$ is sufficiently large. Applying  Lemma \ref{lem.fx_local_bd} and $z_j=x_j$ for $j=1,\ldots,m-1$ yields
\begin{align}
	\frac{1}{2} f_0(x_{j-1}) \leq f_0(x) \leq 2f_0(x_{j-1}) \quad \text{for all} \ x\in [x_{j-1},x_j] \ \text{ and } \ j=1,\ldots,m.
	\label{eq.loc_comp_of_fcts_in_lb}
\end{align}
We thus obtain a second localization by further restricting the data in the local experiment with parameter space $\Theta^\beta(f_0)$ to the intervals $[x_{j-1},x_j]$. We motivate the specific choice of this decomposition by a heuristic argument showing it is natural in terms of double localization. The local parameter space $\Theta^\beta(f_0)$ is defined via an integral rather than pointwise constraint to avoid unnecessary $\log n$ factors in the rate, so for simplicity consider instead $\Theta_1^\beta(f_0)$ from Section \ref{sec.DE_PIE}. Since $\inf_x f_0(x) \gg n^{-\beta/(\beta+1)},$ this localization constraint essentially means that the density is known pointwise up to an error of order $(f_0(x)/n)^{\beta/(2\beta+1)}.$ To show that decomposing $[0,1]$ into the intervals $[x_{j-1}, x_j]$ is correct in the sense of double localization, we therefore have to show that on each interval $[x_{j-1}, x_j]$, the density can be estimated at the rate $(f_0(x)/n)^{\beta/(2\beta+1)}.$ In the Poisson experiment, the number of observations in each interval is 
\begin{align*}
	\# \{i : X_i \in [x_{j-1}, x_j]\} = n \int_{x_{j-1}}^{x_j} f(x) dx + O_P\Big( \sqrt{ n \int_{x_{j-1}}^{x_j} f(x) dx }\Big)
\end{align*}
and the estimator $\widehat f(x) := \# \{i : X_i \in [x_{j-1}, x_j]\}/(n \Delta_j) = f(x) + O_P((f_0(x)/n)^{\beta/(2\beta+1)}),$ $x\in [x_{j-1},x_j],$ thus has the correct rate. A similar result holds in the Gaussian white noise model, which completes the argument. We also note that for rate-optimal estimation of $f$ with smoothness $\beta \leq 1$, in both experiments it suffices to approximate $f$ by a function that is constant on each such interval, see the proof of Theorem \ref{thm.glob2}. The total number of such intervals $m_n$ can thus be viewed as the effective parameter dimension.

We define an orthonormal basis of $L^2[0,1]$ by decomposing $[0,1]$ into the intervals $[x_{i-1},x_i].$ Let $\psi = \mathbf{1}(\cdot \in [0,1/2))-\mathbf{1}(\cdot \in [1/2,1])$ be the Haar mother wavelet and set $\psi_{j,k} := 2^{j/2}\psi(2^j \cdot -k)$ as usual. Then $\{1\}\cup \{ \psi_{j,k} : j=0, 1, \ldots ; k=0, 1, \ldots, 2^j-1\}$ forms an orthonormal basis of $L^2[0,1].$ For the sequence $(x_i)_{i=1,\ldots,m}$ defined above, identify $L^2[0,1]$ with $ \bigotimes_{i=1}^m L^2[x_{i-1}, x_i]$ and consider the Haar basis on each of the intervals $[x_{i-1}, x_i],$ that is  $\phi_i := \Delta_i^{-1/2}\mathbf{1}(\cdot \in (x_{i-1}, x_i])$ and $\psi_{i,j,k} := \Delta_i^{-1/2} \psi_{j,k} ( \Delta_i^{-1}(\cdot-x_{i-1}) ).$ The support of $\psi_{i, j,k}$ is $I_{i,j,k}:=[x_{i-1}+\Delta_i k/2^{j} , x_{i-1}+\Delta_i (k+1)/2^{j}]$ and $\psi_{i,j,k}$ is positive on $I_{i,j,k}^+=I_{i,j+1,2k}$ and negative on $I_{i,j,k}^-:=I_{i,j+1,2k+1}.$ For any $i,$ $\{\phi_i \}\cup \{ \psi_{i, j,k} : j=0, 1, \ldots ; k=0, 1, \ldots, 2^j-1\}$ is an orthonormal basis of $L^2[x_{i-1},x_i].$ For $f\in L^2[0,1]$ write $c_i :=\int f(u) \phi_i(u) du=\Delta_i^{-1/2} \int_{x_{i-1}}^{x_i} f(u) du$ for the approximation coefficients and $d_{i,j,k} := \int f(u) \psi_{i,j,k}(u) du$ for the wavelet coefficients. With $$\Lambda :=\{(i,j,k): i=1,\ldots,m , j=-1,0,1,\ldots, k=0, \ldots, 0\vee (2^j-1)\},$$ $d_{i,-1,0}:=c_i,$ and $\psi_{i,-1,0}:=\phi_i,$  any $f\in L^2[0,1]$ can be decomposed as
\begin{align*}
	f = \sum_{i=1}^m c_i  \phi_i
	+ \sum_{i=1}^m \sum_{j=0}^\infty \sum_{k=0}^{2^{j-1}}  d_{i,j,k} \  \psi_{i,j,k}
	= \sum_{(i,j,k) \in \Lambda}  d_{i,j,k} \  \psi_{i,j,k}
\end{align*}
with convergence in $L^2[0,1]$.

\begin{lem}
\label{lem.wav_decay}
If $f\in \mH^\beta(R)$ with $0<\beta \leq 1,$ then for $j\geq 0,$ $|d_{i,j,k}| \leq R(2^{-j}\Delta_i)^{\beta+1/2}.$
\end{lem}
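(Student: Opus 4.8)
The plan is to use the two defining properties of a Haar wavelet at resolution level $j\geq 0$: it integrates to zero, and on its support---which is the union of two adjacent intervals $I_{i,j,k}^+$ and $I_{i,j,k}^-$ of common length $h:=\Delta_i 2^{-j-1}$---it is constant, equal to $+\Delta_i^{-1/2}2^{j/2}$ on $I_{i,j,k}^+$ and $-\Delta_i^{-1/2}2^{j/2}$ on $I_{i,j,k}^-$. Writing $I_{i,j,k}^+=[c,c+h]$ and $I_{i,j,k}^-=[c+h,c+2h]$, one then has
\begin{align*}
	d_{i,j,k}
	= \Delta_i^{-1/2}2^{j/2}\Big( \int_c^{c+h} f(u)\,du - \int_{c+h}^{c+2h} f(u)\,du \Big)
	= \Delta_i^{-1/2}2^{j/2}\int_0^h \big(f(c+t)-f(c+h+t)\big)\,dt .
\end{align*}

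Next I would invoke the Hölder regularity of $f$. Since $0<\beta\leq 1$ the convention gives $\lfloor\beta\rfloor=0$, so $f^{(\lfloor\beta\rfloor)}=f$ and the inclusion $\mH^\beta(R)\subset\mC^\beta(R)$ yields $|f|_{\mC^\beta}\leq\|f\|_{\mC^\beta}\leq R$; in particular $|f(x)-f(y)|\leq R|x-y|^\beta$ for all $x,y\in[0,1]$. Note the flatness seminorm $|f|_{\mH^\beta}$ is irrelevant here, being identically zero for $\beta\leq 1$. Applying this pointwise bound with $x=c+t$ and $y=c+h+t$ gives $|f(c+t)-f(c+h+t)|\leq Rh^\beta$ for every $t\in[0,h]$, hence
\begin{align*}
	|d_{i,j,k}|\leq \Delta_i^{-1/2}2^{j/2}\int_0^h Rh^\beta\,dt = \Delta_i^{-1/2}2^{j/2}Rh^{\beta+1}.
\end{align*}

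Finally, substituting $h=\Delta_i 2^{-j-1}$ and collecting the powers of $2$ and $\Delta_i$ gives
\begin{align*}
	|d_{i,j,k}|\leq R\,\Delta_i^{\beta+1/2}\,2^{\,j/2-(j+1)(\beta+1)} = 2^{-(\beta+1)}R\,(2^{-j}\Delta_i)^{\beta+1/2},
\end{align*}
which is in fact slightly stronger than the claimed estimate $|d_{i,j,k}|\leq R(2^{-j}\Delta_i)^{\beta+1/2}$, since $2^{-(\beta+1)}<1$. I do not anticipate any genuine obstacle: the argument is a direct computation, and the only points needing a little care are correctly tracking the normalisation factor $\Delta_i^{-1/2}2^{j/2}$ of $\psi_{i,j,k}$ and noticing that the convention $\lfloor\beta\rfloor=0$ on $(0,1]$ makes the Hölder bound apply to $f$ itself rather than to one of its derivatives.
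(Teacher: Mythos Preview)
Your proof is correct and is essentially identical to the paper's own argument: both rewrite $d_{i,j,k}$ as $(\Delta_i 2^{-j})^{-1/2}\int_0^h [f(c+t)-f(c+h+t)]\,dt$ with $h=\Delta_i 2^{-j-1}$ and then apply the H\"older bound $|f(x)-f(y)|\leq R|x-y|^\beta$. The paper simply omits the explicit factor $2^{-(\beta+1)}$ you obtain, stating the slightly weaker inequality directly.
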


\begin{proof}
With $a_{i,j,k}:=x_{i-1}+\Delta_i k/2^j,$ $$d_{i,j,k} = (\Delta_i2^{-j})^{-1/2} \int_{a_{i,j,k}}^{a_{i,j,k}+\Delta_i/2^{j+1}} f(u) -f(u+\Delta_i/2^{j+1}) du.$$
Taking absolute values and using the H\"older continuity of $f$ yields the result.
\end{proof}

\subsection{Rewriting Poisson intensity estimation as a Poisson count model}
\label{sec.Poisson_count_model}

We now decompose the Poisson intensity experiment with respect to the basis from the previous section. For that define a new statistical experiment as follows. Let $(X_1,\ldots,X_N)$ be the jump times of a Poisson process on $[0,1]$ with time-varying intensity $x\mapsto nf(x).$ Define the counts  
\begin{align*}
	N_{i,j,k} &:= \#\{X_\ell \in I_{i,j,k}: \ell =1, \ldots, N\}, \quad (i,j,k) \in \Lambda, \ 0\leq j\leq \overline J+1,
\end{align*}
where $\overline{J}$ is the smallest integer larger than $3\log_2(n)$ and $I_{i,j,k}$ is the support of $\psi_{i,j,k}$ defined in the previous section. We thus have $N_{i,j,k} \sim \Poi(n\int_{I_{i,j,k}} f(u) du),$ and the counts $N_{i,j,k}$ and $N_{i',j',k'}$ are independent whenever $I_{i,j,k}$ and $I_{i',j',k'}$ are disjoint. Denote by $\overline P_{1,f}^n$ the distribution of the vector $(N_{i,j,k})_{(i,j,k) \in \Lambda, \ 0\leq j\leq \overline J+1}$ and by $s_n$ its length. With $\mP(\mathbb{N}^{s_n})$ the power set of $\mathbb{N}^{s_n},$ the Poisson count experiment $\mE_{1,n}^P(\Theta)$ is then defined as 
\begin{align*}
	\mE_{1,n}^P(\Theta) := \big( \mathbb{N}^{s_n}, \mP(\mathbb{N}^{s_n}), \big( \overline P_{1,f}^n: f\in \Theta\big) \big).
\end{align*}
On the local parameter space this experiment is close to $\mE_n^P(\Theta).$

\begin{prop}
\label{prop.mEP_mE1P}
Under the assumptions of Theorem \ref{thm.ub}, it holds that
\begin{align*}
	\Delta\big(\mE_{1,n}^P\big(\Theta^\beta(f_0)\big), \mE_n^P\big(\Theta^\beta(f_0)\big)\big)^2 = o(n^{-1}).
\end{align*}
\end{prop}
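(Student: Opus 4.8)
The plan is to bound the two deficiencies making up $\Delta$ separately. One of them is free: every count $N_{i,j,k}$ is a deterministic, measurable functional of the point process observed in $\mE_n^P(\Theta^\beta(f_0))$, so $\overline P_{1,f}^n$ is the image of $\overline P_f^n$ under a single statistic that does not depend on $f$. The (degenerate) Markov kernel implementing this statistic therefore certifies $\delta\big(\mE_n^P(\Theta^\beta(f_0)),\mE_{1,n}^P(\Theta^\beta(f_0))\big)=0$, and it remains only to control the reverse deficiency.

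For the reverse direction I would build an explicit ``sprinkling'' kernel $M$. Fix $i$; the intervals $I_{i,\overline J+1,k}$, $k=0,\dots,2^{\overline J+1}-1$, partition $[x_{i-1},x_i]$, and the counts $N_{i,\overline J+1,k}$ occur as coordinates of the observed count vector. Let $M$ ignore all other coordinates and, conditionally on these counts and independently across all pairs $(i,k)$, place exactly $N_{i,\overline J+1,k}$ points i.i.d.\ uniformly on $I_{i,\overline J+1,k}$, returning the resulting point measure. Under $\overline P_{1,f}^n$ the variables $N_{i,\overline J+1,k}$ are independent with $N_{i,\overline J+1,k}\sim\Poi\big(n\int_{I_{i,\overline J+1,k}}f\big)$, so by the standard construction of a Poisson process from its counts on a partition (independent Poisson counts plus i.i.d.\ uniform placements within each cell), $M\overline P_{1,f}^n$ is exactly the law of a Poisson point process on $[0,1]$ with intensity $n\bar f$, where $\bar f$ is the piecewise-constant function equal on each $I_{i,\overline J+1,k}$ to the average of $f$ over that cell (equivalently, the $L^2[0,1]$-projection of $f$ onto functions constant on the finest partition). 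As $\overline P_f^n$ is the law of the Poisson process with intensity $nf$, a standard bound on the Hellinger distance between two Poisson point processes gives
\begin{align*}
	\big\|M\overline P_{1,f}^n-\overline P_f^n\big\|_{\TV}^2\le H^2\big(M\overline P_{1,f}^n,\overline P_f^n\big)\le n\int_0^1\big(\sqrt{\bar f(x)}-\sqrt{f(x)}\big)^2\,dx.
\end{align*}

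It then remains to check that this right-hand side is $o(n^{-1})$ uniformly over $f\in\Theta^\beta(f_0)$. Write $I:=I_{i,\overline J+1,k}$. Then $\bar f|_I\ge\inf_I f$, while $|\bar f-f|\le\sup_I f-\inf_I f\le |f|_{\mC^\beta}|I|^\beta\le R|I|^\beta$ because $\beta\le 1$; combined with $\inf_I f\asymp f_0(x_{i-1})$ (which follows from \eqref{eq.loc_comp_of_fcts_in_lb} and $f\in\Theta^\beta(f_0)$), this yields $\int_I(\sqrt{\bar f}-\sqrt f)^2\le\frac{|I|}{4\inf_I f}(\sup_I f-\inf_I f)^2\lesssim R^2|I|^{2\beta+1}/f_0(x_{i-1})$. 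Since $|I|=\Delta_i2^{-\overline J-1}$ and there are $2^{\overline J+1}$ such cells in $[x_{i-1},x_i]$, summing over $k$ and using $\Delta_i^{2\beta+1}\lesssim f_0(x_{i-1})/n$ from \eqref{eq.Deltam_ineqs} gives $\sum_k\int_{I_{i,\overline J+1,k}}(\sqrt{\bar f}-\sqrt f)^2\lesssim R^2\,2^{-2\beta\overline J}/n$; summing over the $m$ blocks and using $\overline J\ge 3\log_2 n$ then gives
\begin{align*}
	n\int_0^1\big(\sqrt{\bar f(x)}-\sqrt{f(x)}\big)^2\,dx\lesssim m\,R^2\,2^{-2\beta\overline J}\lesssim m\,R^2\,n^{-6\beta}.
\end{align*}
Finally $m$ is polynomially bounded: since $\sum_i\Delta_i=1$ and $\inf_{f_0}\inf_x f_0(x)\gg n^{-\beta/(\beta+1)}$ forces $\min_i\Delta_i\gg n^{-1/(\beta+1)}$, we get $m\le 1/\min_i\Delta_i\ll n^{1/(\beta+1)}\le n$, so the last display is $\lesssim R^2 n^{1-6\beta}=o(n^{-1})$ because $\beta>1/2$. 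All implied constants depend only on $R$ and $\beta$. Hence $\delta\big(\mE_{1,n}^P(\Theta^\beta(f_0)),\mE_n^P(\Theta^\beta(f_0))\big)^2\le\sup_{f\in\Theta^\beta(f_0)}\|M\overline P_{1,f}^n-\overline P_f^n\|_{\TV}^2=o(n^{-1})$, and together with the vanishing of the other deficiency this gives $\Delta\big(\mE_{1,n}^P(\Theta^\beta(f_0)),\mE_n^P(\Theta^\beta(f_0))\big)^2=o(n^{-1})$.

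The main obstacle, conceptually, is the middle step: recognising that applying the sprinkling kernel to the count experiment produces a genuine Poisson process experiment again, merely with the intensity $nf$ replaced by its local average $n\bar f$, so that the cost of the reduction is precisely a Hellinger distance between two Poisson processes. Once this is identified the rest is routine: one only has to play the exponential gain $2^{-2\beta\overline J}=n^{-6\beta}$ off against the polynomial bound on the number of blocks $m$, and verify that all constants are uniform over $\Theta^\beta(f_0)$ and depend only on $R$ and $\beta$.
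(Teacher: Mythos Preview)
Your proof is correct and follows essentially the same route as the paper's. Both arguments note that one deficiency vanishes trivially (counts are a statistic of the point process), and for the reverse direction both compare the true Poisson process with the one whose intensity is the piecewise-constant average $n\bar f$ on the finest partition $\{I_{i,\overline J+1,k}\}$; your $\bar f$ is exactly the paper's $f_n$. The only cosmetic differences are that the paper phrases the reverse step via sufficiency (the counts are sufficient for the model with intensity $nf_n$) rather than your explicit sprinkling kernel, and in the final Hellinger estimate the paper uses the cruder bound $\inf_x f(x)\ge n^{-1}$ together with Parseval and the wavelet decay Lemma~\ref{lem.wav_decay}, whereas you use the sharper localisation $\inf_I f\asymp f_0(x_{i-1})$ and $\Delta_i^{2\beta+1}\lesssim f_0(x_{i-1})/n$; both routes give $o(n^{-1})$.
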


\begin{proof}
The experiment $\mE_n^P(\Theta^\beta(f_0))$ is by construction more informative than $\mE_{1,n}^P(\Theta^\beta(f_0)).$ It is thus enough to prove that the original Poisson intensity can be nearly reconstructed from the counts $(N_{i,j,k})_{(i,j,k) \in \Lambda, \ 0\leq j\leq \overline J+1}.$

Consider a Poisson process on $[0,1]$ with intensity $nf_n,$ where $f_n = \sum_{(i,j,k) \in \Lambda, j\leq \overline J}  d_{i,j,k} \  \psi_{i,j,k}.$ By construction, $\psi_{i,j,k}$ is constant on $I_{i,j,k}^+=I_{i,j+1,2k}$ and $I_{i,j,k}^-:=I_{i,j+1,2k+1}.$ Thus, $f_n$ is constant on the intervals $I_{i,\overline{J}+1, k}$ and therefore the counts on the highest resolution level $j= \overline J+1,$ that is $(N_{i,\overline J+1,k})_{i,k},$ form  a sufficient statistic for $f_n.$ Since counts on lower resolution levels can be constructed from $(N_{i,\overline J+1,k})_{i,k},$ we conclude that $(N_{i,\overline J+1,k})_{(i,j,k)\in \Lambda, 0\leq j\leq \overline{J}+1}$ is also a sufficient statistic for $f_n.$

By \eqref{eq.LeCam_ub_on_same_prob_space} it is enough to bound the squared Hellinger distance between a Poisson process with intensity $nf$ and a Poisson process with intensity $nf_n,$ uniformly over $f\in \Theta^\beta(f_0).$ Using Lemma \ref{lem.bds_of_info_distances}(i), the squared Hellinger distance is bounded from above by $n \int_0^1 \big(\sqrt{f(x)} - \sqrt{f_n(x)} \big)^2 dx.$ Together with Lemma \ref{lem.wav_decay} and $\inf_{f\in \Theta}\inf_x f(x) \geq n^{-1},$
\begin{align*}
	\Delta\big(\mE_{1,n}^P\big(\Theta^\beta(f_0)\big), \mE_n^P\big(\Theta^\beta(f_0)\big)\big)^2
	&\leq 
	\sup_{f\in \Theta^\beta(f_0)} n \int_0^1 \big(\sqrt{f(x)} - \sqrt{f_n(x)} \big)^2 dx\\
	&\leq \sup_{f\in \Theta^\beta(f_0)} n^2 \int_0^1 \big(f(x)-f_n(x)\big)^2 dx \\
	&=  \sup_{f\in \Theta^\beta(f_0)} n^2 \sum_{i=1}^m  \sum_{j>\overline J} \sum_{k=0}^{2^j-1} d_{i,j,k}^2\\
	&\leq R^2 n^2 \sum_{i=1}^m \Delta_i^{2\beta+1} 2^{-2\overline J \beta}
	= o(n^{-1}),
\end{align*}
since $\sum_{i=1}^m \Delta_i^{2\beta+1}\leq \sum_{i=1}^m \Delta_i =1,$ $\overline J >3\log_2(n)$ and $\beta>1/2.$
\end{proof}

\subsection{Sequence space representation of the Gaussian white noise experiment}
\label{sec.Gaussian_seq_model}

Given $f_0$ define the step function approximation $T_n f_0 = \sum_{i=1}^m f_0(x_{i-1})\mathbf{1}(\cdot \in [x_{i-1},x_i)) .$ On the local parameter space $\Theta^\beta(f_0),$ we introduce the statistical experiment $$\widetilde \mE_n^G(\Theta^\beta(f_0))=\big(\mC[0,1], \sigma(\mC[0,1]), (\widetilde Q_f^n: f\in \Theta^\beta(f_0))\big),$$ where $\widetilde Q_f^n$ is the distribution of the path $(\widetilde Y_t)_{t\in [0,1]}$ satisfying
\begin{align}
	d\widetilde Y_t = f(t) dt + n^{-1/2} \sqrt{T_n f_0(t)} dW_t, \quad t\in [0,1], \quad f\in \Theta(f_0).
	\label{eq.squared_mod}
\end{align}
The following proposition generalizes Theorem 2.7 in \cite{nussbaum1996} to small densities.

\begin{prop}
\label{prop.step_fct_in_variance}
Under the assumptions of Theorem \ref{thm.ub}, it holds that
$$\Delta\big(\mE_n^G(\Theta^\beta(f_0)), \widetilde\mE_n^G(\Theta^\beta(f_0)) \big)^2 \lesssim n^{\frac{1-2\beta}{2\beta+1}} \int_0^1 f_0(x)^{-\frac{2\beta+3}{2\beta+1}}dx.$$
\end{prop}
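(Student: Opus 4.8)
The plan is to reduce both experiments to Gaussian sequence models in the basis $\{\psi_{i,j,k}\}$ of Section~\ref{sec.Haar_wavelets} and then to couple their coefficients by a de-biased squaring transformation applied block by block, in the spirit of Theorem~2.7 of \cite{nussbaum1996} but keeping track of the dependence on $f_0$ at every step. Working on $\Theta^\beta(f_0)$, observing $(Y_t)$ is, up to an error negligible exactly as in Proposition~\ref{prop.mEP_mE1P} (truncate at resolution $\overline J$ and bound the tail by the argument of Lemma~\ref{lem.wav_decay} applied on each interval to $\sqrt f$, which by \eqref{eq.loc_comp_of_fcts_in_lb} is $\beta$-H\"older on $[x_{i-1},x_i]$ with seminorm $\lesssim R\,f_0(x_{i-1})^{-1/2}$), the same as observing
\[
	y_\lambda := \int \psi_\lambda\, dY = 2\langle \psi_\lambda,\sqrt f\rangle + n^{-1/2}\xi_\lambda, \qquad \lambda=(i,j,k)\in\Lambda,\ j\le\overline J,
\]
with $\xi_\lambda$ i.i.d.\ standard normal. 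The path $(\widetilde Y_t)$ reduces likewise to $\widetilde y_\lambda := \langle \psi_\lambda,f\rangle + n^{-1/2}\sigma_\lambda\widetilde\xi_\lambda$, and here the structural fact motivating the basis in Section~\ref{sec.Haar_wavelets} pays off: $\psi_{i,j,k}$ is supported in the single interval $[x_{i-1},x_i]$, on which $T_nf_0\equiv f_0(x_{i-1})$, so $\sigma_{i,j,k}=\sqrt{f_0(x_{i-1})}$ and, by orthonormality, the $\widetilde\xi_\lambda$ are again i.i.d.\ standard normal. Both experiments thus become product-over-$i$ Gaussian sequence models, with independent (heteroscedastic on the $\widetilde{}$ side) coordinates.

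I would then construct the Markov kernels block by block, exploiting that $f_0$ (hence $\Delta_i$ and the $x_i$) is known on the local space. Write $\bar g_i:=\Delta_i^{-1}\int_{x_{i-1}}^{x_i}\sqrt f$ and $\bar f_i:=\Delta_i^{-1}\int_{x_{i-1}}^{x_i}f$, so $y_{i,-1,0}=2\Delta_i^{1/2}\bar g_i+n^{-1/2}\xi_{i,-1,0}$ and $\widetilde y_{i,-1,0}=\Delta_i^{1/2}\bar f_i+\sqrt{f_0(x_{i-1})}\,n^{-1/2}\widetilde\xi_{i,-1,0}$. For $\mE_n^G\to\widetilde\mE_n^G$ the kernel outputs, in block $i$, the squared and de-biased scaling coefficient $\widehat c_i:=\tfrac14\Delta_i^{-1/2}y_{i,-1,0}^2-\tfrac14\Delta_i^{-1/2}n^{-1}$ as a surrogate for $\widetilde y_{i,-1,0}$, and the rescaled detail coefficients $\sqrt{f_0(x_{i-1})}\,y_{i,j,k}$, $j\ge0$, as surrogates for $\widetilde y_{i,j,k}$. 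Rescaling the $\asymp n^3$ detail coefficients of block $i$ by the \emph{known} constant $\sqrt{f_0(x_{i-1})}$ matches their target noise level \emph{exactly}, so no variance mismatch is incurred there; the identity $\langle\psi_{i,j,k},f\rangle=2\bar g_i\langle\psi_{i,j,k},\sqrt f\rangle+\langle\psi_{i,j,k},(\sqrt f-\bar g_i)^2\rangle$ ($j\ge0$) shows the output drift $2\sqrt{f_0(x_{i-1})}\langle\psi_{i,j,k},\sqrt f\rangle$ differs from $\langle\psi_{i,j,k},f\rangle$ only by $2(\sqrt{f_0(x_{i-1})}-\bar g_i)\langle\psi_{i,j,k},\sqrt f\rangle$ and $\langle\psi_{i,j,k},(\sqrt f-\bar g_i)^2\rangle$; and $\widehat c_i=\Delta_i^{1/2}\bar g_i^2+\bar g_i n^{-1/2}\xi_{i,-1,0}+\tfrac14\Delta_i^{-1/2}n^{-1}(\xi_{i,-1,0}^2-1)$ has mean $\Delta_i^{1/2}\bar g_i^2$, close to the target mean $\Delta_i^{1/2}\bar f_i$. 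The reverse kernel $\widetilde\mE_n^G\to\mE_n^G$ is symmetric: rescale the detail coefficients by $f_0(x_{i-1})^{-1/2}$ and apply $u\mapsto 2\Delta_i^{1/4}\sqrt{u^+}$ to $\widetilde y_{i,-1,0}$.

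It remains to bound $\sup_{f\in\Theta^\beta(f_0)}$ of the squared Hellinger distance between the law of the kernel output and the target; this factorises over $i$ and, within each block, over coordinates, and the contributions are: (a) the bias $\bar f_i-\bar g_i^2=\Var_i(\sqrt f)\lesssim\Delta_i^{2\beta}f_0(x_{i-1})^{-1}$ of the scaling coefficient (via \eqref{eq.loc_comp_of_fcts_in_lb} and $\beta$-H\"older continuity); (b) the single per-block variance mismatch $\bar g_i^2$ versus $f_0(x_{i-1})$, whose sum is $\asymp\sum_i(\bar f_i/f_0(x_{i-1})-1)^2$, controlled by Cauchy--Schwarz against the fourth-moment constraint defining $\Theta^\beta(f_0)$; (c) the non-Gaussianity of $\widehat c_i$; and (d) the two detail-coefficient drift errors above, the first summed via $\sum_{j,k}|\langle\psi_{i,j,k},\sqrt f\rangle|^2\le\Delta_i\Var_i(\sqrt f)\lesssim 1/n$ and the same Cauchy--Schwarz as in (b), the second using that $(\sqrt f-\bar g_i)^2$ is $\beta$-H\"older on the block with seminorm $\lesssim\Delta_i^{\beta}f_0(x_{i-1})^{-1}$, so its Haar coefficients decay like $\Delta_i^{2\beta+1/2}2^{-j(\beta+1/2)}f_0(x_{i-1})^{-1}$ and are summable over $(j,k)$. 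Dividing each contribution by the block noise variance $f_0(x_{i-1})/n$ and using $\Delta_i^{2\beta+1}=f_0(x_{i-1})/n$ together with $\sum_i h(x_{i-1})\asymp n^{1/(2\beta+1)}\int_0^1 f_0(x)^{-1/(2\beta+1)}h(x)\,dx$, every one of (a)--(d) collapses to $n^{(1-2\beta)/(2\beta+1)}\int_0^1 f_0^{-(2\beta+3)/(2\beta+1)}$, so the stated rate is in fact attained along several independent channels. The main obstacle is (c): $\widehat c_i$ equals a constant plus $\tfrac14\Delta_i^{-1/2}n^{-1}$ times a noncentral $\chi^2_1$ variable with non-centrality $4\bar g_i^2 n\Delta_i\asymp f_0(x_{i-1})n\Delta_i$, which tends to infinity because of the assumption $\inf_{f\in\Theta}\inf_x f(x)\gg n^{-\beta/(\beta+1)}\log^8 n$ of Theorem~\ref{thm.ub}; thus $\widehat c_i$ is not Gaussian and its Hellinger distance to the Gaussian target with matched first two moments must be bounded quantitatively. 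Its skewness is $\asymp(n\Delta_i f_0(x_{i-1}))^{-1/2}\ll1$, and the general Hellinger estimates of Section~\ref{sec.bds_for_couplings} convert this into a per-block bound of order $(n\Delta_i f_0(x_{i-1}))^{-1}$, whose sum over $i$ is once more the claimed rate; the reverse kernel is handled by the same, slightly simpler, bookkeeping.
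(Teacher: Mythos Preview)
Your construction works in principle, but it is far more elaborate than necessary, and one step is not fully justified.

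The paper's argument is a five-line computation that avoids nonlinear kernels entirely. The point you are missing is that both experiments are already Gaussian with known noise structure on $\Theta^\beta(f_0)$: since $T_nf_0$ is known, observing $(\widetilde Y_t)$ is the same as observing $dV_t=(f(t)-T_nf_0(t))/\sqrt{T_nf_0(t)}\,dt+n^{-1/2}dW_t$ (divide by $\sqrt{T_nf_0}$), and observing $(Y_t)$ is the same as observing $dU_t=2(\sqrt{f(t)}-\sqrt{T_nf_0(t)})\,dt+n^{-1/2}dW_t$ (subtract $2\sqrt{T_nf_0}$). Both now live on the same sample space with identical noise, so by \eqref{eq.LeCam_ub_on_same_prob_space} and Lemma~\ref{lem.bds_of_info_distances}(ii) the squared Le Cam distance is bounded by $\tfrac n2\int\bigl(2(\sqrt f-\sqrt{T_nf_0})-(f-T_nf_0)/\sqrt{T_nf_0}\bigr)^2$. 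The algebraic identity $2(\sqrt a-\sqrt b)-(a-b)/\sqrt b=-(\sqrt a-\sqrt b)^2/\sqrt b$ collapses this to $n\int(\sqrt f-\sqrt{T_nf_0})^4/(2T_nf_0)$, which is then bounded by splitting $f-T_nf_0$ into $(f-f_0)+(f_0-T_nf_0)$ and invoking the localisation constraint and H\"older continuity respectively. No basis expansion, no squaring, no non-Gaussian intermediate laws.

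Your route instead manufactures non-Gaussianity by squaring $y_{i,-1,0}$, and then has to undo it in step (c). The appeal to ``the general Hellinger estimates of Section~\ref{sec.bds_for_couplings}'' is a gap: that section treats Poisson--Gaussian (Lemma~\ref{lem.Hell_bd_scaling_coeffs}) and binomial--Gaussian (\eqref{eq.Hell_bd_quantil_cpl}) couplings, not noncentral $\chi^2_1$ versus Gaussian. The bound you need, $H^2\bigl(\chi^2_1(\delta),\mathcal N(1+\delta,2+4\delta)\bigr)\lesssim\delta^{-1}$, is true and provable by the same kind of expansion, but it is not in the paper and you would have to supply it. Everything else in your outline (the drift-error bookkeeping in (a), (b), (d)) checks out and sums correctly, so the approach can be completed; it is simply doing with nonlinear kernels and a new coupling lemma what the paper does with a linear change of variables and one algebraic identity.
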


\begin{proof}
On $\Theta^\beta(f_0),$ the Gaussian white noise model is equivalent to observing $(U_t)_{t\in [0,1]}$ with $dU_t = 2(\sqrt{f(t)}-\sqrt{T_n f_0(t)}) dt + n^{-1/2} dW_t$ and observing $(\widetilde Y_t) _{t\in [0,1]}$ is equivalent to observing $(V_t)_{t\in [0,1]}$ with $dV_t =(f(t) - T_n f_0(t)) / \sqrt{T_n f_0(t)}dt + n^{-1/2} dW_t .$ Using \eqref{eq.LeCam_ub_on_same_prob_space}, Lemma \ref{lem.bds_of_info_distances}(ii), \eqref{eq.loc_comp_of_fcts_in_lb}, $f\in \Theta^\beta(f_0),$ $f_0\in \mH^\beta(R)$ and \eqref{eq.Deltam_ineqs}, we can bound the squared Le Cam distance $\Delta\big(\mE_n^G(\Theta^\beta(f_0)), \widetilde\mE_n^G(\Theta^\beta(f_0)) \big)^2$ by the supremum over $f\in \Theta^\beta(f_0)$ of
\begin{align*}
	&\frac{n}{2} \int_0^1 \Big(2(\sqrt{f(t)}-\sqrt{T_n f_0(t)}) - \frac{f(t)-T_n f_0(t)}{\sqrt{T_n f_0(t)}}\Big)^2 dt \\
	&= n \int_0^1 \frac{\big( \sqrt{f(t)} -\sqrt{T_n f_0(t)}\big)^4}{2T_n f_0(t)} dt \\
	&\leq 2^4 n \sum_{i=1}^m \int_{x_{i-1}}^{x_i}
	\frac{\big(f(t)-f_0(t)\big)^4+ \big(f_0(t)-f_0(x_{i-1})\big)^4}{f_0(x_{i-1})^3} dt\\
	&\leq 2^7 C  n^{\frac{1-2\beta}{2\beta+1}} \int_0^1 f_0(x)^{-\frac{2\beta+3}{2\beta+1}}dx 
	+ 2^4 R^4 n\sum_{i=1}^m \int_{x_{i-1}}^{x_i}
	\frac{\Delta_i^{4\beta}}{f_0(x_{i-1})^3} dt \\
	&\leq \big( 2^7 C + 2^73^{4\beta}R^4\big) n^{\frac{1-2\beta}{2\beta+1}} \int_0^1 f_0(x)^{-\frac{2\beta+3}{2\beta+1}}dx,
\end{align*}
which completes the proof.
\end{proof}
In the next step, we approximate \eqref{eq.squared_mod} by the following sequence space model. Denote by $Q_{1,f}^n$ the joint distribution of the (rescaled) empirical scaling and wavelet coefficients,
\begin{align*}
	Z_{i, -1,0}^* & := n\sqrt{\Delta_i} \int \phi_i(t) d\widetilde Y_t,  \quad \text{for} \ i=1,\ldots,m,\\
	Z_{i,j,k}^* &:= \sqrt{\frac{n}{f_0(x_{i-1})}}\int \psi_{i,j,k}(t) d\widetilde Y_t , \quad \text{for} \ (i,j,k)\in \Lambda, \ 0\leq j\leq \overline{J},
\end{align*}
where $\overline{J}$ is again the smallest integer larger than $3\log_2(n)$ (as in experiment $\mE_{1,n}^P(\Theta^\beta(f_0))$). Notice that the observations are independent and normally distributed with $$Z_{i, -1,0}^* \sim \mathcal N\Big(n \int_{x_{i-1}}^{x_i} f(t) dt, n \Delta_i f_0(x_{i-1})\Big) \ \text{ and } \ Z_{i,j,k}^* \sim \mathcal N\Big(\sqrt{\frac n{f_0(x_{i-1})}} d_{i,j,k}, 1\Big), \ \text{for} \  j\geq 0,$$where $d_{i,j,k} = \int f(t) \psi_{i,j,k}(t) dt.$ Write $s_n'$ for the total number of coefficients and define the experiment 
$$\mE_{1,n}^G(\Theta) :=\big(\mathbb{R}^{s_n'}, \sigma(\mathbb{R}^{s_n'}), \big(Q_{1,f}^n: f\in \Theta \big) \big).$$

\begin{prop}
Under the assumptions of Theorem \ref{thm.ub}, it holds that
\begin{align*}
	\Delta\big(\mE_{1,n}^G(\Theta^\beta(f_0)), \widetilde \mE_n^G(\Theta^\beta(f_0)) \big)^2  = o(n^{-1}).
\end{align*}
\end{prop}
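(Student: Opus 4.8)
The plan is to show that the sequence model $\mE_{1,n}^G(\Theta^\beta(f_0))$ loses only a negligible amount of information relative to $\widetilde\mE_n^G(\Theta^\beta(f_0))$, so that the two are within $o(n^{-1})$ in squared Le Cam distance. The experiment $\widetilde\mE_n^G$ is equivalent to observing the Gaussian process $(\widetilde Y_t)$ from \eqref{eq.squared_mod}, which by the standard identification of Gaussian white noise with an infinite Gaussian sequence is equivalent to observing \emph{all} the coefficients $\int \phi_i\, d\widetilde Y_t$ and $\int \psi_{i,j,k}\, d\widetilde Y_t$, for $i=1,\dots,m$ and all $(i,j,k)\in\Lambda$ with $j\geq 0$. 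The experiment $\mE_{1,n}^G$ retains only those with $0\le j\le \overline J$. Hence $\widetilde\mE_n^G$ is more informative than $\mE_{1,n}^G$, and it suffices to bound, uniformly over $f\in\Theta^\beta(f_0)$, the squared Hellinger distance between the laws of the full coefficient vector and the truncated one. Since $\widetilde Y$ differs from a pure white-noise integral only through a deterministic, $f_0$-dependent (and hence known) variance scaling $\sqrt{T_nf_0(t)}$, the coefficients $\int\psi_{i,j,k}\,d\widetilde Y_t$ for $j>\overline J$ are independent Gaussians with unit-order variances (using \eqref{eq.loc_comp_of_fcts_in_lb} and that $\psi_{i,j,k}$ is supported where $T_nf_0$ equals the constant $f_0(x_{i-1})$, which holds at resolutions above those used to build the partition — one must check the highest-level Haar functions sit inside a single partition cell, which they do by construction of $I_{i,j,k}$) and means proportional to $\sqrt{n/f_0(x_{i-1})}\,d_{i,j,k}$.

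Concretely, I would apply \eqref{eq.LeCam_ub_on_same_prob_space} and then bound the squared Hellinger distance between two product Gaussian measures differing only in the tail block $\{j>\overline J\}$. For independent Gaussians with common variance and means $\mu,\mu'$, the squared Hellinger distance is controlled by $\sum (\mu-\mu')^2$ up to constants; here $\mu'=0$ for the discarded coordinates, so the bound is
\begin{align*}
	H^2 \lesssim \sum_{i=1}^m \sum_{j>\overline J}\sum_{k=0}^{2^j-1} \frac{n}{f_0(x_{i-1})}\, d_{i,j,k}^2 .
\end{align*}
Using Lemma \ref{lem.wav_decay}, $|d_{i,j,k}|\le R(2^{-j}\Delta_i)^{\beta+1/2}$, together with $\sum_k 1 = 2^j$, $\sum_{j>\overline J}2^{-2j\beta}\lesssim 2^{-2\overline J\beta}$ and \eqref{eq.Deltai_def}–\eqref{eq.loc_comp_of_fcts_in_lb} to compare $\Delta_i^{2\beta+1}$ with $f_0(x_{i-1})\Delta_i/n$ (recall $\Delta_i\asymp (f_0(x_{i-1})/n)^{1/(2\beta+1)}$), one finds
\begin{align*}
	H^2 \lesssim R^2 n \sum_{i=1}^m \frac{\Delta_i^{2\beta+1}}{f_0(x_{i-1})}\,2^{-2\overline J\beta}
	\lesssim R^2 \sum_{i=1}^m \Delta_i \, 2^{-2\overline J\beta}
	\le R^2\, 2^{-2\overline J\beta},
\end{align*}
since $\sum_i\Delta_i=1$. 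As $\overline J>3\log_2 n$ and $\beta>1/2$, this is $O(n^{-6\beta})=o(n^{-1})$, which gives the claim after taking the supremum over $f\in\Theta^\beta(f_0)$.

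The main obstacle is bookkeeping rather than conceptual: one must verify carefully that the Haar functions $\psi_{i,j,k}$ at the truncated resolutions $j>\overline J$ have support on which $T_nf_0$ is constant (so that the discarded coordinates really are independent Gaussians with $f_0$-dependent but $f$-\emph{in}dependent variances, making the Hellinger computation a pure mean-shift), and that the scaling factors $\sqrt{n/f_0(x_{i-1})}$ introduced in the definition of $Z^*_{i,j,k}$ are harmless — they are bounded above and below on $\Theta^\beta(f_0)$ by \eqref{eq.loc_comp_of_fcts_in_lb}, so they only affect constants. A secondary point is ensuring the Hellinger-distance bound for infinite product Gaussian measures converges, which it does precisely because the series $\sum_{j>\overline J}2^{-2j\beta}$ converges for $\beta>0$; the quantitative gain to $o(n^{-1})$ then comes from the logarithmic choice of $\overline J$ exactly as in Proposition \ref{prop.mEP_mE1P}.
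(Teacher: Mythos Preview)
Your approach is essentially the paper's own: recognize that the truncated coefficients are sufficient for the truncated drift $f_n=\sum_{j\le\overline J}d_{i,j,k}\psi_{i,j,k}$, and then bound the Hellinger distance between the Gaussian models with drifts $f$ and $f_n$ using Lemma~\ref{lem.wav_decay} and Lemma~\ref{lem.bds_of_info_distances}(ii). This is precisely what ``argue as in the proof of Proposition~\ref{prop.mEP_mE1P}'' amounts to. Your sentence about ``the Hellinger distance between the laws of the full coefficient vector and the truncated one'' is imprecise (the two measures live on different spaces), but your subsequent calculation with $\mu'=0$ makes clear you are really comparing the full model under $f$ to the full model under $f_n$, which is correct.

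There is, however, an algebraic slip in your last step. From \eqref{eq.Deltai_def} one has $\Delta_i^{2\beta+1}\asymp f_0(x_{i-1})/n$, \emph{not} $f_0(x_{i-1})\Delta_i/n$; hence $n\Delta_i^{2\beta+1}/f_0(x_{i-1})\asymp 1$ rather than $\Delta_i$, and your sum is of order $m\cdot 2^{-2\overline J\beta}$, not $2^{-2\overline J\beta}$. The conclusion still survives since $m\le n$ eventually and $n\cdot 2^{-2\overline J\beta}\le n^{1-6\beta}=o(n^{-1})$ for $\beta>1/2$. The paper avoids this detour by using the crude global bound $1/T_nf_0\le n$ (from $\inf_x f_0(x)\ge n^{-1}$), which gives directly $n^2\sum_i\Delta_i^{2\beta+1}2^{-2\overline J\beta}\le n^2\cdot 2^{-2\overline J\beta}$ via $\sum_i\Delta_i^{2\beta+1}\le\sum_i\Delta_i=1$.
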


\begin{proof}
Arguing as in the proof of Proposition \ref{prop.mEP_mE1P} using Lemma \ref{lem.bds_of_info_distances}(ii) instead of Lemma \ref{lem.bds_of_info_distances}(i) yields the result.
\end{proof}

\subsection{Information bounds for couplings}
\label{sec.bds_for_couplings}

At this point, we have transformed the Poisson intensity estimation and Gaussian experiments into sequence space experiments, where the empirical scaling and wavelet coefficients are observed. To relate these sequence models to each other, bounds on the information divergences between (transformed) Poisson and Gaussian random variables are discussed.

We firstly transform a Poisson random variable $N$ into a continuous random variable by adding an independent uniform variable $U$ on $[-\tfrac 12,\tfrac 12).$ From the sum $N+U$, we can recover $N$ by taking the nearest integer, which shows that this transformation is invertible. The sum can then be related to a normal random variable with the same mean and variance. To state the following result we write $H(X,Y):=H(P_X, P_Y)$ and $\KL(X,Y):=\KL(P_X,P_Y)$ if $X\sim P_X$ and $Y\sim P_Y.$

\begin{lem}
\label{lem.Hell_bd_scaling_coeffs}
Let $N\sim \Poi(\lambda)$ and $U$ be uniformly distributed on $[-\tfrac 12,\tfrac 12)$ and independent of $N.$ If $Z \sim \mathcal{N}(\lambda, \lambda),$ then
\begin{align*}
	KL(N+U, Z) = \frac{1}{8\lambda} (1+o(1)) \quad \text{as} \ \lambda \rightarrow \infty.
\end{align*}
Moreover, if $Z_0 \sim \mathcal{N}(\lambda, \lambda_0),$ then
\begin{align*}
	H^2(N+U, Z_0) \leq \frac{1}{4\lambda} (1+o(1)) + 4\Big(\frac{\lambda}{\lambda_0} - 1\Big)^2 \quad \text{as} \ \lambda \rightarrow \infty.
\end{align*}
\end{lem}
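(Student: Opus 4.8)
The plan is to reduce the first identity to the classical asymptotic expansion of the Shannon entropy of the Poisson law, and then to deduce the Hellinger bound from it via the triangle inequality together with an explicit computation for Gaussians.

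First I would record an \emph{exact} identity. Because $N$ is integer valued and $U$ is uniform on a unit interval centred at the origin, $N+U$ has the piecewise constant Lebesgue density $g$ with $g(x)=\P(N=k)$ for $x\in[k-\tfrac12,k+\tfrac12)$; hence $\int g\log g = -H(\Poi(\lambda))$, the negative Shannon entropy. Writing $\varphi$ for the $\mathcal N(\lambda,\lambda)$ density and using $\E[N+U]=\lambda$ together with $\E[(N+U-\lambda)^2]=\Var(N)+\Var(U)=\lambda+\tfrac1{12}$, one obtains
\begin{align*}
	\KL(N+U, Z) = -H(\Poi(\lambda)) - \E[\log\varphi(N+U)] = \tfrac12\log(2\pi e\lambda) - H(\Poi(\lambda)) + \tfrac1{24\lambda}.
\end{align*}
Thus the first claim is equivalent to the entropy expansion $H(\Poi(\lambda)) = \tfrac12\log(2\pi e\lambda) - \tfrac1{12\lambda} + o(\lambda^{-1})$ as $\lambda\to\infty$.

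To obtain this expansion I would write $H(\Poi(\lambda)) = \lambda - \lambda\log\lambda + \E[\log(N!)]$ and split the expectation over the event $\{N\ge\lambda/2\}$ and its complement. The complement has probability $O(e^{-c\lambda})$ by a Chernoff bound, and there $0\le\log(N!)\le N^2$, so Cauchy--Schwarz together with the fourth moment of $N$ shows this part is $o(\lambda^{-1})$. On $\{N\ge\lambda/2\}$ I substitute Stirling's series $\log k! = k\log k - k + \tfrac12\log(2\pi k) + \tfrac1{12k} + O(k^{-2})$ and Taylor expand in the variable $(N-\lambda)/\sqrt\lambda$; evaluating the resulting expectations with the Poisson central moments $\E[(N-\lambda)^2]=\E[(N-\lambda)^3]=\lambda$ and $\E[(N-\lambda)^4]=3\lambda^2+\lambda$ produces the $-\tfrac1{12\lambda}$ term. (This expansion is also available in the literature on the Poisson entropy and could simply be cited.) Combined with the identity above, $\KL(N+U,Z) = \tfrac1{12\lambda} + \tfrac1{24\lambda} + o(\lambda^{-1}) = \tfrac1{8\lambda}(1+o(1))$.

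For the Hellinger bound, since $H$ is a metric and $H^2\le\KL$, the triangle inequality gives $H^2(N+U, Z_0) \le 2H^2(N+U,Z) + 2H^2(Z,Z_0) \le 2\KL(N+U,Z) + 2H^2(Z,Z_0)$, and the first term is $\tfrac1{4\lambda}(1+o(1))$ by the previous step. Since $Z\sim\mathcal N(\lambda,\lambda)$ and $Z_0\sim\mathcal N(\lambda,\lambda_0)$ share the same mean, $H^2(Z,Z_0) = 2\bigl(1-\bigl(2\sqrt{\lambda\lambda_0}/(\lambda+\lambda_0)\bigr)^{1/2}\bigr)$, and an elementary one-variable inequality, $1 - \bigl(2\sqrt t/(t+1)\bigr)^{1/2} \le (t-1)^2$ for all $t>0$, applied with $t=\lambda/\lambda_0$ yields $2H^2(Z,Z_0)\le 4(\lambda/\lambda_0-1)^2$. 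Adding the two estimates gives the claim. The main obstacle is the entropy expansion: one must carry the Stirling/Taylor analysis far enough to pin down the exact constant (equivalently the $\tfrac1{8\lambda}$ in the lemma) while rigorously excising the small-$k$ range where Stirling's formula degrades, which is precisely where the Poisson concentration bound is used.
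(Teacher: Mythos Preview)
Your proof is correct and follows essentially the same route as the paper: both reduce the $\KL$ identity to the Poisson entropy expansion $H(\Poi(\lambda))=\tfrac12\log(2\pi e\lambda)-\tfrac{1}{12\lambda}+o(\lambda^{-1})$ (the paper cites Knessl for this, while you sketch a Stirling/Taylor derivation), and both obtain the Hellinger bound via the triangle inequality, $H^2\le\KL$, and an elementary estimate for $H^2(Z,Z_0)$.
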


\begin{proof}
Denote the Lebesgue density of $N+U$ by $p$ and observe that on the interval $[k-\frac 12, k+\frac 12)$ this density equals $e^{-\lambda} \lambda^k/k!.$ Since $E[N+U] = \lambda,$ $\Var(N+U) =\Var(N)+\Var(U) = \lambda +\tfrac 1{12}$ and using the asymptotic expansion for the Poisson entropy (for instance Theorem 2 in \cite{Knessl1998}),
\begin{align*}
	KL(N+U, Z) 
	&= \sum_{k=0}^\infty\int_{k-1/2}^{k+1/2} \log\big( e^{-\lambda} \tfrac{\lambda^k}{k!} \sqrt{2\pi\lambda} e^{\frac 1{2\lambda} (x-\lambda)^2}  \big)p(x) dx \\
	&= \log (\sqrt{2\pi \lambda}) +\frac{1}{2\lambda} \big(\lambda + \frac 1{12}\big) + 
	\sum_{k=0}^\infty \log(e^{-\lambda} \lambda^k /k!) e^{-\lambda} \frac{\lambda^k}{k!} \\
	&=  \log (\sqrt{2\pi \lambda}) + \frac 1{2} +\frac 1{24\lambda} -\frac 12 \log(2\pi e\lambda) + \frac 1{12\lambda} +O\big(\lambda^{-2}) \\
	&= \frac{1}{8\lambda} (1+o(1))
\end{align*}
as $\lambda \rightarrow \infty.$ For the second statement, using that the Hellinger distance satisfies the triangle inequality and that the squared Hellinger distance is bounded by the Kullback-Leibler divergence (Lemma 2.4 of \cite{tsybakov2009}),
\begin{align*}
	H^2(N+U, Z_0) \leq 2 KL(N+U, Z) + 2 H^2(Z_0, Z) \leq \frac{1}{4\lambda} (1+o(1)) + 4 \Big(\frac{\lambda}{\lambda_0} - 1\Big)^2,
\end{align*}
where the bound for $H^2(Z_0, Z)$ follows from elementary computations.  
\end{proof}

If $N\sim \Poi(\lambda)$ and $N' \sim \Poi(\lambda')$ are independent, then $N | (N+N') \sim \Bin(N+N', \lambda/(\lambda+\lambda')),$ where $\Bin(m,p)$ denotes the binomial distribution with parameters $m$ and $0\leq p\leq 1.$ In experiment $\mE_{1,n}^P(\Theta^\beta(f_0)),$ the conditional distribution of the Poisson counts at resolution level $J+1$ given the Poisson counts at lower resolution levels $j\leq J$ is therefore 
\begin{align}
	N_{i,J+1,2k}| (N_{i,j,k})_{(i,j,k)\in \Lambda, 0\leq j\leq J} = N_{i,J+1,2k}|N_{i,J,k} \sim \Bin(N_{i,J,k}, p_{i,J,k})	
	\label{eq.NiJ+1k_cond}
\end{align}
with success probability
\begin{align}
	p_{i,J,k} := \frac{\int_{I_{i,J,k}^+} f(u) du}{\int_{I_{i,J,k}} f(u) du},
	\label{eq.pijk_id}
\end{align}
where $I_{i,j,k},$ $I_{i,j,k}^+$ are defined in Section \ref{sec.Haar_wavelets}. This property is tied to the Haar wavelet expansion and there is no natural extension to other wavelets or approximation schemes. In the corresponding Gaussian model $\mE_{1,n}^G(\Theta^\beta(f_0)),$ the observations are independent and normally distributed and therefore the conditional distributions are also normal. Working conditionally on lower resolution levels, we therefore need to couple binomial and Gaussian random variables. 

Notice that $p_{i,J,k} \approx 1/2$ with equality if $f$ is constant on $I_{i,J,k}.$ As in the Poisson case, we can make the distribution of $X_{m,p} \sim \Bin(m,p)$ continuous if we consider $X_{m,p}+U$ with $U$ uniform on $(-\tfrac 12, \frac 12]$ and independent of $X_{m,p}.$ Denote the c.d.f. of $X_{m,p}+U$ by $G_{m,p}$ and consider $\Phi^{-1} \circ G_{m,1/2}(X_{m,p}+U)$ with $\Phi^{-1}$ the quantile function of the standard normal distribution. The quantile transformation $\Phi^{-1} \circ G_{m,1/2}$ depends on $m$ but not on $p.$ Moreover, for $p=1/2,$ $\Phi^{-1} \circ G_{m,1/2}(X_{m,1/2}+U)\sim \mathcal N(0,1).$ For general $p$ this holds approximately and by Theorem 5 in \cite{brown2004}, 
\begin{align*}
	H^2\big(\mathcal{N}\big(\sqrt{m}(2p-1), 1\big), \Phi^{-1}\circ G_{m,1/2}(X_{m,p}+U) \big)
	\lesssim \big(p-\tfrac 12 \big)^2 + m\big(p-\tfrac 12\big)^4
\end{align*}
and the hidden constant does not depend on $m$ or $p.$ Using the triangle inequality and elementary computations, we obtain for any real number $\mu,$
\begin{align}
	H^2\big(\mathcal{N}\big(\mu, 1\big),  \Phi^{-1}\circ G_{m,1/2}(X_{m,p}+U) \big)
	\lesssim \big(\mu- \sqrt{m}(2p-1)\big)^2 + \big(p-\tfrac 12 \big)^2 + m\big(p-\tfrac 12\big)^4.
	\label{eq.Hell_bd_quantil_cpl}
\end{align}
Lemma \ref{lem.Hell_bd_scaling_coeffs} and \eqref{eq.Hell_bd_quantil_cpl} are used in the next section to bound the Le Cam distance between the sequence space experiments $\mE_{1,n}^P(\Theta^\beta(f_0))$ and $\mE_{1,n}^G(\Theta^\beta(f_0)).$

\subsection{Upper bound for the Le Cam distance between the Poisson and Gaussian sequence space experiments}
\label{sec.LeCam_for_seq_space_models}

In this section,  the proof of Theorem 3 in Brown et al. \cite{brown2004} is generalized to small densities. Recall that in experiment $\mE_{1,n}^P(\Theta^\beta(f_0))$ we observe the counts $(N_{i,j,k})_{(i,j,k) \in \Lambda, 0\leq j\leq \overline J+1}.$ Let $(U_{i,j,k})_{(i,j,k) \in \Lambda, 0\leq j\leq \overline J+1}$ be an i.i.d. sequence of uniform random variables on $(-\tfrac 12, \frac 12]$ which is independent of the Poisson counts.  Motivated by the previous section, define a new statistical experiment $\mE_{2,n}^P(\Theta^\beta(f_0)) = (\mathbb R^{s_n'}, \sigma(\mathbb R^{s_n'}), (\overline P_{2,f}^n: f\in \Theta^\beta(f_0)))$, where $\overline P_{2,f}^n$ is the distribution of the vector $(Z_{i,j,k})_{(i,j,k) \in \Lambda, 0\leq j\leq \overline J+1}$ with
\begin{align}
	Z_{i,-1,0} &:= N_{i,0,0}+U_{i,0,0}, \quad i=1,\ldots,m,  \notag \\
	Z_{i,j,k} & := \Phi^{-1}\circ G_{N_{i,j,k}, 1/2}(N_{i,j+1,2k}+U_{i,j,k}), \quad  (i,j,k) \in \Lambda, \ 0\leq j\leq \overline J.
	\label{eq.def_emp_wav_in_Poisson}
\end{align}
Since the function $\Phi^{-1} \circ G_{m, 1/2}$ is invertible, we can successively recover the Poisson counts $(N_{i,j,k})_{(i,j,k) \in \Lambda, 0\leq j\leq \overline J+1}$ from these observations and therefore
\begin{align*}
	\Delta \big( \mE_{1,n}^P(\Theta^\beta(f_0)), \mE_{2,n}^P(\Theta^\beta(f_0)) \big) =0.
\end{align*}
The experiment $\mE_{2,n}^P(\Theta^\beta(f_0))$ can now be compared to the Gaussian sequence experiment $\mE_{1,n}^G(\Theta^\beta(f_0)).$

\begin{prop}
\label{prop.seq_space_Le_Cam_bd}
Under the assumptions of Theorem \ref{thm.ub}, it holds that
\begin{align*}
	\Delta\big(\mE_{1,n}^G(\Theta^\beta(f_0)), \mE_{2,n}^P(\Theta^\beta(f_0))\big)^2 \lesssim n^{\frac{1-2\beta}{2\beta+1}}\  \int_0^1f_0(x)^{-\frac{2\beta+3}{2\beta+1}} dx.
\end{align*}
\end{prop}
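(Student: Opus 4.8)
The plan is to couple the observations in $\mE_{2,n}^P(\Theta^\beta(f_0))$ and $\mE_{1,n}^G(\Theta^\beta(f_0))$ on a common probability space, conditioning successively on resolution levels, and then use the elementary bound $\Delta(\cdot,\cdot)^2 \le \sup_f H^2(\cdot,\cdot)$ on the same probability space (the analogue of \eqref{eq.LeCam_ub_on_same_prob_space}) together with the subadditivity of the squared Hellinger distance under products/conditioning. Concretely, the Gaussian observations $(Z_{i,j,k}^*)$ are mutually independent, so the joint law $Q_{1,f}^n$ factorizes over $(i,j,k)$; for the Poisson side, the chain rule gives a factorization over resolution levels via the conditional laws \eqref{eq.NiJ+1k_cond}. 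I would therefore build the coupling level by level: at level $j=-1$ match $Z_{i,-1,0} = N_{i,0,0}+U_{i,0,0}$ against $Z_{i,-1,0}^* \sim \mathcal N(n\int_{x_{i-1}}^{x_i} f, n\Delta_i f_0(x_{i-1}))$ using the scaling-coefficient part of Lemma \ref{lem.Hell_bd_scaling_coeffs}, and at each level $0\le j\le \overline J$ match $Z_{i,j,k} = \Phi^{-1}\circ G_{N_{i,j,k},1/2}(N_{i,j+1,2k}+U_{i,j,k})$ against $Z_{i,j,k}^* \sim \mathcal N(\sqrt{n/f_0(x_{i-1})}\,d_{i,j,k},1)$ using the quantile-coupling bound \eqref{eq.Hell_bd_quantil_cpl} with $m = N_{i,j,k}$, $p = p_{i,J,k}$ of \eqref{eq.pijk_id}, and $\mu = \sqrt{n/f_0(x_{i-1})}\,d_{i,j,k}$.

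The bulk of the work is then bounding the sum over $\Lambda$ (and $0\le j\le \overline J$) of the three error terms appearing in \eqref{eq.Hell_bd_quantil_cpl}, namely $(\mu - \sqrt{m}(2p-1))^2$, $(p-\tfrac12)^2$, and $m(p-\tfrac12)^4$, plus the level-$(-1)$ contribution $1/\lambda + (\lambda/\lambda_0 - 1)^2$ from the second part of Lemma \ref{lem.Hell_bd_scaling_coeffs}. The key quantitative inputs are: (i) $p_{i,J,k} - \tfrac12 = d_{i,J+1,2k}/(2\sqrt{\Delta_i 2^{-J}}\int_{I_{i,J,k}} f)$, which by Lemma \ref{lem.wav_decay} and the two-sided bounds $\tfrac1{32}f_0 \le f \le 32 f_0$ together with \eqref{eq.loc_comp_of_fcts_in_lb} is of order at most $(\Delta_i 2^{-J})^{\beta}/f_0(x_{i-1})$; (ii) the definition of the doubly-local partition, $\Delta_i \asymp (f_0(x_{i-1})/n)^{1/(2\beta+1)}$; and (iii) that the number of $k$'s at level $J$ inside interval $i$ is $2^J$, while $\sum_i \Delta_i = 1$. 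Summing $(p-\tfrac12)^2$ over $k$ at a fixed level $J$ and interval $i$ gives $\asymp 2^J (\Delta_i 2^{-J})^{2\beta}/f_0(x_{i-1})^2 = \Delta_i^{2\beta} 2^{-J(2\beta-1)}/f_0(x_{i-1})^2$; summing the geometric series over $J\ge 0$ (which converges since $\beta>\tfrac12$) and then over $i$, and inserting $\Delta_i \asymp (f_0(x_{i-1})/n)^{1/(2\beta+1)}$, produces $\sum_i \Delta_i\, f_0(x_{i-1})^{-1}\big(f_0(x_{i-1})/n\big)^{(2\beta-1)/(2\beta+1)} \asymp n^{(1-2\beta)/(2\beta+1)}\int_0^1 f_0(x)^{-(2\beta+3)/(2\beta+1)}\,dx$, which is exactly the claimed rate; the $m(p-\tfrac12)^4$ and mean-mismatch terms I expect to contribute at the same order or smaller, and the level-$(-1)$ terms to be $o(n^{-1})$ by the same computation as in Proposition \ref{prop.step_fct_in_variance}.

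The main obstacle is the mean-matching term $(\mu - \sqrt{m}(2p-1))^2$ with the \emph{random} $m = N_{i,j,k}$: one must replace $\sqrt{N_{i,j,k}}$ by its ``expected'' value $\sqrt{n\int_{I_{i,j,k}} f}$ and control the stochastic fluctuation, so that $\sqrt{N_{i,j,k}}(2p_{i,j,k}-1) \approx \sqrt{n\int_{I_{i,j,k}}f}\cdot d_{i,j+1,2k}/(\sqrt{\Delta_i 2^{-j}}\int_{I_{i,j,k}}f)$, and then check this is close to $\mu = \sqrt{n/f_0(x_{i-1})}\,d_{i,j,k}$ — note also the coefficient index shifts from $(j,k)$ to $(j+1,2k)$, which is consistent because $\psi_{i,j,k}$ is $\pm$ a rescaling of $\phi$ on $I_{i,j,k}^{\pm} = I_{i,j+1,2k}, I_{i,j+1,2k+1}$. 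Controlling $\E[(\sqrt{N_{i,j,k}} - \sqrt{n\int f})^2]$ uses $\Var(N_{i,j,k}) = n\int_{I_{i,j,k}}f$ and the elementary inequality $(\sqrt a - \sqrt b)^2 \le (a-b)^2/b$; since $n\int_{I_{i,j,k}} f \asymp n\Delta_i 2^{-j} f_0(x_{i-1}) \gg 1$ at the relevant low levels (using $\inf_x f_0 \gg n^{-\beta/(\beta+1)}\log^8 n$), this fluctuation is lower-order. The bookkeeping — uniformity of the hidden constants over $f\in\Theta^\beta(f_0)$ and $f_0\in\Theta$, handling the highest level $j=\overline J$ separately, and assembling the per-coordinate Hellinger bounds into the product bound — is routine given the lemmas already established, so I would invoke Lemma \ref{lem.bds_of_info_distances} and the sub-additivity of squared Hellinger distance to finish.
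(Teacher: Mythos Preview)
Your approach is essentially the paper's: the level-by-level Hellinger telescoping mixing the Gaussian product structure with the Poisson conditional structure (the paper's \eqref{eq.Hell_decomp}), Lemma \ref{lem.Hell_bd_scaling_coeffs} at level $-1$, the quantile-coupling bound \eqref{eq.Hell_bd_quantil_cpl} at levels $J\ge 0$, and summation of the geometric series in $J$ using $\beta>1/2$. Two points need correction.

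First, your claim that the level-$(-1)$ contribution is $o(n^{-1})$ is wrong; it is exactly of the target order. With $\lambda_i=n\int_{x_{i-1}}^{x_i}f\asymp n\Delta_i f_0(x_{i-1})$ and $\Delta_i\asymp(f_0(x_{i-1})/n)^{1/(2\beta+1)}$, one has $1/\lambda_i\asymp n^{(1-2\beta)/(2\beta+1)}\Delta_i f_0(x_{i-1})^{-(2\beta+3)/(2\beta+1)}$, so $\sum_i 1/\lambda_i\asymp n^{(1-2\beta)/(2\beta+1)}\int_0^1 f_0^{-(2\beta+3)/(2\beta+1)}$ (this is precisely \eqref{eq.heuristic_rate_computation}); the $(\lambda/\lambda_0-1)^2$ term also lands at this rate after Cauchy--Schwarz, using the $\int(f-f_0)^4/f_0^3$ constraint in the definition of $\Theta^\beta(f_0)$. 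Note too that Proposition \ref{prop.step_fct_in_variance}, which you cite, gives this rate and not $o(n^{-1})$. This does not break your argument, since you already extract the rate from the $(p-\tfrac12)^2$ terms, but the bookkeeping and the intuition (Section \ref{sec.heuristics}) both say the scaling-coefficient level already carries the full rate.

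Second, for the random $m=N_{i,J,k}$: after the telescoping, the fluctuation term sits inside $\int\sqrt{p_{<J}q_{<J}}\,h$ with $h$ a function of the lower-level data (which determines $N_{i,J,k}$). The paper bounds this by $(\int h^2\,q_{<J})^{1/2}$ via Cauchy--Schwarz, which forces a \emph{fourth}-moment bound $E[(\sqrt{N}-\sqrt{EN})^4]\le 4$ (Lemma 3 of \cite{brown2004}) rather than the second-moment bound you sketch; your inequality $(\sqrt a-\sqrt b)^2\le(a-b)^2/b$ alone does not suffice here. Also, a small index slip: $2p_{i,J,k}-1=\sqrt{\Delta_i}\,d_{i,J,k}\big/\big(2^{J/2}\int_{I_{i,J,k}}f\big)$ involves $d_{i,J,k}$, not $d_{i,J+1,2k}$.
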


\begin{proof} Let us begin with some notation. Write $p_{<J}$ and $p_{=J}$ for the joint density of $(Z_{i,j,k}^{*})_{(i,j,k)\in \Lambda, -1\leq j<J}$ and $(Z_{i,j,k}^{*})_{(i,j,k)\in \Lambda, j=J}$ respectively. Similarly, $q_{<J}$ denotes the joint density of $(Z_{i,j,k})_{(i,j,k)\in \Lambda, -1\leq j<J}$ and $q_{=J | <J}$ the density of the conditional distribution $(Z_{i,j,k})_{(i,j,k)\in \Lambda, j=J} | (Z_{i,j,k})_{(i,j,k)\in \Lambda, -1\leq j<J}.$

The random variables $(Z_{i,j,k}^{*})$ are independent and thus their joint densities factor into products. Expanding the squared Hellinger distance in a telescoping sum and then using this fact,
\begin{align}
	H^2\big(p_{<\overline J+1}, q_{<\overline J+1}\big)
	& = 2\big( 1 - \int \sqrt{p_{<0} q_{<0}}) + 2 \sum_{0\leq J \leq \overline J} \big( \int \sqrt{p_{<J} q_{<J}} - \int \sqrt{p_{<J+1} q_{<J+1}} \big) \notag \\
	& = H^2\big(p_{<0}, q_{<0}\big) + 2 \sum_{0\leq  J \leq \overline J} \int \sqrt{p_{<J} q_{<J}} \big(1- \int \sqrt{p_{=J} q_{=J|<J}}\big) \notag \\
	& = H^2\big(p_{<0}, q_{<0}\big) + \sum_{0\leq  J \leq \overline J}  \int \sqrt{p_{<J} q_{<J}}  H^2(p_{=J} ,q_{=J|<J}) .
	\label{eq.Hell_decomp}
\end{align}
On the lowest resolution level $j=-1,$ the Gaussian and Poisson random variables are independent and so $H^2\big(p_{<0}, q_{<0}\big) \leq  \sum_{i=1}^m H^2(Z_{i,-1,0}^{*}, Z_{i,-1,0})$ (\cite{str}, Lemma 2.17). Together with \eqref{eq.loc_comp_of_fcts_in_lb} and Lemma \ref{lem.Hell_bd_scaling_coeffs} applied to $\lambda = n\int_{x_{i-1}}^{x_i} f(u) du$ and $\lambda_0 =n\int_{x_{i-1}}^{x_i} f_0(u) du$ (noting that $\lambda, \lambda_0 \rightarrow \infty$ since $\inf_{f\in \Theta} \inf_x f(x) \gg n^{-\frac{\beta}{\beta+1}}$),
\begin{align}
	H^2\big(p_{<0}, q_{<0}\big)
	 \leq \sum_{i=1}^m \frac{1}{n \int_{x_{i-1}}^{x_i} f} 
	 + 16\sum_{i=1}^m  \Big( \frac{\int_{x_{i-1}}^{x_i} f(x)-f_0(x) dx}{\Delta_i f_0(x_{i-1}) } \Big)^2  =: (I) + (II),
	\label{eq.ub_small_res}
\end{align}
where $(I)$ and $(II)$ denote the first and second terms respectively. Since $f\in \Theta^\beta(f_0)$ and using \eqref{eq.Deltam_ineqs} and \eqref{eq.loc_comp_of_fcts_in_lb}, we find that  $\int_{x_{i-1}}^{x_i} f \geq 2^{-6}\Delta_i f_0(x_{i-1}) \geq 2^{-6} \Delta_i^{-1} n^{-\frac 2{2\beta+1}} f_0(x_{i-1})^{\frac{2\beta+3}{2\beta+1}}.$ Applying \eqref{eq.loc_comp_of_fcts_in_lb} again yields
\begin{align}
	\frac{1}{n \int_{x_{i-1}}^{x_i} f} \leq 2^6 n^{\frac{1-2\beta}{2\beta+1}} \Delta_i f_0(x_{i-1})^{-\frac{2\beta+3}{2\beta+1}}
	\leq 2^6 2^{\frac{2\beta+3}{2\beta+1}}n^{\frac{1-2\beta}{2\beta+1}} \int_{x_{i-1}}^{x_i}f_0(x)^{-\frac{2\beta+3}{2\beta+1}}dx
	\label{eq.heuristic_rate_computation}
\end{align}
and therefore $(I) \lesssim n^{\frac{1-2\beta}{2\beta+1}} \int_0^1f_0(x)^{-\frac{2\beta+3}{2\beta+1}}dx.$ In order to bound the term $(II)$ in \eqref{eq.ub_small_res}, we use Jensen's inequality, that $ab \leq a^2 + b^2$ for real numbers $a,b,$ and \eqref{eq.loc_comp_of_fcts_in_lb},
\begin{align*}
	(II)
	&\leq 16 \sum_{i=1}^m \frac1{\sqrt{\Delta_if_0(x_{i-1})}} \Big(\int_{x_{i-1}}^{x_i} \frac{(f(x)-f_0(x))^4}{ f_0(x_{i-1})^3} dx\Big)^{1/2}\\
	&\leq  2^7 n\int_0^1   \frac{(f(x)-f_0(x))^4}{ f_0(x)^3} dx  + 16\sum_{i=1}^m \frac1{n\Delta_if_0(x_{i-1})}.
\end{align*}
For the first term we use $f\in \Theta^\beta(f_0)$ and for the second term we can argue as for $(I)$ to obtain the upper bound
\begin{align}
	H^2\big(p_{<0}, q_{<0}\big) \leq (I)+(II) \lesssim n^{\frac{1-2\beta}{2\beta+1}} \int_0^1 f_0(x)^{-\frac{2\beta+3}{2\beta+1}} dx.
	\label{eq.Hell_bd_scaling_coeffs}
\end{align}
We next bound the Hellinger distance $H^2(p_{=J} ,q_{=J|<J}).$ For that we show that conditional on the observations at the lower resolution levels $(Z_{i,j,k})_{(i,j,k)\in \Lambda, -1\leq j<J},$ the random vector $(Z_{i,j,k})_{(i,j,k)\in \Lambda, j=J}$ has independent components. From the definition \eqref{eq.def_emp_wav_in_Poisson}, we conclude that conditioning on $(Z_{i,j,k})_{(i,j,k)\in \Lambda, -1\leq j<J}$ is the same as conditioning on $(U_{i,j,k})_{(i,j,k)\in \Lambda, j< J}$ and the counts $(N_{i,j,k})_{(i,j,k)\in \Lambda, j\leq J}.$  Since$$N_{i,J+1,2k}| (N_{i,j,k})_{(i,j,k)\in \Lambda, j\leq J} = N_{i,J+1,2k}|N_{i,J,k}, \quad k=0,\ldots, 2^J-1,$$are independent, $(Z_{i,j,k})_{(i,j,k)\in \Lambda, j=J}|(Z_{i,j,k})_{(i,j,k)\in \Lambda, -1\leq j<J}$ must also have independent components. This shows that $H^2(p_{=J} ,q_{=J|<J})\leq \sum_{k=0}^{2^J-1} H^2(Z_{i,J,k}^{*}, Z_{i,J,k}).$ Using moreover \eqref{eq.NiJ+1k_cond}, \eqref{eq.pijk_id} and \eqref{eq.Hell_bd_quantil_cpl}, we can bound $H^2(p_{=J} ,q_{=J|<J})$ by
\begin{align*}
		& \sum_{k=0}^{2^J-1} H^2(Z_{i,J,k}^{*}, Z_{i,J,k})  \\
		&\lesssim \sum_{k=0}^{2^J-1} \big(E[Z_{i,J,k}^{*}] - N_{i,J,k}^{1/2} (2p_{i,J,k}-1)\big)^2
		+\big(p_{i,J,k}-\tfrac 12\big)^2 + N_{i,J,k}\big(p_{i,J,k}-\tfrac 12\big)^4 \\
		&\lesssim \sum_{k=0}^{2^J-1} \big(E[Z_{i,J,k}^{*}] - [EN_{i,J,k}]^{1/2} (2p_{i,J,k}-1)\big)^2
		+\big(1 + \big(N_{i,J,k}^{1/2} - [EN_{i,J,k}]^{1/2} \big)^2 \big)\big(p_{i,J,k}-\tfrac 12\big)^2 \\
		&\quad\quad\quad\quad+ N_{i,J,k}\big(p_{i,J,k}-\tfrac 12\big)^4.
\end{align*}
With this inequality, we can now bound $\int \sqrt{p_{<J} q_{<J}}  H^2(p_{=J} ,q_{=J|<J}).$ By the Cauchy-Schwarz inequality, $\int \sqrt{p_{<J} q_{<J}} \leq 1,$ which yields a bound for the terms not depending on $N_{i,J,k}.$ For the terms depending on $N_{i,J,k}$ we use that $\int \sqrt{p_{<J}(x) q_{<J}(x)} h(x) dx \leq (\int h^2(x) q_{<J}(x) dx)^{1/2}$ for all integrable functions $h.$ By Lemma 3 in \cite{brown2004}, $E\big[\big(N_{i,J,k}^{1/2} - [EN_{i,J,k}]^{1/2} \big)^4\big]\leq 4$ and therefore,
\begin{align}
	\int \sqrt{p_{<J} q_{<J}}  H^2(p_{=J} ,q_{=J|<J}) 
	&\lesssim \sum_{i=1}^m\sum_{k=0}^{2^J-1} \big(EZ_{i,J,k}^{*} - [EN_{i,J,k}]^{1/2} (2p_{i,J,k}-1)\big)^2 \notag \\
	&\quad + \sum_{i=1}^m\sum_{k=0}^{2^J-1} (p_{i,J,k}-\tfrac 12)^2 +\sum_{i=1}^m\sum_{k=0}^{2^J-1} \sqrt{EN_{i,J,k}^2}\big(p_{i,J,k}-\tfrac 12\big)^4 \notag  \\
	& =: (i) +(ii) +(iii).
	\label{eq.high_freq_Hell_bd}
\end{align} 
We bound the three sums $(i)-(iii)$ separately. We will frequently use the fact that with $d_{i,J,k} = \int f(x) \psi_{i,J,k}(x) dx,$ \eqref{eq.pijk_id} can be rewritten as
\begin{align*}
	2p_{i,J,k} - 1  = \frac{\sqrt{\Delta_i}d_{i,J,k}}{2^{\frac J2} \int_{I_{i,J,k}} f(x) dx}.
\end{align*}

{\it (i):}  Observe that 
\begin{align*}
		\big(E[Z_{i,J,k}^{*}] - [EN_{i,J,k}]^{1/2} (2p_{i,J,k}-1)\big)^2
	&= n d_{i,J,k}^2\Big( \frac 1{\sqrt{f_0(x_{i-1})}} - \frac{\sqrt{\Delta_i} 2^{-J/2}}{\sqrt{\int_{I_{i,J,k}} f(x) dx}}\Big)^2.
\end{align*}
With $f\in \Theta^\beta(f_0)\subset \mH^\beta(R)$ for $\beta \leq 1,$ \eqref{eq.loc_comp_of_fcts_in_lb}, Jensen's inequality, $ab \leq a^2+b^2,$ and Lemma \ref{lem.wav_decay}, the right hand side of the last display can be bounded by
\begin{align*}
	& 2^6 n d_{i,J,k}^2 \frac{ \big(2^{J} \Delta_i^{-1}\int_{I_{i,J,k}} f(x) -f_0(x_{i-1}) dx \big)^2}{ f_0(x_{i-1})^3} \\
	&\leq 2^7 n d_{i,J,k}^2 \frac{ \big(2^{J} \Delta_i^{-1}\int_{I_{i,J,k}} f(x) -f_0(x) dx \big)^2+R^2 (2^{-J}\Delta_i)^{2\beta}}{ f_0(x_{i-1})^3} \\
	&\leq 2^7 n d_{i,J,k}^2 \frac{ \big(2^{J} \Delta_i^{-1}\int_{I_{i,J,k}} (f(x) -f_0(x))^4 dx \big)^{1/2}+R^2 (2^{-J}\Delta_i)^{2\beta}}{ f_0(x_{i-1})^3} \\
	&\leq 2^8R^4 n \frac{ 2^{-2J\beta}\Delta_i^{4\beta+1}}{f_0(x_{i-1})^3}
	+ 2^{10}2^{J-2J\beta} n \int_{I_{i,J,k}}\frac{(f(x)-f_0(x))^4}{f_0(x)^3} dx.
\end{align*}
Using that $f\in \Theta^\beta(f_0),$ \eqref{eq.Deltam_ineqs} and \eqref{eq.loc_comp_of_fcts_in_lb},
\begin{align*}
	(i) 
	&\lesssim 2^{J-2J\beta} n^{\frac{1-2\beta}{2\beta+1}} \int_0^1 f_0(x)^{-\frac{2\beta+3}{2\beta+1}} dx.
\end{align*}

{\it (ii):}  With $f\in \Theta^\beta(f_0),$  \eqref{eq.loc_comp_of_fcts_in_lb} Lemma \ref{lem.wav_decay}, and \eqref{eq.Deltam_ineqs},
\begin{align*}
	(2p_{i,J,k}-1)^2 \leq 2^{12} R^2 2^{-2J\beta} \frac{\Delta_i^{2\beta}}{f_0(x_{i-1})^2}
	\leq   2^{12} 3^{2\beta-1} R^2 2^{-2J\beta} n^{\frac{1-2\beta}{2\beta+1}}\Delta_i f_0(x_{i-1})^{-\frac{2\beta+3}{2\beta+1}} .
\end{align*}
Thus $(ii) \lesssim 2^{J-2J\beta} n^{\frac{1-2\beta}{2\beta+1}} \int_0^1 f_0(x)^{-\frac{2\beta+3}{2\beta+1}} dx.$ 

{\it (iii):} Since $N_{i,J,k}\sim \Poi(n\int_{I_{i,J,k}} f(u) du),$ we have $[EN_{i,J,k}^2]^{1/2} \leq 1+ n\int_{I_{i,J,k}} f(u) du.$ By definition $0 \leq p_{i,J,k}\leq 1$ and therefore  $(2p_{i,J,k}-1)^4\leq (2p_{i,J,k}-1)^2.$ Using \eqref{eq.loc_comp_of_fcts_in_lb} and the same bound as for $(ii),$
\begin{align*}
	[EN_{i,J,k}^2]^{1/2} \ (2p_{i,J,k}-1)^4 
	&\leq (2p_{i,J,k}-1)^2+2^{25} 3^{4\beta} R^4 2^{-(4\beta+1)J} n^{\frac{1-2\beta}{2\beta+1}} \Delta_i f_0(x_{i-1})^{-\frac{2\beta+3}{2\beta+1}} .
\end{align*}
Together with the bound for $(ii),$ this also shows that $(iii) \lesssim 2^{J-2J\beta} n^{\frac{1-2\beta}{2\beta+1}} \int_0^1 f_0(x)^{-\frac{2\beta+3}{2\beta+1}} dx.$

Combining the bounds for $(i)-(iii)$ gives for \eqref{eq.high_freq_Hell_bd}, $$\int \sqrt{p_{<J} q_{<J}}  H^2(p_{=J} ,q_{=J|<J}) \lesssim 2^{J-2J\beta} n^{\frac{1-2\beta}{2\beta+3}} \int_0^1 f_0(x)^{-\frac{2\beta+3}{2\beta+1}} dx.$$ Summing over $J$ and using that $\beta>1/2$ shows that with \eqref{eq.Hell_decomp} and \eqref{eq.Hell_bd_scaling_coeffs},
\begin{align*}
	H^2\big(p_{\overline J}, q_{\overline J}\big) \lesssim n^{\frac{1-2\beta}{2\beta+1}} \int_0^1 f_0(x)^{-\frac{2\beta+3}{2\beta+1}} dx,
\end{align*}
which proves the assertion.
\end{proof}

\subsection{Completion of the proof of Theorem \ref{thm.ub}}
\label{sec.compl_proof_ub}

From Propositions \ref{prop.mEP_mE1P}-\ref{prop.seq_space_Le_Cam_bd}, we deduce that under the assumptions of Theorem \ref{thm.ub}, 
\begin{align*}
	\sup_{f_0\in \Theta} \Delta \big( \mE_n^P(\Theta^\beta(f_0)), \mE_n^G(\Theta^\beta(f_0)) \big) 
	\lesssim n^{\frac{1-2\beta}{2\beta+1}} \sup_{f\in \Theta} \int_0^1 f(x)^{-\frac{2\beta+3}{2\beta+1}} dx.
\end{align*}
For the globalization step, the following result shows the existence of the required estimators satisfying the conditions of Lemma \ref{lem.localization_bd}.

\begin{thm}
\label{thm.glob2}
Under the assumptions of Theorem \ref{thm.ub}, there exists an estimator $\widehat f_n$ in $\mE_n^P(\Theta)$ taking values in a finite subset of $\Theta$ and satisfying
\begin{align*}
	\inf_{f_0\in \Theta} \overline P_{f_0}^n\big( f_0 \in \Theta^\beta(\widehat f_n) \big)  = 1 -O(n^{-1}).
\end{align*}
Moreover, there exists an estimator in $\mE_n^G(\Theta)$ with the same properties.
\end{thm}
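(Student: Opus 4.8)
We argue for $\mE_n^P(\Theta)$; the Gaussian white noise case is completely analogous and, if anything, slightly easier, since there the noise is exactly Gaussian. The skeleton is the same as behind Theorem~\ref{thm.globalization_Poissonization} (see Appendix~\ref{sec.global}): first build a spatially adaptive preliminary estimator $\widetilde f_n$ attaining the pointwise rate \eqref{eq.ptw_conv_rate}, then select from a fixed finite net of $\Theta$ the element ``closest'' to $\widetilde f_n$ in a metric tailored to the definition of $\Theta^\beta(\cdot)$, and finally verify the two defining inequalities of $\Theta^\beta(\widehat f_n)$ on an event of probability $1-O(n^{-1})$. The new feature compared with $\Theta_1^\beta$ is that the second inequality defining $\Theta^\beta(f_0)$ is an $L^4$-type integral with a \emph{fixed} constant, so one needs not merely the pointwise rate but an $L^4$-risk bound with the correct constant, upgraded via concentration.

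\emph{Preliminary estimator.} Using that $\beta$ is known and that $\mE_n^P(\Theta)$ amounts to observing $X_1,\dots,X_N$ with $N\sim\Poi(n)$ i.i.d.\ from $f$, take for $\widetilde f_n$ a local histogram (or local polynomial) estimator with a Lepski-type data-driven choice of the local bandwidth, as in \cite{patschkowski2016,ray2016IP}. One shows, uniformly over $f_0\in\Theta\subset\mH^\beta(R)$, the existence of an event $\mA_n$ with $\overline{P}_{f_0}^n(\mA_n)\ge 1-O(n^{-1})$ on which (a) $|\widetilde f_n(x)-f_0(x)|\lesssim\sqrt{\log n}\,[(f_0(x)/n)^{\beta/(2\beta+1)}+n^{-\beta/(\beta+1)}]$ for all $x$ --- a Bernstein inequality for the Poisson count in each bin, a union bound over the $O(n)$ bins, and H\"older continuity between grid points --- and (b) $n\int_0^1(\widetilde f_n(x)-f_0(x))^4 f_0(x)^{-3}\,dx\le c\, n^{\frac{1-2\beta}{2\beta+1}}\int_0^1 f_0(x)^{-\frac{2\beta+3}{2\beta+1}}\,dx$ for a fixed $c=c(R,\beta)$. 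The \emph{expectation} version of (b) is an elementary bias--variance computation in the spirit of the estimates around \eqref{eq.heuristic_rate_computation}: the squared bias at $x$ is of order $(f_0(x)/n)^{2\beta/(2\beta+1)}$ (for $\beta\le1$ this already follows from $f_0\in\mC^\beta(R)$; the flatness seminorm would be needed for $\beta>1$), and $\E[(\Poi(\mu)-\mu)^4]\asymp\mu^2$, so $\E[n\int(\widetilde f_n-f_0)^4 f_0^{-3}]\lesssim n^{\frac{1-2\beta}{2\beta+1}}\int f_0^{-\frac{2\beta+3}{2\beta+1}}$.

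\emph{Concentration and discretization.} To obtain (b) on $\mA_n$ (not merely in mean) with a fixed constant, write its left-hand side as a sum $\sum_j T_j$ of \emph{independent} nonnegative contributions (Poisson process restricted to the disjoint bins), with $\E T_j\asymp\mu_j^{-1}$ where $\mu_j$ is the mean count in bin $j$, and note $\sum_j\E T_j\asymp n^{\frac{1-2\beta}{2\beta+1}}\int f_0^{-\frac{2\beta+3}{2\beta+1}}$. The assumption $\inf_{f\in\Theta}\inf_x f(x)\gg n^{-\beta/(\beta+1)}\log^8 n$ enters twice: every $\mu_j\gg\log^8 n$, so each $T_j$ is sub-Weibull with scale $\mu_j^{-1}$ and a stretched-exponential upper tail; and, through the H\"older constraint, the sublevel sets of $f_0$ --- the only region where the $T_j$ are large --- span polynomially many bins, so that a single ``flat'' bin is negligible relative to the total while the dominant part of $\sum_j T_j$ is a sum of many independent terms. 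A Bernstein/Fuk--Nagaev bound then gives a deviation probability $o(n^{-1})$. For the discretization, $\mH^\beta(R)$ is relatively compact in $(\mC[0,1],\|\cdot\|_\infty)$ by Arzel\`a--Ascoli, so $\Theta$ admits a finite $n^{-10}$-net $\mF_n\subset\Theta$; set
\begin{align*}
\widehat f_n := \argmin_{g\in\mF_n\,:\ \frac1{16}\widetilde f_n\,\le\, g\,\le\, 16\widetilde f_n}\ n\int_0^1\frac{(\widetilde f_n(x)-g(x))^4}{g(x)^3}\,dx,
\end{align*}
and put $\widehat f_n$ equal to a fixed element of $\Theta$ on the (sub-$\mA_n^c$) event that the feasible set is empty. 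Then $\widehat f_n$ takes finitely many values in $\Theta$, and on $\mA_n$ (where $\tfrac12 f_0\le\widetilde f_n\le2f_0$ by (a) and the margin) the feasible set contains a net point $g^*$ with $\|g^*-f_0\|_\infty\le n^{-10}$; hence $\tfrac1{32}\widehat f_n\le f_0\le32\widehat f_n$, and $n\int(f_0-\widehat f_n)^4\widehat f_n^{-3}$ is, up to constants, at most $n\int(\widetilde f_n-g^*)^4(g^*)^{-3}+n\int(\widetilde f_n-f_0)^4 f_0^{-3}$, both $\lesssim n^{\frac{1-2\beta}{2\beta+1}}\int f_0^{-\frac{2\beta+3}{2\beta+1}}$ by (b) and $\|g^*-f_0\|_\infty\le n^{-10}$. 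Choosing $C$ in the definition of $\Theta^\beta(\cdot)$ larger than the implied constants yields $f_0\in\Theta^\beta(\widehat f_n)$ on $\mA_n$. The Gaussian case is identical, with $\widetilde f_n$ built from local averages of the increments $dY_t$ (after squaring and subtracting the variance term), the fluctuations now being exactly Gaussian.

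\emph{Main obstacle.} The delicate point is the concentration step: the $T_j$ behave like fourth powers of nearly-Gaussian variables, so an individual $T_j$ exceeds a constant multiple of its mean with only \emph{constant} probability. Obtaining the fixed constant $C$ in $\Theta^\beta(\cdot)$ --- rather than one inflated by a power of $\log n$, which is all a naive use of the pointwise bound (a) would give --- forces one to exploit precisely how the margin $\inf_x f_0(x)\gg n^{-\beta/(\beta+1)}\log^8 n$ interacts with the H\"older smoothness to guarantee enough independent ``effective'' terms in $\sum_j T_j$. Everything else --- the pointwise rate (a), the $L^4$-risk bound (b) in mean, and the net selection --- is routine.
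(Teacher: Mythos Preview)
Your outline matches the paper's proof closely: preliminary pointwise estimator, $L^4$-integral concentration, then projection onto a finite net. The paper's execution differs in one structural point you should make explicit. It \emph{splits the sample}: the first half builds $\widehat f_{1n}$ (via Theorem~\ref{thm.globalization_Poissonization}), which in turn determines a grid $(\widehat x_i)$ with spacing $(\widehat f_{1n}(\widehat x_{i-1})/n)^{1/(2\beta+1)}$; the second half then yields the histogram $\widehat f_{2n}$ on this grid. Conditioning on the first half, the bin counts $N_i'$ are genuinely independent Poissons, and the $L^4$-integral decomposes as $\sum_i \omega_i \eta_i^4$ with $\eta_i$ independent standardized Poissons and $\omega_i \asymp (n\widehat\Delta_i f_0(\widehat x_{i-1}))^{-1}$. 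Your claim that the $T_j$ are independent is not automatic if the bins come from a Lepski rule applied to the \emph{same} data; the two-stage split is exactly what buys this independence, and without it your concentration step does not go through as written.

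For the concentration itself, the paper uses a Rosenthal-type moment inequality (Lemma~\ref{lem.exp_ineq_glob}) rather than Bernstein/Fuk--Nagaev, applied to $\sum_i \omega_i(\eta_i^4 - E\eta_i^4)$. The key condition to verify is $\|\omega\|_\infty \log^5 n \lesssim \sum_i \omega_i$, and this is precisely where both the H\"older constraint and the margin $\inf_x f_0(x)\gg n^{-\beta/(\beta+1)}\log^8 n$ enter, via the argument you sketch: near the minimum of $f_0$ there are at least $\gtrsim \log^5 n$ grid points (Lemma~\ref{lem.fx_local_bd} plus counting), so the largest weight is comparable to at least $\log^5 n$ others. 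Your identification of this as the main obstacle is exactly right; the paper also needs Lemma~\ref{lem.integral_comp}(ii) to control $\|\omega\|_2$. The net radius ($n^{-2}$ in the paper versus your $n^{-10}$) and the selection rule (any feasible center versus your argmin) are inessential differences.
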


Theorem \ref{thm.ub} then follows from Lemmas \ref{lem.localization_bd} and \ref{lem.sample_splitting}.

\section{Heuristics for the rates of the Le Cam deficiencies}
\label{sec.heuristics}

Most results on asymptotic equivalence require minimal smoothness assumptions, which are often difficult to explain heuristically. It is therefore unsurprising that the rates we obtain for the Le Cam deficiencies can also be difficult to interpret. Perhaps the best way to motivate these rates is to consider the doubly local decomposition of experiments explained in Section \ref{sec.Haar_wavelets}. Each doubly local experiment is similar to a parametric problem and the number $m_n$ of such experiments can be viewed as the effective dimension of the problem. For the following heuristic argument, one should think of the total variation distance as always being of the same order as the Hellinger distance, which is typically the case in our situation. If the double localization splits the model into (nearly) independent subproblems, then the overall squared Le Cam deficiency is simply the sum of the squared Le Cam deficiencies for each of the doubly local experiments. 

While we do not use a double localization for the Poissonization in the proof of Theorem \ref{thm.ub_DE_PIE}, it is still instructive to consider such an approach. For Poissonization, the Le Cam deficiencies for the doubly local experiments are all of the same order and the full Le Cam deficiency is therefore proportional to the effective dimension $m_n.$ For simplicity, we consider only the case $\inf_x f(x) \gg n^{-\beta/(\beta+1)}.$ Let $\Delta_j$ be as in \eqref{eq.Deltai_def}. By Lemma \ref{lem.fx_local_bd},
\begin{align*}
	m_n = \sum_{j=1}^{m_n} \Delta_j \Delta_j^{-1} \asymp n^{\frac 1{2\beta+1}} \int_0^1  f_0(x)^{-\frac 1{2\beta+1}} dx.
\end{align*}
If $\inf_x f(x) \gg n^{-\beta/(\beta+1)}$ then the squared rate in Theorem \ref{thm.Poissonization} can be written as $m_n/n,$ up to unnecessary $\log n$ terms. The squared rate can therefore also be written as $m_n r_n^2/n^2$ for $r_n = \sqrt{n}$, which motivates \eqref{eq.rate_Poissonization_general}. 


For the rate of the Le Cam deficiencies between Poisson intensity estimation and the Gaussian white noise model, recall that we partition $[0,1]$ into the intervals $[x_{j-1},x_j]$, with these shrinking intervals generating appropriate doubly local subexperiments. On each such independent subexperiment, we must couple a Poisson random variable with intensity parameter $\lambda_j = n\int_{x_{j-1}}^{x_j} f(u) du$ with a corresponding $\mathcal{N}(\lambda_j, \lambda_j)$ random variable. Since $\lambda_j \rightarrow \infty$ as $n\rightarrow \infty$, we may use Lemma \ref{lem.Hell_bd_scaling_coeffs} to couple a $\Poi (\lambda_j)$ random variable with a $\mathcal{N}(\lambda_j,\lambda_j)$ variable with a squared Hellinger error of size $1/(4\lambda_j) + o(1/\lambda_j)$. Using the independence structure of the subexperiments, these $m_n$ couplings yield a total squared Hellinger loss of order
\begin{align*}
	\sum_{j=1}^{m_n} \frac{1}{\lambda_j} \asymp 	1\wedge n^{\frac{1-2\beta}{2\beta+1}} \int_0^1 f_0(x) ^{-\frac{2\beta+3}{2\beta+1}} dx,
\end{align*}
where the $\lesssim $ direction follows from \eqref{eq.heuristic_rate_computation} and the $\gtrsim$ part can be similarly deduced. This motivates the rate \eqref{eq.rate_in_intro}.


Of course, this decomposition into piecewise constant functions with $m_n$ pieces is too crude, and represents only the first resolution level of a much finer $L^2$-decomposition based on Haar wavelets, which is used to prove Theorem \ref{thm.ub} in Section \ref{sec.PIE_GWN}. However, it provides some insight into why the rate occurs already at low resolution levels without the full technical encumbrance of the higher order remainder terms, which are dealt with using quantile transformations in Section \ref{sec.LeCam_for_seq_space_models}.

\section{Lower bounds for Le Cam deficiencies in the regular regime}
\label{sec.lbs_regular}

In this section, we prove Theorems \ref{thm.lb} and \ref{thm.lb2}. The difference in the Bayes risk for an arbitrary prior and loss function bounded by one yields a lower bound for the Le Cam deficiency. Let  $\mE_1(\Theta)$ and $\mE_2(\Theta)$ be two experiments. If $ E_{\theta}^{(j)}[\ell(\widehat \theta_j, \theta)],$ $j=1,2,$ denotes the risk in experiment $\mE_j(\Theta)$ of the estimator $\widehat \theta_j$ with respect to the loss function $\ell,$ then
\begin{align*}
	\delta\big( \mE_1(\Theta),\mE_2(\Theta) \big)
	\geq
	&\inf_{\widehat \theta_1} \ \sup_{\widehat \theta_2}\ \sup_{\theta \in \Theta} \
	E_{\theta}^{(1)}[\ell(\widehat \theta_1, \theta)] - E_{\theta}^{(2)}[\ell(\widehat \theta_2, \theta)]
\end{align*}
provided the loss is bounded by one (see Definition 1 in \cite{LeCamY2000}, p.13). This immediately implies that for an arbitrary prior $\Pi$ on $\Theta,$
\begin{align}
	\delta\big( \mE_1(\Theta),\mE_2(\Theta) \big)
	\geq
	&\inf_{\widehat \theta_1} \ \sup_{\widehat \theta_2}\ \int_{\Theta}
	E_{\theta}^{(1)}[\ell(\widehat \theta_1, \theta)] d\Pi(\theta) -  \int_{\Theta}E_{\theta}^{(2)}[\ell(\widehat \theta_2, \theta)] d\Pi(\theta)
	\label{eq.deficiency_lb_by_diff_BR}
\end{align}
and the right hand side is just the difference of the Bayes risks (see also \cite{Torgersen1991}, Corollary 6.3.7). 

We argued in Section \ref{sec.Haar_wavelets} that the doubly local decomposition is intrinsic to this problem. For the lower bound, it is thus natural to again partition $[0,1]$ into the intervals $[x_{j-1},x_j].$ On each such doubly local experiment we construct a two hypothesis test, which are then combined into a global multiple testing problem. We compute the Bayes risk in both experiments, which, together with \eqref{eq.deficiency_lb_by_diff_BR}, provides a lower bound on the deficiencies.

\begin{proof}[Proof of Theorem \ref{thm.lb}] Throughout the proof, we write $a_n \lesssim b_n$ if $a_n \leq C b_n$ for all $n\geq n_0$ and a finite constant $C=C(\beta,R)$ which does not depend on $j$ and the parameter $\alpha$ defined below. In the same way we use $\gtrsim$ and the big-O notation.

Pick a sequence $(f_{0n})_n \subset \Theta_0$ such that $\int f_{0n}(x)^{-\frac{2\beta+3}{2\beta+1}} dx \geq \tfrac{1}{2} \sup_{f\in \Theta_0} \int f(x)^{-\frac{2\beta+3}{2\beta+1}} dx.$ For convenience we omit the dependence of $f_{0n}$ on $n,$ writing $f_0:=f_{0n}$ and $F_0 := \int_0^\cdot f_0(u) du.$

Set $\beta'=\beta \vee 2.$ Let $K:\mathbb{R}\rightarrow \mathbb{R}$ be a $\beta'$-smooth H\"older function with support on $[0,1]$ such that $\int_0^1 K(u) du=0,$ $\int K(u)^2 du =1$ and $\int K^3(u) du >0.$ Suppose additionally that $K'(u)=0$ for only finitely many $u \in [0,1]$. As an example of a kernel satisfying these conditions, consider the $L^2$-normalized version of $u\mapsto - \tfrac 43 h_{\beta'}(\tfrac 43 u) + 4 h_{\beta'}(4u-3),$ where $h_{\beta'}$ is the density of a Beta$(\beta'+1,\beta'+1)$ distribution. 

Let $(x_j)_{j=1,\ldots,m}$ be the sequence in \eqref{eq.xi_def} and define the functions
\begin{align}
	x \mapsto \psi_j(x) =  \frac{\alpha \gamma_j \Delta_j^\beta}{f_0(x_{j-1})}  K\Big(\frac{F_0(x)-F_0(x_{j-1})}{F_j}\Big), \quad j=1,\ldots,m,
	\label{eq.def_psi_lb}
\end{align}
where
\begin{align*}
	F_j:=F_0(x_j)-F_0(x_{j-1}), \quad     \gamma_j := \frac{f_0(x_{j-1})}{\sqrt{n \Delta_j^{2\beta} F_j}}
\end{align*}
and $0<\alpha \leq 1$ is a constant that will be chosen later to be small enough. The function $\psi_j$ has support $[x_{j-1}, x_j]$ and, since by assumption $\inf_{x_0} f_0(x_0) \gg n^{-\frac{\beta}{\beta+1}},$ we can apply \eqref{eq.loc_comp_of_fcts_in_lb} and \eqref{eq.Deltam_ineqs} to obtain 
\begin{align}
	\frac{1}{2}\Delta_j f_0(x_{j-1}) \leq F_j \leq 2\Delta_j f_0(x_{j-1})  \quad \text{and} \quad \frac 13 \leq \gamma_j^2 \leq 2 .
	\label{eq.Fj_gammaj_ineqs}
\end{align} 
Since $\inf_{x} f_0(x) \gg n^{-\frac{\beta}{\beta+1}},$ this also implies
\begin{align}
	\min_j nF_j \rightarrow \infty \ \ \text{and} \ \ \ 
	\max_{j=1,\ldots,m} \|\psi_j\|_{\infty} 
	\lesssim \max_{j=1,\ldots,m} \frac{\alpha \Delta_j^\beta}{f_0(x_{j-1})} 
	\lesssim  \max_{j=1,\ldots,m} \frac{\alpha }{\sqrt{nF_j}} 
	\ll \alpha.
	\label{eq.sup_norm_phi_j}
\end{align}
Define
\begin{align}
	\mu_{j,r} := \int \psi_j(x)^r f_0(x) dx
	\label{eq.mu_jr}
\end{align}
and observe that using the properties of $K$ as well as the definitions of $\Delta_j$ and $\gamma_j,$ $\mu_{j,1}=0,$ $\mu_{j,2}= \alpha^2n^{-1}$ and 
\begin{align}
	\mu_{j,3}
	= \frac{\alpha^3 \gamma_j^3 \Delta_j^{3\beta}}{f_0(x_{j-1})^3} F_j \int K^3(u) du
	\gtrsim \frac{\alpha^3}{n^{3/2}\sqrt{\Delta_j f_0(x_{j-1})}}.
	\label{eq.muj3_lb}
\end{align}
For higher moments, we frequently use the bound 
\begin{align}
	\mu_{j,r}\leq \|\psi_j\|_{\infty}^r F_j \lesssim \alpha^r F_j/(nF_j)^{r/2}.
	\label{eq.mujr_higher_moments}
\end{align}

We are now ready to define the test densities. For $\theta =(\theta_1, \ldots,\theta_m) \in \{-1, 1\}^m,$ consider
\begin{align*}
	x\mapsto f_\theta (x) = f_0 (x)
	\big( 1 +\sum_{j=1}^m \theta_j  \psi_j(x) \big).
\end{align*} 
From $\mu_{j,1}=0$ it follows that $\int f_\theta(x) dx =1$ and so $f_\theta$ are indeed probability densities. Observe also that $F_j = \int_{x_{j-1}}^{x_j} f_\theta (x) dx.$ With the sup-norm bound \eqref{eq.sup_norm_phi_j}, it follows immediately that for any $\theta  \in \{-1, 1\}^m,$ $f_\theta \in \mathcal{U}(f_0) \subset \Theta.$ By Lemma \ref{lem.test_fcts_in_Hoelder_ball}, we also know that $f_\theta \in \mathcal{H}^\beta(R)$ for all $\theta \in \{-1,1\}^m$ and $n$ large enough.

We now construct a prior on these densities. Renaming the parameters $f_\theta \leftrightarrow \theta,$ we can take $\{-1, 1\}^m$ as the parameter space and may also conveniently write $P_\theta^n = P_{f_\theta}^n$ and $Q_\theta^n = Q_{f_\theta}^n.$ We consider two priors called $\pi_+$ and $\pi_-,$ which are product priors on the parameter space $\{-1, 1\}^m,$ that is for each $\theta_0 =(\theta_1^0, \ldots,\theta_m^0) \in \{-1, 1\}^m,$
\begin{align*}
	\pi_\pm(\theta_0) = \prod_{j=1}^m \pi_\pm(\theta_j^0), \quad \text{with} \ \ \  \pi_\pm(\theta_j^0=1) = 1- \pi_\pm(\theta_j^0=-1) = e^{\pm 2\alpha}/(1+e^{\pm 2\alpha}).
\end{align*}
This prior is non-uniform. Indeed, $\pi_+$ assigns more weight to vectors which have more components being $+1$ than $-1.$ Both experiments behave very similarly under uniform priors and non-uniformity seems necessary here to obtain a rate-optimal separation of the experiments. The effect of $\alpha$ can best be seen in Proposition \ref{prop.pjqj_expansion} below. The priors $\pi_+$ and $\pi_-$ will lead to the lower bounds for the deficiencies  $\delta ( \mE_n^P(\Theta),\mE_n^G(\Theta))$ and $\delta ( \mE_n^G(\Theta),\mE_n^P(\Theta))$ respectively.
  
Next we construct the loss function. Observe that with \eqref{eq.Deltai_def}, \eqref{eq.loc_comp_of_fcts_in_lb}, \eqref{eq.Deltam_ineqs} and \eqref{eq.Fj_gammaj_ineqs},
\begin{align}
	\sum_{j=1}^m \frac 1{nF_j} = \sum_{j=1}^m \Delta_j \frac{1}{n \Delta_j F_j} \asymp n^{\frac{1-2\beta}{2\beta+1}}\int_0^1 f_0(x)^{-\frac{2\beta+3}{2\beta+1}} dx.
		\label{eq.nFj_int}
\end{align} 
Since $f_0$ is a density on $[0,1],$ $\{x : f_0(x) \geq 1\} \neq \varnothing.$ Let $[x_{j_{1n}},x_{j_{2n}}] \subset [0,1],$ $j_{1n}, j_{2n}\in \{1,\ldots,m\},$ be a sequence of intervals such that $[x_{j_{1n}},x_{j_{2n}}] \cap \{x : f_0(x) \geq 1\} \neq \varnothing$ for all $n$ and
\begin{align}
	n^{\frac{1-2\beta}{2\beta+1}}\int_{x_{j_{1n}}}^{x_{j_{2n}}} f_0(x)^{-\frac{2\beta+3}{2\beta+1}} dx \asymp 1 \wedge n^{\frac{1-2\beta}{2\beta+1}}\int_0^1 f_0(x)^{-\frac{2\beta+3}{2\beta+1}} dx.
		\label{eq.xj1xj2_def}
\end{align}
If the right-hand side is is smaller than one, set $[x_{j_{1n}},x_{j_{2n}}] = [x_0,x_m]=[0,1]$. If the right-hand side is exactly one, then arguing as in \eqref{eq.nFj_int} yields $n^{\frac{1-2\beta}{2\beta+1}}\int_{x_{j-1}}^{x_j} f_0(x)^{-\frac{2\beta+3}{2\beta+1}} dx \asymp 1/(nF_j)$ for all $j.$ By \eqref{eq.sup_norm_phi_j}, each interval $[x_{j-1},x_j]$ thus makes a vanishing contribution to the integral, which proves the existence of sequences satisfying \eqref{eq.xj1xj2_def}. Let
\begin{align}
	\rho(\theta, \theta') = \sum_{j=1}^m \rho_j \mathbf{1}(\theta_j \neq \theta_j') \quad \text{with}  \ \ \rho_j := \frac{1}{\sqrt{nF_j}} \mathbf{1}\big(j_{1n} < j\leq  j_{2n}\big)
	\label{eq.rhoj_def}
\end{align}
and for any $A>0,$ define the loss $\ell_A(\theta, \theta') = \mathbf{1}(\rho(\theta,\theta')\geq A).$ This loss is one if the weighted sum of the misclassified $\theta_j$'s exceeds the threshold $A$ and is zero otherwise. The reason for this particular weighting will become apparent later in the proof as a consequence of Proposition \ref{prop.pjqj_expansion} and Lemma \ref{lem.Bern_diff}. Arguing as for \eqref{eq.nFj_int},
\begin{align}
	\sum_{j=1}^m \rho_j^2 \asymp 1 \wedge n^{\frac{1-2\beta}{2\beta+1}}\int_0^1 f_0(x)^{-\frac{2\beta+3}{2\beta+1}} dx 
	\label{eq.rhojsq_int}
\end{align}
and similarly
\begin{align}
	\sum_{j=1}^m \rho_j^{3} =\sum_{j= j_{1n}+1}^{j_{2n}} \frac 1{(nF_j)^{3/2}}  \asymp n^{\frac{1-3\beta}{2\beta+1}}\int_{x_{j_{1n}}}^{x_{j_{2n}}} f_0(x)^{-\frac{3\beta+4}{2\beta+1}} dx .
	\label{eq.nFj_int32}
\end{align}

The key step is the following factorization of the likelihood ratio. In the Poisson experiment $\mE_n^P,$ define $N_j := \# \{ X_i: X_i \in (x_{j-1}, x_j]\}$ and write $X_1^{(j)}, \ldots, X_{N_j}^{(j)}$ for the observations in the interval $(x_{j-1}, x_j].$ Under $P_{\theta_0}^n,$ the counts $N_j$ are independent Poisson random variables with intensity parameters $ n\int_{x_{j-1}}^{x_j} f_{\theta_0}(x) dx = nF_j$ and the density of $X_i^{(j)}$ is $f_{\theta_0}(\cdot) \mathbf{1}(\cdot \in (x_{j-1},x_j])/ F_j.$ We can factorize
\begin{align}
	\frac{dP_\theta^n}{dP_{\theta_0}^n} 
	&= \prod_{i=1}^N  \frac{1 + \sum_{j=1}^m \theta_j \psi_j(X_i) }{1  +\sum_{j=1}^m \theta_j^0 \psi_j(X_i) } 
	= \prod_{j=1}^m \prod_{i=1}^{N_j}  \frac{1  + \theta_j \psi_j(X_i^{(j)}) }{1  + \theta_j^0 \psi_j(X_i^{(j)})} 
	 =: \prod_{j=1}^m P_j(\theta_j) 
	 \label{eq.LR_fact}
\end{align}
with $P_j(\theta_j)$ being independent random variables. Define the estimators $\widehat \theta_\pm^P=(\widehat \theta_{\pm,j}^P)_{j=1,\ldots,m}$ componentwise via $\widehat \theta_{\pm,j}^P \in \argmax_{\theta_j \in \{-1,1\}} P_j(\theta_j) \pi_{\pm}(\theta_j).$
Then $\widehat{\theta}_{\pm,j}^P \neq \theta_j^0$ iff $P_j(-\theta_j^0)\geq e^{\pm2\alpha \theta_j^0}.$ The random variables $\mathbf{1}(\widehat{\theta}_{\pm, j}^P \neq \theta_j^0)$ are therefore independent and Bernoulli distributed with success probabilities depending on the sign $\pm$ of the prior and $\theta_0,$
\begin{align*}
	p_{\pm, j}(\theta_0) := P_{\theta_0}^n(\widehat{\theta}_{\pm}^P \neq \theta_j^0)
	= P_{\theta_0}^n \big(P_j(-\theta_j^0)\geq e^{\pm2\alpha \theta_j^0} \big).
\end{align*}
We denote the Bernoulli distribution with parameter $p$ by $\Ber(p).$ For independent random variables $Z_j(a_j)\sim \Ber(a_j)$, the risk of $\theta_{\pm}^P$ under the loss function $\ell_A$ becomes
\begin{align}
	P_{\theta_0} \big(\rho(\widehat \theta_\pm^P, \theta_0) \geq A \big)
	= \P\big( \sum_{j=1}^m \rho_j Z_j\big(p_{\pm, j}(\theta_0)\big) > A\big).
	\label{eq.poisson_risk_lb_rewrite}
\end{align}

A similar factorization into independent products holds in the Gaussian white noise experiment since by Girsanov's formula,
\begin{align*}
	\frac{dQ_\theta^n}{dQ_{\theta_0}^n}
	&= \exp\Big( 2\sqrt{n} \int_0^1 (\sqrt{f_\theta(s)} - \sqrt{f_{\theta_0}(s)}) dW_s - 2n \big\| \sqrt{f_\theta}- \sqrt{f_{\theta_0}}\big\|_2^2\Big) \notag \\
	& = \prod_{j=1}^m \exp\Big( 2\sqrt{n} \int_{x_{j-1}}^{x_j} (\sqrt{f_\theta(s)} - \sqrt{f_{\theta_0}(s)}) dW_s - 2n \int_{x_{j-1}}^{x_j} \big(\sqrt{f_\theta(s)} - \sqrt{f_{\theta_0}(s)} \big)^2 ds \Big) \notag \\
	&=: \prod_{j=1}^m Q_j(\theta_j).
\end{align*}
In particular, $Q_j(\theta_j)$ are independent. In analogy with the Poisson model, define the estimators $\widehat \theta_\pm^G=(\widehat \theta_{\pm,j}^G)_{j=1,\ldots,m}$ componentwise via $\widehat \theta_{\pm,j}^G \in \argmax_{\theta_j \in \{-1,1\}} Q_j(\theta_j) \pi_{\pm}(\theta_j).$ Then $\widehat{\theta}_{\pm,j}^G \neq \theta_j^0$ iff $Q_j(-\theta_j^0)\geq e^{\pm2\alpha \theta_j^0}.$ With $ q_{\pm,j} (\theta_0):= Q_{\theta_0}^n(\widehat{\theta}_{\pm,j}^G \neq \theta_j^0) = Q_{\theta_0}^n(Q_j(-\theta_j^0)\geq e^{\pm2\alpha \theta_j^0})$ we find in the same way as for \eqref{eq.poisson_risk_lb_rewrite} that for independent $Z_j(q_{\pm, j} (\theta_0)) \sim \Ber(q_{\pm, j} (\theta_0))$, 
\begin{align*}
		Q_{\theta_0}^n \big(\rho(\widehat \theta_\pm^G, \theta_0) \geq A \big)
	= \P\big( \sum_{j=1}^m \rho_j Z_j\big(q_{\pm, j} (\theta_0)\big) > A\big).
\end{align*}

\begin{prop}
\label{prop.pjqj_expansion}
Let $\Phi$ be the c.d.f. of the standard normal distribution, $\phi=\Phi'$ be its density and $\mu_{j,r}$ be defined by \eqref{eq.mu_jr}. Then for sufficiently large $n,$ there exists a constant $C$ independent of $\alpha,n,j,$ such that 
\begin{align*}
	\big| q_{\pm, j}(\theta_0)  -\Phi (-\alpha \mp \theta_j^0) \big| \leq \frac{C\alpha^2}{nF_j}, \  \text{and} \ 
	\big| p_{\pm, j}(\theta_0)  -\Phi (-\alpha \mp \theta_j^0) \mp \frac{n\mu_{j,3}}{6\alpha^2}\phi(-\alpha \mp \theta_j^0) \big| \leq \frac{C\alpha^2}{\sqrt{nF_j}}.
\end{align*} 
\end{prop}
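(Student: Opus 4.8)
The plan for $q_{\pm,j}(\theta_0)$ is to use that the log-likelihood ratio on a single block is \emph{exactly} Gaussian. Under $Q_{\theta_0}^n$ the process $W$ is a standard Brownian motion, and since the $\psi_i$ have disjoint supports $[x_{i-1},x_i]$, on the block $(x_{j-1},x_j]$ one has $f_{\theta_0}=f_0(1+\theta_j^0\psi_j)$ while flipping the $j$-th sign gives $f_0(1-\theta_j^0\psi_j)$; hence $\log Q_j(-\theta_j^0)\sim\mathcal N\big(-\tfrac12\sigma_j^2,\sigma_j^2\big)$ with $\sigma_j^2=4n\int_{x_{j-1}}^{x_j}\big(\sqrt{f_0(1-\theta_j^0\psi_j)}-\sqrt{f_0(1+\theta_j^0\psi_j)}\big)^2\,dt$. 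Using $(\sqrt{1-u}-\sqrt{1+u})^2=2\big(1-\sqrt{1-u^2}\big)=u^2+\tfrac14u^4+O(u^6)$ together with $\mu_{j,2}=\alpha^2/n$ and \eqref{eq.mujr_higher_moments} gives $\sigma_j^2=4\alpha^2+O(\alpha^4/(nF_j))$, so $\tfrac12\sigma_j=\alpha+O(\alpha^2/(nF_j))$ and $(\pm2\alpha\theta_j^0)/\sigma_j=\pm\theta_j^0+O(\alpha^2/(nF_j))$ (using $|\theta_j^0|=1$, $\alpha\le1$). Since $q_{\pm,j}(\theta_0)=\Phi\big(-\tfrac12\sigma_j-(\pm2\alpha\theta_j^0)/\sigma_j\big)$, the $1$-Lipschitz property of $\Phi$ yields $|q_{\pm,j}(\theta_0)-\Phi(-\alpha\mp\theta_j^0)|\lesssim\alpha^2/(nF_j)$.

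\textbf{The Poisson bound: cumulants.} For $p_{\pm,j}(\theta_0)$ I would write $S:=\log P_j(-\theta_j^0)=\sum_{i=1}^{N_j}h_j(X_i^{(j)})$, where $h_j:=\log\frac{1-\theta_j^0\psi_j}{1+\theta_j^0\psi_j}$ and, under $P_{\theta_0}^n$, $N_j\sim\Poi(nF_j)$ with the $X_i^{(j)}$ i.i.d.\ of density $f_{\theta_0}\mathbf{1}_{(x_{j-1},x_j]}/F_j$. Thus $S$ is compound Poisson with rate $\lambda_j=nF_j$ and jump law $\mathcal L(h_j(X^{(j)}))$, so $\kappa_r(S)=\lambda_j\,\E[h_j(X^{(j)})^r]=n\int_{x_{j-1}}^{x_j}h_j^r f_{\theta_0}$. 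Taylor-expanding $h_j=-2\theta_j^0\psi_j-\tfrac23\theta_j^0\psi_j^3-\cdots$, integrating against $f_{\theta_0}=f_0(1+\theta_j^0\psi_j)$, and using $\mu_{j,1}=0$, $\mu_{j,2}=\alpha^2/n$ and \eqref{eq.mujr_higher_moments} for the tails (which also gives $|\epsilon_j|\lesssim\alpha^3/\sqrt{nF_j}$ for $\epsilon_j:=n\mu_{j,3}$, cf.\ \eqref{eq.muj3_lb}), one obtains
\begin{align*}
	\kappa_1=-2\alpha^2-\tfrac23\theta_j^0\epsilon_j+O\big(\tfrac{\alpha^4}{nF_j}\big),\qquad
	\kappa_2=4\alpha^2+4\theta_j^0\epsilon_j+O\big(\tfrac{\alpha^4}{nF_j}\big),\qquad
	\kappa_3=-8\theta_j^0\epsilon_j+O\big(\tfrac{\alpha^4}{nF_j}\big),
\end{align*}
together with $\sum_{r\ge4}|\kappa_r|\lesssim\alpha^4/(nF_j)$.

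\textbf{The Poisson bound: Edgeworth expansion and cancellation.} Next I would invoke a third-order Edgeworth expansion for $\hat S:=(S-\kappa_1)/\sqrt{\kappa_2}$,
\begin{align*}
	\P(\hat S\ge z)=\Phi(-z)+\frac{\kappa_3}{6\kappa_2^{3/2}}\big(z^2-1\big)\phi(z)+O\big(\tfrac1{nF_j}\big),
\end{align*}
valid uniformly for $z$ in bounded sets and uniformly in $j$: the jump law is a small rescaling of the law of $K$ at a near-uniform variable, hence (since $K'$ vanishes at only finitely many points) non-lattice with characteristic function bounded away from $1$ at infinity uniformly in $j$; $\min_j nF_j\to\infty$ by \eqref{eq.sup_norm_phi_j}; and the standardised third and fourth cumulants of the jump law are $O(1)$ uniformly in $j$, so a Fourier-inversion argument delivers the expansion (the atom of $S$ at $0$ has negligible mass $e^{-nF_j}$). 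Since $p_{\pm,j}(\theta_0)=\P\big(S\ge\pm2\alpha\theta_j^0\big)=\P(\hat S\ge z_j)$ with $z_j:=(\pm2\alpha\theta_j^0-\kappa_1)/\sqrt{\kappa_2}$, I would then expand, retaining every term of order $\epsilon_j/\alpha^k$ for $k=1,2,3$ (each of which is $\gg\alpha^2/\sqrt{nF_j}$). This gives $z_j=\pm\theta_j^0+\alpha\mp\tfrac{\epsilon_j}{2\alpha^2}-\tfrac{\theta_j^0\epsilon_j}{6\alpha}+O(\alpha^2/(nF_j))$, so with $w:=-\alpha\mp\theta_j^0$,
\begin{align*}
	\Phi(-z_j)=\Phi(w)\pm\frac{\epsilon_j}{2\alpha^2}\phi(w)+\frac{\theta_j^0\epsilon_j}{6\alpha}\phi(w)+O\big(\tfrac{\alpha^2}{nF_j}\big),
\end{align*}
while $\tfrac{\kappa_3}{6\kappa_2^{3/2}}=-\tfrac{\theta_j^0\epsilon_j}{6\alpha^3}+O(\tfrac\alpha{nF_j})$, $\phi(z_j)=\phi(w)+O(\epsilon_j/\alpha^2)$, and --- the crucial point --- $z_j^2-1=(\alpha\pm\theta_j^0)^2-1+O(\epsilon_j/\alpha^2)=\alpha(\alpha\pm2\theta_j^0)+O(\epsilon_j/\alpha^2)$ carries an explicit factor $\alpha$, so using $(\theta_j^0)^2=1$,
\begin{align*}
	\frac{\kappa_3}{6\kappa_2^{3/2}}\big(z_j^2-1\big)\phi(z_j)=-\frac{\theta_j^0\epsilon_j}{6\alpha}\phi(w)\mp\frac{\epsilon_j}{3\alpha^2}\phi(w)+O\big(\tfrac\alpha{nF_j}\big).
\end{align*}
Adding the two displays, the $\theta_j^0\epsilon_j/\alpha$-terms cancel and $\tfrac12-\tfrac13=\tfrac16$, leaving $p_{\pm,j}(\theta_0)=\Phi(-\alpha\mp\theta_j^0)\pm\tfrac{n\mu_{j,3}}{6\alpha^2}\phi(-\alpha\mp\theta_j^0)+O(\alpha^2/\sqrt{nF_j})$; as $\alpha$ is fixed and $\min_j nF_j\to\infty$, every collected remainder is $\le C\alpha^2/\sqrt{nF_j}$ for $n$ large, with $C$ independent of $\alpha,n,j$ (since $1/(nF_j)\le1/\sqrt{nF_j}$).

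\textbf{Main obstacle.} The hard part is the Poisson estimate: making the Edgeworth expansion rigorous and \emph{uniform in $j$} for a triangular array of compound Poisson laws whose jump distribution itself degenerates as $n\to\infty$, and then doing the bookkeeping precisely enough to expose the cancellation producing the constant $\tfrac16$. The key structural observation is that $z_j^2-1=O(\alpha)$: this demotes the a priori dominant Edgeworth correction (of order $\epsilon_j/\alpha^3\asymp1/\sqrt{nF_j}$) to a contribution of order $\epsilon_j/\alpha^2\asymp\alpha/\sqrt{nF_j}$ which merges into, rather than swamps, the stated correction term.
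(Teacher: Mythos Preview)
Your Gaussian bound is exactly the paper's argument (the paper writes $D_j=n\int(\sqrt{f_\theta}-\sqrt{f_{\theta_0}})^2=\alpha^2+O(n\mu_{j,4})$ and then $q_{\pm,j}=\Phi(-\sqrt{D_j}\mp\alpha\theta_j^0/\sqrt{D_j})$, which is your $\sigma_j^2=4D_j$).

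For the Poisson bound you take a genuinely different route. The paper first approximates $\log P_j(-\theta_j^0)$ by $-2\theta_j^0\sum_i\psi_j(X_i^{(j)})$ plus a remainder $r_{j,n}=O(N_j\|\psi_j\|_\infty^3)$, then \emph{conditions on $N_j$}, applies Petrov's i.i.d.\ Edgeworth expansion to the standardized sum $\xi_j$, and finally integrates out $N_j$, which forces a sandwich between two thresholds $B_{j,n}^{(1)},B_{j,n}^{(2)}$ and a separate expansion of $\sqrt{N_j}/\sqrt{nF_j}$ and $\sqrt{nF_j}/\sqrt{N_j}$ via Lemma~\ref{lem.Poisson_moments}. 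You instead keep the exact $h_j=\log\tfrac{1-\theta_j^0\psi_j}{1+\theta_j^0\psi_j}$, recognise $S$ as compound Poisson, read off $\kappa_r(S)=nF_j\,\E[h_j^r]$ directly, and run a single Edgeworth expansion. Your cumulant calculations and the algebra leading to the $\tfrac16$ cancellation are correct and indeed more transparent than the paper's bookkeeping; in particular you avoid the $r_{j,n}$--sandwich and the $N_j$--averaging entirely.

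What you trade for this cleanliness is that you need an Edgeworth expansion with remainder $O(1/(nF_j))$ for a \emph{triangular array of compound Poisson laws}, uniformly in $j$, whereas the paper only needs the textbook i.i.d.\ version (Theorem~\ref{thm.edgeworth}). Your justification sketch is right in spirit: infinite divisibility lets one write $S$ as a sum of $\asymp nF_j$ i.i.d.\ pieces, the jump law inherits a uniform Cram\'er bound from the fact that $K'$ has finitely many zeros (this is exactly the content of the paper's lemma showing $\max_j\sup_{|t|\ge\delta_j}|v_j(t)|\le L<1$), and the atom at $0$ is $e^{-nF_j}$. So the approach is sound, but turning ``a Fourier-inversion argument delivers the expansion'' into a proof with the required uniformity is where the real work would lie, and is roughly comparable in effort to the paper's conditioning-and-averaging route.
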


With \eqref{eq.muj3_lb}, we conclude that for sufficiently small $\alpha>0,$ the success probabilities differ by a term of order at least $\alpha/\sqrt{n F_j}.$ This is the key ingredient to show that there is a difference in the Bayes risks for the two experiments. Recall that $\alpha$ is the parameter modeling the non-uniformity of the prior and the size of the local alternatives $\psi_j.$ If the prior is uniform then $\alpha=0$, and a close inspection of the proof shows that the difference in the success probabilities is then of the smaller order $1/(nF_j)$, so that non-uniformity of the prior is crucial in this construction.

The following proposition shows that $\widehat \theta_{\pm}^P$ and $\widehat \theta_{\pm}^G$ are Bayes estimators and uses that the deficiency is lower bounded by the difference of the Bayes risks. A proof can be found in Appendix \ref{sec.add_proofs}.
\begin{prop}
\label{prop.MLE_is_mininmax}
Let $\Theta, \rho$ and $\pi_{\pm}$ be as above. Then $\widehat \theta_{\pm}^P$ and $\widehat \theta_{\pm}^G$ are Bayes estimators with respect to the priors $\pi_{\pm}$ in the Poisson intensity estimation and the Gaussian white noise experiments, respectively. 
\end{prop}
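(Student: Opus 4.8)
The plan is to show that, for almost every realization of the data, the estimator $\widehat\theta_\pm^P$ (respectively $\widehat\theta_\pm^G$) minimizes the posterior expected $\ell_A$-loss; since the Bayes risk is the data-average of the posterior risk, this is exactly the claim that these are Bayes estimators with respect to $\pi_\pm$, and in fact simultaneously for every threshold $A\geq 0$.

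First I would observe that the posterior factorizes. Because $\pi_\pm$ is a product prior and, by \eqref{eq.LR_fact}, the likelihood ratio $dP_\theta^n/dP_{\theta_0}^n=\prod_{j=1}^m P_j(\theta_j)$ splits into factors with the $j$-th one depending only on $\theta_j$ and on the observations in $(x_{j-1},x_j]$ (and noting that $F_j=\int_{x_{j-1}}^{x_j}f_\theta$ does not depend on $\theta$, so the count $N_j$ carries no information on $\theta_j$), Bayes' rule gives that the posterior on $\theta$ is the product distribution whose $j$-th marginal $\Pi_j$ on $\{-1,1\}$ satisfies $\Pi_j(\theta_j)\propto P_j(\theta_j)\pi_\pm(\theta_j)$. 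Since $\|\psi_j\|_\infty<1$ by \eqref{eq.sup_norm_phi_j}, the factors $P_j(\theta_j)$ are strictly positive and there is no degeneracy. Hence $\widehat\theta_{\pm,j}^P\in\argmax_{\theta_j}P_j(\theta_j)\pi_\pm(\theta_j)$ is precisely a coordinatewise posterior mode. In the Gaussian experiment the same holds with $Q_j$ in place of $P_j$, using the Girsanov factorization $dQ_\theta^n/dQ_{\theta_0}^n=\prod_j Q_j(\theta_j)$ displayed above and the independence of the Brownian increments over the disjoint intervals $[x_{j-1},x_j]$.

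Next, fix any candidate $c=(c_1,\dots,c_m)\in\{-1,1\}^m$. Under the product posterior the mismatch indicators $\mathbf{1}(\theta_j\neq c_j)$, $j=1,\dots,m$, are independent, and $\mathbf{1}(\theta_j\neq c_j)\sim\Ber(q_j(c_j))$ where $q_j(c_j):=\Pi_j(\theta_j\neq c_j)$ equals one of the two values $\Pi_j(\theta_j=1)$ or $\Pi_j(\theta_j=-1)$ depending on the sign of $c_j$. Consequently the posterior expected $\ell_A$-loss of $c$ equals $\P\big(\sum_{j=1}^m\rho_j B_j\geq A\big)$ for independent $B_j\sim\Ber(q_j(c_j))$. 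Choosing each $c_j$ to be the mode of $\Pi_j$ makes $q_j(c_j)=\min\{\Pi_j(\theta_j=1),\Pi_j(\theta_j=-1)\}$, the smaller of the only two attainable values, for every $j$ separately. It then remains to check that lowering the Bernoulli parameters cannot increase $\P(\sum_j\rho_j B_j\geq A)$: with the monotone coupling $B_j=\mathbf{1}(U_j\leq q_j)$ for $(U_j)$ i.i.d. uniform on $[0,1]$, one sees that $q_j\leq q_j'$ for all $j$ forces $\sum_j\rho_j B_j\leq\sum_j\rho_j B_j'$ almost surely (using $\rho_j\geq 0$), whence the claimed inequality of probabilities. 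Therefore the coordinatewise posterior mode minimizes the posterior $\ell_A$-risk for every $A\geq 0$ and every data realization, which identifies $\widehat\theta_\pm^P$---and by the same argument $\widehat\theta_\pm^G$, with a fixed measurable tie-breaking rule in the $\argmax$---as a Bayes estimator.

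The one genuine subtlety is that $\ell_A$ is not additive over the coordinates, so coordinatewise optimality does not follow from a coordinatewise decomposition of the risk; the stochastic-monotonicity step above is exactly what bridges this gap, and it relies on $\ell_A$ being nondecreasing in $\rho$ together with $\rho$ being a nonnegatively-weighted count of coordinate mismatches.
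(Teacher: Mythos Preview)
Your argument is correct and is, if anything, cleaner than the paper's. You recognize that the product prior and the factorized likelihood (via \eqref{eq.LR_fact}, respectively the Girsanov product) make the posterior a product over coordinates, so the posterior $\ell_A$-risk of a candidate $c$ is exactly the tail probability $\P\big(\sum_j\rho_j B_j\geq A\big)$ with independent $B_j\sim\Ber\big(\Pi_j(\theta_j\neq c_j)\big)$; the stochastic-monotonicity coupling (your version of the paper's Remark~\ref{rem.monotonicity_of_Bern_sum}) then reduces minimization to choosing each $c_j$ as the posterior mode.

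The paper takes a slightly different route. It first rewrites the Bayes risk so that a Bayes estimator is any element of $\argmax_\theta\sum_{\theta':\rho(\theta,\theta')<A}p_{\theta'}\pi_\pm(\theta')$, and then uses a coordinate-flipping path argument: starting from an arbitrary $\widehat\theta$ and moving one entry at a time toward $\widehat\theta_\pm^P$, it shows the objective does not decrease at each step by a combinatorial cancellation over the sets $U_r=\{\theta:\rho(\widehat\theta_r,\theta)\leq A\}$. Your approach replaces this path/cancellation argument with a single global stochastic-dominance step; the paper's proof is more combinatorial and never explicitly names the posterior, but is doing the same thing coordinate by coordinate. Both proofs ultimately hinge on the product structure and on the fact that $\ell_A$ is a nondecreasing function of the mismatch count~$\rho$.
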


Together with \eqref{eq.deficiency_lb_by_diff_BR}, the previous proposition thus shows that for any $A>0,$
\begin{align}
	\delta \big( \mE_n^P(\Theta),\mE_n^G(\Theta) \big)
	&\geq 
	\sum_{\theta_0\in \Theta} 
	\Big(P_{\theta_0}^n \big(\rho(\widehat \theta_+^P, \theta_0) \geq A \big)  - Q_{\theta_0}^n \big(\rho(\widehat \theta_+^G, \theta_0) \geq A \big)\Big)
	\pi_+(\theta_0)  \notag \\
	&=
	\sum_{\theta_0\in \Theta} 
	\Big(  \P\big( \sum_{j=1}^m \rho_j Z_j\big(p_{+,j}(\theta_0)\big) > A\big) -\P\big( \sum_{j=1}^m \rho_j Z_j\big(q_{+,j} (\theta_0)\big) > A\big)\Big)
	\pi_+(\theta_0) 
	\label{eq.first_Bayes_difference}
\end{align}
and
\begin{align*}
	\delta \big( \mE_n^G(\Theta),\mE_n^P(\Theta) \big)
	&\geq 
	\sum_{\theta_0\in \Theta} 
	\Big(  \P\big( \sum_{j=1}^m \rho_j Z_j\big(q_{-,j}(\theta_0)\big) > A\big) -\P\big( \sum_{j=1}^m \rho_j Z_j\big(q_{-,j} (\theta_0)\big) > A\big)\Big)
	\pi_-(\theta_0). 
\end{align*}

We have therefore reduced lower bounding the Le Cam deficiency to computing probabilities connected to weighted sums of independent Bernoulli random variables. To finish the proof we need the following monotonicity property together with a change of measure type inequality which are established next and proved separately in Appendix \ref{sec.add_proofs}.

\begin{rem}
\label{rem.monotonicity_of_Bern_sum}
The probability $\P\big(\sum_{j=1}^m \rho_j Z_j(a_j)>A \big)$ is monotone increasing in the parameters $a_j.$ Indeed if $a_j'\geq a_j,$ then for $\eta \sim \Ber(a_j/a_j')$ independent, $Z_j(a_j') \geq \eta Z_j(a_j') \sim \Ber(a_j).$
\end{rem}

\begin{lem}
\label{lem.Bern_diff}
Suppose that $(p_j)_{j=1,\ldots,m},$ $(q_j)_{j=1,\ldots,m}$ and $(\beta_j)_{j=1,\ldots,m}$ are vectors with entries between zero and one such that for some $0\leq \omega \leq 1/2,$ $p_j \geq  q_j + q_j(1-q_j) \omega \beta_j$ for all $j=1,\ldots,m.$ If $(Z_j(p_j))_{j=1,\ldots,m}$ are independent $\Ber(p_j)$ random variables, then
\begin{align*}
	\P\big(\sum_{j=1}^m \beta_j Z_j(p_j)>A \big)
	\geq  \exp\big ( \omega A -\omega \sum_{j=1}^m \beta_j q_j - 2\omega^2 \sum_{j=1}^m \beta_j^2 \big) \P\big(\sum_{j=1}^m \beta_j Z_j(q_j)>A \big).
\end{align*}
\end{lem}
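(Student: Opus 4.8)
The plan is to compare the two weighted Bernoulli sums $S_p := \sum_{j=1}^m \beta_j Z_j(p_j)$ and $S_q := \sum_{j=1}^m \beta_j Z_j(q_j)$ via an exponential change of measure (tilting). Since $p_j \geq q_j + q_j(1-q_j)\omega\beta_j$, the $\Ber(p_j)$ law is a pointwise-dominating perturbation of the $\Ber(q_j)$ law in the sense of stochastic order (Remark \ref{rem.monotonicity_of_Bern_sum}), so intuitively $S_p$ puts more mass on large values than $S_q$. To make this quantitative, first I would couple: write $Z_j(p_j)$ and $Z_j(q_j)$ on a common space so that $Z_j(p_j) \geq Z_j(q_j)$ a.s., hence $S_p \geq S_q$ a.s. Then $\P(S_p > A) \geq \P(S_q > A)$ trivially, but I need the extra exponential factor, so this crude bound is not enough and I should instead work directly with a likelihood-ratio/tilting argument.

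The key step is the following. For each $j$, consider the likelihood ratio $L_j(z) := \P(Z_j(p_j)=z)/\P(Z_j(q_j)=z)$ for $z\in\{0,1\}$, so $L_j(1) = p_j/q_j$ and $L_j(0) = (1-p_j)/(1-q_j)$. Then for any bounded nonnegative function $h$ on $\{0,1\}^m$,
\begin{align*}
    \E\big[h(Z_1(p_1),\dots,Z_m(p_m))\big] = \E\Big[h(Z_1(q_1),\dots,Z_m(q_m)) \prod_{j=1}^m L_j(Z_j(q_j))\Big].
\end{align*}
Applying this with $h = \mathbf{1}\{\sum_j \beta_j z_j > A\}$, I get
\begin{align*}
    \P(S_p > A) = \E\Big[\mathbf{1}\{S_q > A\} \prod_{j=1}^m L_j(Z_j(q_j))\Big].
\end{align*}
Now on the event $\{S_q > A\}$ I want a deterministic lower bound on $\prod_j L_j(Z_j(q_j))$. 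The idea is to lower-bound each $L_j(z_j)$ by $e^{\omega\beta_j z_j - (\text{correction})_j}$: since $L_j(1) = p_j/q_j \geq 1 + (1-q_j)\omega\beta_j \geq e^{(1-q_j)\omega\beta_j - \frac12((1-q_j)\omega\beta_j)^2} \geq e^{\omega\beta_j z_j}\cdot e^{-q_j\omega\beta_j - \frac12\omega^2\beta_j^2}$ when $z_j = 1$ (using $\log(1+x)\geq x - x^2/2$ for $x\geq 0$ and $\omega\beta_j\leq 1$), and $L_j(0) = (1-p_j)/(1-q_j) \geq e^{-\frac{p_j-q_j}{1-q_j} - (\cdots)} \geq e^{-q_j\omega\beta_j - \frac12\omega^2\beta_j^2}$ when $z_j=0$ (here using $p_j - q_j$ is small relative to $1-q_j$; one needs $p_j - q_j \leq q_j(1-q_j)\omega\beta_j/(\text{something})$... actually $p_j \le 1$ so $1-p_j \ge 0$, and a direct estimate $\log\frac{1-p_j}{1-q_j} \ge -\frac{p_j-q_j}{1-q_j} - (\cdots)^2$ works since $\frac{p_j-q_j}{1-q_j}$ is controlled). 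Combining, $L_j(z_j) \geq \exp(\omega\beta_j z_j - \omega\beta_j q_j - 2\omega^2\beta_j^2)$ for $z_j\in\{0,1\}$, where I absorb constants generously into the $2\omega^2$ term.

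Taking the product over $j$ and using that on $\{S_q > A\}$ we have $\sum_j \beta_j z_j > A$,
\begin{align*}
    \prod_{j=1}^m L_j(Z_j(q_j)) \geq \exp\Big(\omega \sum_{j=1}^m \beta_j Z_j(q_j) - \omega\sum_{j=1}^m \beta_j q_j - 2\omega^2\sum_{j=1}^m\beta_j^2\Big) \geq \exp\Big(\omega A - \omega\sum_{j=1}^m\beta_j q_j - 2\omega^2\sum_{j=1}^m\beta_j^2\Big)
\end{align*}
on that event, and plugging this into the previous display gives exactly the claimed inequality. The main obstacle I anticipate is getting the precise constants in the per-coordinate bound $L_j(z_j)\geq \exp(\omega\beta_j z_j - \omega\beta_j q_j - 2\omega^2\beta_j^2)$ right: the $z_j = 0$ case requires carefully checking that the hypothesis $p_j \ge q_j + q_j(1-q_j)\omega\beta_j$ together with $\omega\le 1/2$ and $p_j\le 1$ suffices to bound $-\log\frac{1-q_j}{1-p_j}$ from below by the stated quantity (one must verify $\frac{p_j - q_j}{1-q_j}$ does not blow up when $q_j$ is close to $1$, which is where the factor $q_j(1-q_j)$ rather than just $(1-q_j)$ in the hypothesis is used — it forces $p_j - q_j \le q_j(1-q_j)\omega\beta_j \le (1-q_j)\cdot\frac12$, keeping $\frac{p_j-q_j}{1-q_j}\le \frac12$). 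Everything else is a routine application of $\log(1+x) \ge x - x^2/2$ and bookkeeping of the quadratic error terms.
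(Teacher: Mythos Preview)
Your approach is essentially the same as the paper's: both rewrite $\P(S_p>A)$ as a sum over configurations $z\in\{0,1\}^m$ with $\sum_j\beta_j z_j>A$ and lower-bound the likelihood ratio $\prod_j L_j(z_j)$ on that set. The paper phrases this as an infimum over subsets $V\in\mathcal V$, you phrase it as a conditional expectation under the $q$-law; these are the same computation.

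There is, however, a genuine gap in your handling of $L_j(0)$. You write that the hypothesis ``forces $p_j - q_j \le q_j(1-q_j)\omega\beta_j$'', but the assumption is $p_j \ge q_j + q_j(1-q_j)\omega\beta_j$, i.e.\ a \emph{lower} bound on $p_j-q_j$, not an upper bound. With only a lower bound, $L_j(0)=(1-p_j)/(1-q_j)$ can be arbitrarily small (take $p_j=1$, $q_j<1$), and your per-coordinate estimate $L_j(0)\ge \exp(-\omega\beta_j q_j - 2\omega^2\beta_j^2)$ fails. The fix is exactly what the paper does in its first line: by the monotonicity you cite from Remark~\ref{rem.monotonicity_of_Bern_sum}, it suffices to prove the inequality for $p_j = q_j + q_j(1-q_j)\omega\beta_j$. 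With equality, $L_j(1)=1+(1-q_j)\omega\beta_j$ and $L_j(0)=1-q_j\omega\beta_j$, and since $\omega\le 1/2$ and $\beta_j,q_j\le 1$ both arguments lie in $[0,1/2]$, so $\log(1+x)\ge x-x^2/2$ and $\log(1-x)\ge -x-2x^2$ give precisely the bound you want. Once you insert this reduction-to-equality step, the rest of your argument is correct and coincides with the paper's proof.
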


Recall that the difference of the success probabilities in Proposition \ref{prop.pjqj_expansion} is of the order at least $\alpha /\sqrt{nF_j}.$ Together with the change of measure formula in Lemma \ref{lem.Bern_diff}, this shows why the weights $\rho_j=1/\sqrt{nF_j}$ in the Hamming loss \eqref{eq.rhoj_def} are natural.
Let us only consider the case where $\theta_0$ is drawn from $\pi_+,$ that is the case \eqref{eq.first_Bayes_difference}. The other case can be proved analogously. By Proposition \ref{prop.pjqj_expansion}, $q_j :=q_{+,j}(\theta_0) = \Phi(-\alpha - \theta_j^0) + O(\alpha^2/(nF_j))$ and $p_j := p_{+,j}(\theta_0) = \Phi(-\alpha - \theta_j^0) + (n\mu_{j,3}/(6\alpha^2))\phi(-\alpha -\theta_j^0) +O(\alpha^2/\sqrt{nF_j}).$ Choosing the constant $\alpha$ small enough, $\Phi(-2) \leq q_j \leq \Phi(1)$ and moreover by \eqref{eq.muj3_lb} we can always find a positive constant $c>0$ such that $p_j \geq q_j +  c q_j(1-q_j)\alpha \rho_j,$ for all $j=1,\ldots,m.$ Denote the mean of $q_j=q_{+,j}(\theta_0)$ under $\pi_+$ by $\overline q_j,$ let $r_\alpha = \mathbb{E}_{\theta_j \sim \pi_+}[\Phi(-\alpha-\theta_j)]=  \Phi(-\alpha - 1) \pi_+(\theta_j=1)+ \Phi(-\alpha + 1) \pi_+(\theta_j=-1)$  and choose the constant in the loss $\ell_A$ as
\begin{align*}
	A= r_\alpha \sum_{j=1}^m \rho_j 
	+ 4\Big( \sum_{j=1}^m \rho_j^2\Big)^{1/2}.
\end{align*}
Throughout the remaining proof we make frequent use of the formula $\sum_{j=1}^m \rho_j^2 \lesssim 1,$ which follows immediately from \eqref{eq.rhojsq_int}. In particular, this allows us to conclude from $|\sum_{j=1}^m \rho_j(\overline q_j-r_\alpha)|\lesssim \alpha \sum_{j=1}^m \rho_j^2$ that for sufficiently small $\alpha$ and $n$ large enough, $|\sum_{j=1}^m \rho_j(\overline q_j-r_\alpha)| \leq (\sum_{j=1}^m \rho_j^2)^{1/2}.$ Define the set  
\begin{align*}
	\mathcal{D} := \Big\{\theta_0 \in \Theta : 
	 \big| \sum_{j=1}^m \rho_j \overline q_j - \sum_{j=1}^m \rho_j q_j \big|
	\leq \big(\sum_{j=1}^m \rho_j^2\big)^{1/2} \Big\}.
\end{align*}
Since $p_j \geq q_j,$ all the summands in \eqref{eq.first_Bayes_difference} are non-negative in view of Remark \ref{rem.monotonicity_of_Bern_sum}. By Lemma \ref{lem.Bern_diff} with $\omega =c\alpha,$ the definition of $A$ and $e^x\geq x+1,$ it follows that for sufficiently small $\alpha$ and $n$ large enough,
\begin{align}
	\delta \big( \mE_n^P(\Theta),\mE_n^G(\Theta) \big)
	\geq 
	&\sum_{\theta_0 \in \mathcal{D}} 
	 \Big[ \exp\Big( c\alpha A - c\alpha \sum_{j=1}^m \rho_j q_j - c^2\alpha^2\sum_{j=1}^m \rho_j^2  \Big) -1\Big]
	\P\big( \sum_{j=1}^m \rho_j Z_j(q_j) > A\big) \pi_+(\theta_0) \notag  \\
	\geq &
	c\alpha \big( \sum_{j=1}^m \rho_j^2 \big)^{1/2}
	\sum_{\theta_0 \in \mathcal{D}}
	\P\big( \sum_{j=1}^m \rho_j Z_j(q_j) > A\big) \pi_+(\theta_0).
	\label{eq.delta_lb}
\end{align}
Recall that the expectation and the variance of $ \sum_{j=1}^m \rho_j Z_j(q_j)$ are $ \sum_{j=1}^m \rho_j q_j$ and $ \sum_{j=1}^m \rho_j^2 q_j (1-q_j)$ respectively. Let $\xi$ be a Gaussian random variable with the same mean and variance. By Berry-Esseen's theorem there exists a universal constant $C_0$ such that for $\theta_0\in \mathcal{D},$
\begin{align*}
	\P\big( \sum_{j=1}^m \rho_j Z_j(q_j) > A\big)
	&\geq \P\big( \xi >A \big) 
	- C_0 \frac{ \sum_{j=1}^m \rho_j^3}{(\sum_{j=1}^m \rho_j^2)^{3/2}} \\
	&\geq 
	1- \Phi\Big(\frac{6}{\sqrt{\Phi(-2)(1-\Phi(1))}}\Big)- C_0 \frac{ \sum_{j=1}^m \rho_j^3}{(\sum_{j=1}^m \rho_j^2)^{3/2}},
\end{align*}
where we used that $q_j(1-q_j) \geq \Phi(-2)(1-\Phi(1)).$ From \eqref{eq.rhojsq_int}, \eqref{eq.nFj_int32} and Lemma \ref{lem.integral_comp}, it follows that $\sum_{j=1}^m \rho_j^3 \ll (\sum_{j=1}^m \rho_j^2)^{3/2}.$ For all sufficiently large $n$,
\begin{align*}
	\inf_{\theta_0 \in \mD}\P\big( \sum_{j=1}^m \rho_j Z_j(q_j) > A\big)
	\geq
	\frac 12\Big(1- \Phi\Big(\frac{6}{\sqrt{\Phi(-2)(1-\Phi(1))}}\Big)\Big)
\end{align*}
and the right-hand side is positive. Denote by $\Var_{\pi_+}$ the variance with respect to the prior $\pi_+.$ Since $0\leq q_j \leq \Phi(1),$ Chebychev's  inequality yields
\begin{align*}
	\pi_+(\mathcal{D})
	= 1- \pi_+(\Theta\setminus \mathcal{D})
	\geq 1 - \frac{\Var_{\pi_+}(\sum_{j=1}^m \rho_j q_j)}{\sum_{j=1}^m \rho_j^2}
	\geq 1 - \Phi(1)^2 >0.
\end{align*}
Together with \eqref{eq.delta_lb}, this shows that $\delta \big( \mE_n^P(\Theta),\mE_n^G(\Theta) \big) \geq $ const.$\times ( \sum_{j=1}^m \rho_j^2 )^{1/2}$ and with \eqref{eq.rhojsq_int} this completes the proof for the lower bound of $\delta \big( \mE_n^P(\Theta),\mE_n^G(\Theta) \big).$ A similar argument holds for the deficiency $\delta \big( \mE_n^G(\Theta),\mE_n^P(\Theta) \big),$ replacing the prior $\pi_+$ by $\pi_-.$
\end{proof}

\begin{proof}[Proof of Theorem \ref{thm.lb2}]
Recall that by assumption, $\inf_{f\in \Theta} \inf_x f(x) \gg n^{-\beta/(\beta+1)}.$ Since $f\in \Theta  \subset \mH^\beta(R),$ $f$ is also uniformly bounded and with Theorem \ref{thm.ub_DE_PIE},
\begin{align*}
	\Delta(\mE_n^D(\Theta), \mE_n^P(\Theta))^2 
	&\lesssim n^{-\frac{2\beta}{2\beta+1}}\log^2 n \  \sup_{f\in \Theta}  \int_0^1 \Big( \frac 1{f(x)} \wedge n^{\frac{\beta}{\beta+1}}\Big)^{\frac{1}{2\beta+1}}  dx \\
	& \ll 
	n^{\frac{1-2\beta}{2\beta+1}} \sup_{f\in \Theta}  \int_0^1 f(x)^{-\frac{2\beta+3}{2\beta+1}} dx.
\end{align*}
Using \eqref{eq.1st_assump_thmlb2} and that the Le Cam deficiency satisfies the triangle inequality, Theorem \ref{thm.lb} implies $$\delta(\mE_n^D(\Theta), \mE_n^G(\Theta)) \geq \delta(\mE_n^P(\Theta), \mE_n^G(\Theta)) - \Delta(\mE_n^D(\Theta), \mE_n^P(\Theta))\gtrsim \Big(n^{\frac{1-2\beta}{2\beta+1}}\sup_{f\in \Theta} \int f(x)^{-\frac{2\beta+3}{2\beta+1}}dx\Big)^{1/2}.$$
Similarly, we can obtain the same lower bound for the deficiency $\delta(\mE_n^G(\Theta), \mE_n^D(\Theta))$ and this completes the proof. 
\end{proof}

\textbf{Acknowledgements:} The authors would like to thank the Associate Editor and Referees for their valuable suggestions and comments. The second author would like to thank Marc Hoffmann for bringing this problem to his attention during his PhD defense.

\appendix

\section*{Appendix}

\section{Proofs for Section \ref{sec.DE_PIE}}
\label{eq.proof_sec_DE_PIE}

\begin{proof}
[Proof of Theorem \ref{thm.Poissonization}] We first construct a Markov kernel that maps density estimation to the Poisson intensity model up to an error
\begin{align}
	\delta\big(\mE_n^D(\Theta_1^\beta(f_0)), \mE_n^P(\Theta_1^\beta(f_0))\big)\lesssim n^{-\frac{2\beta}{2\beta+1}}\log^2 n \ \int_0^1 \Big( \frac 1{f_0(x)} \wedge n^{\frac{\beta}{\beta+1}}\Big)^{\frac{1}{2\beta+1}}  dx.
	\label{eq.to_show_Poissonization}
\end{align}
Throughout the proof, we always consider the parameter space $\Theta_1^\beta(f_0)$ and thus omit it in the notation, that is we write $\mE_n^D:=\mE_n^D(\Theta_1^\beta(f_0)), \mE_n^P:=\mE_n^P(\Theta_1^\beta(f_0)), \ldots$ For $\kappa_n := \sqrt{2n\log n},$ let $N \sim \Poi(n-\kappa_n)$ and define a new experiment $\mG_{n-\kappa_n}^P$ in which we observe $N\wedge n$ i.i.d. random variables $X_1, \ldots, X_{N\wedge n}$ with density $f.$ The Le Cam deficiency satisfies the triangle inequality and so
\begin{align*}
	\delta\big(\mE_n^D, \mE_n^P\big) 
	\leq \delta\big(\mE_n^D, \mG_{n-\kappa_n}^P\big) + \delta (\mG_{n-\kappa_n}^P, \mE_{n-\kappa_n}^P\big)+\delta\big(\mE_{n-\kappa_n}^P, \mE_n^P\big) =(I) +(II)+(III).
\end{align*}

{\it (I):} Since $\mG_{n-\kappa_n}^P$ is not more informative than $\mE_n^D,$ $\delta\big(\mE_n^D, \mG_{n-\kappa_n}^P\big)=0.$ 

{\it(II):} Denote by $P_f^{X,N}$ the distribution of $(X_1, \ldots, X_{N\wedge n}, N)$ in experiment $\mG_{n-\kappa_n}^P.$ Similarly, write $Q_f^{X,N}$ and $Q_f^{X|N_n}$ for the distributions of $(X_1, \ldots, X_N, N)$ and $(X_1, \ldots,X_N)|N$ in experiment $\mE_{n-\kappa_n}^P.$ If $N\leq n,$ both experiments are equally informative. If $M$ denotes the Markov kernel adding $(N-n)\vee 0$ times the first observation, 
\begin{align*}
	\widetilde P_f^{X,N}=MP_f^{X,N} = (X_1, \ldots, X_{N\wedge n}, \underbrace{ X_1, \ldots,X_1}_{(N-n)\vee 0}, N).
\end{align*}
Writing $\widetilde P_f^{X|N}$ for the conditional distribution given $N,$
\begin{align*}
	\delta (\mG_{n-\kappa_n}^P, \mE_{n-\kappa_n}^P\big) 
	= \inf_M \sup_f \|M P_f^X - Q_f^X \|_{\TV}
	\leq \sup_f \E\big[ \|\widetilde P_f^{X|N} - Q_f^{X|N} \|_{\TV} | N \big]
	\leq \P(N>n).
\end{align*}
With Lemma \ref{lem.Poisson_moments}(iii), we can further bound the right-hand side by $4/n.$

{\it (III):} Let $L_n :=n^{-1}\log n$ and $c:=(4C) \vee (4C)^{(2\beta+1)/(\beta+1)}$ with $C$ the constant in the definition of $\Theta_1^\beta(f_0)$. Recall that $N\sim \Poi(n-\kappa_n).$ In experiment $\mE_{m}^P$ we observe a Poisson process on $[0,1]$ with intensity $mf.$ Adding an independent Poisson process with intensity $\kappa_n \widetilde f_0,$ where $\widetilde f_0=f_0 \mathbf{1}(f_0(x)\geq c L_n^{\beta/(\beta+1)}),$ we observe in experiment $\mE_{n-\kappa_n}^P$ a Poisson process with intensity $(n-\kappa_n)f+ \kappa_n \widetilde f_0.$ Due to the choice of the constant $c,$ we have $|f(x)-f_0(x)|\leq CL_n^{\beta/(\beta+1)}+C(L_nf_0(x))^{\beta/(2\beta+1)}\leq \tfrac 14 f_0(x) + \tfrac 14 f_0(x) \leq \tfrac 12 f_0(x)$ whenever $f_0(x)\geq c L_n^{\beta/(\beta+1)}$ and $f \in \Theta_1^\beta(f_0).$ This implies in particular that under these conditions $f(x)\geq \tfrac 12 f_0(x).$ Using the Hellinger bound for two Poisson processes in Lemma  \ref{lem.bds_of_info_distances}(i), uniformly over $f\in \Theta_1^\beta(f_0),$
\begin{align*}
	\delta\big(\mE_{n-\kappa_n}^P, \mE_n^P\big)^2
	&\leq \int (\sqrt{(n-\kappa_n)f(x)+ \kappa_n \widetilde f_0(x)}- \sqrt{n f(x)})^2 dx \\
	&\leq  \frac{\kappa_n^2}{n} \int f(x) \mathbf{1}\big(f_0(x)< cL_n^{\frac{\beta}{\beta+1}} \big) + \frac{(f_0(x)-f(x))^2}{f(x)}\mathbf{1}\big(f_0(x)\geq cL_n^{\frac{\beta}{\beta+1}} \big) dx \\
	&\lesssim \log n \int_0^1 L_n^{\frac{\beta}{\beta+1}}\mathbf{1}\big(f_0(x)< cL_n^{\frac{\beta}{\beta+1}} \big)
	+ \frac{(L_nf_0(x))^{\frac{2\beta}{2\beta+1}}}{f_0(x)} \mathbf{1}\big(f_0(x)\geq cL_n^{\frac{\beta}{\beta+1}} \big) dx \\
	&\lesssim \log n \int_0^1 L_n^{\frac{\beta}{\beta+1}} \wedge L_n^{\frac{2\beta}{2\beta+1}} f_0(x) ^{-\frac{1}{2\beta+1}} dx \\
	&\leq n^{-\frac{2\beta}{2\beta+1}}\log^2 n \int_0^1 \Big(\frac{1}{f_0(x)} \wedge n^{\frac{\beta}{\beta+1}}\Big)^{\frac{1}{2\beta+1}} dx.
\end{align*}
The upper bounds derived in $(I)-(III)$ imply \eqref{eq.to_show_Poissonization}. Estimating $\delta\big( \mE_n^P, \mE_n^D\big)$ from above can be done using the same arguments and leads to exactly the same rate in the upper bound. Since $\Delta( \mE_n^D, \mE_n^P\big)= \delta\big( \mE_n^D, \mE_n^P\big) \vee \delta\big( \mE_n^P, \mE_n^D\big),$ the proof is complete.
\end{proof}

\section{Additional proofs for Theorem \ref{thm.lb}}
\label{sec.add_proofs}

In this section, we provide proofs for the propositions occurring in the proof of Theorem \ref{thm.lb}.

\begin{lem}
\label{lem.test_fcts_in_Hoelder_ball}
Suppose that $f_0 \in \mathcal{H}^\beta(R')$ and let $f_\theta=  f_0+ f_0\sum_{j=1}^m \theta_j \psi_j$ with $\psi_j$ as defined in \eqref{eq.def_psi_lb}. Assume that $\inf_x f_0(x) \gg n^{-\frac{\beta}{\beta+1}}.$ For any $R>R',$ there exist $\alpha_0>0$ and $n_0$ such that for any $n\geq n_0,$ whenever $\alpha$ in the definition of $\psi_j$ in \eqref{eq.def_psi_lb} is smaller than  $\alpha_0,$  $$f_\theta \in \mathcal{H}^\beta(R), \quad \text{for all} \ \theta  \in \{-1, 1\}^m.$$
\end{lem}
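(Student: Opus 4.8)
The plan is to write $f_\theta=f_0+\sum_{j=1}^m h_j$ with $h_j:=f_0\,\theta_j\psi_j$, exploit that the $h_j$ have pairwise disjoint supports $[x_{j-1},x_j]$, and reduce everything to bounds on a single $h_j$ that are uniform in $j$. Since $\mathcal{H}^\beta(R)=\mathcal{C}^\beta(R)$ for $\beta\le1$, for $\beta\le1$ it suffices to bound $\|f_\theta\|_{\mathcal{C}^\beta}$, while for $\beta>1$ we must additionally check $|f_\theta^{(i)}(x)|\le R^{i/\beta}f_\theta(x)^{1-i/\beta}$ for $1\le i<\beta$. As $K$ is $\beta'$-smooth Hölder with support in $[0,1]$ and $\beta'=\beta\vee2\ge\beta$, the derivatives $K^{(\ell)}$ vanish at $0$ and $1$ for $\ell\le\lfloor\beta'\rfloor\ge\lfloor\beta\rfloor$; consequently all functions appearing below vanish at the endpoints $x_{j-1},x_j$ up to order $\lfloor\beta\rfloor$, so $f_\theta^{(\lfloor\beta\rfloor)}=f_0^{(\lfloor\beta\rfloor)}+\sum_j h_j^{(\lfloor\beta\rfloor)}$ exists and at every point at most one summand $h_j^{(\ell)}$ is nonzero. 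Hence $\|f_\theta^{(\ell)}\|_\infty\le\|f_0^{(\ell)}\|_\infty+\max_j\|h_j^{(\ell)}\|_\infty$ for $\ell\le\lfloor\beta\rfloor$, and, by the standard argument splitting a pair of points according to the intervals containing them and using the endpoint vanishing, $|f_\theta|_{\mathcal{C}^\beta}\le|f_0|_{\mathcal{C}^\beta}+2\max_j|h_j^{(\lfloor\beta\rfloor)}|_{\mathcal{C}^{\beta-\lfloor\beta\rfloor}}$; also $f_\theta=f_0(1+\sum_j\theta_j\psi_j)\ge f_0(1-\sup_j\|\psi_j\|_\infty)>0$ and $\int f_\theta=1$ since $\mu_{j,1}=0$.

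The main tools are the flatness bound $|f_0^{(i)}(x)|\le R'^{i/\beta}f_0(x)^{1-i/\beta}$ for $1\le i<\beta$ (from $|f_0|_{\mathcal{H}^\beta}\le R'$), the comparisons $F_j\asymp\Delta_jf_0(x_{j-1})$, $\gamma_j\asymp1$, $f_0(x)\asymp f_0(x_{j-1})$ on $[x_{j-1},x_j]$ from \eqref{eq.loc_comp_of_fcts_in_lb}, \eqref{eq.Deltam_ineqs}, \eqref{eq.Fj_gammaj_ineqs}, and, crucially, the inequality $\Delta_j\le c\,(f_0(x_{j-1})/R)^{1/\beta}$ valid for any fixed $c>0$ once $n$ is large (stated after \eqref{eq.Deltam_ineqs}). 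With $u_j(x)=(F_0(x)-F_0(x_{j-1}))/F_j$, so $u_j^{(\ell)}=f_0^{(\ell-1)}/F_j$ and $\|u_j'\|_{\infty,[x_{j-1},x_j]}\lesssim\Delta_j^{-1}$, Faà di Bruno writes $(K\circ u_j)^{(i)}$ as a finite sum of terms $K^{(p)}(u_j)\prod_\ell(f_0^{(\ell-1)}/F_j)^{m_\ell}$ with $\sum_\ell\ell m_\ell=i$ and $p=\sum_\ell m_\ell$; inserting the flatness bounds and using $\sum_\ell(\ell-1)m_\ell=i-p$, each term is $\lesssim F_j^{-p}f_0(x_{j-1})^{-(i-p)/\beta}\asymp\Delta_j^{-p}f_0(x_{j-1})^{-(i-p)/\beta}\lesssim\Delta_j^{-i}$, the last step being exactly where $\Delta_j\lesssim f_0(x_{j-1})^{1/\beta}$ enters. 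Thus $\|(K\circ u_j)^{(i)}\|_\infty\lesssim\Delta_j^{-i}$ for $0\le i\le\lfloor\beta\rfloor$, and since $h_j=\theta_j\alpha\gamma_j\Delta_j^\beta f_0(x_{j-1})^{-1}f_0\,(K\circ u_j)$, Leibniz's rule with the same bookkeeping ($\|f_0^{(\ell)}(K\circ u_j)^{(i-\ell)}\|_\infty\lesssim f_0(x_{j-1})^{1-\ell/\beta}\Delta_j^{\ell-i}\lesssim f_0(x_{j-1})\Delta_j^{-i}$) yields
\begin{equation*}
\|h_j^{(i)}\|_\infty\le C\alpha\,\Delta_j^{\beta-i}\le C\alpha,\qquad 0\le i\le\lfloor\beta\rfloor,
\end{equation*}
with $C=C(\beta,R,R',K)$, using $\Delta_j\le(R'/n)^{1/(2\beta+1)}\to0$ and $\beta-i\ge0$.

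For the Hölder seminorm of $h_j^{(\lfloor\beta\rfloor)}$ on $[x_{j-1},x_j]$, expand it as a finite sum of terms $c\cdot\alpha\gamma_j\Delta_j^\beta f_0(x_{j-1})^{-1}\cdot f_0^{(\ell)}\cdot K^{(p)}(u_j)\cdot\prod_m(f_0^{(a_m-1)}/F_j)$ with $\ell+\sum_ma_m=\lfloor\beta\rfloor$. A term with $p<\lfloor\beta'\rfloor$ and $\ell<\lfloor\beta\rfloor$ is $C^1$ on $[x_{j-1},x_j]$ with derivative $\lesssim\alpha\Delta_j^{\beta-\lfloor\beta\rfloor-1}$ (one more step of the above bookkeeping), hence has $(\beta-\lfloor\beta\rfloor)$-Hölder constant there $\lesssim\alpha\Delta_j^{\beta-\lfloor\beta\rfloor-1}\cdot\Delta_j^{1-(\beta-\lfloor\beta\rfloor)}=\alpha$; a term containing $K^{(\lfloor\beta'\rfloor)}(u_j)$ or $f_0^{(\lfloor\beta\rfloor)}$ is handled by the product rule for Hölder seminorms, since these factors are $(\beta-\lfloor\beta\rfloor)$-Hölder with constants $\lesssim\Delta_j^{-(\beta-\lfloor\beta\rfloor)}$ and $\le R'$ respectively and the full term has sup-norm $\lesssim\alpha\Delta_j^{\beta-\lfloor\beta\rfloor}$, giving $\lesssim\alpha$ again. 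Thus $|h_j^{(\lfloor\beta\rfloor)}|_{\mathcal{C}^{\beta-\lfloor\beta\rfloor}}\le C\alpha$, and combining with the previous paragraph,
\begin{equation*}
\|f_\theta\|_{\mathcal{C}^\beta}\le\|f_0\|_{\mathcal{C}^\beta}+C'\alpha\le R'+C'\alpha<R
\end{equation*}
once $\alpha<\alpha_0$; this settles the case $\beta\le1$ entirely.

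Finally, for $\beta>1$ we must check $\|f_\theta\|_{\mathcal{H}^\beta}\le R$; since $\|f_0\|_{\mathcal{H}^\beta}\le R'<R$ and $\|f_\theta\|_{\mathcal{C}^\beta}\le R'+C'\alpha$ by the above, it suffices that $|f_\theta|_{\mathcal{H}^\beta}$ exceed $|f_0|_{\mathcal{H}^\beta}$ by at most a small amount. Off $\bigcup_j[x_{j-1},x_j]$ one has $f_\theta=f_0$. On $[x_{j-1},x_j]$, expanding $f_\theta^{(i)}=f_0^{(i)}(1+\theta_j\psi_j)+\sum_{\ell=0}^{i-1}\binom{i}{\ell}f_0^{(\ell)}\theta_j\psi_j^{(i-\ell)}$ and noting $\|\psi_j^{(a)}\|_\infty\lesssim\alpha\Delta_j^{\beta-a}/f_0(x_{j-1})$, the flatness bound for $f_0$ together with $\Delta_j\lesssim f_0(x_{j-1})^{1/\beta}$ (used twice) give $|f_0^{(\ell)}(x)\psi_j^{(i-\ell)}(x)|\lesssim\alpha\,f_0(x)^{1-i/\beta}$, whence, with $\epsilon_n:=\sup_j\|\psi_j\|_\infty\to0$ (by \eqref{eq.sup_norm_phi_j}) and $f_\theta\ge(1-\epsilon_n)f_0$,
\begin{equation*}
\frac{|f_\theta^{(i)}(x)|}{f_\theta(x)^{1-i/\beta}}\le\Bigl((1+\epsilon_n)\frac{|f_0^{(i)}(x)|}{f_0(x)^{1-i/\beta}}+C''\alpha\Bigr)(1-\epsilon_n)^{-(1-i/\beta)}.
\end{equation*}
Raising to the $\beta$-th power, taking the supremum over $x$ and the $i$-th root, for each of the finitely many $i\in\{1,\dots,\lfloor\beta\rfloor\}$ the resulting bound on $\||f_\theta^{(i)}|^\beta/|f_\theta|^{\beta-i}\|_\infty^{1/i}$ converges, as $n\to\infty$, to a quantity at most $|f_0|_{\mathcal{H}^\beta}$ plus something that tends to $0$ as $\alpha\to0$; hence $\|f_\theta\|_{\mathcal{H}^\beta}\le R$ once $\alpha<\alpha_0$ and $n\ge n_0$. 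I expect the main obstacle to be the bookkeeping in the two middle paragraphs: showing that every error term produced by differentiating the chain $K\circ u_j$ and by the variation of $f_0$ across $[x_{j-1},x_j]$ is absorbed into the leading term rests entirely on the interplay of the flatness bounds on $f_0^{(i)}$ with $\Delta_j\lesssim f_0(x_{j-1})^{1/\beta}$; a secondary complication is that for noninteger $\beta$ the top-order objects $K^{(\lfloor\beta'\rfloor)}$ and $f_0^{(\lfloor\beta\rfloor)}$ are only Hölder rather than differentiable, so the top-resolution Hölder estimate must be done directly rather than by taking one more derivative.
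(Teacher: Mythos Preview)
Your proposal is correct and follows essentially the same approach as the paper's proof: both reduce to controlling a single $h_j=f_0\theta_j\psi_j$ on its support, use the Fa\`a di Bruno expansion of $(K\circ u_j)^{(q)}$ together with the flatness bounds $|f_0^{(\ell)}|\le R'^{\ell/\beta}f_0^{1-\ell/\beta}$ and the crucial inequality $\Delta_j\lesssim f_0(x_{j-1})^{1/\beta}$, and then treat the $\mathcal{C}^\beta$-seminorm and the $\mathcal{H}^\beta$-seminorm separately. The only organisational differences are that the paper handles the H\"older seminorm by applying $|ab-a'b'|\le|a-a'||b|+|a'||b-b'|$ uniformly to all factors in the expansion (and refers to a companion paper for the detailed bookkeeping), whereas you split explicitly into terms that are $C^1$ (bounded via one more derivative) versus terms containing the top-order factors $K^{(\lfloor\beta'\rfloor)}(u_j)$ or $f_0^{(\lfloor\beta\rfloor)}$ (bounded via the product rule for H\"older seminorms); and for the $\mathcal{H}^\beta$-seminorm the paper bounds $(f_0\psi_j)^{(r)}$ directly while you expand $f_\theta^{(i)}$ via Leibniz and bound $\psi_j^{(a)}$ first---both routes lead to the same estimate $|(f_0\psi_j)^{(r)}|\lesssim\alpha(\Delta_j f_0(x_{j-1})^{-1/\beta})^{\,\text{positive power}}\,f_0^{1-r/\beta}$.
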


\begin{proof}
The $\lesssim$ symbol is used as in Theorem \ref{thm.lb}. Throughout the proof all statements are considered to hold for sufficiently large $n.$

Let $\delta>0$ be arbitrary. In $(i)$ we check that for sufficiently large $n,$ $\|f_\theta\|_{\infty}+|f_\theta |_{\mC^\beta}\leq \|f_0\|_{\infty}+|f_0|_{\mC^\beta}+2\delta$ and in $(ii)$ we verify that for sufficiently large $n,$ $|f_\theta|_{\mH^\beta}\leq |f_0|_{\mH^\beta} +\delta$ and $\|f_\theta^{(\lfloor \beta \rfloor)}\|_\infty \leq \|f_0^{(\lfloor \beta \rfloor)}\|_\infty+ \delta.$ Putting all the bounds together, we find that for sufficiently large $n,$ $(i)$ and $(ii)$ imply $\|f_\theta\|_{\mH^\beta}\leq \|f_0\|_{\mH^\beta}+ 4\delta.$ Since $\delta>0$ was arbitrary, this then gives the result. 

Throughout the proof of $(i)$ and $(ii),$ we use freely the inequalities \eqref{eq.Fj_gammaj_ineqs} and $\max_{j=1, \ldots,m} \Delta_j^\beta/f_0(x_{j-1}) \rightarrow 0,$ which is a consequence of $\inf_{x} f_0(x) \gg n^{-\beta/(\beta+1)}.$

{\it (i):} Recall that $\|f\|_{\mC^\beta}= \|f\|_\infty+\|f^{(\lfloor \beta \rfloor)}\|_\infty+|f|_{\mC^\beta}.$ Since $\|f_\theta\|_{\mC^\beta}\leq \|f_0\|_{\mC^\beta}+ \| f_0\sum_{j=1}^m \theta_j \psi_j\|_{\mC^\beta},$ it remains to show that $ \| f_0\sum_{j=1}^m \theta_j \psi_j\|_{\mC^\beta}\leq 3\delta.$ By \eqref{eq.sup_norm_phi_j} and due to the disjoint support of $\psi_{j}$ for different $j,$ $\|f_0\sum_{j=1}^m\theta_j \psi_j \|_{\infty} \leq \delta.$ In the next step we show that $|f_0\sum_{j=1}^m\theta_j  \psi_j|_{\mC^\beta}\leq \delta.$ By definition, the derivatives of the kernel function $K$ in the definition of $\psi_j$ in \eqref{eq.def_psi_lb} vanish on the boundary points $u\in \{0,1\}$ and so $(f_0\sum_{j=1}^m \theta_j \psi_j)^{(\lfloor \beta \rfloor )}(x)=0,$ whenever $x=x_j$ with $j=0,1,\ldots,m.$ Thus, if $x\in [x_{j-1},x_j]$ and $y \in [x_{j'-1},x_{j'}]$ with $j<j',$
\begin{align*}
	&\big | (f_0\sum_{j=1}^m \theta_j \psi_j)^{(\lfloor \beta \rfloor )}(x)
	- (f_0\sum_{j=1}^m \theta_j \psi_j)^{(\lfloor \beta \rfloor )}(y)\big| \\
	&\leq \big | (f_0 \psi_j)^{(\lfloor \beta \rfloor )}(x) - (f_0 \psi_j)^{(\lfloor \beta \rfloor )}(x_j)\big|
	+\big | (f_0 \psi_{j'})^{(\lfloor \beta \rfloor )}(x_{j'-1}) - (f_0 \psi_{j'})^{(\lfloor \beta \rfloor )}(y)\big|.
\end{align*}
Together with the inequality $x^\gamma + y^\gamma \leq 2^{1-\gamma}(x + y)^\gamma$ for $0<\gamma \leq 1,$ which is a consequence of the concavity of $x\mapsto x^\gamma,$ $0<\gamma \leq 1,$ it follows that if the H\"older seminorm on each interval $[x_{j-1}, x_j]$ is bounded by $\delta/2,$ then the global H\"older seminorm is less than $\delta.$ It is thus enough to show $|f_0\psi_j|_{\mC^\beta}\leq \delta/2.$

For $\beta\leq 1,$ with \eqref{eq.loc_comp_of_fcts_in_lb} and \eqref{eq.sup_norm_phi_j}, $|f_0 \psi_j|_{\mC^\beta}\leq 2 f_0(x_{j-1})|\psi_j|_{\mC^\beta}+ |f_0 |_{\mC^\beta} \|\psi_j\|_\infty \lesssim \alpha \leq \alpha_0.$ Choosing $\alpha_0$ small gives $|f_0 \psi_j|_{\mC^\beta} \leq \delta/2.$ Now suppose $\beta>1.$ The proof that $|f_0 \psi_j|_{\mC^\beta} \leq \delta/2$ follows along the lines of the proof of Lemma 2 in \cite{RaySchmidt-Hieber2015c}. For the convenience of the reader, we nevertheless give the full proof here and only refer to \cite{RaySchmidt-Hieber2015c} for a more detailed exposition. With $v_j(x) := (F_0(x)-F_0(x_{j-1}))/F_j,$ we can rewrite $f_0(x) \psi_j(x) = \alpha \gamma_j \Delta_j^{\beta} f_0(x_{j-1})^{-1}f_0(x) (K \circ v_j)(x).$ For two $r$-times differentiable functions $g, h,$ $(gh)^{(r)} = \sum_{q=0}^r \binom{r}{q}g^{(q)}h^{(r-q)}.$ Moreover, by Fa\`a di Bruno's formula, we have for the $q$-th derivative of $K\circ v_j,$ 
\begin{align*}
	\big( K\circ v_j \big)^{(q)} = \sum c_{m_1, \ldots, m_q}  (K^{(M_q)}\circ v_j) \prod_{s=1}^q \big(v_j^{(s)}\big)^{m_s} 
	= \sum c_{m_1, \ldots, m_q}   \frac{K^{(M_q)}\circ v_j}{F_j^{M_q}} \prod_{s=1}^q \big(f_0^{(s-1)}\big)^{m_s},
\end{align*}
where the sum is over all non-negative integers $m_1,\ldots,m_q$ with $m_1+2m_2+\ldots+q m_q =q,$ $M_q:=\sum_{\ell=1}^q m_\ell,$ and $c_{m_1, \ldots, m_q}$ are suitable coefficients. The $r$-th derivative of $f_0\psi_j$ can thus be rewritten as
\begin{align}
	\frac{\alpha \gamma_j \Delta_j^\beta}{f_0(x_{j-1})} \Big((K\circ v_j) f_0^{(r)}+ \sum_{q=1}^r \sum \binom{r}{q}  c_{m_1, \ldots, m_q} \frac{K^{(M_q)}\circ v_j}{F_j^{M_q}} f_0^{(r-q)} \prod_{s=1}^q \big(f_0^{(s-1)}\big)^{m_s} \Big),
	\label{eq.f0phij_expansion}
\end{align}
where the second sum is over the same set of integers as above.

If $x, y \in [x_{j-1}, x_j],$ then by \eqref{eq.Fj_gammaj_ineqs}, $|K^{(q)}\big(v_j(x)\big)-K^{(q)}\big(v_j(y)\big) |\lesssim (\Delta_j^{-1}|x-y|)^{\beta -r}$ for any $q=0, \ldots, r.$ By definition, $f_0\in \mathcal{H}^\beta(R')$ implies that $|f_0^{(r)}(x)|\leq R^{\frac{r}{\beta}} |f_0(x)|^{\frac{\beta-r}{\beta}}$ for all $r=1, \ldots, \lfloor \beta \rfloor$ and all $x\in [0,1].$ Without loss of generality, we may assume that $x<y.$ Using Lemma \ref{lem.fx_local_bd} and the mean value theorem, we can argue as for Equation (3.5) in \cite{RaySchmidt-Hieber2015c} and find for $s\leq \lfloor \beta \rfloor-1$ and some $\xi \in [x,y],$ $|f_0^{(s)}(x)^{m_s}-f_0^{(s)}(y)^{m_s}|\leq m_s |f_0^{(s+1)}(\xi)f_0^{(s)}(\xi)^{m_s-1}| |x-y| \lesssim R^{\frac{sm_s+1}{\beta}} f_0(x_{j-1})^{-\frac{1}{\beta}+\frac{\beta-s}{\beta}m_s}\Delta_j^{1-(\beta-r)}|x-y|^{\beta-r}$ and $|f_0^{(\lfloor \beta \rfloor)}(x)-f_0^{(\lfloor \beta \rfloor)}(y)|\lesssim R |x-y|^{\beta-r}.$ In order to control $|(f_0\psi_j)^{(r)}(x)-(f_0\psi_j)^{(r)}(y)|,$ we rewrite this expression using  \eqref{eq.f0phij_expansion} with $r=\lfloor \beta \rfloor$ and control each factor separately, applying the inequality $|ab -a'b'|\leq |a-a'| |b| + |a'| |b-b'|$ which holds for any $a,a',b,b' \in \mathbb{R}.$ This gives
\begin{align*}
	|f_0 \psi_j|_{\mC^\beta} \lesssim \alpha \sum_q \Big(\frac{\Delta_j^\beta}{f_0(x_{j-1})} \Big)^{\frac{r-M_q}{\beta}}
	\lesssim \alpha,
\end{align*} 
where for the second step we used $\max_j \Delta_j^\beta/ f_0(x_{j-1})\rightarrow 0.$ Thus, $|f_0\psi_j|_{\mC^\beta}\leq \delta/2$ for $\alpha$ small and all sufficiently large $n.$

{\it (ii):} We first show that $|f_\theta|_{\mH^\beta}\leq |f_0|_{\mH^\beta}+\delta.$ Equation \eqref{eq.sup_norm_phi_j} implies $|f_\theta(x)/f_0(x)| =|1+ \sum_{j=1}^m \theta_j\psi_j(x)| =1+o(1),$ uniformly over $x.$ It is  thus enough to prove $|f_\theta^{(r)}(x)| \leq  (|f_0|_{\mH^\beta}+\delta/2)^{\frac{r}{\beta}} |f_0(x)|^{\frac{\beta-r}{\beta}}$ for $r=1,\ldots, \lfloor \beta \rfloor.$ If for any $r=1,\ldots,\lfloor \beta \rfloor,$ 
\begin{align}
	\big |(f_0\psi_j)^{(r)}(x) \big| \leq [(R+\delta/2)^{\frac{r}{\beta}}-R^{r/\beta}] |f_0(x)|^{\frac{\beta-r}{\beta}}, \ \text{for all} \ x\in [x_{j-1},x_j],  \ j=1,\ldots,m,
	\label{eq.toshow_fct_in_Hoeld_space}
\end{align}
then, since $x\mapsto (x+b)^\alpha-x^\alpha$ for $b, x>0$ and $0<\alpha \leq 1$ is  monotone decreasing and $| f_0^{(r)}(x)|\leq |f_0|_{\mH^\beta}^{\frac{r}{\beta}} |f_0(x)|^{\frac{\beta-r}{\beta}}$ by assumption, $$|f_\theta^{(r)}(x)|\leq |f_0^{(r)}(x)| + |(f_0\psi_j)^{(r)}(x)| \leq (|f_0|_{\mH^\beta}+\delta/2)^{\frac{r}{\beta}} |f_0(x)|^{\frac{\beta-r}{\beta}}.$$
It thus remains to show \eqref{eq.toshow_fct_in_Hoeld_space}. To see this, use \eqref{eq.f0phij_expansion} and $f_0\in \mH^\beta(R').$ This yields $|(f_0\psi_j)^{(r)}| \lesssim \sum_q (\Delta_j f_0^{-\frac{1}{\beta}})^{\beta-M_q}f_0^{\frac{\beta-r}{\beta}},$ which implies \eqref{eq.toshow_fct_in_Hoeld_space} for sufficiently large $n$ since $M_q \leq \lfloor \beta \rfloor < \beta$ and $\max_j \Delta_j^\beta/ f_0(x_{j-1}) \rightarrow 0.$ The previous step also shows that $\|f_\theta^{(\lfloor \beta \rfloor)}\|_\infty \leq \|f_0^{(\lfloor \beta \rfloor)}\|_\infty+ \delta.$ 
\end{proof}

\subsubsection*{Proof of Proposition \ref{prop.pjqj_expansion}}

We use $\lesssim,$ $\gtrsim$ and the big-O notation in the same way as in Theorem \ref{thm.lb}.

{\it Expansion of $q_{\pm, j}(\theta_j^0)$:} Recall that $f_\theta = f_0(1+\sum_{j=1}^m \theta_j\psi_j)$ and that the $\psi_j$ have disjoint support. Using the identity $\sqrt{z}-1 = \tfrac 12 (z-1) - \tfrac 18 (z-1)^2 + \tfrac 18 (z-1)^3(3+\sqrt{z})/(\sqrt{z}+1)^3$ for $z=1+\theta_j\psi_j(x)$ and $z=1 +\theta_j^0 \psi_j(x),$ together with $\mu_{j,2} =\alpha^2/n$ and \eqref{eq.sup_norm_phi_j}, we find for $\theta_j\neq \theta_j^0,$
\begin{align*}
	D_j:=n\int_{x_{j-1}}^{x_j}\big(\sqrt{f_\theta(x)}-\sqrt{f_{\theta_0}(x)}\big)^2 dx
	= \alpha^2 + O\big( n \mu_{j,4}\big)
\end{align*} 
and in particular, $D_j \geq \alpha^2/2$ for all $j$ if $n$ is large enough. Therefore, by Taylor expansion and straightforward computations,
\begin{align*}
	q_{\pm, j}(\theta_j^0) = Q_{\theta_0}(Q_j(-\theta_j^0)\geq e^{\pm 2\alpha \theta_j^0}) = \Phi\big(-D_j^{1/2} \mp \alpha \theta_j^0 D_j^{-1/2}\big)
	= \Phi(-\alpha \mp \theta_j^0)+ O\Big(\frac{\alpha^2}{nF_j}\Big),
\end{align*}
which proves the first part of the proposition.

{\it Expansion of $p_{\pm, j}(\theta_j^0)$:} Throughout this part of the proof we make freely use of the inequalities \eqref{eq.Fj_gammaj_ineqs} and \eqref{eq.sup_norm_phi_j}. For a real number $b$ with $1-|b|>0,$ consider the difference $\log(1+b)-\log(1-b).$ By a fourth order Taylor expansion of both log terms around one, we find
\begin{align*}
	&\big| \log(1+b)-\log(1-b) -2b
	\big| \leq \frac 23 |b|^3 + \frac{b^4}{2(1-|b|)^4}.
\end{align*}
Recall the definition of $P_j(\theta_j)$ in \eqref{eq.LR_fact}. With $b=\theta_j \psi_j(X_i^{(j)})$, the likelihood ratio for $\theta_j$ in the Poisson experiment $\mE_n^P$ is
\begin{align}
	P_j(\theta_j)
	&= \exp\Big( r_{j,n}+   (\theta_j -\theta_j^0)  \sum_{i=1}^{N_j} \psi_j(X_i^{(j)}) \Big),
	\label{eq.LR_approx_dens_est}
\end{align}
for a suitable remainder term $r_{j,n}$ satisfying $|r_{j,n}| \lesssim N_j \|\psi_j\|_\infty^3.$ Due to \eqref{eq.sup_norm_phi_j}, there is a constant $c_r$ such that 
\begin{align*}
	|r_{j,n}| \leq 2c_r N_j \alpha^3 (nF_j)^{-3/2} 
\end{align*}
(the factor $2$ allows us to simplify expressions later). Define $E_j:=E_{\theta_0}[\psi_j(X_1^{(j)})]$ and $s_j:= \Std_{\theta_0}(\psi_j(X_1^{(j)})).$  Let
\begin{align*}
	\xi_j 
	= \sqrt{N_j} \frac{\frac 1{N_j} \sum_{i=1}^{N_j}\psi_j(X_i^{(j)}) - E_j}{s_j}
\end{align*}
and observe that
\begin{align*}
	E_j
	&= \frac 1{F_j} \int \psi_j(x) f_0(x) \big(1 + \sum_{j=1}^m \theta_j^0  \psi_j(x) \big) dx
	=  \frac{\alpha^2}{nF_j} \theta_j^0 
\end{align*}
and
\begin{align*}
	s_j^2
	&= F_j^{-1} \int \psi_j^2(x)  f_0(x) \big(1 + \sum_{j=1}^m \theta_j^0  \psi_j(x) \big) dx  - E_j^2 
	= \frac{\alpha^2}{nF_j} 	+ \frac{\mu_{j,3}}{F_j}\theta_j^0 - \frac{\alpha^4}{(nF_j)^2} ,
\end{align*}
implying for sufficiently large $n,$
\begin{align}
		\frac{\alpha}{2\sqrt{nF_j}}\leq s_j \leq \frac{2\alpha}{\sqrt{nF_j}} \quad \text{for all} \ j=1,\ldots,m.
		\label{eq.sj_asymp}
\end{align}
Since $(1+x)^{-1/2}=1-x/2+O(x^2)$ for $|x|\leq 1/2$, we also have
\begin{align}
	\frac{\alpha}{ \sqrt{nF_j}s_j}
	= \big(1+ n \mu_{j,3}\alpha^{-2}\theta_j^0 -\alpha^2/nF_j\big)^{-1/2}
	= 1- \frac{n\mu_{j,3}\theta_j^0}{2\alpha^2} +O\Big(\frac{\alpha^2}{nF_j}\Big).
	\label{eq.sj_asymp2}
\end{align}
The $r$-th central moment of $\theta_j^0\psi_j(X_1^{(j)})$ will be denoted by  $\mathfrak{m}_{j,r}.$ With \eqref{eq.sj_asymp2},
\begin{align}
	\frac{\mathfrak{m}_{j,3}}{s_j^3}
	&=
	\theta_j^0  \frac{E_{\theta_0}[\psi_j(X_1^{(j)})^3] -3E_{\theta_0}[\psi_j(X_1^{(j)})^2]E_j + 2E_j^3}{s_j^3}  \notag \\
	&=
	\theta_j^0\frac{\mu_{j,3}}{ F_j s_j^3}+  O\big(\alpha/(nF_j)^{1/2}\big)  \notag \\
	&=\theta_j^0 \mu_{j,3}n^{3/2}\sqrt{F_j}\alpha^{-3}+  O\big(\alpha/(nF_j)^{1/2}\big) 
	\label{eq.cumulant_expansions}
\end{align}
and with \eqref{eq.mujr_higher_moments}, $\max_j \mathfrak{m}_{j,r}/s_j^r\lesssim\max_j  E_{\theta_0}[\psi_j(X_1^{(j)})^r]/s_j^r\lesssim 1.$ We can further rewrite \eqref{eq.LR_approx_dens_est} as
\begin{align}
	P_j(-\theta_j^0)= \exp\Big(r_{j,n} -2\sqrt{N_j} s_j \theta_j^0\xi_j -  N_j\frac{2\alpha^2}{nF_j}  \Big).
	\label{eq.LR_approx_dens_est2}
\end{align}
For $\ell=1,2,$ let
\begin{align*}
	B_{j,n}^{(\ell)}:=-\frac{\alpha^2 \sqrt{N_j}} {nF_j s_j} + (-1)^\ell \frac{c_r\alpha^3 \sqrt{N_j}}{(nF_j)^{3/2}s_j} \mp \frac{\alpha\theta_j^0}{\sqrt{N_j}s_j}
\end{align*}
and observe that the only randomness in $B_{j,n}^{(\ell)}$ comes from $N_j.$ Recall that $\widehat{\theta}_{\pm,j}^P \neq \theta_j^0$ iff $P_j(-\theta_j^0) \geq e^{\pm 2\alpha \theta_j^0}.$ Due to \eqref{eq.LR_approx_dens_est2}, we therefore have $\widehat{\theta}_{\pm,j}^P \neq \theta_j^0$ iff $ r_{j,n} -2\theta_j^0 \sqrt{N_j} s_j \xi_j -  4\alpha^2 N_j (nF_j(f_{\theta_0}))^{-1}  > \pm 2\alpha \theta_j^0$ and thus $$P_{\theta_0}(\theta_j^0\xi_j \leq  B_{j,n}^{(1)}) \leq  p_{j, \pm}(\theta_0)\leq P_{\theta_0}(\theta_j^0\xi_j \leq  B_{j,n}^{(2)}).$$ In the next step, we show that for $\ell=1,2,$ $ P_{\theta_0}(\theta_j^0\xi_j \leq  B_{j,n}^{(\ell)})=\Phi(-\alpha  \mp \theta_j^0)\pm n\mu_{j,3}/(6\alpha^2)\phi(-\alpha \mp \theta_j^0)+O(\alpha^2/\sqrt{nF_j}).$ To do that we need the following Edgeworth expansion, which is a simplification of Petrov \cite{petrov1975sums}, p.159 with $k=3.$
\begin{thm}
\label{thm.edgeworth}
Let $(Y_i)_{i=1,\ldots,M}$ be i.i.d. random variables with $EY_1=0,$ $\sigma :=\Std(Y_1)$ and $E[Y_1^4]<\infty.$ Let $v(t)=Ee^{itY_1}$ and denote by $G_M$ the c.d.f. of $\xi = M^{-1/2}\sum_i Y_i/\sigma.$ There exists an absolute constant $C$ such that for any $t\in \mathbb{R},$
\begin{align*}
	\Big|G_M(t) -\Phi(t) - \frac{1}{\sqrt{M}} \frac{E[Y_1^3]}{6\sigma^3} (1-t^2) \phi (t) 
	\Big|
	\leq C\frac {E[Y_1^4]}{\sigma^{4}M}  + C\Big( \sup_{|u|\geq \sigma^2/(12E|Y_1|^3)} |v(u)| + \frac{1}{2M}\Big)^M M^6.
\end{align*}
\end{thm}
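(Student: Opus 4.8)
The plan is to prove the (stronger) uniform-in-$t$ statement, since the asserted left-hand side is dominated by $\sup_x|G_M(x)-G(x)|$, where $G(x):=\Phi(x)+\tfrac{\gamma}{6\sqrt M}(1-x^2)\phi(x)$ and $\gamma:=E[Y_1^3]/\sigma^3$; the tool is Esseen's smoothing inequality together with a characteristic-function analysis, which is exactly the scheme behind the quoted theorem of Petrov. By the rescaling $Y_i\mapsto Y_i/\sigma$ — which leaves $G_M$, $\Phi$, $\gamma$ and the polynomial term unchanged, replaces $E[Y_1^4]/\sigma^4$ by $E[Y_1^4]$, and transforms the threshold $\sigma^2/(12E|Y_1|^3)$ into $1/(12E|Y_1|^3)$ for the characteristic function of the rescaled variable — one may assume $\sigma=1$. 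Writing $v$ for the characteristic function of $Y_1$, one has $\widehat F(t):=Ee^{it\xi}=v(t/\sqrt M)^M$, while $\tfrac{d}{dx}\big[(1-x^2)\phi(x)\big]=(x^3-3x)\phi(x)$ and the Hermite identity $\int e^{itx}(x^3-3x)\phi(x)\,dx=(it)^3e^{-t^2/2}$ give the Fourier–Stieltjes transform $\widehat G(t)=e^{-t^2/2}\big(1+\tfrac{\gamma}{6\sqrt M}(it)^3\big)$ and $\sup_x|G'(x)|\le C\,E[Y_1^4]$ (using $|\gamma|\le(E[Y_1^4])^{3/4}\le E[Y_1^4]$, valid since $E[Y_1^4]\ge1$). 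Esseen's inequality then yields, for every $T>0$,
$$\sup_x\big|G_M(x)-G(x)\big|\ \le\ \frac1\pi\int_{-T}^{T}\frac{|\widehat F(t)-\widehat G(t)|}{|t|}\,dt\ +\ \frac{C\,E[Y_1^4]}{T},$$
so everything reduces to estimating this integral and choosing $T$. I would note at the outset that if $E[Y_1^4]/M\gtrsim 1$ the asserted bound holds trivially (its right-hand side then exceeds a constant, while the left-hand side is at most $1+o(1)$), so it suffices to treat the case in which $E[Y_1^4]/M$ is small.

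Next I would split the integral into three ranges of $|t|$. On the innermost range $|t|\le c\sqrt M\,(|\gamma|\vee\sqrt{E[Y_1^4]})^{-1}$ I would Taylor-expand $\log v$ to fourth order (legitimate since $E|Y_1|^4<\infty$ and $|t/\sqrt M|$ is small), obtaining $\widehat F(t)=\exp\!\big(-t^2/2+\tfrac{(it)^3\gamma}{6\sqrt M}+\theta\,C\,E[Y_1^4]t^4/M\big)$ with $|\theta|\le1$; the choice of range forces $|\gamma||t|^3/\sqrt M$ and $E[Y_1^4]t^4/M$ to be at most $t^2/4$, so the elementary bound $|e^z-1-w|\le(|z-w|+\tfrac12|z|^2)e^{|z|}$ gives $|\widehat F(t)-\widehat G(t)|\le e^{-t^2/4}\big(C\,E[Y_1^4]|t|^4+C\gamma^2|t|^6\big)/M$; dividing by $|t|$, integrating over $\mathbb R$, and invoking the Cauchy–Schwarz bound $\gamma^2=(E[Y_1^3])^2\le E[Y_1^4]$ bounds this contribution by $C\,E[Y_1^4]/M$. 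On the middle range $c\sqrt M\,(|\gamma|\vee\sqrt{E[Y_1^4]})^{-1}\le|t|\le\sqrt M/(12E|Y_1|^3)$, the elementary estimate $|v(s)|\le 1-s^2/2+E|Y_1|^3|s|^3/6\le1-s^2/4\le e^{-s^2/4}$ (valid for $|s|=|t/\sqrt M|$ in this range) gives $|\widehat F(t)|\le e^{-t^2/4}$, and since $|t|$ is at least a constant multiple of $\sqrt M$ both the $\widehat F$- and the Gaussian $\widehat G$-contributions integrate to a super-exponentially small quantity, absorbed into the previous term.

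On the outer range $|t|\ge\sqrt M/(12E|Y_1|^3)$ no decay of a single factor is available, and one simply uses $|\widehat F(t)|=|v(t/\sqrt M)|^M\le\kappa^M$ with $\kappa:=\sup_{|u|\ge1/(12E|Y_1|^3)}|v(u)|$, the $\widehat G$-part being again negligible; taking $T\asymp M^{3/2}$ makes the smoothing remainder $C\,E[Y_1^4]/T$ of order $C\,E[Y_1^4]/M$, while the logarithmic integral $\int_{\sqrt M/(12E|Y_1|^3)}^{T}\kappa^M\,t^{-1}\,dt$ is easily bounded, and a marginally more careful treatment of the frequencies just above the threshold (exactly as in Petrov) converts these into the advertised term $C\big(\sup_{|u|\ge1/(12E|Y_1|^3)}|v(u)|+\tfrac1{2M}\big)^M M^6$ — the precise exponent being irrelevant, since this term is exponentially small whenever $\kappa<1$ and the whole inequality is trivial when $\kappa=1$. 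Undoing the rescaling recovers the general-$\sigma$ statement. I expect the main obstacle to be purely organizational: making the three-region split and the choice of $T$ cooperate so that every error collapses into the two named terms with \emph{absolute} constants, and in particular disposing cleanly of the parameter regime in which the bound is vacuous; there is no deeper difficulty, and one could alternatively simply invoke \cite{petrov1975sums} (p.~159, $k=3$) and record that its remainder specializes as stated.
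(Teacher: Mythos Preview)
The paper does not prove this theorem at all: it is stated as a black-box result and attributed to Petrov \cite{petrov1975sums}, p.~159 with $k=3$, with no argument supplied. Your proposal sketches precisely the classical proof that Petrov gives---Esseen's smoothing lemma plus a three-range split of the characteristic-function integral---so the approach is the same as the cited source, and your own closing remark that one could ``simply invoke \cite{petrov1975sums}'' is exactly what the paper does.

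Your sketch is correct in outline; the only places that would need tightening in a full write-up are the matching of the inner and middle range endpoints (one needs $c\sqrt{M}(|\gamma|\vee\sqrt{E[Y_1^4]})^{-1}\le \sqrt{M}/(12E|Y_1|^3)$, which does not hold for arbitrary distributions and requires either a different inner cutoff or a case distinction) and the conversion of the outer-range logarithmic integral into the specific form $(\kappa+1/(2M))^M M^6$, which as you note requires the refined treatment in Petrov rather than the crude bound $\kappa^M\log T$.
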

To compute $P_{\theta_0}(\theta_j^0\xi_j \leq  B_{j,n}^{(\ell)}) ,$ we first condition on $N_j.$ The bounds below are only useful if $N_j>0$ and we will later see that this is enough. Using Theorem \ref{thm.edgeworth}, there exists a constant $C'$ such that
\begin{align*}
	\big|P_{\theta_0}\big(\theta_j^0\xi_j \leq  y \big| N_j \big) 
	- \Phi\big(y\big)
	-\frac{\mathfrak{m}_{j,3}}{6\sqrt{N_j} s_j^3}(1-y^2)\phi(y) \big|\leq
	 \frac{C'}{N_j} + C'\Big( \sup_{|t|\geq \delta_j } |v_j(t)| + \frac{1}{2N_j}\Big)^{N_j} N_j^6
\end{align*}
with $|v_j(t)| = | E_{\theta_0}\exp(it [\psi_j(X_1^{(j)})-E_j]) | =| E_{\theta_0}\exp(it \psi_j(X_1^{(j)}))|$ and $\delta_j = s_j^2/(12 \sqrt{\mathfrak{m}_{j,6}}).$ 
\begin{lem}
For $n$ sufficiently large, there exists a constant $L<1$ such that $\max_j\sup_{|t|\geq \delta_j} |v_j(t)| \leq L <1.$
\end{lem}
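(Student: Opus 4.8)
The plan is to realize $|v_j(t)|$, via a change of variables, as (the modulus of) the characteristic function of a \emph{fixed} compactly supported random variable, and then combine the Riemann--Lebesgue lemma with a compactness argument; the hypotheses on $K$ enter exactly once, to guarantee absolute continuity of that fixed law.

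First I would change variables by $u=(F_0(x)-F_0(x_{j-1}))/F_j$ in the integral defining $v_j$. Since only $\psi_j$ is supported on $(x_{j-1},x_j]$, on that interval $f_{\theta_0}=f_0(1+\theta_j^0\psi_j)$ and $\psi_j(x)=a_jK(u)$ with $a_j:=\alpha\gamma_j\Delta_j^\beta/f_0(x_{j-1})=\alpha/\sqrt{nF_j}$ (using the definition of $\gamma_j$). This yields
$$|v_j(t)| = \Big|\int_0^1 e^{ita_jK(u)}\big(1+\theta_j^0 a_j K(u)\big)\,du\Big| \;\leq\; |g(ta_j)| + a_j\|K\|_\infty ,$$
where $g(\tau):=\int_0^1 e^{i\tau K(u)}\,du$ is the characteristic function of $K(V)$ for $V$ uniform on $[0,1]$. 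By \eqref{eq.sup_norm_phi_j} one has $\max_j a_j\to 0$, so the perturbation term $a_j\|K\|_\infty$ is uniformly negligible and it suffices to control $|g(ta_j)|$ for $|t|\geq\delta_j$.

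Next I would check that the constraint $|t|\geq\delta_j$ keeps the rescaled argument $ta_j$ bounded away from $0$ uniformly in $j$: from \eqref{eq.sj_asymp} we have $s_j\geq\tfrac12 a_j$, while $\mathfrak{m}_{j,6}\lesssim s_j^6$ (as in the moment bounds after \eqref{eq.cumulant_expansions}) together with $s_j\leq 2a_j$ gives $\mathfrak{m}_{j,6}\lesssim a_j^6$; hence $\delta_j=s_j^2/(12\sqrt{\mathfrak{m}_{j,6}})\gtrsim a_j^{-1}$, so there is a fixed $c_0>0$ with $\delta_ja_j\geq c_0$ for all $j$ and all large $n$. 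It therefore remains to bound $\sup_{|\tau|\geq c_0}|g(\tau)|$ strictly below one. Here the key structural input is that $K\in C^1$ (since $\beta'=\beta\vee 2\geq 2$) and $K'$ vanishes at only finitely many points, so $K$ is a finite concatenation of strictly monotone $C^1$ pieces; consequently the law of $K(V)$ is absolutely continuous with a compactly supported $L^1$ density (on each monotone piece the pushforward of Lebesgue measure has density $|K'|^{-1}\circ K^{-1}$, integrable over its range). This gives both facts I need: $|g(\tau)|<1$ for every $\tau\neq 0$ (equality would force $K(V)$ onto a countable set, contradicting absolute continuity), and $g(\tau)\to 0$ as $|\tau|\to\infty$ by Riemann--Lebesgue. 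Since $g$ is continuous, choosing $M$ with $|g|\leq 1/2$ on $\{|\tau|>M\}$ and using compactness of $\{c_0\leq|\tau|\leq M\}$ yields $L_0:=\sup_{|\tau|\geq c_0}|g(\tau)|<1$.

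Finally, for $n$ large enough that $\max_j a_j\|K\|_\infty\leq(1-L_0)/2$, the displays combine to give $\max_j\sup_{|t|\geq\delta_j}|v_j(t)|\leq L_0+(1-L_0)/2=:L<1$, which is the claim. The main obstacle is the absolute continuity of the law of $K(V)$ (the only point where the precise assumptions on $K$ are used), since this is what makes Riemann--Lebesgue applicable and keeps $|g|$ strictly below one off the origin; the secondary point requiring care is the lower bound $\delta_j a_j\geq c_0$, because the characteristic-function argument only controls $|g|$ away from zero, not near it.
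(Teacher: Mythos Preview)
Your proof is correct, and it takes a genuinely different route from the paper's argument. The paper shows that the density of $\psi_j(X_1^{(j)})$ on its support $[\kappa_j\inf K,\kappa_j\sup K]$ (with $\kappa_j=a_j$) is everywhere bounded below by $1/(2\kappa_j\|K'\|_\infty)$, then subtracts off this uniform component and computes its characteristic function explicitly as a sinc; this yields a quantitative bound $|v_j(t)|\le 1-\tfrac{\sup K-\inf K}{2\|K'\|_\infty}+\bigl|\tfrac{\sin(t\kappa_j(\sup K-\inf K)/2)}{t\kappa_j\|K'\|_\infty}\bigr|$, and the conclusion follows from $\delta_j\kappa_j\gtrsim 1$ and the fact that $\operatorname{sinc}$ is strictly below one away from zero.

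Your reduction, by the substitution $u=(F_0(x)-F_0(x_{j-1}))/F_j$, to the fixed characteristic function $g(\tau)=\int_0^1 e^{i\tau K(u)}\,du$ plus a uniformly vanishing perturbation is cleaner in that all $j$--dependence is pushed into the scaling $\tau=ta_j$ and a harmless additive $O(a_j)$ term. You then invoke Riemann--Lebesgue and compactness instead of the explicit sinc computation. Both proofs use the hypothesis that $K'$ vanishes at only finitely many points in the same essential way (to force absolute continuity of the pushforward law), but the paper's approach delivers an explicit constant, while yours is shorter and more conceptual. Either argument is perfectly adequate here, since only the strict inequality $L<1$ is needed downstream.
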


\begin{proof}
To simplify the proof, write $\kappa_j = \alpha \gamma_j \Delta_j^\beta/f_0(x_{j-1})$ and observe that with \eqref{eq.sj_asymp}, $\kappa_j \asymp s_j.$ Let $W$ be a random variable with Lebesgue density $f_W$ and $V=g(W)$ for a continuously differentiable function $g.$ Let $v$ be such that for all $w\in g^{-1}(v)$ the derivative $g'(\omega)$ is non-zero. For such a $v$, the density $f_V$ of $V$ is given by
\begin{align*}
	f_V(v) = \sum_{w\in g^{-1}(v)} \frac{f_W(w)}{|g'(w)|}.
\end{align*}
Since $K$ is by assumption continuously differentiable and $K'(u)=0$ for only finitely many different values of $u\in [0,1],$ the density of $\psi_j(X_1^{(j)})$ with $X_1^{(j)}$ generated from $P_{\theta_0}$ is contained in the support $[\kappa_j \inf K, \kappa_j \sup K]$ and almost everywhere bounded from below by
\begin{align*}
	\inf_{x\in [x_{j-1},x_j]} \frac{f_{\theta_0}(x)}{\kappa_j \|K'\|_{\infty}f_0(x)}.
\end{align*}
By \eqref{eq.sup_norm_phi_j}, we have that for sufficiently large $n$ this is lower bound by $1/(2\kappa_j \|K'\|_\infty).$ Subtracting and adding  $1/(2\kappa_j \|K'\|_\infty)$ to the density, we obtain for the characteristic function,
\begin{align*}
	|v_j(t)| &\leq 1 - \frac{\sup K - \inf K}{2\|K'\|_{\infty}}
	+ \frac{1}{2\kappa_j \|K'\|_\infty}\Big|\int_{\kappa_j \inf K}^{\kappa_j \sup K} e^{itu} du \Big| \\
	&=1 - \frac{\sup K - \inf K}{2\|K'\|_{\infty}} 
	+ 
	\Big|\frac{\sin(t\kappa_j(\sup K - \inf K)/2)}{t\kappa_j \|K'\|_\infty}\Big|.	
\end{align*}
Observe that $\delta_j = 1/(12 s_j\sqrt{\mathfrak{m}_{j,6}/s_j^6}) \gtrsim 1/s_j\gtrsim 1/\kappa_j$ and therefore there exits a positive constant that does not depend on $j$ such that $\sup_{|t|\geq \delta_j } |v_j(t)|\leq \sup_{t\kappa_j \geq c>0} |v_j(t)|.$ Since the sinc-function $\sin(x)/x$ is smaller than one whenever $x$ is bounded away from zero, this implies $\max_j \sup_{t\kappa_j \geq c>0} |v_j(t)|\leq L <1.$
\end{proof}

As a consequence of the previous lemma, we obtain
\begin{align*}
	\big|P_{\theta_0}\big(\theta_j^0\xi_j \leq  y \big| N_j \big) 
	- \Phi\big(y\big)
	-\frac{\mathfrak{m}_{j,3}}{6\sqrt{N_j} s_j^3}(1-y^2)\phi(y) \big|\lesssim 
	 \frac{1}{N_j}.
\end{align*}
For any real numbers $y,z$, there exist $\eta,\eta', \eta'' \in \mathbb{R}$ such that by Taylor expansion $\Phi(y) = \Phi(z) + (y -z) \phi(z) +\frac 12(y-z)^2 \phi'(\eta)$ as well as $\phi(y)  = \phi(z) + (y-z) \phi'(\eta')$ and $y^2\phi(y)  =z^2 \phi(z) + (y-z) [2\eta'' \phi(\eta'')+(\eta'')^2\phi'(\eta'')].$ Together with $\max_j \mathfrak{m}_{j,3}/s_j^3 \lesssim 1$ this yields
\begin{align}
	\Big|
	&P_{\theta_0}\big(\theta_j^0\xi_j \leq y \big| N_j \big) 
	- \Phi(z) -(y-z) \phi(z)
	-\frac{\mathfrak{m}_{j,3}}{6\sqrt{N_j} s_j^3}(1-z^2)\phi(z) \Big|\lesssim
	 \frac{1}{N_j} +(y-z)^2.
	 \label{eq.gen_statement_for_pj_cond}
\end{align}
In the next step, we show that 
\begin{align}
	&\big| B_{j,n}^{(\ell)} + \alpha  \pm \theta_j^0 \mp  \frac{ n \mu_{j,3}}{2\alpha^2} + \frac{\alpha^2\mp \alpha \theta_j^0}{ \sqrt{nF_j}  s_j}   \frac{N_j-nF_j}{2nF_j} \big|
	\lesssim \frac{\sqrt{N_j} \alpha^2}{ nF_j} + \frac{|N_j-nF_j|^2}{(nF_j)^2} \Big(1+\frac{\sqrt{nF_j}}{\sqrt{N_j}}\Big) + \frac{\alpha^2}{\sqrt{nF_j}}.
	\label{eq.Bjn_alpha_est}
\end{align}
For that, decompose $B_{j,n}^{(\ell)} + \alpha  \pm \theta_j^0$ into
\begin{align}
	(-1)^\ell \frac{c_r\alpha^3\sqrt{N_j}}{(nF_j)^{3/2}s_j}
	-
	\frac{\alpha^2 }{ \sqrt{nF_j}  s_j} \Big( \frac{\sqrt{N_j} }{\sqrt{n F_j}}  -1 \Big)
	+
	(\alpha \pm \theta_j^0) \Big(  1-\frac{\alpha }{ \sqrt{nF_j}  s_j} \Big)
	\pm \frac{\alpha\theta_j^0}{\sqrt{nF_j}s_j}\Big( 1- \frac{\sqrt{nF_j}}{\sqrt{N_j}} \Big).
	\label{eq.Bjn_alpha_decomp}
\end{align}
Using \eqref{eq.sj_asymp}, the first term is of order $\sqrt{N_j} \alpha^2 /(nF_j).$ Applying the identity $\sqrt{z}-1=\tfrac 12 (z-1)-\tfrac 12 (z-1)^2/(\sqrt{z}+1)^2$ to $z= N_j/(n F_j),$
\begin{align}
 \frac{\sqrt{N_j} }{\sqrt{n F_j}}  -1 
 	= \frac{N_j-nF_j}{2nF_j} +O\Big(\frac{(N_j-nF_j)^2}{(nF_j)^2} \Big),
 	\label{eq.Nj_ratio_expansion}
\end{align}
which controls the second term in \eqref{eq.Bjn_alpha_decomp}. For the last term, using $1-z^{-1/2} = \sqrt{z}-1 -(\sqrt{z}-1)^2/\sqrt{z}$ together with \eqref{eq.Nj_ratio_expansion} gives
\begin{align*}
	1- \frac{\sqrt{n F_j}}{\sqrt{N_j} }
	 = 
	 \frac{N_j-nF_j}{2nF_j} +O\Big(\frac{(N_j-nF_j)^2}{(nF_j)^2} \Big(1+\frac{\sqrt{nF_j}}{\sqrt{N_j}}\Big)\Big).
\end{align*} 
Finally, the third term of \eqref{eq.Bjn_alpha_decomp} can be controlled with \eqref{eq.sj_asymp2} and this proves \eqref{eq.Bjn_alpha_est}.

Using \eqref{eq.sup_norm_phi_j}, $P(N_j=0)=\exp(-nF_j)$ decreases faster to zero than any power of $1/(nF_j).$ Considering each term in \eqref{eq.Bjn_alpha_est} individually using Lemma \ref{lem.Poisson_moments}(ii), that $EN_j^{1/2} \leq [EN_j]^{1/2}$ and the Cauchy-Schwarz inequality gives
\begin{align*}
	E_{\theta_0}[B_{j,n}^{(\ell)}\mathbf{1}(N_j>0)] = -\alpha  \mp \theta_j^0 \pm \frac{n\mu_{j,3}}{2\alpha^2} + O\Big(\frac{\alpha^2}{\sqrt{nF_j}}\Big)
\end{align*}
and $E_{\theta_0}[(B_{j,n}^{(\ell)}+\alpha \pm \theta_j^0)^2\mathbf{1}(N_j>0)]\lesssim 1/(nF_j).$ Applying this to \eqref{eq.gen_statement_for_pj_cond} with $y=B_{j,n}^{(\ell)}$ and $z=-\alpha \mp \theta_j^0,$ using \eqref{eq.sup_norm_phi_j}, \eqref{eq.sj_asymp} and the expression for the standardized cumulant $\mathfrak{m}_{j,3}/s_j^3$ in \eqref{eq.cumulant_expansions} gives
\begin{align*}
	E_{\theta_0}[P(\theta_j^0\xi_j \leq  B_{j,n}^{(\ell)} \big| N_j )] 
	&= E_{\theta_0}[P(\theta_j^0\xi_j \leq  B_{j,n}^{(\ell)} \big| N_j )\mathbf{1}(N_j>0)]+ O(e^{-(nF_j)}) \\
	&=
	\Phi(-\alpha\mp \theta_j^0) \pm \frac{n\mu_{j,3}}{6\alpha^2}\phi(-\alpha \mp \theta_j^0) +O(\alpha^2/\sqrt{nF_j}).
\end{align*}
This finally yields
\begin{align*}
	p_{\pm, j}(\theta_j^0)
	&=
	\Phi(-\alpha\mp \theta_j^0) \pm \frac{n\mu_{j,3}}{6\alpha^2}\phi(-\alpha \mp \theta_j^0) +O(\alpha^2/\sqrt{nF_j}),
\end{align*}
which completes the proof of the second assertion of the proposition.\qed

\subsubsection*{Remaining proofs}

\begin{proof}[Proof of Proposition \ref{prop.MLE_is_mininmax}]
We first prove that $\widehat{\theta}_{\pm}^P$ is a Bayes estimator in the Poisson model. Denote by $p_\theta$ the density of $P_\theta^n$ with respect to some dominating measure $\mu.$ In step $(i),$ we prove that any estimator
\begin{align}
	\widetilde \theta \in \argmax_{\theta \in \Theta} \sum_{\theta': \rho(\theta, \theta') \leq A} p_{\theta'} \pi_{\pm}(\theta')
	\label{eq.argmin_over_neighbh}
\end{align}
is a Bayes estimator. In step $(ii),$ we show that $\widehat \theta_{\pm}^P$ is always contained in the argmax.

{\it (i):} Observe that
\begin{align*}
	\inf_{\widehat{\theta}} \sum_{\theta_0\in \Theta} P_{\theta_0}^n \big(\rho(\widehat \theta, \theta_0) \geq A \big) \pi_{\pm}(\theta_0)
	& = 1 -  \sup_{\widehat{\theta}} \int \sum_{\theta_0 \in \Theta}  \mathbf{1}(\rho(\widehat \theta, \theta_0) < A) p_{\theta_0}\pi_{\pm}(\theta_0)  d\mu.
\end{align*}
Now $\sum_{\theta_0 \in \Theta}  \mathbf{1}(\rho(\widehat \theta, \theta_0) < A) p_{\theta_0}\pi_{\pm}(\theta_0) \leq \sup_\theta \sum_{\theta_0 \in \Theta}  \mathbf{1}(\rho(\theta, \theta_0) < A) p_{\theta_0}\pi_{\pm}(\theta_0),$ which does not depend on $\widehat \theta$ anymore. The upper bound is attained by any estimator $\widetilde \theta$ satisfying \eqref{eq.argmin_over_neighbh}.

{\it (ii):} Let $\widehat{\theta}$ be an arbitrary estimator. If $L=\sum_{j=1}^m \mathbf{1}(\widehat \theta_{\pm,j}^P \neq \widehat \theta_j)$ is positive, we can find a sequence of estimators $\widehat \theta_0:= \widehat \theta, \widehat \theta_1, \ldots, \widehat\theta_{L-1}, \widehat\theta_L :=\widehat \theta_{\pm}^P$ such that for any $r=1, \ldots, L,$ $\widehat \theta_r$ and $\widehat \theta_{r-1}$ differ in exactly one entry. Write $U_r = \{\theta: \rho(\widehat \theta_r, \theta) \leq A\}.$ It is enough to prove that the sequence
\begin{align}
	\sum_{\theta\in U_r} \pi_{\pm}(\theta)p_{\theta}, \quad r=0,\ldots, L 
	\label{eq.to_prove_mon_incr}
\end{align}
is monotone increasing in $r.$ Let $\theta=(\theta_1, \ldots, \theta_m)$ and observe that by \eqref{eq.LR_fact} the densities $p_\theta$ and the priors $\pi_{\pm}$ factorize with respect to the components $\theta_j,$ that is $p_{\theta} = \prod_{j=1}^m p_{\theta_j}$ and $\pi_{\pm}(\theta)=\prod_{j=1}^m  \pi_{\pm}(\theta_j).$ Going from $\widehat \theta_r$ to $\widehat \theta_{r+1}$ we increase one of the factors, say the first one. It thus remains to show that
\begin{align*}
	\sum_{\theta\in U_r} \pi_{\pm}(\theta_1)p_{\theta_1} \pi_{\pm}(\theta_2)p_{\theta_2}\cdot \ldots \cdot \pi_{\pm}(\theta_m)p_{\theta_m}
	&\leq \sum_{\theta\in U_{r+1}} \pi_{\pm}(\theta_1)p_{\theta_1} \pi_{\pm}(\theta_2)p_{\theta_2}\cdot \ldots \cdot \pi_{\pm}(\theta_m)p_{\theta_m} \\
	&= \sum_{\theta\in U_r} \pi_{\pm}(-\theta_1)p_{-\theta_1} \pi_{\pm}(\theta_2)p_{\theta_2}\cdot \ldots \cdot \pi_{\pm}(\theta_m)p_{\theta_m}.
\end{align*}
If $(\theta_1,\theta_2,\ldots, \theta_m )$ and $(-\theta_1,\theta_2,\ldots, \theta_m )$ are both elements of $U_r,$ the respective terms cancel in both sums. We are thus left with the case that $(\theta_1,\theta_2,\ldots, \theta_m ) \in U_r$ and $(-\theta_1,\theta_2,\ldots, \theta_m ) \not \in U_r.$ In this case, we must have $\sum_{j=1}^m \rho_j |\widehat \theta_j^r -\theta_j|\leq 2A$ and $\rho_1|\widehat \theta_1^r +\theta_1| + \sum_{j=2}^m \rho_j |\widehat \theta_j^r -\theta_j|> 2A,$ implying $\theta_1=\widehat{\theta}_1^r.$ Since by construction $\pi_{\pm}(\widehat{\theta}_1^r)p_{\widehat{\theta}_1^r} \leq \pi_{\pm}(\widehat{\theta}_1^{r+1})p_{\widehat{\theta}_1^{r+1}}=\pi_{\pm}(-\widehat{\theta}_1^{r}) p_{-\widehat{\theta}_1^r},$ we finally see that \eqref{eq.to_prove_mon_incr} is monotone increasing in $r$ and this completes the proof of $(ii).$

The same arguments hold for the Gaussian experiment, proving that $\widehat{\theta}_{\pm}^G$ are Bayes estimators as well.
\end{proof}

\begin{proof}[Proof of Lemma \ref{lem.Bern_diff}]
By Remark \ref{rem.monotonicity_of_Bern_sum}, it is enough to prove the result for $p_j=q_j + q_j(1-q_j)\omega  \beta_j.$

Define the set $\mathcal{V}:=\{ I\subset \{1,\ldots,m\}: \sum_{j=1}^m \beta_j>A\}$ and notice that
\begin{align*}
	\P\big(\sum_{j=1}^m \beta_j Z_j(p_j)>A \big)
	=\sum_{V\in \mathcal{V}} \prod_{j\in V} p_j\prod_{j\in V^c} (1-p_j)
	\geq  \P\big(\sum_{j=1}^m \beta_j Z_j(q_j)>A \big) \inf_{V\in \mathcal{V}} \prod_{j\in V} \frac{p_j}{q_j}\prod_{j\in V^c} \frac{1-p_j}{1-q_j}.
\end{align*}
Moreover, for any $V\in \mathcal{V},$
\begin{align*}
	 R(V):=\log \prod_{j\in V}\frac{p_j}{q_j}\prod_{j\in V^c} \frac{1-p_j}{1-q_j}
	=\sum_{j\in V} \log\big(1+(1-q_j)\omega \beta_j \big)+ \sum_{j\in V^c} \log\big( 1- q_j\omega  \beta_j\big).
\end{align*}
For $0\leq x\leq1/2,$ $\log(1+x)\geq x-x^2/2$ and $\log(1-x)\geq-x-2x^2.$ Since $\omega \leq 1/2,$
\begin{align*}
	R(V) \geq \omega \sum_{j\in V} \beta_j -\omega \sum_{j=1}^m \beta_j q_j - 2\omega^2 \sum_{j=1}^m \beta_j^2
	\geq \omega A -\omega \sum_{j=1}^m \beta_j q_j -2\omega^2  \sum_{j=1}^m \beta_j^2.
\end{align*}
\end{proof}

\section{Results for globalization}
\label{sec.global}

We now derive estimators for the globalization step of the proofs. Denote by $\Theta(f)$ the local parameter space about a point $f$. We must show that if $f_0$ is the true parameter, there exists an estimator $\widehat f_n$ such that $f_0 \in \Theta(\widehat f_n)$ with high probability. To avoid measurability issues, we restrict $\widehat f_n$ to take values in a finite subset $\Theta' \subset \Theta,$ whose cardinality may depend on $n.$ 

The construction of such estimators is similar in all the cases. In a first step, we split the sample and use the first part for a preliminary kernel density estimator of $f_0.$ The second part of the sample is then used for another estimator $\widehat f_{2n}$ of $f_0$, whose bandwidth depends locally on the first estimator. This estimator is then shown to satisfy $f_0 \in \Theta(\widehat f_{2n})$ with high probability. Finally, we construct from $\widehat f_{2n}$ an estimator  $\widehat f_n$ with values in a finite subset of $\Theta.$ By the Arzel\` a -Ascoli theorem, the H\"older ball $\mC^\beta(R)$ is compact with respect to the uniform topology. For any decreasing positive sequence $(\delta_n),$ the parameter space $\Theta \subset \mH^\beta(R) \subset \mC^\beta(R)$ can therefore be covered with respect to the uniform norm by finitely many $\delta_n$-balls with centers in $\Theta.$ The set of centers $\Theta'$ form a finite subset of $\Theta.$ Define the estimator $\widehat f_n$ as any element of $\Theta'$ (i.e. center of a ball) that lies in $\Theta(\widehat f_{2n}).$ We next show that if $f_0 \in \Theta(\widehat f_{2n}),$ then the center of the ball covering $f_0$ also lies in $\Theta(\widehat f_{2n}),$ provided that $\delta_n$ is chosen small enough. This shows that with high probability $\widehat f_n \in \Theta' \subset \Theta.$ We finally show that this also implies the assertion that $f_0 \in \Theta(\widehat f_n)$ with high probability.

We begin with a preliminary result on kernel density estimators. For the definition and construction of an $\ell$-th order kernel see for instance \cite{tsybakov2009}, Definition 1.3 and Section 1.2.2.

\begin{thm}
\label{thm.dens_estimation}
Work in the density estimation experiment $\mE_n^D(\Theta).$ Consider a kernel density estimator $\widehat f_{nh_x}=(nh_x)^{-1}\sum_{i=1}^n K((X_i-\cdot )/h_x)$ for a positive bandwidth function $h_x> 0$ and some $\lfloor \beta \rfloor$-th order kernel $K$ with support on $[-1,1].$ Let $a=a(\beta)$ be the constant from Lemma \ref{lem.fx_local_bd}. If $f\in \mH^\beta(R),$ then with probability at least $1-2n^{1-\gamma},$
\begin{align*}
\big| \widehat f_{nh_x}(x) - f(x)\big| 
&\leq R\Big(\|K\|_{\infty} + \frac{1}{a^\beta}\Big) h_x^{\beta}  +
	2\gamma (\|K\|_{\infty}+\|K\|_2^2) \frac{\log n}{nh_x} + \|K\|_2\sqrt{8\gamma f(x) \frac{\log n}{nh_x}} \\
	&\leq R\Big(\|K\|_{\infty} + \frac{1}{a^\beta}\Big) h_x^{\beta}  +
	2\gamma (\|K\|_{\infty}+5\|K\|_2^2) \frac{\log n}{nh_x} +\frac 12 f(x)
\end{align*}
for all $x \in \{1/n, 2/n, \ldots, 1\}.$
\end{thm}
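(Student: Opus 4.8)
The plan is to prove the pointwise concentration inequality for the kernel density estimator $\widehat f_{nh_x}(x)$ via a standard bias-variance decomposition combined with Bernstein's inequality. First I would write
\[
\widehat f_{nh_x}(x) - f(x) = \big(\E \widehat f_{nh_x}(x) - f(x)\big) + \big(\widehat f_{nh_x}(x) - \E \widehat f_{nh_x}(x)\big),
\]
and handle the deterministic bias term and the stochastic fluctuation term separately.

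For the bias term, the standard argument for an $\lfloor\beta\rfloor$-th order kernel gives $|\E \widehat f_{nh_x}(x) - f(x)| = |\int K(u)(f(x-h_x u) - f(x))\,du|$, which after a Taylor expansion to order $\lfloor\beta\rfloor$ and using that $K$ kills polynomials of degree $\le \lfloor\beta\rfloor-1$ reduces to controlling an increment of $f^{(\lfloor\beta\rfloor)}$ over a window of size $h_x$. The new ingredient here compared to the classical setting is that we want the bias constant to scale with the \emph{local} size of $f$, not $\|f\|_\infty$; this is exactly what the flatness seminorm $|f|_{\mH^\beta}$ and Lemma~\ref{lem.fx_local_bd} deliver. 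Concretely, Lemma~\ref{lem.fx_local_bd} with constant $a=a(\beta)$ ensures that $f$ (and hence its relevant derivatives, via the $\mH^\beta$ constraint) stays comparable to $f(x)$ on the interval $[x-h_x, x+h_x]$ provided $h_x$ is small relative to $(f(x)/R)^{1/\beta}$; combining the H\"older continuity of $f^{(\lfloor\beta\rfloor)}$ with the $\mH^\beta$-bound $\|\,|f^{(\lfloor\beta\rfloor)}|^\beta/|f|^{\beta-\lfloor\beta\rfloor}\|_\infty \le R^{\lfloor\beta\rfloor}$ then produces the bias bound $R\big(\|K\|_\infty + a^{-\beta}\big) h_x^\beta$. (The $\|K\|_\infty$ vs.\ $a^{-\beta}$ split reflects the two regimes of $h_x$ relative to the local scale.)

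For the stochastic term, write $\widehat f_{nh_x}(x) - \E\widehat f_{nh_x}(x) = \frac1n\sum_{i=1}^n \xi_i$ with $\xi_i = \frac{1}{h_x}K\big(\frac{X_i - x}{h_x}\big) - \E[\cdots]$. These are i.i.d., mean zero, bounded by $\|\xi_i\|_\infty \le 2\|K\|_\infty/h_x$, and with variance $\Var(\xi_i) \le \frac{1}{h_x^2}\E K^2\big(\frac{X_i-x}{h_x}\big) = \frac1{h_x}\int K^2(u) f(x-h_x u)\,du \le \frac{C f(x)\|K\|_2^2}{h_x}$, again using Lemma~\ref{lem.fx_local_bd} to replace $f$ on the support window by a constant multiple of $f(x)$. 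Bernstein's inequality at level $t$ then gives a deviation of order $\sqrt{t \Var(\xi_1)/n} + t\|\xi_1\|_\infty/n$; choosing $t = \gamma\log n$ and taking a union bound over the $n$ points $x\in\{1/n,\dots,1\}$ (which costs only a factor absorbed into $\gamma$, yielding failure probability $\le 2n^{1-\gamma}$) produces the first displayed bound, namely the variance term $2\gamma(\|K\|_\infty + \|K\|_2^2)\frac{\log n}{nh_x} + \|K\|_2\sqrt{8\gamma f(x)\frac{\log n}{nh_x}}$ after tracking the explicit constants from Bernstein. The second displayed inequality follows from the elementary bound $\|K\|_2\sqrt{8\gamma f(x)\frac{\log n}{nh_x}} \le \frac12 f(x) + 4\gamma\|K\|_2^2\frac{\log n}{nh_x}$ (AM--GM with $ab \le \tfrac12 a^2 + \tfrac12 b^2$), which combines with the remaining $\frac{\log n}{nh_x}$ term to give the coefficient $2\gamma(\|K\|_\infty + 5\|K\|_2^2)$.

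The main obstacle is bookkeeping the explicit constants so that they come out exactly as stated, and — more substantively — making sure that Lemma~\ref{lem.fx_local_bd} is applicable uniformly: one needs $h_x$ small enough relative to $(f(x)/R)^{1/\beta}$ for the local comparison $\tfrac12 f(x) \le f(y) \le 2 f(x)$ on $[x-h_x,x+h_x]$ to hold, and this is where the constant $a(\beta)$ enters both the admissible bandwidth range and the bias constant. If $h_x$ is too large for the local comparison, one falls back on the global bound $\|f\|_\infty \le R$ and $\|f^{(\lfloor\beta\rfloor)}\|_\infty \le R$, which is why the bias constant is stated as $R(\|K\|_\infty + a^{-\beta})$ rather than something purely local — the statement is robust in both regimes. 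Everything else is a routine, if slightly tedious, application of Bernstein's inequality and a union bound.
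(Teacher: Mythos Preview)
Your overall strategy---bias/variance decomposition, Bernstein's inequality, union bound over the $n$ grid points---is exactly what the paper does, and the second displayed inequality is indeed just AM--GM on the square-root term. However, you have misidentified where the constant $a^{-\beta}$ enters.

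The bias is handled by the \emph{classical} kernel argument (Proposition~1.2 in Tsybakov): since $f\in\mC^\beta(R)$ and $K$ is an $\lfloor\beta\rfloor$-th order kernel supported on $[-1,1]$, one gets $|E\widehat f_{nh_x}(x)-f(x)|\le R h_x^\beta\int|u^\beta K(u)|\,du/\lfloor\beta\rfloor!\le 2R\|K\|_\infty h_x^\beta$ directly, with no recourse to Lemma~\ref{lem.fx_local_bd} and no local adaptivity. There is nothing ``new'' in the bias bound; it is purely global.

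The $a^{-\beta}$ term instead comes from the \emph{variance} side. Bernstein gives a stochastic term involving $G_h f(x):=\sup_{[x-h_x,x+h_x]}f$, and this is where Lemma~\ref{lem.fx_local_bd} is invoked: if $G_h f(x)\ge a^{-\beta}Rh_x^\beta$ then applying the lemma at the maximizer $z^*$ yields $G_h f(x)\le 2f(x)$; otherwise $G_h f(x)\le a^{-\beta}Rh_x^\beta$ trivially. Hence $G_h f(x)\le 2f(x)+a^{-\beta}Rh_x^\beta$ in all cases. Splitting the square root via $\sqrt{a+b}\le\sqrt a+\sqrt b$ and then using $2\sqrt{uv}\le u+2v$ on the piece $\sqrt{a^{-\beta}Rh_x^\beta\cdot(\cdots)}$ produces an additive term $Ra^{-\beta}h_x^\beta$ that \emph{looks} like bias (it is proportional to $h_x^\beta$) and so gets grouped with the genuine bias in the final display. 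Your ``fallback to the global bound when $h_x$ is too large'' intuition is right, but it operates inside the variance term, not the bias.
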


\begin{proof}
Using Proposition 1.2 in \cite{tsybakov2009}, we can bound the bias by $|E[\widehat f_{nh_x}(x)] - f(x)|\leq \frac{Rh_x^{\beta}}{\lfloor \beta \rfloor !} \int |u^\beta K(u) | du\leq 2R \|K\|_{\infty}h_x^{\beta}.$ Recall Bernstein's inequality: if $Z_1, \ldots,Z_n$ is a sequence of i.i.d. centered, real-valued random variables such that $|Z_i|\leq 1$ a.s., then for any $t>0,$
\begin{align*}
	P\big( \big|\sum_{i=1}^n Z_i\big| >t\big) \leq 2 \exp\Big(-\frac{\tfrac 12 t^2}{nE[Z_1^2]+t/3}\Big).
\end{align*}
Defining $G_h f(x) := \sup_{z\in [x-h_x,x+h_x]} f(z),$ this shows that
\begin{align*}
	P_f^n\Big( \Big| \sum_{i=1}^n K\Big( \frac{X_i-x}{h_x}\Big)- E\Big[K\Big( \frac{X_i-x}{h_x}\Big)\Big] \Big|
	\geq 2\gamma \|K\|_{\infty} \log n + 2\|K\|_2\sqrt{\gamma G_hf(x) nh_x \log n}\Big) 
	\leq 2n^{-\gamma}.
\end{align*}
Together with a union bound and the bound for the bias, this proves that with probability at least $1-2n^{1-\gamma},$
\begin{align*}
\big| \widehat f_{nh_x}(x) - f(x)\big| \leq 2R\|K\|_{\infty}h_x^{\beta}  +
	2\gamma \|K\|_{\infty} \frac{\log n}{nh_x} + 2\|K\|_2\sqrt{\frac{\gamma G_hf(x) \log n}{nh_x}}
\end{align*}
for all $x \in \{1/n, 2/n, \ldots, 1\}.$ Let $a=a(\beta)$ be the constant from Lemma \ref{lem.fx_local_bd}. This implies that $G_hf(x) \leq 2 f(x)$ whenever $a^{-\beta}R h_x^\beta \leq G_hf(x).$ If this does not hold, we simply use $G_hf(x) \leq a^{-\beta}R h_x^\beta$ so that $G_hf(x) \leq 2 f(x)+ a^{-\beta}R h_x^\beta$ for all $x.$ Using that for positive numbers $\sqrt{a+b}\leq \sqrt{a}+\sqrt{b}$ and $2\sqrt{uv} \leq u +2v,$ this finally gives that with probability at least $1-2n^{1-\gamma},$
\begin{align*}
\big| \widehat f_{nh_x}(x) - f(x)\big| \leq R\Big(\|K\|_{\infty} + \frac{1}{a^\beta}\Big) h_x^{\beta}  +
	2\gamma (\|K\|_{\infty}+\|K\|_2^2) \frac{\log n}{nh_x} + \|K\|_2\sqrt{8\gamma f(x) \frac{\log n}{nh_x}}
\end{align*}
for all $x \in \{1/n, 2/n, \ldots, 1\}.$ This proves the first inequality. For the second inequality, use $2\sqrt{uv} \leq u +2v$ again.
\end{proof}

\begin{proof}[Proof of Theorem \ref{thm.globalization_Poissonization}] 
In the Poisson intensity estimation experiment, we observe $X_1, \ldots, X_N$ with $N\sim \Poi(n).$ By Lemma \ref{lem.Poisson_moments}(iii), $P(N\geq n/2) \geq 1-2e^{-n/16}.$ Thus, on an event with probability $1- o(1/n),$ we can recover the density estimation model with sample size $\lfloor n/2\rfloor.$ It is therefore enough to prove the result for density estimation.  

Throughout the following let $K$ be an $\lfloor \beta \rfloor$-th order kernel with support on $[-1,1]$ and let $n_* :=\lfloor n/2 \rfloor \asymp n$ and $L_{n_*} := (\log n_*)/n_*.$ In the density estimation experiment, we can split the sample in two independent samples of size $n_*$  and use the first part of the sample to define the estimator $\widehat f_{1n_*}=(n_*h_{1n})^{-1}\sum_{i=1}^{n_*} K((X_i-\cdot )/h_{1n})$ with $h_{1n} = L_{n_*}^{1/(\beta+1)}.$ The second part of the sample is then used for the estimator $\widehat f_{2n_*}=(n_*\hat h_n)^{-1}\sum_{i=n^*+1}^{2n_*} K((X_i-\cdot )/\hat h_n)$ with $\hat h_n = L_{n_*}^{1/(\beta+1)} \vee (L_{n_*}\widehat f_{1n_*}(x) )^{1/(2\beta+1)}.$ By the compactness argument given at the beginning of Section \ref{sec.global}, $\Theta$ can be covered by finitely many $L^\infty$-balls of radius $L_{n^*}^{\beta/(\beta+1)}$ having centers in $\Theta.$ Let us define an estimator $\widehat f_n$ as any of the centers of the covering balls in the set
\begin{align*}
	\big\{ f\in \Theta : \big| \widehat{f}_{2n_*}\big(\tfrac in \big) - f\big(\tfrac in\big)\big| 
	\leq (C+1) L_{n_*}^{\beta/(\beta+1)} +C (L_{n_*}\widehat f_{2n_*}(x) )^{\beta/(2\beta+1)}, \ \  i=1,\ldots,n\big\}.
\end{align*}
If none of the centers are in this set then set $\widehat f_n :=f^*$ for some fixed parameter $f^* \in \Theta.$

Applying Theorem \ref{thm.dens_estimation} with $\gamma=2,$ there is a constant $C_1$ such that $|\widehat f_{1n_*}(i/n)-f_0(i/n)|\leq C_1 L_{n_*}^{\beta/(\beta+1)}+ f_0(i/n)/2$ for all $i=1,\ldots,n$ with probability at least $1-2n_*^{-1}.$ In particular, if $f_0(i/n) \geq 4C_1L_{n_*}^{\beta/(\beta+1)},$ then $\tfrac 14 f_0(i/n) \leq \widehat f_{1n_*}(i/n)\leq \tfrac 74 f_0(i/n).$ Applying Theorem \ref{thm.dens_estimation} with $\gamma =2$ to $\widehat f_{2n_*}$ conditionally on $X_1, \ldots X_{n_*},$ and treating the cases $f_0(i/n) \gtrless 4C_1L_{n_*}^{\beta/(\beta+1)}$ separately, gives for some constant $C_3,$
\begin{align*}
	\big | \widehat f_{2n_*}\big(\tfrac in \big) - f_0\big(\tfrac in \big) \big | 
	\leq C_3 L_{n_*}^{\beta/(\beta+1)} + C_3 \big( f_0\big(\tfrac in \big) L_{n_*}\big)^{\beta/(2\beta+1)} \ \ \text{for all} \ i=1,\ldots,n,
\end{align*}
with probability at least $1-4n_*^{-1}\geq 1-8/(n-1).$ From now on, let us work on the event where the previous inequalities hold. The switching relation in Lemma \ref{lem.switch_relation} shows that we can exchange $f_0$ by $\widehat f_{2n_*}$ on the right-hand side and therefore, for a constant $C_4,$
\begin{align*}
	\big | \widehat f_{2n_*}\big(\tfrac in \big) - f_0\big(\tfrac in \big) \big | 
	\leq C_4 L_{n_*}^{\beta/(\beta+1)} + C_4 \big( \widehat f_{2n_*}\big(\tfrac in \big) L_{n_*}\big)^{\beta/(2\beta+1)} \ \ \text{for all} \ i=1,\ldots,n.
\end{align*}
By construction, we can then conclude that if the constant $C$ in the definition of $\widehat{f}_n$ is taken to be larger than $C_4,$ $\widehat{f}_n$ must be a center of a ball from the covering and $| \widehat{f}_{2n_*}(i/n \big) - \widehat f_n(i/n)| \leq (C_4+1) L_{n_*}^{\beta/(\beta+1)} +C_4 (\widehat f_{2n_*}(i/n) L_{n_*} )^{\beta/(2\beta+1)}$ for all $i=1,\ldots,n.$ With Lemma \ref{lem.switch_relation}, we can replace  $\widehat f_{2n_*}(i/n)$ by $\widehat f_{n}(i/n)$ and this shows that for some constants $C_5, C_6$ and any $i=1,\ldots,n,$
\begin{align*}
	\big | f_0\big(\tfrac in \big) - \widehat f_n\big(\tfrac in \big) \big | 
	&\leq C_5 L_{n_*}^{\beta/(\beta+1)}
	+ C_5 \Big( \max\big(f_0\big(\tfrac in \big), \widehat f_n\big(\tfrac in \big)\big) L_{n_*}\Big)^{\beta/(2\beta+1)} \\
	&\leq 
	C_6 L_{n_*}^{\beta/(\beta+1)}
	+ C_6 \Big( \widehat f_n\big(\tfrac in \big) L_{n_*}\Big)^{\beta/(2\beta+1)},
\end{align*}
where the last step follows from Lemma \ref{lem.switch_relation} applied to $a_n = \max(f_0(i/n), \widehat f_n(i/n))$ and $b_n = \min (f_0(i/n), \widehat f_n(i/n)).$ Finally, let $x\in [0,1]$ be arbitrary and define $i_x := \argmin_i |x-\tfrac in|.$ Since $f_0, \widehat f_n \in \mH^\beta(R)$ and $n^{-(1\wedge \beta)} \leq L_{n_*}^{\beta/(\beta+1)},$ the triangle inequality gives
\begin{align*}
	\big | f_0(x) - \widehat f_n(x) \big | 
	&\leq 2Rn^{-(1\wedge \beta)}  + \big | f_0\big(\tfrac {i_x}n\big) - \widehat f_n\big(\tfrac {i_x}n\big)\big | \\
	&\leq 2Rn^{-(1\wedge \beta)} + C_6 L_{n_*}^{\beta/(\beta+1)}
	+ C_6 \Big( \big(\widehat f_n(x)+Rn^{-(1\wedge \beta)} \big) L_{n_*}\Big)^{\beta/(2\beta+1)} \\
	&\leq \big(2R +C_6(1+R^{\beta/(2\beta+1)})\big) L_{n_*}^{\beta/(\beta+1)}
	+ C_6 \Big(\widehat f_n(x) L_{n_*}\Big)^{\beta/(2\beta+1)}.
\end{align*}
Since $x$ was arbitrary, this shows that $f_0 \in \Theta_1^\beta (\widehat f_n )$ provided that the constant $C$ in the definition of $\Theta_1^\beta (\widehat f_n )$ is taken large enough.
\end{proof}

\begin{proof}[Proof of Theorem \ref{thm.glob2}]
The arguments in the proof always hold for sufficiently large $n$ although this is not always explicitly mentioned. Let $f^* \in \Theta$ be an arbitrary fixed parameter. In $(I)$ we prove the result for the Poisson intensity estimation experiment and in $(II)$ the result is extended to the Gaussian white noise experiment $\mE_n^G(\Theta).$

{\it (I):} We first construct two preliminary estimators $\widehat f_{1n}$ and $\widehat f_{2n}.$ Given $N \sim \Poi(n),$ let $N_1 \sim \Bin(N,1/2).$ Then $(X_1, \ldots, X_{N_1})$ and $(X_{N_1+1}, \ldots, X_N)$ are two independent samples from the same Poisson intensity estimation experiment with $n$ replaced by $n/2.$ If $N_1>n/4,$ construct the estimator satisfying the conclusions of Theorem \ref{thm.globalization_Poissonization} based on the subsample $(X_1, \ldots, X_{\lfloor n/4\rfloor})$ and denote this estimator by $\widehat f_{1n}.$ If $N_1\leq n/4,$ set $\widehat f_{1n}=f^*$. Let $L_n = n^{-1}\log n.$ By the conclusion of Theorem \ref{thm.globalization_Poissonization} and Lemma \ref{lem.switch_relation}, it follows for that some sufficiently large constant $C,$ the event
\begin{align*}
	\Omega:= \Big\{ |\widehat{f}_{1n}(x) -f_0(x)|\leq CL_n^{\beta/(\beta+1)}+C(f_0(x)L_n)^{\beta/(2\beta+1)} \ \text{for all}\ x\in [0,1]\Big\}
\end{align*}
has $\overline P_{f_0}^n$-probability $1-O(n^{-1}).$ Since by assumption $\inf_{f_0 \in \Theta} \inf_x f_0(x) \gg L_n^{\beta/(\beta+1)}$, it follows that $\tfrac 12 f_0 \leq  \widehat{f}_{1n} \leq 2f_0$ on $\Omega.$ Based on $\widehat f_{1n},$ we estimate the sequence \eqref{eq.xi_def}. Let $\widehat z_0:=0$ and $\widehat z_{i+1} := \widehat z_i + (\widehat f_{1n}(\widehat z_i)/n)^{1/(2\beta+1)}.$ Denote by $\widehat m$ the index of the largest $\widehat z_i$ smaller than $1$ and define $(\widehat x_i)_{i=0,\ldots,m}$ as $\widehat x_i:= \widehat z_i$ for  $i<\widehat m$ and $\widehat x_{\widehat m}:=1.$ In analogy with \eqref{eq.Deltai_def}, write $\widehat \Delta_i := \widehat x_i- \widehat x_{i-1} =  (\widehat f_{1n}(\widehat x_{i-1})/n)^{1/(2\beta+1)}+(1-\widehat z_{\widehat m})\mathbf{1}(i=\widehat m).$ Using the same arguments as for \eqref{eq.Deltam_ineqs} and \eqref{eq.loc_comp_of_fcts_in_lb}, we obtain that on $\Omega$ and for sufficiently large $n,$
\begin{align}
	(f_0(\widehat x_{j-1})/n)^{1/(2\beta+1)} \leq \widehat \Delta_j \leq 3(f_0(\widehat x_{j-1})/n)^{1/(2\beta+1)}
	\label{eq.hatDeltam_ineqs}
\end{align}
and
\begin{align}
	\frac{1}{2} f_0(\widehat x_{j-1}) \leq f_0(x) \leq 2f_0(\widehat x_{j-1}), \quad \text{for all} \ x\in [\widehat x_{j-1}, \widehat x_j]
	\label{eq.hatloc_comp_of_fcts_in_lb}
\end{align}
for all $j=1,\ldots, \widehat m.$

Let $N_i' := \# \{j\in \{N_1+1, \ldots , N\}: X_j\in [\widehat x_{i-1},\widehat x_i)\}$ be the number of counts in the interval $[\widehat x_{i-1},\widehat x_i)$ based on the second part of the sample. Thus, conditionally on $X_1,\ldots,X_{N_1},$ $N_i'$ follows a Poisson distribution with intensity $E[N_i' |X_1,\ldots,X_{N_1}]=\tfrac n2 \int_{\widehat x_{i-1}}^{\widehat x_i} f_0(u) du.$ Define the estimator
\begin{align}
	\widetilde{f}_{2n} = \sum_{i=1}^{\widehat m} \frac{2N_i'}{n\widehat \Delta_i} \mathbf{1}\big( \cdot \in [\widehat x_{i-1}, \widehat x_i) \big)
	\label{eq.def_widetildef2n}
\end{align}
and denote by $\widehat{f}_{2n}$ the projection of $\widetilde{f}_{2n}$ on $[\tfrac 12 \widehat f_{1n}(x), 2\widehat f_{1n}(x)]$, that is
\begin{align}
	\widehat{f}_{2n}(x)
	= 
	\big( \widetilde{f}_{2n}(x) \wedge 2\widehat f_{1n}(x) \big) \vee \frac{\widehat f_{1n}(x)}{2}.
	\label{eq.def_widehatf2n}
\end{align}
On $\Omega,$ $\tfrac 12 \widehat f_{1n} \leq f_0\leq  2\widehat f_{1n}$ and thus $\tfrac 14 f_0 \leq \widehat{f}_{2n}\leq 4f_0$ as well as $|\widehat{f}_{2n}(x) - f_0(x)|\leq |\widetilde{f}_{2n}(x)- f_0(x)|$ for all $x\in [0,1].$

We next show that on an event $\Omega_1$ with probability $P(\Omega_1)= 1 -O(n^{-1}),$ the estimator $\widehat f_{2n}(x)$ satisfies 
\begin{align}
	&n \int_0^1 \frac{(f_0(x)-\widehat f_{2n}(x))^4}{\widehat f_{2n}(x)^3} dx
	\leq C_2 n^{\frac{1-2\beta}{2\beta+1}} \int_0^1 \widehat f_{2n}(x)^{-\frac{2\beta+3}{2\beta+1}} dx
	\label{eq.widehatf2n_toshow}
\end{align}
for some constant $C_2$ which depends only on $R$ and $\beta.$ Let $\lambda_i := \tfrac n2 \int_{\widehat x_{i-1}}^{\widehat x_i} f_0(u) du,$ $\omega_i :=1/(n \widehat \Delta_i f_0(\widehat x_{i-1}))$ and $\eta_i := (N_i'-\lambda_i)/\sqrt{\lambda_i}.$  On $\Omega,$ using $f_0 \in \mH^\beta(R),$ \eqref{eq.hatDeltam_ineqs} and \eqref{eq.hatloc_comp_of_fcts_in_lb},
\begin{align}
	&n \int_0^1 \frac{(f_0(x) - \widehat f_{2n}(x))^4}{\widehat f_{2n}(x)^3} dx \notag \\
	&\leq 2^9 n \int_0^1 \frac{(f_0(x) - E[\widetilde f_{2n}(x)|X_1, \ldots,X_{N_1}])^4+(E[\widetilde f_{2n}(x)|X_1, \ldots,X_{N_1}] - \widetilde f_{2n}(x))^4}{f_0(x)^3} dx \notag \\
	&\leq 2^{12} n \sum_{i=1}^{\widehat m}  \frac{R^4\widehat \Delta_i^{1+4\beta}}{f_0(\widehat x_{i-1})^3}
	+ 2^{16} \sum_{i=1}^{\widehat m} \frac{\lambda_i^2\eta_i^4}{n^3 \widehat \Delta_i^3f_0(\widehat x_{i-1})^3}  \label{eq.glob_decomp_int} \\
	&\leq 3^{4\beta} 2^{12} R^4 n^{\frac{1-2\beta}{2\beta+1}}
	\sum_{i=1}^{\widehat m} 
	\widehat \Delta_i f_0(\widehat x_{i-1})^{-\frac{2\beta+3}{2\beta+1}}
	+  2^{16} \sum_{i=1}^{\widehat m} \omega_i \eta_i^4 \notag \\
	&\leq 3^{4\beta} 2^{15} R^4 n^{\frac{1-2\beta}{2\beta+1}}
	\int_0^1 f_0(x)^{-\frac{2\beta+3}{2\beta+1}} dx
	+  2^{16} \sum_{i=1}^{\widehat m} \omega_i \eta_i^4. \notag
\end{align}
Due to
\begin{align}
	\sum_{i=1}^{\widehat m } \omega_i
	\leq 
	n^{\frac{1-2\beta}{2\beta+1}}\sum_{i=1}^{\widehat m} \frac{\widehat \Delta_i}{f_0(\widehat x_{i-1})^{\frac{2\beta+3}{2\beta+1}}}
	\leq 
	8 n^{\frac{1-2\beta}{2\beta+1}} \int_0^1 f_0(x)^{-\frac{2\beta+3}{2\beta+1}} dx,
	\label{eq.sum_omega_i}
\end{align}
$\min_i \lambda_i \geq \min_i \tfrac 14 n \widehat \Delta_i f_0(\widehat x_{i-1})\geq \tfrac 14 n^{2\beta/(2\beta+1)} \inf_{f_0\in \Theta}\inf_x f_0(x)^{(2\beta+2)/(2\beta+1)}\rightarrow \infty$ and Lemma \ref{lem.Poisson_moments}(i), we find for some sufficiently large constant $C_1,$
\begin{align}
	n \int_0^1 \frac{(f_0(x) - \widehat f_{2n}(x))^4}{\widehat f_{2n}(x)^3} dx
	&\leq C_1 n^{\frac{1-2\beta}{2\beta+1}} \int_0^1 \widehat f_{2n}(x)^{-\frac{2\beta+3}{2\beta+1}} dx
	+ 2^{16} \sum_{i=1}^{\widehat m} \omega_i \big(\eta_i^4-E\big[\eta_i^4\big]\big).
	\label{eq.widetildef2n_ubd}
\end{align}
For the second term, we apply the exponential inequality in Lemma \ref{lem.exp_ineq_glob}. For that we firstly verify that $\| \omega\|_\infty \log^5 n \lesssim  \sum_i \omega_i.$ Set $f_* := \inf_x f_0(x)$ and $x_*\in \argmin_x f_0(x).$ For $K\in \{2,4\},$ denote by $I_K$ the largest interval such that $x_*\in I_K$ and $I_K \subset \{x:f_*\leq f_0(x) \leq Kf_*\}.$ Let us derive a lower bound for the cardinality of $\{i: \widehat x_{i-1}\in I_4\}.$ If $[\widehat x_{i-1}, \widehat x_i) \cap I_2 \neq \varnothing,$ then by \eqref{eq.hatloc_comp_of_fcts_in_lb}, $f(\widehat x_{i-1})\leq 4f_*$ for sufficiently large $n$ and so $\widehat x_{i-1} \in I_4.$ The cardinality of $\{i: \widehat x_{i-1}\in I_4\}$ can therefore be lower bounded by the cardinality of $\{i:[\widehat x_{i-1}, \widehat x_i) \cap I_2 \neq \varnothing \}.$ If $\widehat x_{i-1}\in I_4$ then by \eqref{eq.hatDeltam_ineqs}, $\widehat \Delta_i \leq 3(4f_*/n)^{1/(2\beta+1)}.$ Moreover by Lemma \ref{lem.fx_local_bd}, the Lebesgue measure of the set $I_2$ is at least $a(f_*/R)^{1/\beta}$ with $a$ the constant in Lemma \ref{lem.fx_local_bd}. This means that the cardinality of $\{i: \widehat x_{i-1}\in I_4\}$ is at least 
\begin{align*}
	\frac{a(f_*/R)^{\frac 1{\beta}}}{3 (4f_*/n)^{\frac 1{2\beta+1}}} = \frac{ a}{ 3 R^{\frac 1{\beta}}4^{\frac 1{2\beta+1}}} f_*^{\frac{\beta+1}{\beta(2\beta+1)}}n^{\frac 1{2\beta+1}} \gtrsim  \log^5 n,
\end{align*}
where for the last step we used that $\beta\mapsto (\beta+1)/(\beta(2\beta+1))$ is monotone decreasing for $\beta>0$ and that $\inf_{f\in \Theta} \inf_x f(x) \gg n^{-\beta/(\beta+1)}\log^8 n$ by assumption. Recall the definition of $\omega_i$ and observe that if $i \in I_4,$ the ratio $\omega_i/\|\omega\|_\infty$ is bounded from below by a constant. Consequently, $\| \omega\|_\infty\log^5 n \lesssim \sum_{i: \widehat x_{i-1}\in I_4} \omega_i \leq \sum_{i=1}^{\widehat m} \omega_i$ and the right-hand side can be further bounded using \eqref{eq.sum_omega_i}. By \eqref{eq.hatDeltam_ineqs}, \eqref{eq.hatloc_comp_of_fcts_in_lb} and Lemma \ref{lem.integral_comp}(ii), 
\begin{align*}
	\log n \|\omega\|_2  \lesssim \log n \Big(n^{\frac{1-4\beta}{2\beta+1}} \int_0^1 f_0(x)^{-\frac{4\beta+5}{2\beta+1}} dx \Big)^{1/2} \lesssim    n^{\frac{1-2\beta}{2\beta+1}}\int_0^1  f_0(x)^{-\frac{2\beta+3}{2\beta+1}} dx. 
\end{align*}
Since $\inf_{f_0 \in \Theta}\inf_x f_0(x) \gg n^{-\beta/(\beta+1)},$ we have $\widehat m = \sum_{i=1}^{\widehat m} \widehat \Delta_i / \widehat \Delta_i \leq n^{1/(\beta+1)}\sum_{i=1}^{\widehat m} \widehat \Delta_i = n^{1/(\beta+1)}$ for all sufficiently large $n.$ Thus, using Lemma \ref{lem.Poisson_moments}(i) and $\min_i \lambda_i \rightarrow \infty,$ we can apply the exponential inequality in Lemma \ref{lem.exp_ineq_glob} with $p=4$ and $t=2\log n$ to obtain
\begin{align*}
	\sum_{i=1}^{\widehat m} \omega_i \big(\eta_i^4-E\big[\eta_i^4\big]\big)
	\lesssim n^{\frac{1-2\beta}{2\beta+1}}\int_0^1  f_0(x)^{-\frac{2\beta+3}{2\beta+1}} dx
\end{align*}
with probability $\geq 1- \widehat m e^2/n^2 \geq 1-e^2/n.$ Together with \eqref{eq.widetildef2n_ubd}, this shows that there is a constant $C_2$ depending only on $\beta$ and $R$, such that 
\begin{align}
	n \int_0^1 \frac{(f_0(x) - \widehat f_{2n}(x))^4}{\widehat f_{2n}(x)^3} dx
	\leq C_2 n^{\frac{1-2\beta}{2\beta+1}} \int_0^1 \widehat f_{2n}(x)^{-\frac{2\beta+3}{2\beta+1}} dx
	\label{eq.glob2_f0_in_ball}
\end{align}
on an event $\Omega_1$ with probability $P(\Omega_1)\geq 1-e^2/n - P(\Omega^c) = 1 -O(n^{-1}).$ This proves \eqref{eq.widehatf2n_toshow}. 

As in the proof of Theorem \ref{thm.globalization_Poissonization}, we cover $\Theta \subset \mH^\beta(R)$ with finitely many balls of sup-norm radius $n^{-2}$ and centers in $\Theta.$ The estimator $\widehat f_n$ is then defined as any of the centers of the covering balls in the set
\begin{align}
	\Big\{ f\in \Theta : 
	&\frac 18 f \leq \widehat f_{2n} \leq 8f ,   \text{ and} \  n \int_0^1 \frac{(f(x) - \widehat f_{2n}(x))^4}{\widehat f_{2n}(x)^3} dx 
	\leq 8(C_2+2) n^{\frac{1-2\beta}{2\beta+1}} \int_0^1 \widehat f_{2n}(x)^{-\frac{2\beta+3}{2\beta+1}} dx \Big\}.
	\label{eq.set_final_est_glob2}
\end{align}
If none of the centers are in this set then set $\widehat f_n :=f^*.$

By construction, the estimator $\widehat f_n$ can take only finitely many values in the parameter space $\Theta.$ We now show that on the event $\Omega_1,$ $\widehat f_n$ lies in the set \eqref{eq.set_final_est_glob2}. By construction of the covering, it is enough to prove that on $\Omega_1,$ any $\tilde f\in \Theta$ with $\|\tilde f -f_0\|_\infty \leq n^{-2}$ is in the set \eqref{eq.set_final_est_glob2}. Let us work on $\Omega_1.$ Since $\inf_{f_0\in \Theta} \inf_x f_0(x) \gg 4n^{-\beta/(\beta+1)}\geq 4n^{-1}$ and $\tfrac 14 f_0\leq  \widehat f_{2n}\leq 4f_0,$ it follows that $\widehat f_{2n} \geq 1/n$ and $\tfrac 18 \widetilde f \leq \widehat f_{2n} \leq 8 \widetilde f.$ Observe that $(\widetilde f (x)- \widehat f_{2n}(x))^4\leq 8(\widetilde f(x)-f_0(x))^4 + 8(f_0(x) - \widehat f_{2n}(x))^4 \leq 8n^{-8} + 8(f_0(x) - \widehat f_{2n}(x))^4.$ Using \eqref{eq.glob2_f0_in_ball} and that $\|\hat{f}_{2n}\|_{L^\infty}\leq 4R$,
\begin{align}
	n \int_0^1 \frac{(\widetilde f(x) -\widehat f_{2n}(x))^4}{\widehat f_{2n}(x)^3} dx  
	&\leq 8n^{-4}+8C_2 n^{\frac{1-2\beta}{2\beta+1}} \int_0^1 \widehat f_{2n}(x)^{-\frac{2\beta+3}{2\beta+1}} dx \notag \\
	&\leq 8(C_2+o(1)) n^{\frac{1-2\beta}{2\beta+1}} \int_0^1 \widehat f_{2n}(x)^{-\frac{2\beta+3}{2\beta+1}} dx
	\label{eq.widetildefn_in_discr_set}
\end{align}
for sufficiently large $n.$ Thus on $\Omega_1,$ $\widehat f_n$ is in the set \eqref{eq.set_final_est_glob2}. We also know that $\tfrac 18 \widehat f_n \leq \widehat f_{2n} \leq 8\widehat f_n,$ which together with $\tfrac 14 f_0 \leq  \widehat f_{2n} \leq 4f_0$ gives $2^{-5}\widehat f_n \leq f_0  \leq 2^5 \widehat f_n.$ By the triangle inequality $|f_0(x) -\widehat f_n(x)|\leq |f_0(x) - \widehat f_{2n}(x)|+|\widehat f_{2n}(x)- \widehat f_n(x)|$ and using \eqref{eq.set_final_est_glob2} and \eqref{eq.widetildefn_in_discr_set},
\begin{align*}
	n \int_0^1 \frac{(f_0(x)-\widehat f_{n}(x))^4}{\widehat f_{n}(x)^3} dx
	\leq Cn^{\frac{1-2\beta}{2\beta+1}} \int_0^1 \widehat f_{n}(x)^{-\frac{2\beta+3}{2\beta+1}} dx
\end{align*}
for some sufficiently large constant $C,$ which proves that $f_0 \in \Theta^\beta(\widehat f_n).$

{\it (II):} By the same argument as in the proof of Lemma \ref{lem.sample_splitting}, we know that observing $(Y_t)_{t\in [0,1]}$ with $dY_t = 2\sqrt{f(t)} dt + n^{-1/2} dW_t,$ $t\in [0,1],$ is equivalent to observing two independent processes $(Y_{i,t})_{t\in [0,1]},$ $i=1,2,$ with $dY_{i,t}= \sqrt{f(t)} dt + n^{-1/2} dW_{i,t},$ $t\in [0,1],$ and $W_{i,t}$ independent Brownian motions. Instead of observing one process with noise level $n^{-1/2},$ we can thus rewrite the experiment such that we observe two independent processes with $n$ replaced by $n/2.$ By Theorem 1 in \cite{ray2016IP}, there exists an estimator $\widehat f_{3,n}$ based on $(Y_{1,t})_{t\in [0,1]}$ and a constant $C_3$ depending only on $\beta$ and $R$, such that $\inf_{f_0 \in \Theta} Q_{f_0}^n (\widetilde \Omega)= 1-o(n^{-1})$ with
\begin{align*}
	\widetilde \Omega := \Big\{\big|\widehat f_{3,n}(x) -f_0(x)\big|\leq C_3 L_n^{\frac{\beta}{\beta+1}}+C_3 \big(f_0(x)L_n\big)^{\frac{\beta}{2\beta+1}}, \ \text{for all} \ x\in [0,1]\Big\}.
\end{align*}
Throughout the remaining proof, we work on the event $\widetilde \Omega.$ Replace $\widehat f_{1n}$ by $\widehat f_{3n}$ in the construction of the sequence $(\widehat x_{i})_{i=0,\ldots \widehat m}$ in part $(I),$ labelling the new sequence $(\widetilde x_i)_{i=0,\ldots, \widetilde m}.$ Define also $\widetilde \Delta_i = \widetilde x_i-\widetilde x_{i-1}.$ These sequences satisfy in particular the relations \eqref{eq.hatDeltam_ineqs} and \eqref{eq.hatloc_comp_of_fcts_in_lb} on $\widetilde \Omega,$ with $\widetilde x_i$ and $\widetilde \Delta_i$ replacing $\widehat x_i$ and $\widehat \Delta_i.$ Similarly to \eqref{eq.def_widetildef2n} and \eqref{eq.def_widehatf2n}, we define the estimators
\begin{align*}
	\widehat f_{4n} = \sum_{i=1}^{\widetilde m} \Big(\frac{Y_{2, \widetilde x_i}-Y_{2, \widetilde x_{i-1}}}{\widetilde \Delta_i}\Big)^2 
	\mathbf{1}\big( \cdot \in [\widetilde x_{i-1}, \widetilde x_i)\big)
\end{align*}
and $\widehat{f}_{4n}(x)= ( \widetilde{f}_{4n}(x) \wedge 2\widehat f_{3n}(x) ) \vee \tfrac 12 \widehat f_{3n}(x).$ Thus on $\widetilde \Omega,$ $\tfrac 14 f_0 \leq \widehat f_{4n} \leq 4f_0$ and $|\widehat{f}_{4n}(x) - f_0(x)|\leq |\widetilde{f}_{4n}(x) - f_0(x)|$ for all $x\in [0,1].$ The next step is then to show that \eqref{eq.widehatf2n_toshow} holds with probability $1-O(1/n)$ and $\widehat f_{2n}$ replaced by $\widehat f_{4n}.$ To show this notice that for $x\in [\widetilde x_{i-1},\widetilde x_i],$
\begin{align*}
	\widetilde f_{4n}(x)|(Y_{1,t})_t \,{\buildrel d \over =}\, \Big(\frac 1{\widetilde \Delta_i}  \int_{\widetilde x_{i-1}}^{\widetilde x_i} \sqrt{f_0(u)} du\Big)^2
	+ \frac 2{\sqrt{n} \widetilde \Delta_i^{3/2}}  \int_{\widetilde x_{i-1}}^{\widetilde x_i} \sqrt{f_0(u)} du \ \xi_i
	+ \frac{1}{n\widetilde \Delta_i} \xi_i^2,
\end{align*}
where $\xi_i \sim \mathcal{N}(0,1)$ are i.i.d. for $i=1, \ldots, \widetilde m$ and $\,{\buildrel d \over =}\,$ means equal in distribution. Using \ref{eq.hatloc_comp_of_fcts_in_lb} and the formula for the difference of two squares, the first term can be approximated by
\begin{align*}
	\Big| f_0(x) - \Big(\frac 1{\widetilde \Delta_i}  \int_{\widetilde x_{i-1}}^{\widetilde x_i} \sqrt{f_0(u)} du\Big)^2 \Big|
	&\leq \frac{1}{\widetilde \Delta_i} \int_{\widetilde x_{i-1}}^{\widetilde x_i} \frac{|f_0(x)-f_0(u)|}{\sqrt{f_0(x)}} du
	\Big( \sqrt{f_0(x)} + \frac 1{\widetilde \Delta_i}  \int_{\widetilde x_{i-1}}^{\widetilde x_i} \sqrt{f_0(u)} du \Big)\\
	&\leq 3R \widetilde \Delta_i^\beta.
\end{align*}
With the expression for $\widetilde f_{4n}(x)|(Y_{1,t})_t,$ the previous inequality and $\widetilde \omega_i :=1/(n \widetilde \Delta_i f_0(\widetilde x_{i-1})),$
\begin{align*}
	&n \int_0^1 \frac{(f_0(x) - \widehat f_{4n}(x))^4}{\widehat f_{4n}(x)^3} dx \notag \\
	&\leq 2^9 n \int_0^1 \frac{(f_0(x) - E[\widetilde f_{4n}(x)|(Y_{1,t})_t])^4+(E[\widetilde f_{4n}(x)|(Y_{1,t})_t] - \widetilde f_{4n}(x))^4}{f_0(x)^3} dx \notag \\
	&\leq 2^{15}3^4 R^4 \sum_{i=1}^{\widetilde m} \frac{\widetilde \Delta_i^{4\beta+1}}{f_0(\widetilde x_{i-1})^3}
	+2^{15} \sum_{i=1}^{\widetilde m} \widetilde \omega_i^3
	+ 2^{21} \sum_{i=1}^{\widetilde m} \widetilde \omega_i \xi_i^4 
	+2^{15}  \sum_{i=1}^{\widetilde m} \widetilde \omega_i^3 (\xi_i^2-1)^4.
\end{align*}
The same argument as for \eqref{eq.sum_omega_i} gives $\sum_{i=1}^{\widetilde m} \widetilde \omega_i \lesssim n^{(1-2\beta)/(2\beta+1)} \int f_0(x)^{-(2\beta+3)/(2\beta+1)} dx.$ Moreover, since $\inf_{f_0\in \Theta} \inf_{x} f_0(x) \gg n^{-\beta/(\beta+1)},$ also $\max_i \widetilde \omega_i \rightarrow 0.$  Similar arguments as in \eqref{eq.glob_decomp_int} show
\begin{align*}
	&n \int_0^1 \frac{(f_0(x) - \widehat f_{4n}(x))^4}{\widehat f_{4n}(x)^3} dx \\
	&\lesssim 
	n^{\frac{1-2\beta}{2\beta+1}} \int_0^1 \widehat f_{4n}(x)^{-\frac{2\beta+3}{2\beta+1}} dx
	+  \sum_{i=1}^{\widetilde m} \widetilde \omega_i \big(\xi_i^4 -E\big[\xi_i^4\big]\big)
	+ \sum_{i=1}^{\widetilde m} \widetilde \omega_i^3 \big((\xi_i^2-1)^4 -E\big[(\xi_i^2-1)^4\big]\big).
\end{align*}
To control the second and third term, we apply Lemma \ref{lem.exp_ineq_glob} with $\epsilon_i = \xi_i$ and $\epsilon_i = \xi_i^2-1$ respectively. Notice that the moment condition in Lemma \ref{lem.exp_ineq_glob} is satisfied since  $E[(\xi_i^2-1)^r] \leq 2^r E[\xi_i^{2r}]+2^r = 2^r(2r)!/r!+2^r \leq 4^r r^r +2^r\leq 6^r r^r.$ Following exactly the same arguments as for $(I),$ we see that we can apply Lemma \ref{lem.exp_ineq_glob} and obtain in analogy with \eqref{eq.glob2_f0_in_ball} that
\begin{align}
	n \int_0^1 \frac{(f_0(x) - \widehat f_{4n}(x))^4}{\widehat f_{4n}(x)^3} dx 
	&\leq C_3 n^{\frac{1-2\beta}{2\beta+1}} \int_0^1 \widehat f_{4n}(x)^{-\frac{2\beta+3}{2\beta+1}} dx
	\label{eq.last_ineq}
\end{align}
holds with probability $1-O(1/n)$ for a constant $C_3$ that only depends on $\beta$ and $R.$ The final step is now to show that there is also an estimator $\widehat f_n$ which takes only finitely many values in $\Theta$ and also satisfies \eqref{eq.last_ineq} and $\tfrac 18 \widehat f_n \leq \widehat f_{4n} \leq 8\widehat f_n.$ The construction and analysis of this estimator is exactly the same as in the Poisson experiment considered in part $(I)$ and is therefore omitted. This completes the proof. 
\end{proof}

\section{Technical results}

\begin{lem}[Lemma 1 in \cite{RaySchmidt-Hieber2015c}]
\label{lem.fx_local_bd}
Suppose that $f \in \mH^\beta$ with $\beta>0$ and let $a = a(\beta)>0$ be any constant satisfying $(e^a-1) + a^\beta / (\lfloor \beta \rfloor!) \leq 1/2.$ Then for 
\begin{align*}
	|h| \leq a \left( \frac{|f(x)|}{\|f\|_{\mH^\beta}}\right)^{1/\beta},
\end{align*}
we have
\begin{equation*}
|f(x+h) - f(x)| \leq \frac{1}{2} |f(x)| ,
\end{equation*}
implying in particular, $|f(x)|/2 \leq |f(x+h)| \leq 3|f(x)|/2$.
\end{lem}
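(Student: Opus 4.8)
The plan is to Taylor-expand $f$ around $x$ to order $\lfloor\beta\rfloor$, controlling the polynomial terms by the flatness seminorm $|f|_{\mH^\beta}$ and the Taylor remainder by the Hölder seminorm $|f|_{\mC^\beta}$. First I would dispose of the trivial case: if $f(x)=0$, the hypothesis $|h|\le a(|f(x)|/\|f\|_{\mH^\beta})^{1/\beta}$ forces $h=0$ and the claim holds, so assume $f(x)\neq0$. Writing $n:=\lfloor\beta\rfloor$ and letting $P$ denote the degree-$n$ Taylor polynomial of $f$ at $x$, I would start from
\[
f(x+h)-f(x)=\sum_{j=1}^{n}\frac{f^{(j)}(x)}{j!}\,h^{j}+\bigl(f(x+h)-P(h)\bigr).
\]

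For the polynomial part, the definition of $|f|_{\mH^\beta}$ gives $|f^{(j)}(x)|\le|f|_{\mH^\beta}^{j/\beta}|f(x)|^{1-j/\beta}$ for $1\le j\le n$; combining this with $\|f\|_{\mH^\beta}\ge|f|_{\mH^\beta}$ and the assumed bound on $|h|$ yields $|f^{(j)}(x)h^{j}/j!|\le(a^{j}/j!)|f(x)|$, so the sum is at most $\sum_{j\ge1}(a^{j}/j!)|f(x)|=(e^{a}-1)|f(x)|$.

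For the remainder, I would set $\phi(t):=f(x+t)-P(t)$, observe that $\phi$ and its first $n$ derivatives vanish at $0$ and that $\phi^{(n)}(t)=f^{(n)}(x+t)-f^{(n)}(x)$, so $|\phi^{(n)}(t)|\le|f|_{\mC^\beta}|t|^{\beta-n}$. Expressing $\phi(h)$ as an $n$-fold iterated integral of $\phi^{(n)}$ based at $0$ and integrating the bound $|f|_{\mC^\beta}|t|^{\beta-n}$ gives $|\phi(h)|\le|f|_{\mC^\beta}|h|^{\beta}\,\Gamma(\beta-n+1)/\Gamma(\beta+1)\le|f|_{\mC^\beta}|h|^{\beta}/n!$, where the last inequality uses $\beta(\beta-1)\cdots(\beta-n+1)>n!$ since $\beta>n$. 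As $|f|_{\mC^\beta}\le\|f\|_{\mH^\beta}$, the hypothesis on $|h|$ bounds this by $(a^{\beta}/n!)|f(x)|$.

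Adding the two estimates, $|f(x+h)-f(x)|\le\bigl((e^{a}-1)+a^{\beta}/\lfloor\beta\rfloor!\bigr)|f(x)|\le\tfrac12|f(x)|$ by the choice of $a$, and the two-sided bound $|f(x)|/2\le|f(x+h)|\le3|f(x)|/2$ follows from the (reverse) triangle inequality. I expect the only mildly delicate point to be the bookkeeping in the iterated-integral estimate of the remainder---dealing with $h<0$ by symmetry and tracking the Beta-function constant---together with a sanity check of the degenerate case $\beta\le1$, where $n=0$, the polynomial sum is empty and $|f|_{\mH^\beta}=0$, so only the remainder term contributes, consistently with the stated constant. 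Since this is Lemma~1 of \cite{RaySchmidt-Hieber2015c}, I would merely sketch the argument and refer there for the details.
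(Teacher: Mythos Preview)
Your argument is correct. The paper itself gives no proof of this lemma, merely citing it as Lemma~1 of \cite{RaySchmidt-Hieber2015c}, so there is nothing to compare against; your Taylor-expansion sketch---bounding the polynomial part via the flatness seminorm and the integral remainder via the H\"older seminorm, with the Beta-integral bookkeeping yielding the factor $1/\lfloor\beta\rfloor!$---is exactly the natural route and would serve as the proof the paper defers to that reference.
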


\begin{lem}
\label{lem.integral_comp}
\begin{itemize}
\item[(i)] If $(f_n)_n \subset \mH^\beta(R)$ is a sequence of functions such that $\inf_{x} f_n(x) \gg n^{-\beta/(\beta+1)}$ and $[x_{j_{1n}},x_{j_{2n}}]$ is as defined in \eqref{eq.xj1xj2_def}, then
\begin{align*}
	\int_{x_{j_{1n}}}^{x_{j_{2n}}} f_n(x)^{-\frac{3\beta+4}{2\beta+1}} dx \ll n^{\frac{1}{4\beta+2}}\Big( \int_{x_{j_{1n}}}^{x_{j_{2n}}}  f_n(x)^{-\frac{2\beta+3}{2\beta+1}} dx\Big)^{3/2}.
\end{align*}
\item[(ii)] If $(f_n)_n \subset \mH^\beta(R)$ is a sequence of functions such that $\inf_{x} f_n(x) \geq n^{-\beta/(\beta+1)}\log^3 n$ and $\beta\leq 1,$ then there is a constant $C$ that is independent of $(f_n)_n$ such that
\begin{align*}
	 \int_0^1 f_n(x)^{-\frac{4\beta+5}{2\beta+1}} dx \leq C  \frac{n^{\frac{1}{2\beta+1}}}{\log^2 n}\Big( \int_0^1  f_n(x)^{-\frac{2\beta+3}{2\beta+1}} dx\Big)^2.
\end{align*}
\end{itemize}
\end{lem}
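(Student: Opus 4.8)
The plan is to factor out the worst power of $f_n$ in each integrand and thereby reduce both parts to a single condition on $\underline f_n := \inf_x f_n(x)$. Since $\tfrac{3\beta+4}{2\beta+1} = \tfrac{2\beta+3}{2\beta+1} + \tfrac{\beta+1}{2\beta+1}$ and $\tfrac{4\beta+5}{2\beta+1} = \tfrac{2\beta+3}{2\beta+1} + \tfrac{2(\beta+1)}{2\beta+1}$, bounding $f_n(x)\ge \underline f_n$ in the extra factor gives
\[
	\int_{x_{j_{1n}}}^{x_{j_{2n}}} f_n^{-\frac{3\beta+4}{2\beta+1}} \le \underline f_n^{-\frac{\beta+1}{2\beta+1}} \int_{x_{j_{1n}}}^{x_{j_{2n}}} f_n^{-\frac{2\beta+3}{2\beta+1}}, \qquad \int_0^1 f_n^{-\frac{4\beta+5}{2\beta+1}} \le \underline f_n^{-\frac{2(\beta+1)}{2\beta+1}} \int_0^1 f_n^{-\frac{2\beta+3}{2\beta+1}}.
\]
So it suffices to prove $\underline f_n^{-\frac{\beta+1}{2\beta+1}} \ll n^{\frac1{4\beta+2}}\bigl(\int_{x_{j_{1n}}}^{x_{j_{2n}}} f_n^{-\frac{2\beta+3}{2\beta+1}}\bigr)^{1/2}$ for part (i), and $\underline f_n^{-\frac{2(\beta+1)}{2\beta+1}} \lesssim \tfrac{n^{1/(2\beta+1)}}{\log^2 n}\int_0^1 f_n^{-\frac{2\beta+3}{2\beta+1}}$ for part (ii).

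Both require a lower bound on an integral of $f_n^{-\frac{2\beta+3}{2\beta+1}}$, and I would combine two estimates. First, multiplying the equivalence in \eqref{eq.xj1xj2_def} by $n^{\frac{2\beta-1}{2\beta+1}}$ gives $\int_{x_{j_{1n}}}^{x_{j_{2n}}} f_n^{-\frac{2\beta+3}{2\beta+1}} \asymp n^{\frac{2\beta-1}{2\beta+1}} \wedge \int_0^1 f_n^{-\frac{2\beta+3}{2\beta+1}}$. Second, since $f_n$ is continuous on $[0,1]$ it attains its minimum at some $x_*$, and applying Lemma \ref{lem.fx_local_bd} at $x_*$ (using $\|f_n\|_{\mH^\beta}\le R$) shows $f_n\le\tfrac32\underline f_n$ on an interval about $x_*$ of length $\gtrsim \underline f_n^{1/\beta}$, so $\int_0^1 f_n^{-\frac{2\beta+3}{2\beta+1}} \gtrsim \underline f_n^{\frac1\beta-\frac{2\beta+3}{2\beta+1}}$, where $\tfrac1\beta-\tfrac{2\beta+3}{2\beta+1} = -\tfrac{(2\beta-1)(\beta+1)}{\beta(2\beta+1)}$. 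Substituting $\int_{x_{j_{1n}}}^{x_{j_{2n}}} f_n^{-\frac{2\beta+3}{2\beta+1}} \gtrsim n^{\frac{2\beta-1}{2\beta+1}}\wedge \underline f_n^{\frac1\beta-\frac{2\beta+3}{2\beta+1}}$ into the two displayed criteria and collecting powers of $\underline f_n$ and $n$, both reduce to $\underline f_n^{-\frac{\beta+1}{\beta}} \ll n$ for part (i) and to $\underline f_n^{-\frac{\beta+1}{\beta}} \lesssim n/\log^{2(2\beta+1)} n$ for part (ii).

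The first reduced inequality is exactly a restatement of the hypothesis $\underline f_n \gg n^{-\beta/(\beta+1)}$. For the second, the hypothesis $\underline f_n \ge n^{-\beta/(\beta+1)}\log^3 n$ yields $\underline f_n^{-\frac{\beta+1}{\beta}} \le n\log^{-3(\beta+1)/\beta} n$, which is $\lesssim n/\log^{2(2\beta+1)} n$ precisely because $2(2\beta+1) - 3\tfrac{\beta+1}{\beta} = \tfrac{(4\beta+3)(\beta-1)}{\beta} \le 0$ for $0<\beta\le1$; this is where the restriction $\beta\le1$ enters, and the constant $C$ depends only on $\beta$ and $R$ through Lemma \ref{lem.fx_local_bd}. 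The one point needing care is that in part (i) the minimiser $x_*$ need not lie in $[x_{j_{1n}},x_{j_{2n}}]$, so the flatness bound cannot be applied to $\int_{x_{j_{1n}}}^{x_{j_{2n}}}$ directly; the ``$\wedge$''-form handles this automatically, since when the subinterval integral is comparable to the global one the flatness bound is available, and otherwise the $n^{\frac{2\beta-1}{2\beta+1}}$ term from \eqref{eq.xj1xj2_def} suffices. The remaining work is routine exponent arithmetic. I expect the only real (and minor) obstacle to be recognising that crude lower bounds such as $\int_0^1 f_n^{-\frac{2\beta+3}{2\beta+1}} \ge 1$ (Jensen) or $\ge R^{-\frac{2\beta+3}{2\beta+1}}$ are too weak near the threshold $\underline f_n \asymp n^{-\beta/(\beta+1)}$, so the sharper local estimate of Lemma \ref{lem.fx_local_bd} is essential.
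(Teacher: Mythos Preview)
Your argument is correct and genuinely different from the paper's. The paper proceeds by a dyadic level-set decomposition: it writes $m_n=\inf_x f_n(x)$, lets $L_k$ be the Lebesgue measure of $\{x: 4^k m_n \le f_n(x) < 4^{k+1} m_n\}\cap E$, bounds $\int_E f_n^{-\frac{3\beta+4}{2\beta+1}}$ by $\sum_k L_k (4^k m_n)^{-\frac{3\beta+4}{2\beta+1}}$, and then treats the top level $k=k^*$ and the lower levels separately, the latter using Lemma~\ref{lem.fx_local_bd} to get $L_k \gtrsim (4^k m_n)^{1/\beta}$. Your approach bypasses this machinery entirely: you peel off the excess exponent $\frac{\beta+1}{2\beta+1}$ (respectively $\frac{2(\beta+1)}{2\beta+1}$) as a single factor $\underline f_n^{-\text{excess}}$, and then invoke Lemma~\ref{lem.fx_local_bd} \emph{once}, at the global minimiser, to lower-bound $\int_0^1 f_n^{-\frac{2\beta+3}{2\beta+1}}$. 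The two-term $\wedge$ coming from \eqref{eq.xj1xj2_def} neatly handles the possibility that $x_*\notin[x_{j_{1n}},x_{j_{2n}}]$; the paper's decomposition effectively rediscovers this split by separating the top level (where $4^{k^*}m_n\gtrsim 1$) from the rest. Your reduction to the single scalar inequality $\underline f_n^{-(\beta+1)/\beta}\ll n$ (or its logarithmically refined version) is cleaner and makes the role of the hypothesis $\underline f_n \gg n^{-\beta/(\beta+1)}$ completely transparent, whereas in the paper this enters more implicitly through $m_n^{-\frac{\beta+1}{\beta(4\beta+2)}} \ll n^{\frac{1}{4\beta+2}}$. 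The paper's approach would be more flexible if one wanted bounds that tracked the full distribution of values of $f_n$ rather than only its minimum, but for the statement as given your argument is both shorter and sharper in its dependence on the assumptions.
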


\begin{proof}
{\it (i):} Set $m_n := \inf_{x} f_n(x)$ and $E= [x_{j_{1n}}, x_{j_{2n}}].$ Let $L_k$ be the Lebesgue measure of the set $\{x: 4^{k}m_n \leq f_n(x) < 4^{k+1}m_n\}\cap E$ and denote by $k^*$ the largest $k$ such that $L_k$ is positive. Then
\begin{align}
	\int_E f_n(x)^{-\frac{3\beta+4}{2\beta+1}} dx 
	\leq \sum_{k=0}^{k^*} L_k (4^k m_n)^{-\frac{3\beta+4}{2\beta+1}}.
	\label{eq.int_comp_1}
\end{align}
If $k=k^*,$ then $4^{k^*+1}m_n \geq 1,$ since by construction of $E,$ $\sup_{x\in E} f_n(x) \geq 1.$ Considering $L_k^* \lessgtr (4^{k^*}m_n)^{1/(2\beta+1)}n^{(\beta^*-1)/(2\beta+1)}$ with $\beta^*=\beta \wedge 1,$ gives
\begin{align}
	L_{k^*} (4^{k^*} m_n)^{-\frac{3\beta+4}{2\beta+1}}
	&\leq n^{\frac{\beta^*-1}{2\beta+1}} 4^{\frac{3\beta+3}{2\beta+1}}
	+ n^{\frac{1-\beta^*}{4\beta+2}} L_{k^*}^{3/2} (4^{k^*}m_n)^{-\frac{3\beta+9/2}{2\beta+1}} \notag \\
	&\leq n^{\frac{\beta^*-1}{2\beta+1}} 4^{\frac{3\beta+3}{2\beta+1}}
	+ 4^5 n^{\frac{1-\beta^*}{4\beta+2}} \Big( \int_E  f_n(x)^{-\frac{2\beta+3}{2\beta+1}} dx\Big)^{3/2} \notag \\
	&\ll n^{\frac{1}{4\beta+2}} \Big( \int_E  f_n(x)^{-\frac{2\beta+3}{2\beta+1}} dx\Big)^{3/2}.
	\label{eq.int_comp_2}
\end{align}
For the last step we used that  $\int_E  f_n(x)^{-\frac{2\beta+3}{2\beta+1}} dx \gtrsim 1\wedge n^{(2\beta-1)/(2\beta+1)},$ which follows from the definition of $E$ in \eqref{eq.xj1xj2_def} and the fact that $f_n$ is a density. If $k< k^*,$ then by continuity there is an $x\in E$ such that $f_n(x)= 2\cdot 4^k m_n$ and by Lemma \ref{lem.fx_local_bd}, $L_k\geq a (4^k m_n/R)^{1/\beta}.$ Since $\sum_i |a_i|^{3/2}\leq (\sum_i |a_i|)^{3/2},$
\begin{align}
	\sum_{k=0}^{k^*-1} L_k (4^k m_n)^{-\frac{3\beta+4}{2\beta+1}} 
	&\leq 
	\frac{R^{\frac{1}{2\beta}}}{\sqrt{a}} \sum_{k=0}^{k^*-1} L_k^{3/2} (4^k m_n)^{- \frac{1}{2\beta}-\frac{3\beta+4}{2\beta+1}} 
	\notag \\
	&\leq \frac{R^{\frac{1}{2\beta}}}{\sqrt{a}} m_n^{-\frac{\beta+1}{\beta(4\beta+2)}} \sum_{k=0}^{k^*-1} L_k^{3/2} (4^k m_n)^{-\frac{3\beta+9/2}{2\beta+1}} \notag \\
	&\ll n^{\frac{1}{4\beta+2}}\Big( \sum_{k=0}^{k^*-1} L_k (4^k m_n)^{-\frac{2\beta+3}{2\beta+1}}\Big)^{3/2} \notag \\
	&\leq 4^5 n^{\frac{1}{4\beta+2}}\Big( \int_E  f_n(x)^{-\frac{2\beta+3}{2\beta+1}} dx\Big)^{3/2}.
	\label{eq.int_comp_3}
\end{align}
Together with \eqref{eq.int_comp_1} and \eqref{eq.int_comp_2} this yields the assertion.

{\it (ii):} Applying the same argument as for \eqref{eq.int_comp_1} with $E=[0,1]$ gives $\int_0^1 f_n(x)^{-\frac{4\beta+5}{2\beta+1}}dx \leq \sum_{k=0}^{k^*} L_k (4^k m_n)^{-\frac{4\beta+5}{2\beta+1}}.$ If $k=k^*,$ it is enough to treat the two cases $L_k^* \lessgtr (4^{k^*}m_n)^{1/(2\beta+1)}$ and to argue as for \eqref{eq.int_comp_2} in order to find that 
\begin{align*}
	L_{k^*} (4^{k^*} m_n)^{-\frac{4\beta+5}{2\beta+1}}\lesssim 1+ \Big(\int_0^1 f_n(x)^{-\frac{2\beta+3}{2\beta+1}}dx\Big)^2 \lesssim  \frac{n^{\frac 1{2\beta+1}}}{\log^2 n}\Big(\int_0^1 f_n(x)^{-\frac{2\beta+3}{2\beta+1}}dx \Big)^2.
\end{align*}
Arguing as for \eqref{eq.int_comp_3} yields $\sum_{k=0}^{k^*-1} L_k (4^k m_n)^{-\frac{4\beta+5}{2\beta+1}} \lesssim m_n^{-\frac{\beta+1}{\beta(2\beta+1)}} ( \int_0^1 f_n(x)^{-\frac{2\beta+3}{2\beta+1}} dx)^2.$ Since $m_n \geq n^{-\beta/(\beta+1)}\log^3 n$ and $(\beta+1)/(\beta(2\beta+1))$ is monotone decreasing for $\beta>0,$ we find $m_n^{-\frac{\beta+1}{\beta(2\beta+1)}}\leq n^{1/(2\beta+1)}/\log^2 n$ and this completes the proof for $(ii).$
\end{proof}

\begin{lem}
\label{lem.Poisson_moments}
Let $N \sim Poi(\lambda).$ Then
\begin{itemize}
\item[(i)] For any integer $r>0,$ $E[|N-\lambda|^r]\leq r^r (1\vee \lambda)^{r/2}$ for all $\lambda>0,$
\item[(ii)] For $r>0,$ $E[N^{-r}\mathbf{1}(N>0)] = \lambda^{-r}+O(\lambda^{-r-1})$ as $\lambda \rightarrow \infty,$
\item[(iII)] For any $0\leq x \leq \lambda,$
\begin{align*}
	\P\big(|N-\lambda| > x) \leq 2 e^{-\frac{x^2}{2\lambda} + \frac{x^3}{2\lambda^2}}.
\end{align*}
\end{itemize}
\end{lem}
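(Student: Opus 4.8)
The plan is to prove the three bounds in the order (iii), (i), (ii): the exponential tail bound (iii) is the cleanest and serves as the main tool for the other two. For (iii) I would run the standard Chernoff argument on the Poisson moment generating function $\E e^{t(N-\lambda)} = e^{\lambda(e^t - 1 - t)}$. Optimising the exponent of $\P(N - \lambda > x) \le e^{\lambda(e^t - 1 - t) - tx}$ over $t > 0$ gives $\P(N-\lambda > x) \le e^{-\lambda\psi_+(x/\lambda)}$ with $\psi_+(u) = (1+u)\log(1+u) - u = \int_0^u \log(1+s)\,ds \ge \int_0^u (s - s^2/2)\,ds = u^2/2 - u^3/6$, which after multiplying by $\lambda$ and rewriting in $x$ yields the stated exponent. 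For the lower tail, the analogous optimisation (legitimate for $0 \le x < \lambda$, the case $x=\lambda$ being trivial) gives $\P(\lambda - N > x) \le e^{-\lambda\psi_-(x/\lambda)}$ with $\psi_-(u) = (1-u)\log(1-u) + u$; since $\psi_-(0) = \psi_-'(0) = 0$ and $\psi_-''(u) = 1/(1-u) \ge 1$ on $[0,1)$, one has $\psi_-(u) \ge u^2/2 \ge u^2/2 - u^3/6$. A union bound produces the factor $2$.

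For (i), first take $\lambda \ge 1$ and $r$ even. Expanding $\E[(N-\lambda)^r]$ in cumulants and using that the first cumulant of $N-\lambda$ vanishes while every cumulant of $N$ of order at least $2$ equals $\lambda$, one obtains $\E[(N-\lambda)^r] = \sum_\pi \lambda^{|\pi|}$, the sum running over all set partitions $\pi$ of $\{1,\dots,r\}$ whose blocks have size at least $2$. Each such $\pi$ has at most $\lfloor r/2 \rfloor$ blocks, so $\lambda^{|\pi|} \le \lambda^{r/2}$ for $\lambda \ge 1$, and the number of such partitions is crudely at most $r^r$; hence $\E|N-\lambda|^r = \E[(N-\lambda)^r] \le r^r\lambda^{r/2}$. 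For odd $r$ one interpolates by Hölder's inequality between the two neighbouring even orders $r-1$ and $r+1$, choosing the interpolation exponent so that the resulting constant still does not exceed $r^r$. For $\lambda < 1$ we have $1\vee\lambda = 1$ and $|N-\lambda| \le N+1$, so $\E|N-\lambda|^r \le \E[(N+1)^r] = \sum_j \binom{r}{j}\E[N^j]$; since $\E[N^j] = \sum_k S(j,k)\lambda^k \le B_j$ for $\lambda \le 1$ (Stirling numbers of the second kind and Bell numbers), the Bell recursion gives $\E[(N+1)^r] \le B_{r+1}$, and one checks $B_{r+1} \le (r+1)! \le r^r$ for $r \ge 3$, the cases $r \le 2$ being immediate from $\E|N-\lambda|^r \le (\E(N-\lambda)^2)^{r/2} = \lambda^{r/2} \le 1$.

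For (ii), split $\E[N^{-r}\mathbf 1(N>0)]$ at $N = \lambda/2$. On $\{0 < N \le \lambda/2\}$ we have $N^{-r} \le 1$, so this contribution is at most $\P(|N-\lambda| \ge \lambda/2) \le 2e^{-\lambda/16}$ by (iii), which is $o(\lambda^{-r-1})$. On $\{N > \lambda/2\}$ put $\epsilon = (N-\lambda)/\lambda > -1/2$ and Taylor-expand $N^{-r} = \lambda^{-r}(1+\epsilon)^{-r} = \lambda^{-r}(1 - r\epsilon + R(\epsilon))$ with $|R(\epsilon)| \le c_r\epsilon^2$ uniformly for $\epsilon > -1/2$. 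Taking expectations, $\P(N > \lambda/2) = 1 + O(e^{-\lambda/16})$; the linear term equals $-r\E[\epsilon\mathbf 1(N>\lambda/2)] = -r\E\epsilon + O(e^{-c\lambda}) = O(e^{-c\lambda})$ because $\E\epsilon = 0$ and the truncated tail is exponentially small (Cauchy--Schwarz together with (iii)); and $|\E[R(\epsilon)\mathbf 1(N>\lambda/2)]| \le c_r\E\epsilon^2 = c_r\Var(N)/\lambda^2 = O(1/\lambda)$. Hence $\E[N^{-r}\mathbf 1(N>0)] = \lambda^{-r}(1 + O(1/\lambda)) = \lambda^{-r} + O(\lambda^{-r-1})$.

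The step I expect to be the main obstacle is obtaining the explicit constant $r^r$ in (i) uniformly in $r$ and $\lambda$: the partition-counting estimate is very comfortable for moderate $r$, but the odd-order interpolation and the regime $\lambda < 1$ (in particular $r = 1$) are precisely where the book-keeping must be done carefully. Parts (ii) and (iii) are then routine once the Chernoff bound is in hand.
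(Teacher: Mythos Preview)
Your argument for (iii) is the same Chernoff optimisation the paper uses, and your direct proof of (ii) by splitting at $N=\lambda/2$ and Taylor-expanding $(1+\epsilon)^{-r}$ is correct and in fact more self-contained than the paper, which simply cites a reference (Corollary~4 in \v{Z}nidari\v{c}~2005) for (ii).

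For (i) your route is genuinely different from the paper's: you use the cumulant identity $\E[(N-\lambda)^r]=\sum_\pi \lambda^{|\pi|}$ over partitions with blocks of size $\ge 2$, whereas the paper runs an induction on $r$ via the recurrence $\E[(N-\lambda)^r]=\lambda\sum_{k=0}^{r-2}\binom{r-1}{k}\E[(N-\lambda)^k]$, which handles all $\lambda>0$ at once and yields the slightly sharper bound $(r-1)^r(1\vee\lambda)^{r/2}$ for even $r$. Your partition approach is perfectly valid for even $r$, but there is a small gap in your odd-$r$ step: the only H\"older interpolation between the orders $r-1$ and $r+1$ that hits order $r$ is Cauchy--Schwarz, and by convexity of $x\mapsto x\log x$ this gives the constant $(r-1)^{(r-1)/2}(r+1)^{(r+1)/2}>r^r$ (e.g.\ $32>27$ at $r=3$), so ``choosing the interpolation exponent'' cannot rescue the exact constant. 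The fix is easy and is exactly what the paper does: use Jensen from the next even order alone, $\E|N-\lambda|^r\le(\E(N-\lambda)^{r+1})^{r/(r+1)}$. For this to give $r^r$ you need the even-moment constant to be a bit better than $r^r$; replacing your crude partition count by $a_{r+1}\le B_{r+1}\le (r+1)!$ suffices, since $((r+1)!)^{r/(r+1)}\le r^r$ for $r\ge 2$ and $r=1$ is immediate from $\E|N-\lambda|\le\sqrt\lambda$. Your $\lambda<1$ argument via $B_{r+1}\le(r+1)!\le r^r$ for $r\ge 3$ is fine.
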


\begin{proof}
Part $(i)$ is proved by induction. The statement is clearly true for $r\leq 2.$ Now suppose it is true for $r\leq 2s.$ We want to show that it also holds for $r\leq 2(s+1).$ Consider first $r=2s+2.$ The $r$-th centralized moment satisfies the recurrence relation $E[(N-\lambda)^r]= \lambda \sum_{k=0}^{r-2}\binom{r-1}{k}E[(N-\lambda)^k]$ (cf. the proof of Lemma 3.1 in \cite{Privault2011}). Thus, $E[(N-\lambda)^{2s+2}]\leq (1\vee \lambda)^{s+1} \sum_{k=0}^{2s}\binom{2s+1}{k}(2s)^k\leq (1\vee \lambda)^{s+1} (2s+1)^{2s+2}.$ This shows that the statement also holds for $r=2s+2.$ For $r=2s+1,$ we apply Jensen's inequality and obtain $E[|N-\lambda |^{2s+1}] \leq [E(N-\lambda)^{2s+2}]^{(2s+1)/(2s+2)} \leq   (1\vee \lambda)^{s+1/2} (2s+1)^{2s+1},$ completing the proof of the induction step. Statement $(ii)$ is a consequence of Corollary 4 in \cite{Znidaric2005}. Let us now prove $(iii).$ Using exponential moments gives for any $t>0,$ $\P(N> \lambda + x) \leq e^{\lambda (e^t -1-t)-tx}.$ Optimizing over $t>0$ gives $t=\log ((\lambda+x)/\lambda)$ and using that $-\log(1+z) \leq -z+\tfrac 12 z^2$ for $z>0,$ yields $\P(N> \lambda + x) \leq e^{x-(x+\lambda)\log(\frac x{\lambda} +1) } \leq e^{-\frac{x^2}{2\lambda} + \frac{x^3}{2\lambda^2}}.$ Writing $P(N < \lambda-x) = P(-(N-\lambda)>x)$ and following the same steps as above gives $P(N-\lambda < -x) \leq e^{x-(x+\lambda)\log(1+\frac x{\lambda})}\leq e^{-\frac{x^2}{2\lambda} + \frac{x^3}{2\lambda^2}}.$ 
\end{proof}

\begin{lem}
\label{lem.exp_ineq_glob}
Let $m \geq 3$ and suppose that $\epsilon_i,$ $i=1,\ldots,m,$ are independent random variables satisfying $E[|\epsilon_i|^r]\leq A^r r^r$ for all $i=1,\ldots,m$ and all integers $r \geq 2.$ For positive weights $\omega_1, \ldots, \omega_m,$ integer $p\geq 1$ and any $t>0,$
\begin{align}
	P\Big( \sum_{i=1}^m \omega_i \big(\epsilon_i^p -E[\epsilon_i^p]\big)  \geq 2e(2Ap)^p \max(\|\omega\|_2, \|\omega \|_\infty t^p)  t \Big) 
	\leq m e^{2-t}.
	\label{eq.exp_ineq_for_weighted_sums}
\end{align}
\end{lem}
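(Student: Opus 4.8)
The inequality is vacuous whenever $me^{2-t}\ge 1$, i.e.\ whenever $t\le 2+\log m$, so we may assume $t>2+\log m$; in particular $t>2$ and $m<e^{t}$. Write $Y_i:=\epsilon_i^p-E[\epsilon_i^p]$ and $\sigma:=(2Ap)^p$, and extract from $E[|\epsilon_i|^r]\le A^rr^r$ the following three facts: (a) $\Var(Y_i)\le E[\epsilon_i^{2p}]\le A^{2p}(2p)^{2p}=\sigma^2$; (b) optimising the Markov estimate $P(|\epsilon_i|>u)\le(Ar/u)^r$ over integers $r$ yields a sub-exponential tail $P(|\epsilon_i|>u)\le e\,e^{-u/(eA)}$ once $u$ exceeds an absolute constant; (c) integrating this tail shows that $E\big[|\epsilon_i|^p\mathbf{1}(|\epsilon_i|>M)\big]$ decays exponentially in $M/A$, its leading term being of order $M^pe^{-M/(eA)}$. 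Thus the $Y_i$ are independent, centred, with variance proxy $\sigma^2$ and sub-Weibull (order $1/p$) tails, and the statement is a Fuk--Nagaev-type bound; the plan is to prove it by truncation.

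Fix $M:=eAt$, put $\bar\epsilon_i:=\epsilon_i\mathbf{1}(|\epsilon_i|\le M)$ and $\Omega_M:=\{|\epsilon_i|\le M\text{ for all }i\le m\}$. By (b) and a union bound, $P(\Omega_M^c)\le m\,e\,e^{-M/(eA)}=em\,e^{-t}\le\tfrac12 me^{2-t}$. On $\Omega_M$ we have $\epsilon_i^p=\bar\epsilon_i^{\,p}$ for every $i$, so
\[
 \sum_{i=1}^m\omega_i\big(\epsilon_i^p-E[\epsilon_i^p]\big)=S+R,\qquad S:=\sum_{i=1}^m\omega_i\big(\bar\epsilon_i^{\,p}-E[\bar\epsilon_i^{\,p}]\big),\quad R:=\sum_{i=1}^m\omega_i\big(E[\bar\epsilon_i^{\,p}]-E[\epsilon_i^p]\big),
\]
with $R$ deterministic. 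Bounding $|R|\le\|\omega\|_1\max_iE[|\epsilon_i|^p\mathbf{1}(|\epsilon_i|>M)]$ and using $\|\omega\|_1\le m\|\omega\|_\infty<e^{t}\|\omega\|_\infty$, the factor $e^{t}$ is beaten by the exponential decay in $M/A=et$ supplied by (c), giving $|R|\le c_0(eA)^p\|\omega\|_\infty t^p$ for an absolute constant $c_0$ close to $1$ — a factor $t$ smaller than the right-hand side of \eqref{eq.exp_ineq_for_weighted_sums}, which in both regimes of the maximum is at least $2e(2Ap)^p\|\omega\|_\infty t^{p+1}$. Writing $D:=2e(2Ap)^p\max(\|\omega\|_2,\|\omega\|_\infty t^p)\,t$, it follows that $P\big(\sum_i\omega_i(\epsilon_i^p-E[\epsilon_i^p])>D\big)\le P(\Omega_M^c)+P(S>D-|R|)$ with $D-|R|\ge(1-o(1))D$, so it remains to show $P(S>D-|R|)\le\tfrac12 me^{2-t}$.

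For this I would apply Bernstein's inequality (or Bennett's, if a sharper leading constant is needed) to $S$: its summands are independent, centred, bounded a.s.\ by $B:=2\|\omega\|_\infty M^p=2(eA)^p\|\omega\|_\infty t^p$, with $\sum_i\Var(\omega_i\bar\epsilon_i^{\,p})\le\sigma^2\|\omega\|_2^2$, so $P(S>x)\le\exp\big(-\tfrac{x^2/2}{\sigma^2\|\omega\|_2^2+Bx/3}\big)$. This is $\le e^{-t}\le\tfrac12 me^{2-t}$ as soon as $x^2\ge 2t\sigma^2\|\omega\|_2^2+\tfrac23 tBx$, for which it suffices that $x\ge\sqrt{2t}\,\sigma\|\omega\|_2+\tfrac23 tB$ (solve the quadratic and use $\sqrt{a+b}\le\sqrt a+\sqrt b$). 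Since $\sigma=(2Ap)^p$ and $\tfrac23 tB=\tfrac43(eA)^p\|\omega\|_\infty t^{p+1}$, and since $(2p/e)^p\ge 4/(3e)$ for all integers $p\ge 1$, one gets $\tfrac23 tB\le e(2Ap)^p\|\omega\|_\infty t^{p+1}$ and $\sqrt{2t}\,\sigma\|\omega\|_2\le (2Ap)^pt\|\omega\|_2$; using $\max\ge$ average, $x=D$ (and a fortiori $x=D-|R|=(1-o(1))D$) then meets the requirement. Adding the two probabilities yields the asserted bound $me^{2-t}$.

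\textbf{Main obstacle.} Conceptually this is the textbook truncation proof of a Fuk--Nagaev inequality and nothing is subtle; the real work is tracking constants so as to land exactly on the prefactor $2e(2Ap)^p$ and the exponent $t$. One must balance the truncation level $M\asymp At$ — hence the a.s.\ bound $B\sim(eA)^pt^p$ — against the variance proxy $\sigma^2=(2Ap)^{2p}$; accept that the naive sub-Gaussian deviation $\propto\sqrt t$ is harmlessly inflated to $\propto t$; verify the elementary inequality $(2p/e)^p\ge 4/(3e)$ for all integers $p\ge 1$ that lets the bounded-part term be absorbed into the target; and confirm, via the tail integration in (c), that the deterministic remainder $R$ and the integer rounding $r=\lceil t\rceil$ in the Markov step contribute only lower-order terms. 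The initial reduction to $t>2+\log m$ (so that $m<e^{t}$ and $t$ exceeds an absolute constant) and the even split $\tfrac12 me^{2-t}+\tfrac12 me^{2-t}$ of the error budget are precisely what make these estimates close; for $p\ge 2$ there is ample room since $(2p/e)^p$ grows, while for $p=1$ the borderline case is checked directly.
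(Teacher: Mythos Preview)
Your truncation-plus-Bernstein approach is correct and is the standard Fuk--Nagaev route to such inequalities; the constants do close once one bounds the truncated moment by choosing the Markov exponent $r=\lceil t\rceil$ rather than integrating the tail, which gives $E[|\epsilon_i|^p\mathbf{1}(|\epsilon_i|>M)]\le 2e(eA)^pt^pe^{-t}$ (for $t\ge p$; for $t<p$ one uses $r=p$ and $m<e^{t-2}$ directly) and hence $c_0=2/e$ after invoking $m<e^{t-2}$.

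The paper, however, takes a shorter and rather different path: it applies Rosenthal's inequality (with the explicit constant of Ibragimov--Sharakhmetov) directly to the $q$-th moment of $\sum_i\omega_i(\epsilon_i^p-E[\epsilon_i^p])$, where $q$ is the largest even integer below $t$. The moment hypothesis $E[|\epsilon_i|^r]\le(Ar)^r$ feeds straight into Rosenthal to give
\[
E\Big[\Big(\sum_i\omega_i(\epsilon_i^p-E[\epsilon_i^p])\Big)^q\Big]\le q^q\,2^q(2Ap)^{pq}\,m\,\max\big(\|\omega\|_\infty q^p,\|\omega\|_2\big)^q,
\]
and a single Markov step then yields the claim, using $q<t$ (so the ratio of maxima is at most one) and $q\ge t-2$ (so $(q/(et))^q\le e^{-q}\le e^{2-t}$). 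This avoids truncation, the deterministic remainder $R$, and all of the constant-balancing you identify as the main obstacle; the factor $m$ enters naturally from $\sum_i\omega_i^q\le m\|\omega\|_\infty^q$ rather than from a union bound. Your approach has the merit of being self-contained (no appeal to sharp Rosenthal constants) and of delivering a genuine exponential tail bound, but for this statement the moment method is considerably cleaner.
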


\begin{proof}
Let $q$ be an even, positive integer and $\xi_1, \ldots, \xi_m$ be independent, centered random variables with bounded $q$-th moment. Applying Lemma \ref{lem.Poisson_moments}(i) to bound the explicit constant in Rosenthal's inequality that is derived in Ibragimov and Sharkhmetov \cite{ibragi2002}, we have
\begin{align}
	E\big[ \big(\sum_{i=1}^m \xi_i\big)^q \big] 
	\leq q^q \max\Big( \sum_{i=1}^m E\big[ \xi_i^q\big] , \big(\sum_{i=1}^m E\big[\xi_i^2\big]\big)^{q/2}\Big).
	\label{eq.IS_moment_ineq}
\end{align}
We now apply this to show \eqref{eq.exp_ineq_for_weighted_sums}. There is nothing to prove in the case $t\leq 2.$ Thus it is enough to consider $t>2.$ Let $q$ be now the largest even integer smaller than $t$ and observe that in particular, $q\geq 2$ as well. The moment bound \eqref{eq.IS_moment_ineq} gives
\begin{align*}
	E\Big[ \Big( \sum_{i=1}^m \omega_i \big(\epsilon_i^p -E[\epsilon_i^p]\big) \Big)^q \Big]
	&\leq 
	q^q \max\Big( \sum_{i=1}^m (2\omega_i)^q (Apq)^{pq}, \big( \sum_{i=1}^m \omega_i^2 (2Ap)^{2p} \big)^{q/2} \Big) \notag \\
	&\leq 
	q^q 2^q (2Ap)^{pq} m \max\big( \|\omega\|_\infty q^p, \|\omega\|_2\big)^q.
\end{align*}
Taking both sides in the inequality to the power $q$ and applying Markov's inequality yields
\begin{align*}
	P\Big( \sum_{i=1}^m \omega_i \big(\epsilon_i^p -E[\epsilon_i^p]\big)  \geq 2e(2Ap)^p \max(\|\omega\|_2, \|\omega \|_\infty t^p)  t \Big) 
	\leq m e^{-q} \leq m e^{2-t}.
\end{align*}
\end{proof}

\begin{lem}
\label{lem.switch_relation}
Suppose that there are positive sequences $(a_n)_n,$ $(b_n)_n$ and $(r_n)_n$ such that for some $\beta>0$ and a positive constant $C,$ 
\begin{align*}
	|a_n -b_n |\leq C r_n^{\beta/(\beta+1)} + C (a_n r_n)^{\beta/(2\beta+1)}.
\end{align*}
Then there exists a finite constant $\widetilde C$ that only depends on $C$ and $\beta,$ such that
\begin{align*}
	|a_n -b_n |\leq \widetilde C r_n^{\beta/(\beta+1)} + \widetilde C (b_n r_n)^{\beta/(2\beta+1)}.
\end{align*}
\end{lem}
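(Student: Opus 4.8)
The plan is to argue by cases according to the relative sizes of $b_n$ and $a_n$. For brevity write $\gamma_1 := \beta/(\beta+1)$ and $\gamma_2 := \beta/(2\beta+1)$, so the hypothesis reads $|a_n-b_n|\le C r_n^{\gamma_1}+C(a_nr_n)^{\gamma_2}$ and the goal is to establish the same type of bound with $a_n$ replaced by $b_n$ on the right-hand side, at the cost of enlarging the constant. Since $(a_n)_n$ and $(b_n)_n$ are positive, exactly one of the alternatives $b_n\ge \tfrac12 a_n$ or $b_n<\tfrac12 a_n$ holds.

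In the first case $b_n\ge\tfrac12 a_n$, one simply has $(a_nr_n)^{\gamma_2}\le 2^{\gamma_2}(b_nr_n)^{\gamma_2}\le 2(b_nr_n)^{\gamma_2}$, and substituting this into the hypothesis immediately yields the desired inequality with $\widetilde C=2C$. In the second case $b_n<\tfrac12 a_n$, positivity gives $|a_n-b_n|=a_n-b_n>\tfrac12 a_n$, so the hypothesis produces the self-referential bound $a_n< 2C r_n^{\gamma_1}+2C(a_nr_n)^{\gamma_2}$. This is resolved by a further dichotomy: if $2C(a_nr_n)^{\gamma_2}\le\tfrac12 a_n$ then $a_n\le 4C r_n^{\gamma_1}$; otherwise $2C(a_nr_n)^{\gamma_2}>\tfrac12 a_n$, which rearranges to $a_n^{1-\gamma_2}=a_n^{(\beta+1)/(2\beta+1)}<4C r_n^{\gamma_2}$, and raising to the power $(2\beta+1)/(\beta+1)$ and using $\gamma_2\cdot\tfrac{2\beta+1}{\beta+1}=\gamma_1$ gives $a_n<(4C)^{(2\beta+1)/(\beta+1)}r_n^{\gamma_1}$. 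Hence in the second case $a_n\le c\,r_n^{\gamma_1}$ with $c:=(4C)\vee(4C)^{(2\beta+1)/(\beta+1)}$ (the constant already appearing in the proof of Theorem~\ref{thm.Poissonization}), and since $|a_n-b_n|\le a_n$ by positivity of $b_n$, the conclusion follows with $\widetilde C:=\max\{2C,c\}$.

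The argument is entirely elementary, so there is no genuine obstacle; the only step requiring a little care is resolving the implicit inequality $a_n\lesssim r_n^{\gamma_1}+(a_nr_n)^{\gamma_2}$, where one exploits that $\gamma_2<1$ to absorb the $a_n$-dependence from the right-hand side, which is precisely what the exponent bookkeeping $\gamma_2\cdot\tfrac{2\beta+1}{\beta+1}=\gamma_1$ encodes.
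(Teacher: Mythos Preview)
Your proof is correct and follows essentially the same approach as the paper's: both are elementary two-case arguments that hinge on (i) if $a_n$ and $b_n$ are comparable then one may substitute $b_n$ for $a_n$ in $(a_nr_n)^{\gamma_2}$ at the cost of a factor $2^{\gamma_2}$, and (ii) otherwise $a_n$ itself is forced to be of size at most a constant times $r_n^{\gamma_1}$, whence the whole right-hand side collapses into the first term. The only cosmetic difference is that the paper splits first on whether $a_n$ exceeds the threshold $(4C)^{(2\beta+1)/(\beta+1)}r_n^{\gamma_1}$ (and then deduces $a_n\le 2b_n$ in the large-$a_n$ case), whereas you split first on $b_n\gtrless\tfrac12 a_n$ and in the second case deduce the same smallness bound on $a_n$ via the self-referential inequality; the constants and exponent bookkeeping are identical.
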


\begin{proof}
Without loss of generality, we can assume that $C\geq 1.$ If $a_n \geq (4C)^{(2\beta+1)/(\beta+1)} r_n^{\beta/(\beta+1)},$ then $|a_n -b_n |\leq C r_n^{\beta/(\beta+1)} + C (a_n r_n)^{\beta/(2\beta+1)}\leq a_n/4 +a_n/4 \leq a_n/2$ and therefore $a_n \leq 2b_n.$ In this case we thus obtain $|a_n -b_n |\leq C r_n^{\beta/(\beta+1)} + C (2b_n r_n)^{\beta/(2\beta+1)}.$ Otherwise, if $a_n \leq (4C)^{(2\beta+1)/(\beta+1)}  r_n^{\beta/(\beta+1)},$ then $|a_n -b_n | \leq C(1+ (4C)^{\beta/(\beta+1)}) r_n^{\beta/(\beta+1)}.$
\end{proof}

\section{Brief overview of the Le Cam deficiency}
\label{sec.LeCam}

We briefly recall some basic facts about the Le Cam deficiency. General treatments can be found in \cite{str,Torgersen1991,LeCamY2000,mariucci2016}.  

Following \cite{nussbaum1996}, Definition 9.1, we call a statistical experiment $\mE(\Theta)=(\Omega, \mA, (P_\theta: \theta \in \Theta))$ dominated if there exists a probability measure $\mu$ such that any $P_\theta$ is dominated by $\mu.$ Moreover, $\mE(\Theta)$ is said to be Polish if $\Omega$ is a Polish space and $\mA$ is the associated Borel $\sigma$-algebra. If $\mE(\Theta)=(\Omega, \mA, (P_\theta: \theta \in \Theta))$ and $\mF(\Theta)=(\Omega', \mA', (Q_\theta: \theta \in \Theta))$ are two Polish and dominated experiments, the Le Cam deficiency can be defined as 
\begin{align*}
	\delta\big(\mE(\Theta) , \mF(\Theta)\big) := \inf_M \sup_{\theta\in \Theta} \big\| MP_\theta^n - Q_\theta^n \big\|_{\TV},
\end{align*} 
where the infimum is taken over all Markov kernels from $(\Omega, \mA)$ to $(\Omega', \mA'),$ see (68) and Proposition 9.2 of \cite{nussbaum1996}. For any three statistical experiments with the same parameter space, the Le Cam deficiency satisfies the triangle inequality (cf. the proof of Lemma 59.2 in \cite{str}). The Le Cam distance 
\begin{align*}
	\Delta\big(\mE(\Theta) , \mF(\Theta)\big) := \delta\big(\mE(\Theta) , \mF(\Theta)\big) \vee \delta\big(\mF(\Theta) , \mE(\Theta)\big)
\end{align*}
thus defines a pseudo-distance on the space of all experiments with parameter space $\Theta.$ 

To derive bounds for the Le Cam deficiency, a common strategy is to construct intermediate experiments that embed both statistical models into a common probability space. Once the experiments are defined on the same measurable space, taking $M$ to be the identity yields (cf.\cite{tsybakov2009}, Lemmas 2.3 and 2.4)
\begin{align}
	\Delta\big(\mE(\Theta) , \mF(\Theta)\big) \leq 
	\sup_{\theta\in \Theta} \big\| P_\theta^n - Q_\theta^n \big\|_{\TV}
	\leq \sup_{\theta\in \Theta} H\big( P_\theta^n , Q_\theta^n\big)
	\leq \sup_{\theta\in \Theta} \sqrt{\KL\big( P_\theta^n , Q_\theta^n\big)},
	\label{eq.LeCam_ub_on_same_prob_space}
\end{align}
where $H$ and $\KL$ denote the Hellinger distance and the Kullback-Leibler divergence respectively. Bounding the Le Cam distance therefore often reduces to bounding information measures. In the next lemma we collect a number of facts that we use repeatedly in this article.

\begin{lem}
\phantomsection\label{lem.bds_of_info_distances}
\begin{itemize}
\item[(i)] Denote by $\overline P_{\Lambda}$ the distribution of the Poisson process with intensity measure $\Lambda.$ If $\nu$ is a measure that dominates $\Lambda_1$ and $\Lambda_2$ and $\lambda_j = d\Lambda_j/d\nu,$ then$$H^2(\overline P_{\Lambda_1},\overline P_{\Lambda_2})= \int (\sqrt{\lambda_1(x)}-\sqrt{\lambda_2(x)})^2 d\nu(x).$$
\item[(ii)] For a function $b$ and $\sigma>0,$ denote by $Q_{b, \sigma}$ the distribution of the path $(Y_t)_{t\in [0,1]}$ with $dY_t= b(t)dt+ \sigma dW_t,$ where $W$ is a Brownian motion. If $\Phi$ denotes the c.d.f. of the standard normal distribution, then$$\| Q_{b_1, \sigma}- Q_{b_2, \sigma}\|_{\TV} = 1-2\Phi(-\tfrac 1{2\sigma} \|b_1 -b_2\|_2),$$ $$H^2(Q_{b_1, \sigma}, Q_{b_2, \sigma}) = 2-2\exp(-\tfrac 1{8\sigma^{2}} \|b_1 -b_2\|_2^2),$$ $$\KL(Q_{b_1, \sigma}, Q_{b_2, \sigma}) = \tfrac 1{2 \sigma^{2}} \|b_1 -b_2\|_2^2.$$
\end{itemize}
\end{lem}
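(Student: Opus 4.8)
The plan is to reduce both parts to an explicit likelihood-ratio computation. I would begin with part (ii), which is the quicker of the two. By Girsanov's theorem, each $Q_{b,\sigma}$ is absolutely continuous with respect to the scaled Wiener measure $Q_{0,\sigma}$ with density $\exp(\sigma^{-2}\int_0^1 b(t)\,dY_t - (2\sigma^2)^{-1}\int_0^1 b(t)^2\,dt)$; hence $\log(dQ_{b_1,\sigma}/dQ_{b_2,\sigma}) = \sigma^{-2}\int_0^1(b_1-b_2)\,dY_t - (2\sigma^2)^{-1}\int_0^1(b_1^2-b_2^2)\,dt$. Substituting the dynamics $dY_t = b_2(t)\,dt+\sigma\,dW_t$ valid under $Q_{b_2,\sigma}$ and simplifying the two drift terms, one finds that this log-likelihood ratio is $\mathcal{N}(-\tfrac{1}{2\sigma^2}\|b_1-b_2\|_2^2,\ \tfrac{1}{\sigma^2}\|b_1-b_2\|_2^2)$ under $Q_{b_2,\sigma}$, and by symmetry the same with mean $+\tfrac{1}{2\sigma^2}\|b_1-b_2\|_2^2$ under $Q_{b_1,\sigma}$. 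The three identities then all drop out of standard normal-distribution facts: $\KL(Q_{b_1,\sigma},Q_{b_2,\sigma})$ is the mean of the log-likelihood ratio under $Q_{b_1,\sigma}$, i.e. $\tfrac{1}{2\sigma^2}\|b_1-b_2\|_2^2$; the Hellinger affinity is $E_{Q_{b_2,\sigma}}[\exp(\tfrac12\log\tfrac{dQ_{b_1,\sigma}}{dQ_{b_2,\sigma}})]$, which by the Gaussian moment generating function equals $\exp(-\tfrac{1}{8\sigma^2}\|b_1-b_2\|_2^2)$, giving $H^2 = 2 - 2\exp(-\tfrac{1}{8\sigma^2}\|b_1-b_2\|_2^2)$; and $\|Q_{b_1,\sigma}-Q_{b_2,\sigma}\|_{\TV} = E_{Q_{b_2,\sigma}}[(\tfrac{dQ_{b_1,\sigma}}{dQ_{b_2,\sigma}}-1)^+]$, a one-dimensional Gaussian integral that a change of measure evaluates to $1-2\Phi(-\tfrac{1}{2\sigma}\|b_1-b_2\|_2)$.

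For part (i) I would compute the Hellinger affinity $\rho(\overline P_{\Lambda_1},\overline P_{\Lambda_2}) = \int\sqrt{d\overline P_{\Lambda_1}\,d\overline P_{\Lambda_2}}$, from which $H^2 = 2-2\rho$. On the region where both intensities are positive the Radon--Nikodym derivative of one Poisson process with respect to the other is, for a point configuration $\{x_i\}$, equal to $\exp(\Lambda_2([0,1]) - \Lambda_1([0,1]))\prod_i(\lambda_1(x_i)/\lambda_2(x_i))$, so that $\rho = \exp(\tfrac12\Lambda_2([0,1]) - \tfrac12\Lambda_1([0,1]))\,E_{\overline P_{\Lambda_2}}[\prod_i\sqrt{\lambda_1(x_i)/\lambda_2(x_i)}]$. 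Applying the Laplace functional of a Poisson process, $E_{\overline P_{\Lambda}}[\prod_i g(x_i)] = \exp(\int(g-1)\,d\Lambda)$, with $g = \sqrt{\lambda_1/\lambda_2}$ and combining the exponents yields $\rho = \exp(-\tfrac12\int(\sqrt{\lambda_1}-\sqrt{\lambda_2})^2\,d\nu)$, hence $H^2(\overline P_{\Lambda_1},\overline P_{\Lambda_2}) = 2 - 2\exp(-\tfrac12\int(\sqrt{\lambda_1}-\sqrt{\lambda_2})^2\,d\nu) \le \int(\sqrt{\lambda_1}-\sqrt{\lambda_2})^2\,d\nu$, the last step using $2-2e^{-x/2}\le x$ for $x\ge 0$; it is through this bound that the lemma is actually applied (for instance in Propositions \ref{prop.mEP_mE1P} and \ref{prop.step_fct_in_variance}). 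As a more elementary alternative that bypasses point-process measure theory, I would partition $[0,1]$ into intervals $I_k$, use that the counts $(\overline P_{\Lambda_j}(I_k))_k$ are independent $\Poi(\Lambda_j(I_k))$ so that the affinity of the discretised sub-experiment factorises into $\prod_k\exp(-\tfrac12(\sqrt{\Lambda_1(I_k)}-\sqrt{\Lambda_2(I_k)})^2)$, and pass to the limit along a refining sequence of partitions, using that the true affinity is the decreasing limit of the discretised ones while $\sum_k(\sqrt{\Lambda_1(I_k)}-\sqrt{\Lambda_2(I_k)})^2$ increases to $\int(\sqrt{\lambda_1}-\sqrt{\lambda_2})^2\,d\nu$ by the Cauchy--Schwarz inequality and monotone convergence.

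The main obstacle is the rigorous bookkeeping in part (i): one must pin down a common dominating measure and deal with the regions where $\lambda_1$ or $\lambda_2$ vanishes — there the two process laws can be mutually singular, which is consistent with the exponential formula sending $\rho$ to zero exactly when $\int(\sqrt{\lambda_1}-\sqrt{\lambda_2})^2\,d\nu$ diverges — and one must justify either the Laplace-functional identity or, in the discretisation approach, the convergence of the affinity along a refining sequence of partitions. Part (ii), despite invoking stochastic calculus, is routine once Girsanov's formula is in place: it collapses to a single one-dimensional Gaussian computation, with the only care needed being to track the reference measure $Q_{0,\sigma}$ and the sign of the drift term when passing between $Q_{b_1,\sigma}$ and $Q_{b_2,\sigma}$.
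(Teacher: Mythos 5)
Your treatment of (ii) is correct and is essentially the paper's own argument: the paper likewise invokes Girsanov's formula together with $\|P-Q\|_{\TV}=1-P(\tfrac{dQ}{dP}>1)-Q(\tfrac{dP}{dQ}\geq 1)$ and $H^2(P,Q)=2-2\int(dP\,dQ)^{1/2}$, and your explicit identification of the log-likelihood ratio as $\mathcal{N}(\mp\tfrac{1}{2\sigma^2}\|b_1-b_2\|_2^2,\ \tfrac{1}{\sigma^2}\|b_1-b_2\|_2^2)$ under the two laws is exactly the computation the paper leaves implicit. For (i) the paper gives no derivation at all, simply citing Le Cam and Yang, p.~67, and Meister and Rei\ss; your Laplace-functional computation (and the discretization alternative via $\rho(\Poi(a),\Poi(b))=e^{-\frac12(\sqrt a-\sqrt b)^2)}$, suitably bracketed) is the standard proof behind those citations and is sound, with the caveats you mention (dominating measure, vanishing intensities, integrability of $\sqrt{\lambda_1/\lambda_2}$) being routine here since the intensity measures on $[0,1]$ are finite.

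One point deserves emphasis: what your argument actually establishes is the affinity identity $\int\sqrt{d\overline P_{\Lambda_1}\,d\overline P_{\Lambda_2}}=\exp\big(-\tfrac12\int(\sqrt{\lambda_1}-\sqrt{\lambda_2})^2\,d\nu\big)$, hence $H^2(\overline P_{\Lambda_1},\overline P_{\Lambda_2})=2-2\exp\big(-\tfrac12\int(\sqrt{\lambda_1}-\sqrt{\lambda_2})^2\,d\nu\big)$, and this is the correct statement: the equality displayed in part (i) of the lemma cannot hold literally, since the left-hand side is at most $2$ while the right-hand side is unbounded. As you note, every use of the lemma in the paper (e.g.\ in Propositions \ref{prop.mEP_mE1P} and \ref{prop.step_fct_in_variance}) only requires the inequality $H^2\leq\int(\sqrt{\lambda_1}-\sqrt{\lambda_2})^2\,d\nu$, which follows from $2-2e^{-x/2}\leq x$, so your proof supports all applications; strictly speaking it proves a corrected version of (i) rather than the stated identity, and the display in the lemma should be read as this exponential formula or as an upper bound.
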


\begin{proof}
For a proof of $(i),$ see \cite{LeCamY2000}, p. 67 and \cite{meister2013}. Part $(ii)$ follows from Girsanov's formula $dQ_{b, \sigma}/dQ_{0, \sigma} = \exp(\sigma^{-1}\int b(t) dW_t - \tfrac 12 \sigma^{-2}\| b\|_2^2)$ together with $\|P-Q\|_{\TV}= 1-P(\tfrac{dQ}{dP}>1)-Q(\tfrac{dP}{dQ}\geq1)$ and $H^2(P,Q)=2-2\int(dPdQ)^{1/2}.$
\end{proof}

For upper bounds on the Le Cam distance, we use the localization technique described in Section 3 of \cite{nussbaum1996}, which we briefly recall here. A sequence of experiments $\mE_n(\Theta)=(\Omega_n, \mA_n, (P_\theta^n: \theta \in \Theta))$ is said to allow sample splitting if $P_\theta^n = P_\theta^{\lfloor n/2\rfloor} \otimes P_\theta^{\lceil n/2\rceil},$ that is if the sample can be split into two independent samples of size $\lfloor n/2\rfloor$ and $\lceil n/2\rceil.$ Moreover given $\mE_n(\Theta)$, define the sub-experiment $\mE_n(\Theta'):=(\Omega_n, \mA_n, (P_\theta^n: \theta \in \Theta'))$ for any $\Theta'\subset \Theta.$

\begin{lem}
\label{lem.localization_bd}
Suppose that for any $n\geq 2,$ $\mE_n(\Theta)=(\Omega_n, \mA_n, (P_\theta^n: \theta \in \Theta))$ and $\mF_n(\Theta)=(\Omega_n', \mA_n', (Q_\theta^n: \theta \in \Theta))$ are Polish experiments which are dominated and allow sample splitting. Let $\widehat \theta_{1,n}$ and $\widehat \theta_{2,n}$ be two estimators based on a sample from $P_{\theta}^{\lfloor n/2\rfloor}$ and $Q_{\theta}^{\lceil n/2\rceil}$ respectively and assume that $\widehat \theta_{1,n}$ and $\widehat \theta_{2,n}$ only take values in a finite subset of $\Theta.$ For any $\theta \in \Theta,$ denote by $U_n(\theta) \subset \Theta$ a neighbourhood of $\theta.$ Then, for $n\geq 4,$
\begin{align*}
	&\Delta\big( \mE_n(\Theta), \mF_n(\Theta) \big) \\
	&\leq 8\sup_{\theta\in \Theta} \Big(\max_{r\in \{\lfloor n/2\rfloor,  \lceil n/2\rceil\}}\Delta\big( \mE_r(U_n(\theta)), \mF_r(U_n(\theta)) \big) 
	+ P_{\theta}^{\lfloor n/2\rfloor}\big(\theta \notin U_n(\widehat \theta_{1,n})\big)
	+ Q_{\theta}^{\lceil n/2\rceil}\big(\theta \notin U_n(\widehat \theta_{2,n})\big)\Big).
\end{align*}
\end{lem}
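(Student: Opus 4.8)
The plan is to exploit the sample-splitting hypothesis to separate the roles of ``estimating a rough location of $\theta$'' and ``solving the problem on a small neighbourhood''. First I would note that, by sample splitting, an observation in $\mE_n(\Theta)$ can be written as an independent pair $(X_1,X_2)$ with $X_1\sim P_\theta^{\lfloor n/2\rfloor}$ and $X_2 \sim P_\theta^{\lceil n/2\rceil}$, and similarly in $\mF_n(\Theta)$. From $X_1$ we compute the estimator $\widehat\theta_{1,n}$, which takes values in a finite set; this determines a random neighbourhood $U_n(\widehat\theta_{1,n})$. Conditionally on $\widehat\theta_{1,n}=\vartheta$, the second half $X_2\sim P_\theta^{\lceil n/2\rceil}$ is, on the event $\{\theta\in U_n(\vartheta)\}$, distributed according to a parameter lying in $\mE_{\lceil n/2\rceil}(U_n(\vartheta))$, for which by hypothesis there is a Markov kernel transforming it into $\mF_{\lceil n/2\rceil}(U_n(\vartheta))$ with error at most $\Delta(\mE_{\lceil n/2\rceil}(U_n(\vartheta)),\mF_{\lceil n/2\rceil}(U_n(\vartheta)))$. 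We also need to reproduce the $X_1$-part as a sample in $\mF$: using the \emph{other} half-sample and the estimator $\widehat\theta_{2,n}$ (based on a $Q_\theta^{\lceil n/2\rceil}$-sample) symmetrically, one builds a global kernel.

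Concretely, the construction of the global Markov kernel $M$ from $\mE_n(\Theta)$ to $\mF_n(\Theta)$ would go as follows. Split the $\mE_n$-observation into four independent quarter-samples of size $\lfloor n/4\rfloor$ or $\lceil n/4\rceil$ (which is why $n\geq 4$ is needed and why the bound is stated with $r\in\{\lfloor n/2\rfloor,\lceil n/2\rceil\}$ rather than a single index — after the first split one splits again). Use the first quarter to form $\widehat\theta_{1,n}$; apply the local kernel associated to $U_n(\widehat\theta_{1,n})$ to the second quarter to manufacture a pseudo-$\mF$-observation of size $\asymp n/4$; and handle the remaining two quarters symmetrically using an estimator in $\mF$. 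Concatenating the pieces (and, on the low-probability event that $\theta\notin U_n(\widehat\theta_{1,n})$, outputting anything — say a fixed point mass) gives a valid Markov kernel whose total-variation error is controlled by a triangle-inequality decomposition: one term for the event that the estimator misses the neighbourhood (bounded by $P_\theta^{\lfloor n/2\rfloor}(\theta\notin U_n(\widehat\theta_{1,n}))$, since the quarter-sample used is a sub-sample), and one term for the local Le Cam distance on $U_n(\widehat\theta_{1,n})$, which is bounded by the supremum over $\theta$ of $\max_r \Delta(\mE_r(U_n(\theta)),\mF_r(U_n(\theta)))$. Doing the same in the reverse direction gives the bound for $\delta(\mF_n,\mE_n)$, and the absolute constant $8$ absorbs the fourfold splitting together with the triangle inequality for $\|\cdot\|_{\TV}$ across the concatenated coordinates.

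The main obstacle — and the place that requires genuine care rather than routine bookkeeping — is the measurability and conditioning argument needed to glue the \emph{random} local kernels $M^{U_n(\widehat\theta_{1,n})}$ into a single honest Markov kernel. One must argue that, because $\widehat\theta_{1,n}$ takes values in a fixed \emph{finite} subset of $\Theta$, the map $(x_1,\cdot)\mapsto M^{U_n(\widehat\theta_{1,n}(x_1))}(\cdot)$ is a legitimate Markov kernel (a finite mixture over the possible values of $\widehat\theta_{1,n}$, each piece measurable), and that on the good event the induced distribution is exactly what the local Le Cam bound controls; the finiteness hypothesis on the range of the estimators is exactly what is invoked in Section~\ref{sec.global} and is there precisely to sidestep these measurability issues. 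The rest — tracking that a sub-sample of size $\lfloor n/2\rfloor$ or a quarter-sample is still governed by the same family, and that the errors add under concatenation via $\|P_1\otimes P_2 - Q_1\otimes Q_2\|_{\TV}\leq \|P_1-Q_1\|_{\TV}+\|P_2-Q_2\|_{\TV}$ — is routine, and the final constant follows by keeping track of how many such splits and triangle-inequality steps are used.
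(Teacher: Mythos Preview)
Your construction has a conceptual gap. You propose to split the $\mE_n$-sample into four quarters and, for the last two, ``handle the remaining two quarters symmetrically using an estimator in $\mF$''. But all four quarters are $P_\theta$-samples: you never have access to a genuine $Q_\theta$-sample, so you cannot legitimately evaluate $\widehat\theta_{2,n}$, which by hypothesis is based on a sample from $Q_\theta^{\lceil n/2\rceil}$. Your attempt to justify the $P_\theta^{\lfloor n/2\rfloor}$ bound via ``the quarter-sample used is a sub-sample'' is also off: an estimator built on a quarter-sample is controlled by $P_\theta^{\lfloor n/4\rfloor}$, not $P_\theta^{\lfloor n/2\rfloor}$, and in any case $\widehat\theta_{1,n}$ is assumed to be a function of a half-sample.

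The device you are missing is the introduction of a \emph{hybrid} experiment
\[
\mG_n(\Theta)=\bigl(\Omega_{\lfloor n/2\rfloor}\times \Omega_{\lceil n/2\rceil}',\ \mA_{\lfloor n/2\rfloor}\otimes \mA_{\lceil n/2\rceil}',\ (P_\theta^{\lfloor n/2\rfloor}\otimes Q_\theta^{\lceil n/2\rceil}:\theta\in\Theta)\bigr),
\]
and the use of the triangle inequality $\Delta(\mE_n,\mF_n)\le \Delta(\mE_n,\mG_n)+\Delta(\mG_n,\mF_n)$. For $\Delta(\mE_n,\mG_n)$ one keeps the first half fixed, computes $\widehat\theta_{1,n}$ from it, and applies the local kernel on $U_n(\widehat\theta_{1,n})$ to the second half; this is exactly Lemma~9.3 of \cite{nussbaum1996} and gives the factor $4$ together with the terms involving $\widehat\theta_{1,n}$ and $r=\lceil n/2\rceil$. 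For $\Delta(\mG_n,\mF_n)$ the roles are reversed: now the second half \emph{is} a genuine $Q_\theta^{\lceil n/2\rceil}$-sample, so $\widehat\theta_{2,n}$ can be evaluated, and one transforms the first half; this yields the terms with $\widehat\theta_{2,n}$ and $r=\lfloor n/2\rfloor$. The constant $8=4+4$ and the two values of $r$ come from these two half-sample steps, not from a fourfold split. Your observation that the finiteness of the estimators' range handles measurability is correct and is indeed the reason for that hypothesis.
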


\begin{proof}
We split the sample $P_\theta^n = P_\theta^{\lfloor n/2\rfloor} \otimes P_\theta^{\lceil n/2\rceil}$ and construct the estimator $\widehat \theta_{1,n}$ based on the sub-sample from $P_\theta^{\lfloor n/2\rfloor}.$ Define a new statistical experiment $\mG_n(\Theta)=(\Omega_{\lfloor n/2\rfloor}\times \Omega_{\lceil n/2\rceil}',\mA_{\lfloor n/2\rfloor}\otimes \mA_{\lceil n/2\rceil}', (P_\theta^{\lfloor n/2\rfloor} \otimes Q_\theta^{\lceil n/2\rceil}: \theta \in \Theta))$ and observe that $\mG_n(\Theta)$ is also Polish and dominated. By Lemma 9.3 in \cite{nussbaum1996} (last display on p. 2427), it follows that
\begin{align*}
	\Delta\big( \mE_n(\Theta), \mG_n(\Theta) \big)
	&\leq 4 \sup_{\theta\in \Theta} \Big(\Delta\big( \mE_{\lceil n/2\rceil}(U_n(\theta)), \mF_{\lceil n/2\rceil}(U_n(\theta)) \big) 
	+ P_{\theta}^{\lfloor n/2\rfloor}\big(\theta \notin U_n(\widehat \theta_{1,n})\big) \Big).
\end{align*}
With the same arguments, 
\begin{align*}
	\Delta\big( \mG_n(\Theta), \mF_n(\Theta) \big)\leq 4 \sup_{\theta\in \Theta} \Big(\Delta\big( \mE_{\lfloor n/2\rfloor}(U_n(\theta)), \mF_{\lfloor n/2\rfloor}(U_n(\theta)) \big) 
	+ Q_{\theta}^{\lceil n/2\rceil}\big(\theta \notin U_n(\widehat \theta_{2,n})\big) \Big)
\end{align*}
and since $\Delta$ is a pseudo-distance, the result follows.
\end{proof}

The previous lemma essentially says that if the statistical experiments allow sample splitting and if $\theta$ can be estimated in both models with rate $\epsilon_n,$ then it is sufficient to bound the Le Cam distance on a local parameter space consisting of an $\epsilon_n$-neighbourhood of some arbitrary $\theta_0.$ Bounding the Le Cam distance on a local parameter space is often much more convenient since we can use the fact that any parameter $\theta$ is $\epsilon_n$-close to $\theta_0.$ If the estimation rate $\epsilon_n$ can be obtained with probability $1-\delta_n,$ then by Lemma \ref{lem.localization_bd} this localization step adds $O(\delta_n)$ to the global Le Cam distance. In the experiments studied in this article, $\delta_n$ is much smaller than the Le Cam distance between the local parameter spaces and so does not contribute to the global Le Cam rate.

\begin{lem}
\label{lem.sample_splitting}
Let $\Theta \subset \mH^\beta(R)$ for some $\beta>0.$ The statistical experiments $\mE_n^D(\Theta), \mE_n^P(\Theta)$ and $\mE_n^G(\Theta)$ defined in Section \ref{sec.main} are Polish, dominated and allow sample splitting. 
\end{lem}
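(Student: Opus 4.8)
Lemma \ref{lem.sample_splitting} asserts three structural properties (Polish, dominated, sample splitting) of the three basic experiments. The plan is to verify each property for each of $\mE_n^D(\Theta)$, $\mE_n^P(\Theta)$ and $\mE_n^G(\Theta)$ in turn; all three verifications are essentially bookkeeping once the sample spaces are written down explicitly, so the proof will be short.

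\textbf{Polish and dominated.} For $\mE_n^D(\Theta)$ the sample space is $[0,1]^n$ with its Borel $\sigma$-algebra, which is Polish, and every $P_f^n$ has Lebesgue density $\prod_{i=1}^n f(x_i)$ with respect to $n$-dimensional Lebesgue measure, so the experiment is dominated. For $\mE_n^P(\Theta)$, the space $\mathbb M$ of point measures on $[0,1]$ with the vague (equivalently weak) topology is Polish, and its Borel $\sigma$-algebra coincides with $\mathcal M$; the reference measure is the distribution of a Poisson process with intensity $n$ times Lebesgue measure (equivalently the unit-intensity Poisson process after rescaling), and each $\overline P_f^n$ is absolutely continuous with respect to it with an explicit Radon--Nikodym derivative $e^{n\int (1-f)}\prod_{i} f(X_i)$ on the event of $N$ points, so $\mE_n^P(\Theta)$ is dominated. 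Since $\Theta\subset\mH^\beta(R)$ consists of bounded densities bounded below (or at worst nonnegative), these densities are genuine Radon--Nikodym derivatives. For $\mE_n^G(\Theta)$, the space $\mathcal C([0,1])$ with the uniform topology is a separable Banach space, hence Polish, and $\sigma(\mathcal C([0,1]))$ is its Borel $\sigma$-algebra; domination follows from Girsanov's theorem (Lemma \ref{lem.bds_of_info_distances}(ii)), which gives $dQ_f^n/dQ_0^n = \exp\bigl(2\sqrt n\int_0^1\sqrt{f(t)}\,dW_t - 2n\int_0^1 f(t)\,dt\bigr)$ with respect to the Wiener measure $Q_0^n$ of $dY_t = n^{-1/2}dW_t$, so $Q_0^n$ dominates the family.

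\textbf{Sample splitting.} Here one must exhibit, for each experiment and each $n$, an identification of $P_f^n$ with $P_f^{\lfloor n/2\rfloor}\otimes P_f^{\lceil n/2\rceil}$. For density estimation this is immediate: $n$ i.i.d. copies of $X\sim f$ split as $\lfloor n/2\rfloor$ plus $\lceil n/2\rceil$ i.i.d. copies, i.e. $P_f^n = P_f^{\lfloor n/2\rfloor}\otimes P_f^{\lceil n/2\rceil}$ as a product on $[0,1]^{\lfloor n/2\rfloor}\times[0,1]^{\lceil n/2\rceil}$. For Poisson intensity estimation one uses the superposition/thinning property: a Poisson process with intensity $nf$ on $[0,1]$ is the superposition of two independent Poisson processes with intensities $\lfloor n/2\rfloor f$ and $\lceil n/2\rceil f$ (split the intensity $nf = \lfloor n/2\rfloor f + \lceil n/2\rceil f$), so $\overline P_f^n = \overline P_f^{\lfloor n/2\rfloor}\otimes\overline P_f^{\lceil n/2\rceil}$ under the map that sends a pair of point measures to their sum. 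For Gaussian white noise, as already used in the proof of Theorem \ref{thm.glob2}, observing $(Y_t)$ with $dY_t = 2\sqrt{f(t)}\,dt + n^{-1/2}dW_t$ is equivalent to observing two independent processes $dY_{i,t} = \sqrt{f(t)}\,dt + n^{-1/2}dW_{i,t}$, $i=1,2$; rescaling time-free noise, each has the form of a Gaussian white noise experiment at sample sizes $\lfloor n/2\rfloor$ and $\lceil n/2\rceil$ respectively (the noise levels $(\lfloor n/2\rfloor)^{-1/2}$ and $(\lceil n/2\rceil)^{-1/2}$ combine to $n^{-1/2}$ via $(\lfloor n/2\rfloor)^{-1}+(\lceil n/2\rceil)^{-1}$ after suitable reparametrisation, or more cleanly one writes $Y_t = a Y^{(1)}_t + b Y^{(2)}_t$ with $a^2+b^2$ matching). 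Concatenating these three verifications completes the proof.

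The only genuinely delicate point is the Gaussian sample-splitting identification, since ``adding observations'' in a white-noise experiment means \emph{decreasing} the noise level rather than literally concatenating data, so one has to phrase the equivalence $\mE^G_n \cong \mE^G_{\lfloor n/2\rfloor}\otimes\mE^G_{\lceil n/2\rceil}$ in terms of sufficiency of the pair $(Y^{(1)},Y^{(2)})$ for $Y$; this is exactly the device already invoked in the proof of Theorem \ref{thm.glob2} and in Lemma \ref{lem.sample_splitting}'s usage, so I would simply cite that construction. Everything else — Polishness of the three carrier spaces, domination via explicit likelihoods and Girsanov — is standard and I would present it tersely.
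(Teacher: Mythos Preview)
Your proposal is essentially correct and follows the same outline as the paper: verify Polish and dominated directly (the paper simply cites \cite{nussbaum1996}, Theorem 3.2, for Polishness and invokes $\sup_{f\in\mH^\beta(R)}\|f\|_\infty<\infty$ for domination, but your more explicit treatment is fine), then handle sample splitting case by case. The density and Poisson cases match the paper exactly.

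The Gaussian sample-splitting argument, however, is muddled. Your reference to the device in the proof of Theorem~\ref{thm.glob2} (splitting $dY_t=2\sqrt{f}\,dt+n^{-1/2}dW_t$ into two independent processes $dY_{i,t}=\sqrt{f}\,dt+n^{-1/2}dW_{i,t}$) gives two \emph{equal} halves, not sizes $\lfloor n/2\rfloor$ and $\lceil n/2\rceil$, and your parenthetical ``the noise levels $(\lfloor n/2\rfloor)^{-1/2}$ and $(\lceil n/2\rceil)^{-1/2}$ combine to $n^{-1/2}$ via $(\lfloor n/2\rfloor)^{-1}+(\lceil n/2\rceil)^{-1}$'' is wrong: that sum is $\approx 4/n$, not $1/n$. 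What is true is that the \emph{precisions} (inverse noise variances) add, i.e.\ $\lfloor n/2\rfloor+\lceil n/2\rceil=n$. The paper avoids this confusion entirely by writing $W_t=(\lfloor n/2\rfloor/n)^{1/2}W_t^{(1)}+(\lceil n/2\rceil/n)^{1/2}W_t^{(2)}$ with $W^{(1)},W^{(2)}$ independent Brownian motions and then checking via Girsanov that the likelihood ratio factorizes exactly:
\[
\frac{dQ_f^n}{dQ_0^n}=\exp\Bigl(2\sqrt{n}\int\sqrt{f}\,dW-2n\|\sqrt{f}\|_2^2\Bigr)=\frac{dQ_f^{\lfloor n/2\rfloor}}{dQ_0^{\lfloor n/2\rfloor}}\cdot\frac{dQ_f^{\lceil n/2\rceil}}{dQ_0^{\lceil n/2\rceil}}.
\]
This is both shorter and handles the floor/ceiling asymmetry cleanly; I would replace your Gaussian paragraph with this computation.
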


\begin{proof}
The proof of Theorem 3.2 in \cite{nussbaum1996} shows that the experiments are Polish. The experiments are also dominated since $\sup_{f\in \mH^\beta(R)} \|f\|_\infty<\infty.$ The sample splitting property is obvious for density estimation $\mE_n^D(\Theta).$ Consider now $\mE_n^P(\Theta).$ Given $N \sim \Poi(\lambda),$ let $N' \sim \Bin(N,p_n)$ with $p_n = \lfloor n/2\rfloor/n.$ Then $(X_1, \ldots, X_{N'})$ and $(X_{N'+1}, \ldots, X_N)$ are two independent samples of the same Poisson intensity estimation experiment with $n$ replaced by $\lfloor n/2\rfloor$ and $\lceil n/2\rceil$ respectively. In the Gaussian white noise experiment $\mE_n^G(\Theta),$ we can use that a Brownian motion $W$ can be written as $W_t = (n^{-1}\lfloor n/2\rfloor)^{1/2}W_t^{(1)}+ (n^{-1}\lceil n/2\rceil)^{1/2}W_t^{(2)},$ $t>0,$ for two independent Brownian motions $W^{(1)}$ and $W^{(2)}.$ By Girsanov's theorem,
\begin{align*}
	\frac{dQ_f^n}{dQ_0^n} = \exp\Big( 2\sqrt{n} \int_0^1 \sqrt{f(t)} dW_t - 2n \big\|\sqrt{f}\big\|_2^2\Big)
	= \frac{dQ_f^{\lfloor n/2\rfloor}}{dQ_0^{\lfloor n/2\rfloor}}  \frac{dQ_f^{\lceil n/2\rceil}}{dQ_0^{\lceil n/2\rceil}}
\end{align*}
and this completes the proof for $\mE_n^G(\Theta).$
\end{proof}

\bibliographystyle{acm}    
\bibliography{bibhd}           

\end{document}